\NeedsTeXFormat{LaTeX2e}
\documentclass[11pt,reqno]{amsart}
\usepackage{hyperref}
\usepackage{amsmath}
\usepackage{amsthm}
\usepackage{amsfonts}
\usepackage{amssymb}

\usepackage{latexsym,amsmath}
\usepackage{geometry}
\usepackage{fixmath}

\usepackage[dvips]{graphicx}
\usepackage{graphicx}
\usepackage{fullpage}
\usepackage{appendix}

\usepackage{color} 
\usepackage{times}

\usepackage{leftidx}
\usepackage{scrextend}
\usepackage{float}
\usepackage{graphicx}
\usepackage[symbol]{footmisc}
\usepackage[shortlabels]{enumitem}

\newtheorem{theorem}{Theorem}[section]
\newtheorem{lemma}[theorem]{Lemma}

\newtheorem{proposition}[theorem]{Proposition}
\newtheorem{claim}[theorem]{Claim}
\newtheorem{corollary}[theorem]{Corollary}

\newtheorem{definition}[theorem]{Definition}


\newcommand{\cE}{{\mathcal{E}}}

\newcommand{\cH}{{\mathcal{H}}}
\newcommand{\cI}{{\mathcal{I}}}
\newcommand{\cJ}{{\mathcal{J}}}

\newcommand{\cM}{{\mathcal{M}}}
\newcommand{\cP}{{\mathcal{P}}}
\newcommand{\cQ}{{\mathcal{Q}}}

\newcommand{\cS}{{\mathcal{S}}}

\newcommand{\bA}{{{\mathbold A}}}
\newcommand{\bB}{{\mathbold{B}}}
\newcommand{\bC}{{\mathbold{C}}}
\newcommand{\bD}{{\mathbold{D}}}
\newcommand{\bF}{{\mathbold{F}}}

\newcommand{\bH}{\mathbold{H}}
\newcommand{\bI}{{\mathbold{I}}}

\newcommand{\bL}{\mathbold{L}}
\newcommand{\bM}{{\mathbold{M}}}

\newcommand{\bS}{\mathbold{S}}

\newcommand{\bW}{{\mathbold{W}}}
\newcommand{\bX}{{\mathbold{X}}}

\newcommand{\bq}{\mathbold{q}}


\newcommand{\bgamma}{\mathbold{\gamma}}
\newcommand{\bxi}{\mathbold{\xi}}

\newcommand{\bpsi}{{\mathbold{\psi}}}

\newcommand{\walkT}{{T}} 
\newcommand{\KWalks}{\textrm{MAX-$k$}}
\newcommand{\NWalks}{\textrm{MAX-$n$}}
\newcommand{\Tall}{ {\rm T}_{\textrm{MAX-$k$} } } 

\newcommand{\Tsaw}{{T_{\rm SAW}}}
\newcommand{\cp}{{\tt A}}


%
\newcommand{\maxDeg}{{\Delta}}
%


\newcommand{\dcritical}{{\Delta_c}}
\newcommand{\lcritical}{{\lambda_c}}

\newcommand{\Pweight}{{\bW}}

\newcommand{\simpleadj}{{\bf A}}
\newcommand{\powadj}{{\mathbold{B}}}

\newcommand{\infmatrix}{{\cI}}
\newcommand{\NBMatrix}{\bH}
\newcommand{\OrntEdges}{\overrightarrow{E}}
\newcommand{\vtooedge}{{\bf K}}
\newcommand{\oedgetov}{{\bf C}}

\newcommand{\eigenval}{\theta}
\newcommand{\spradius}{{\rho}}
\newcommand{\hspradius}{\nu}

\newcommand{\maxeigenv}{{{\bf f}_1}}
\newcommand{\eigenv}{{\bf f}}


\newcommand{\DBounded}{\bq}

\newcommand{\lnorm}{\left | \left |}
\newcommand{\rnorm}{\right| \right|}

\newcommand{\Glauber}{{\{X_t\}_{t\geq 0}}}
\newcommand{\UnIsing}{{\mathbb{M}_{\rm Ising}}}

\newcommand{\subcont}{{\cJ}}

\date{\today}

\begin{document}

\title{Spectral Independence Beyond Uniqueness \\ using  the topological method. }
\author{Charilaos Efthymiou$^*$}
\thanks{ 
University of Warwick,  Coventry,  CV4 7AL, UK.  Email: {\tt charilaos.efthymiou@warwick.ac.uk}\\ 
$^*$Research supported by EPSRC New Investigator Award, grant EP/V050842/1,  and 
Centre of Discrete Mathematics and Applications (DIMAP), University of Warwick, UK. \\ 
}

\address{Charilaos Efthymiou, {\tt charilaos.efthymiou@warwick.ac.uk}, University of Warwick, Coventry, CV4 7AL, UK.}

\maketitle

\thispagestyle{empty}

\noindent

\begin{abstract}

We present novel  results for fast mixing of Glauber dynamics using the  newly introduced  and  
powerful {\em Spectral Independence}  method from [Anari, Liu, Oveis-Gharan: FOCS 2020].  In our 
results, the parameters of the Gibbs distribution are expressed in terms of the {\em spectral radius} of  
the {\em adjacency matrix} of $G$, or that of the  {\em Hashimoto non-backtracking} matrix.

The analysis relies on new  techniques that we introduce to  bound  the maximum eigenvalue  of the 
pairwise influence 
matrix   $\infmatrix^{\Lambda,\tau}_{G}$ for the two spin Gibbs distribution $\mu$. There is a common 
framework that underlies  these techniques which we call the {\em topological method}.   The idea is to 
systematically  exploit the well-known connections between $\infmatrix^{\Lambda,\tau}_{G}$  and the 
topological  construction called  {\em tree of self-avoiding walks}.

Our approach is novel and  gives new insights to the problem of establishing spectral independence
for Gibbs distributions.  More importantly, it allows us to derive new -improved- rapid mixing bounds 
for Glauber dynamics   on  distributions  such as the Hard-core model and the Ising model
for graphs that  the spectral radius  is smaller than the maximum degree.
\end{abstract}

\newpage
\setcounter{page}{1}

\section{Introduction}

The Markov  Chain Monte Carlo method  (MCMC) is a very simple, yet very powerful method for approximate
sampling from Gibbs distributions on combinatorial  structures. In the standard setting, we are given  a very simple  
to describe, ergodic   Markov chain  and we  need to analyse the speed of  convergence to the equilibrium distribution. 
The challenge  is to show that the chain {\em  mixes fast} when  the parameters  of the equilibrium distribution belong 
to  a certain region of values.

Here our focus is on combinatorial structures that are specified with respect to  an underlying graph $G$, e.g., 
independent sets. For us the graph $G$ is always undirected, connected and finite  and has maximum degree $\maxDeg$.

Recently, a new technique for analysing the speed of convergence of the Markov chain called 
Glauber  dynamics has been introduced in \cite{OptMCMCIS}. This technique is called  the 
{\em Spectral Independence}  method.  The authors in \cite{OptMCMCIS} use the Spectral 
Independence  method (SI) to  prove  a long  standing  conjecture about the mixing time of 
Glauber dynamics  for the so-called {\em Hard-core} model, improving on a series of result
such as \cite{VIS,EHSVY19}.  Since then, it is not an exaggeration 
to claim  that  SI  has revolutionised the study in the field.  Using this method  it has been 
possible to  get positive results for approximate sampling from 2-spin Gibbs distributions  
that  match the hardness ones, e.g.,   \cite{OptMCMCIS,VigodaSpectralInd,Sly10,SS14}.

In this work, our main focus is on  the so-called  {\em pairwise influence matrix}, denoted as 
$\infmatrix^{\Lambda,\tau}_{G}$.  This is  a central concept as the rapid  mixing bounds we derive 
using SI  rely on showing that the {\em maximum  eigenvalue} 
of  this matrix  is bounded. We propose {\em novel}  techniques that  allow us to derive 
more accurate estimations  on the maximum  eigenvalue of this matrix  than what we have been 
getting from the previous  approaches that are  proposed  in \cite{OptMCMCIS,VigodaSpectralInd}. 
In turn,  we get new rapid mixing results   for the Glauber dynamics on two spin Gibbs distribution 
such as the {\em Hard-core} model and the  {\em Ising} model.  Interestingly,  in our  results  the 
parameters of the Gibbs distribution do not depend on the maximum degree $\maxDeg$ of the 
underlying graph $G$.  They rather depend  on  the spectral  radius of the {\em adjacency 
matrix} of $G$,  denoted as $\simpleadj_G$.

For concreteness, consider the Glauber dynamics on the Hard-core model  for  $G$ whose adjacency matrix
has spectral radius $\spradius$ which is a bounded number.  Let $\lcritical(k)$ by  the critical value  for  the 
{\em Gibbs uniqueness}  of the Hard-core model  on the infinite  $k$-ary tree.  We prove that the Glauber dynamics  
mixes   in $O(n\log n)$   steps for any fugacity $0\leq \lambda<\lcritical(\spradius)$. 
For a comparison, recall  that the   max-degree-$\maxDeg$ bound  for the Hard-core model comes from 
\cite{OptMCMCIS,VigodaSpectralIndB} and requires  fugacity $0\leq \lambda<\lcritical(\maxDeg-1)$ in order
to  get  $O(n\log n)$  mixing time. This implies that  our approach gives better bounds when the  spectral 
radius $\spradius$ is  smaller than $\maxDeg-1$.   As a reference,  note that we alway have that  
$\spradius \leq \maxDeg$. On the other hand, the spectral radius  can get much smaller than the  
maximum degree, e.g., for  a {\em planar}  graph  we have that $\spradius \leq \sqrt{8\maxDeg-16}+2\sqrt{3}$,
see further  cases in  Section \ref{sec:Applications}.
We get results of similar flavour for the Ising model, too.

We also consider the case where the spectral radius is unbounded.
Even though we  improve on results in the literature,  our results are not as strong as the 
those  we get  for the bounded case and (most likely)  they admit  improvements. The case of
unbounded spectral radius is considered  in the Appendix, at the end of this paper.

The use of the spectral radius, or   matrix  norms of the adjacency matrix in order to derive rapid 
mixing bounds has also been studied in  \cite{Hayes06,DGJ09}. As opposed to our  approach  here,  that  relies 
on SI,  these results   rely on the {\em path coupling} technique \cite{PathCoupling}.  
Our improvements on these works  reflects  the fact that SI is much stronger 
than  path coupling.  The techniques considered in these two  paper do not seem to apply to 
the setting of analysis we have with SI. In that respect, our approach is orthogonal to the previous ones.

All the above   sed some more light on the well-known hardness transition shown  for approximate 
counting-sampling in  the Hard-core model (and the antiferromagnetic Ising).  Specifically, our results indicate that 
for  graphs  of maximum  degree $\maxDeg$ the hardness transition occurs at $\lcritical(\maxDeg-1)$ only if 
the spectral radius of the adjacency matrix satisfies that $\spradius(\simpleadj_G)\geq \maxDeg-1$.  
On the other hand, for $\spradius(\simpleadj_G) < \maxDeg-1$ we show  rapid mixing for 
Glauber dynamics for the parameters of the Gibbs distribution which  are beyond the  uniqueness region, 
i.e., for $\lambda>\lcritical(\maxDeg-1)$.  Interestingly,  we manage to get access to this region of parameters  
by directly  analysing the influence matrix.

When we deviate from the standard approach, i.e., the one that relies on the maximum degree $\maxDeg$, 
a natural  challenge   that arises is how to deal with the {\em effect of high-degrees}. That is,  how to 
accommodate  in the analysis  the high degree vertices. This  problem is common when the underlying 
structure is a random (hypergraph)  graph \cite{SIGnp,Efth19,Efth22,ConnectiveConst}.   As far as  SI is 
concerned, the work in \cite{SIGnp} deals with a version of  this  problem,  however,  it focuses  only on 
{\em random graphs}  and it takes  advantage of properties that  are special to the typical instances of 
this distribution.  Unfortunately, there does not seem to be a way of exploiting them in the {\em worst-case} 
setting  that we are focusing here. 

Concluding,  one might wonder what are the optimal bounds one can get using the assumption on  the spectral
radius of $\simpleadj_G$, i.e., rather than the maximum degree $\maxDeg$. Note that the regions  of the parameters 
of the Gibbs distribution we get here are 
quite natural and  they are related to the spectral radius  in the same way as the  corresponding regions  for 
Gibbs uniqueness are related to  $\maxDeg-1$.  In the Gibbs uniqueness, the parameters of the distribution need to guarantee 
that the rate  of correlation decay counterbalances  the growth rate of the  underlying graph $G$, which is at most
$\maxDeg-1$,   hence their dependence on  $\maxDeg$.   In an  analogy to uniqueness,  here  the correlation  decay 
counterbalances   the growth rate  of the number walks  between any two  vertices in $G$, which is 
$\spradius(\simpleadj_G)$. 
 Hence, the entries of the influence matrix do not grow too large for distant pairs of vertices. 
In that respect, our {\em conjecture} would be that under the assumption of $\spradius(\simpleadj_G)$ for $G$, 
our results cannot be further improved.

However, we believe that improvements are possible if we consider different matrices, i.e., other than $\simpleadj_G$.
Specifically, we believe that a more natural  matrix to consider for the problem is the so-called {\em (Hashimoto) non-backtracking 
matrix} of $G$ introduced in \cite{Hash89}. The non-backtracking matrix is  an object that has been studied extensively 
in mathematical physics.  Getting results  in terms of the spectrum  of the non-backtracking matrix is quite
desirable because  it is considered to capture  the structure of $G$ much better than that of the adjacency matrix.

Here, we further derive  a rapid mixing result for the Ising model using  the non-backtracking 
matrix of $G$, e.g., see Theorem \ref{thrm:Ising4SPRadiusNBK}. This result is only a preliminary one. 
Due to the intricacy of working with this matrix, we didn't manage to show that  the  non-backtracking spectrum 
gives  better bound  than what we get with the adjacency one. We only show that the results that someone gets with the non-backtracking matrix 
are at least  as good  as  those   from the  adjacency matrix. 
 We believe that the direction  of exploiting the spectrum of the non-backtracking matrix for the problem  
is worth  further investigation.

\subsubsection*{The Topological Method for Spectral Independence} 
As mentioned earlier,  a central  notion in SI is the so-called pairwise influence matrix  $\infmatrix^{\Lambda,\tau}_{G}$.
Given a set of parameters of the Gibbs distribution,  the endeavour is to show that the maximum  eigenvalue  of 
$\infmatrix^{\Lambda,\tau}_{G}$ is $O(1)$.  
Previously introduced approaches in \cite{OptMCMCIS,VigodaSpectralInd} have been focusing on proving that 
either $|| \infmatrix^{\Lambda,\tau}_{G} ||_1$,  or    $|| \infmatrix^{\Lambda,\tau}_{G} ||_{\infty}$  is bounded, which 
in turn implies that the maximum eigenvalue is bounded. 
These approaches are quite elegant and provide,  in a very natural way, bounds that depend on the maximum 
degree $\maxDeg$. However, for our purposes here  they    seem to be too crude.

The main contribution of our work amounts to introducing {\em novel techniques} to bound the
maximum eigenvalue of $\infmatrix^{\Lambda,\tau}_{G}$ which  (for most of the cases) turn out to 
be more precise than  the previous ones.  We introduce a  common framework that underlies  these 
techniques that we call the  {\em topological method}.

The basic idea for the topological method comes from the following, well-known, observation: each  entry   
$\infmatrix^{\Lambda,\tau}_{G}(w,v)$ can be  expressed  in terms of a topological  construction  called 
{\em tree of self-avoiding walks}  (starting from $w$),  together with a set of weights  on  the paths of this
tree, which are called {\em influences}. The influences are specified by the parameters  of the Gibbs distribution
we consider. The entry of the matrix is nothing more than the sum of influences over an appropriately chosen
set of paths in this tree

In the topological method, we generalise the above concepts by  introducing  the notion of {\em walk-matrix}.
For the entries in a walk-matrix,   we don't necessarily use  trees of self-avoiding walks. We may  use other
topological constructions of $G$ such as {\em  path-trees},  {\em universal covers} etc  (see 
more about these constructions in the excellent textbook  \cite{GodsilBookComb}).  Also, we have weights on
the paths of this construction. The weight of  each path  can  be chosen arbitrarily, i.e., it is
not necessarily an influence. Each entry in the walk-matrix is a sum of weights of appropriately chosen paths
in the topological construction.  In that respect, one might regard the influence matrix $\infmatrix^{\Lambda,\tau}_{G}$ 
to be a special case of  walk-matrix (e.g. see Lemma \ref{lemma:InfluenceMatrixIsWalkMatrix}).

Exploiting  properties of these special matrices,   i.e., walk-matrices, we introduce two  techniques, orthogonal 
to each other,   that we use  to derive  our results.

With the first technique we focus on   {\em comparing}  walk-matrices  in  terms of their 
corresponding spectral radii.  That is,  for two walk-matrices $\bC$ and $\bD$    the aim is to establish  
that the corresponding spectral radii satisfy  $\spradius(\bC)\leq \spradius(\bD)$.   Since  $\infmatrix^{\Lambda,\tau}_{G}$ is a
special case of walk-matrix, we use this technique to establish an inequality which is  similar to 
the aforementioned one,  using the walk-matrix $\sum^n_{\ell=0}\xi \cdot \simpleadj^{\ell}_G$,  where 
$\simpleadj_G$ is the adjacency matrix and $\xi>0$ is an appropriately chosen real number. Specifically, we show that 
\begin{align}\nonumber
\spradius\left(\infmatrix^{\Lambda,\tau}_{G} \right) &\textstyle \leq \spradius\left( \sum^n_{\ell=0}\xi \cdot \simpleadj^{\ell}_G \right).
\end{align}
The value of  $\xi$ depends on the parameters of the Gibbs distribution that underlies 
$\infmatrix^{\Lambda,\tau}_{G}$. It is easy to see that adjusting these parameters  
such that $\xi \leq (1-\epsilon)/\spradius(\simpleadj_G)$, for any fixed $\epsilon>0$,  implies  that 
the spectral radius of  $\infmatrix^{\Lambda,\tau}_{G}$  -and hence the maximum eigenvalue-
is bounded. Working as described above, allows us to derive  new, very interesting results about the Ising 
model,  see  Theorem \ref{thrm:Ising4SPRadius}.

We establish another inequality, similar to the above,  that is between  $\infmatrix^{\Lambda,\tau}_{G}$
 and a walk-matrix  related to   the non-backtracking matrix, e.g., see Corollary \ref{cor:MaxElement4NB}.
We exploit this  inequality  to get the rapid mixing result in Theorem \ref{thrm:Ising4SPRadiusNBK},
  which is about the Ising model, too.

It turns out that for the Hard-core model the bounds we get from the previous approach are too crude. 
To this end, we follow a different one. We introduce a new matrix norm to  bound the maximum 
eigenvalue of  $\infmatrix^{\Lambda,\tau}_{G}$. Our norm provides    better bounds 
compared to   what we get from  $||\infmatrix^{\Lambda,\tau}_{G} ||_1$ and  $|| \infmatrix^{\Lambda,\tau}_{G} ||_{\infty}$. Note that for the 
Hard-core model the problem becomes highly non-linear because of the fact that we     use the 
{\em potential method}.  In that respect,  finding a  matrix norm that allows us to derive the kind of bounds we are 
aiming for is a  non-trivial task.

We derive our rapid mixing results  for the Hard-core model  in Theorem \ref{thrm:HC4SPRadius} by 
choosing an appropriate  non-singular matrix $\bD$ and showing that  
$$
\textstyle \lnorm  (\bD)^{-1} \cdot \infmatrix^{\Lambda,\tau}_{G}  \cdot \bD\rnorm_{\infty}=O(1).
$$

\noindent 
More specifically,   $\bD$ is the diagonal matrix such that $\bD(w,w)=\left( \mathbold{f}_1(w)\right)^{t}$, for 
appropriate  $t\geq 1$, while   $\mathbold{f}_1$ is the  eigenvector that  corresponds to the maximum 
eigenvalue  of $\simpleadj_G$. Note that our assumptions about $G$  imply 
the above norm  is well-defined,  i.e., $\bD$ is non-singular.  The parameter $t\geq 1$,
in the norm, needs to be specified in the context of the   potential method.

In hindsight,   the use of the above norm is, somehow,   natural  in the context of topological method. 
The related analysis  gives rise to a further topological  construction that  we call {\em walk-vector} 
(see Section \ref{sec:WalkVector}).

\section{Main Results}\label{sec:Results}
Consider the fixed graph $G=(V,E)$  on $n$ vertices. We assume that $G$ is undirected,  finite and
connected.

The Gibbs distribution $\mu$ on $G$  with   spins  $\{\pm 1\}$ is a distribution on the set 
of configurations $\{\pm 1\}^V$. We use  the parameters $\beta\in \mathbb{R}_{\geq 0}$ and 
$\gamma, \lambda \in \mathbb{R}_{>0}$ and specify  that  each  configuration $\sigma\in \{\pm 1\}^V$ gets a probability 
measure
\begin{align}\label{def:GibbDistr}
\mu(\sigma)  &\propto \textstyle  \lambda^{\# \textrm{assignments ``1" in $\sigma$ } }
\times \beta^{\# \textrm{edges with both ends ``1" in $\sigma$ } }
\times \gamma^{\# \textrm{edges with both ends ``-1" in $\sigma$} },
\end{align}
where the symbol $\propto$ stands for ``proportional to".  The above distribution is called
{\em ferromagnetic}   when $\beta \gamma>1$,  while for $\beta \gamma<1$ it is   called 
{\em antiferromagnetic}.  Unless otherwise specified, we always assume that $\mu$ is a  
two-spin Gibbs distribution.

Using the formalism in  \eqref{def:GibbDistr}, one  recovers the well-known {\em Ising model} by setting 
$\beta=\gamma$. In this case, the magnitude  of $\beta$ specifies  the strength of the interactions. The 
above, also,  gives rise to the so-called {\em Hard-core model} if we choose $\beta=0$ and $\gamma=1$. 
Particularly, this distribution assigns to each {\em independent set}  $\sigma$ probability measure  which 
is proportional to $\lambda^{|\sigma|}$, where $|\sigma|$ is the size of the independent set.   For the
Hard-core model  we use the term  {\em fugacity} to refer to the parameter $\lambda$.

Given a Gibbs distribution $\mu$   we use the discrete time,   (single site)   {\em Glauber dynamics} 
$\Glauber$ to approximately sample from  $\mu$.  Glauber dynamics is a very simple to describe 
Markov chain.  The state space of the chain is the support of $\mu$.
We assume that the chain   starts from an arbitrary configuration $X_0$. For  
$t\geq 0$, the transition from the state $X_t$ to $X_{t+1}$ is according to the  following rules: 
Choose uniformly at random a vertex $v$. 
For every vertex $w$ different than $v$, set $X_{t+1}(w)=X_t(w)$.
Then, set $X_{t+1}(v)$ according to the marginal of $\mu$ at $v$, conditional on 
the neighbours  of $v$ having the configuration  specified by $X_{t+1}$.

For the cases we consider here,  $\Glauber$ satisfies a set  of technical  conditions that come with the 
name {\em ergodicity}.  Ergodicity implies that $\Glauber$ converges to a {\em unique} {stationary 
distribution},  which, in our case,  is the Gibbs distribution $\mu$.

In this work, we  study the  rate that the  chain converges to 
stationarity,  when the parameters $\beta,\gamma$ and $\lambda$ of the Gibbs distribution vary in a 
range of parameters that depends on the spectral radius of the adjacency matrix $\simpleadj_G$. Recall 
that $\simpleadj_G$ is a $V\times V$ matrix with entries in $\{0,1\}$, such that for any $u,w\in V$ we have
\begin{align}\nonumber
\simpleadj_G(w, u)= {\bf  1} \{\textrm{ $u, w$ are adjacent in $G$}\}.
\end{align}
The spectral radius of $\simpleadj_G$, denoted as  $\spradius(\simpleadj_G)$ is equal to the maximum in 
magnitude  eigenvalue of  the matrix $\simpleadj_G$.

\subsection{The Ising Model}
As mentioned earlier,  the Ising model  corresponds to the distribution in  \eqref{def:GibbDistr}  such 
that  $\beta=\gamma$.  This implies that each configuration $\sigma\in  \{\pm 1\}^V$ 
is assigned  probability measure
\begin{align}\label{eq:DefOfIsing}
\mu(\sigma)\propto \lambda^{\sum_{x\in  V}{\bf 1}\{\sigma(x)=1\}}\times\beta^{\sum_{\{x, y\}\in E} {\bf 1}\{\sigma(x)=\sigma(y)\}}.
\end{align}
When $\beta>1$ we have the {\em ferromagnetic} Ising model, while when  $\beta<1$ we have
the {\em antiferromagnetic}. Here, we always assume that $\lambda =1$. This corresponds to 
what we call  {\em zero external field} Ising model.

It is a well-known result that the uniqueness region of the Ising model  on the infinite $k$-ary tree, where $k\geq 2$,  
corresponds to having 
\begin{align}\nonumber
\frac{k-1}{k+1} <\beta< \frac{k+1}{k-1}.
\end{align}
Particularly, for the ferromagnetic case the measure is unique when  $1\leq \beta < \frac{k+1}{k-1}$, while
for  the antiferromagnetic case, the measure is unique when  $\frac{k-1}{k+1}<\beta\leq 1$.

For $k\geq 2$ and $\delta\in (0,1)$, we let the interval
\begin{align}\label{eq:DefOfIsingUniquReg}
\UnIsing(k,\delta) =\textstyle \left [ \frac{k-1+\delta}{k+1-\delta},   \frac{k+1-\delta}{k-1+\delta}\right].
\end{align}
We use the Spectral Independence method to get the following result about the Ising model. 

\begin{theorem}[Adjacency Matrix]\label{thrm:Ising4SPRadius}
For any $\delta\in (0,1)$ and for bounded $\spradius_G \geq  2$,     consider  the graph  $G=(V,E)$  whose 
adjacency matrix $\simpleadj_G$ has  spectral radius $\spradius_{G}$. Also,   let $\mu_G$  be 
the Ising model on $G$ with zero external field and parameter $\beta\in \UnIsing(\spradius_G,\delta)$.  

There is a constant  $C=C( \delta)$ such that the mixing time of the  Glauber dynamics 
on $\mu_G$  is at most $C n\log n$.
\end{theorem}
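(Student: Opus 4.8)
The plan is to run the Spectral Independence program: I will bound, \emph{uniformly over every pinning} $(\Lambda,\tau)$, the spectral radius of the pairwise influence matrix $\infmatrix^{\Lambda,\tau}_{G}$ by a constant depending only on $\delta$, and then feed this into the optimal‑mixing machinery behind \cite{OptMCMCIS} together with the (here trivial, since $\spradius_G$ is bounded) marginal‑boundedness of the zero‑field Ising model, to obtain mixing time $C(\delta)\,n\log n$.

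The first step is the topological representation of the influence matrix: by Lemma~\ref{lemma:InfluenceMatrixIsWalkMatrix} the entry $\infmatrix^{\Lambda,\tau}_{G}(w,v)$ is a signed sum, over the occurrences of $v$ in the tree of self‑avoiding walks $\Tsaw$ rooted at $w$ (equivalently, over the $(\Lambda,\tau)$‑consistent self‑avoiding walks of $G$ from $w$ to $v$), of the product of the edge‑influences along the corresponding branch. For the zero‑field Ising model with edge parameter $\beta$, a routine estimate on the tree recursion $x\mapsto\prod_u\tfrac{\beta x_u+1}{x_u+\beta}$ (maximised via AM--GM) shows each single edge‑influence has absolute value at most $\xi:=\frac{|1-\beta|}{1+\beta}$. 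Since the number of self‑avoiding walks of length $\ell$ from $w$ to $v$ is at most the number of length‑$\ell$ walks, namely $\simpleadj_G^{\ell}(w,v)$, this gives the entrywise bound
\[
\bigl|\infmatrix^{\Lambda,\tau}_{G}(w,v)\bigr|\ \le\ \sum_{\ell\ge1}\xi^{\ell}\,\simpleadj_G^{\ell}(w,v)\ =\ \Bigl(\textstyle\sum_{\ell\ge1}(\xi\,\simpleadj_G)^{\ell}\Bigr)(w,v),
\]
the right‑hand side being exactly a walk‑matrix built from the adjacency matrix, of the form anticipated in the introduction.

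Next I would pass to spectral radii using the ``comparison of walk‑matrices'' technique. By Perron--Frobenius monotonicity, $\spradius(\infmatrix^{\Lambda,\tau}_{G})\le\spradius(|\infmatrix^{\Lambda,\tau}_{G}|)\le\spradius\bigl(\sum_{\ell\ge1}(\xi\,\simpleadj_G)^{\ell}\bigr)$; and since $\simpleadj_G$ is symmetric with all eigenvalues in $[-\spradius_G,\spradius_G]$, the latter matrix has eigenvalues $\sum_{\ell\ge1}(\xi\eigenval_i)^{\ell}$, each of modulus at most $\sum_{\ell\ge1}(\xi\spradius_G)^{\ell}=\frac{\xi\spradius_G}{1-\xi\spradius_G}$, provided $\xi\spradius_G<1$. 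A short computation with the endpoints of $\UnIsing(k,\delta)$ (where $\beta\mapsto\xi$ is monotone on each side of $1$ and equals $(1-\delta)/k$ at both endpoints) gives $\xi\,\spradius_G\le1-\delta$ for every $\beta\in\UnIsing(\spradius_G,\delta)$, hence
\[
\spradius\bigl(\infmatrix^{\Lambda,\tau}_{G}\bigr)\ \le\ \frac{1-\delta}{\delta}\ =\ O_\delta(1)\qquad\text{for all }(\Lambda,\tau),
\]
which is constant‑rate spectral independence. Because $\spradius_G\ge2$ is bounded, $\UnIsing(\spradius_G,\delta)$ lies in a fixed compact subinterval of $(0,\infty)$, so all conditional marginals of $\mu_G$ are bounded away from $0,1$; combining constant spectral independence with this marginal‑boundedness through the standard optimal‑mixing theorems underlying \cite{OptMCMCIS} yields the claimed $O(n\log n)$ bound.

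I expect the main obstacle to lie in the second and third steps rather than in the reduction to SI: the key point is that the $\Tsaw$ expansion of $\infmatrix^{\Lambda,\tau}_{G}$ should be dominated not by the branching bound $(\maxDeg-1)^{\ell}$ used in \cite{OptMCMCIS,VigodaSpectralInd} but by the walk count $\simpleadj_G^{\ell}(w,v)$, whose aggregate growth is governed by $\spradius_G$, and that this entrywise domination must be converted into a genuine spectral‑radius inequality rather than an $\ell_1/\ell_\infty$ bound, which would be far too lossy here — this is exactly where the walk‑matrix comparison does the work. The remaining verifications (the per‑edge influence bound, the eigenvalue computation for a polynomial in $\simpleadj_G$, and the endpoint arithmetic for $\UnIsing$) are routine.
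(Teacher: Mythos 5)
Your proposal follows essentially the same route as the paper: the per-edge influence bound $\frac{|1-\beta|}{1+\beta}\le\frac{1-\delta}{\spradius_G}$ is exactly the paper's $\delta$-contraction step (Lemma \ref{lemma:IsingInfNormBound}), the entrywise domination of the SAW walk-matrix by $\sum_\ell(\xi\simpleadj_G)^\ell$ followed by Perron--Frobenius monotonicity and symmetry is the paper's Corollary \ref{cor:MaxElement}/Theorem \ref{thrm:Recurrence4InfluenceEigenBound}, and the final step via marginal boundedness and the optimal-mixing theorem of \cite{VigodaSpectralIndB} matches the paper's conclusion. The only nitpick is that your geometric sum should start at $\ell=0$ (the diagonal entries of $\infmatrix^{\Lambda,\tau}_{G}$ equal $1$, coming from the zero-length path), which changes the constant from $\frac{1-\delta}{\delta}$ to $\delta^{-1}$ but affects nothing else.
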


Theorem \ref{thrm:Ising4SPRadius} follows from a technical result we present later, 
i.e., Theorem \ref{thrm:Recurrence4InfluenceEigenBound},  which we use to establish  spectral independence 
for the zero external field Ising model $\mu_G$ with $\beta\in \UnIsing(\spradius_G,\delta)$. 
Once we establish spectral independence for $\mu_G$,  we use results from \cite{VigodaSpectralIndB} to derive
the bound on the mixing time.

Theorem \ref{thrm:Ising4SPRadius}   improves on results in \cite{Hayes06} for the Ising model 
by allowing a wider rage for $\beta$.
Specifically,  in the ferromagnetic  case  the above bound allows for $\beta$ that is a constant factor larger  
than what we had  before.  The analogous holds for the antiferromagnetic Ising, i.e., Theorem \ref{thrm:Ising4SPRadius}  
allows for $\beta$ which is a constant factor smaller than what we had before. 

\subsection{The Hard-Core Model} 
Another distributions of interest is the so-called  {\em Hard-core} model.  The formalism  
in \eqref{def:GibbDistr}  gives rise to the Hard-core model with fugacity $\lambda$ if we set   $\beta=0$ 
and $\gamma=1$.  This distribution assigns to each independent set $\sigma$ of the graph $G$, 
probability  measure $\mu(\sigma)$ such that
\begin{align}
\mu(\sigma)\propto \lambda^{|\sigma|}, 
\end{align}
where   $|\sigma|$ is the size of the independent set.

We use $\{\pm 1\}^V$ to encode the configurations of the the Hard-core model, i.e., the independent sets of $G$. 
Particularly, the assignment $+1$  implies that the vertex is in the independent set, while $-1$ implies the opposite. 
We often use physics' terminology where  the vertices with assignment $+1$ are  called  ``occupied", whereas  the 
vertices with $-1$ are the ``unoccupied" ones.

For  $z>1$, we let the function $\lcritical(z)=\frac{z^{z}}{(z-1)^{(z+1)}}$.  It is a well-known result from \cite{Kelly85} that 
the uniqueness region of the Hard-core model  on the $k$-ary tree, where $k\geq 2$,   holds for any  $\lambda$ such that
\begin{align}
 \lambda< \lcritical(k).
\end{align}

As far as the Hard-core model is concerned we use the Spectral Independence method to derive the following result.

\begin{theorem}[Adjacency matrix]\label{thrm:HC4SPRadius}
For any $\epsilon \in (0,1)$,  $\maxDeg\geq 2$ and  $\spradius_G>1$ which is bounded,  consider the graph $G=(V,E)$  
of maxim degree $\maxDeg$,  whose adjacency matrix $\simpleadj_G$ has  spectral radius $\spradius_{G}$. Also,   let $\mu_G$ be 
the Hard-core model on $G$ with  fugacity $\lambda\leq (1-\epsilon)\lcritical(\spradius_G)$.

There is a constants  $C=C(\epsilon)$  such that the mixing time of the   Glauber dynamics on $\mu_G$ 
is at most $C n\log n$.
\end{theorem}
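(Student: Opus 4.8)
The plan is to reduce the mixing-time bound to a spectral-independence estimate and then establish that estimate via the topological method, specifically through the conjugated-norm bound advertised in the introduction. First I would recall the standard reduction: by the results of \cite{VigodaSpectralIndB} (together with the fact that the Hard-core model has bounded marginals away from $0$ and $1$ for bounded $\lambda$ and $\maxDeg$), it suffices to show that for every pinning $(\Lambda,\tau)$ the maximum eigenvalue of the pairwise influence matrix $\infmatrix^{\Lambda,\tau}_{G}$ is bounded by a constant $C(\epsilon)$ uniformly in $n$. So the whole argument is about bounding $\spradius(\infmatrix^{\Lambda,\tau}_{G})$.

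Next I would invoke the self-avoiding-walk representation: by Lemma \ref{lemma:InfluenceMatrixIsWalkMatrix}, $\infmatrix^{\Lambda,\tau}_{G}(w,v)$ equals a sum of path-weights (influences) over paths in the tree of self-avoiding walks rooted at $w$, where the influences are controlled by the \emph{potential method} for the Hard-core model. The key quantitative input is that, in the uniqueness regime $\lambda\le(1-\epsilon)\lcritical(\spradius_G)$, the per-step influence decay rate, after the potential change of variables, is governed by a contraction factor that one can take to be $\le (1-\epsilon')/\spradius_G$ for some $\epsilon'=\epsilon'(\epsilon)>0$; this is exactly the Hard-core analogue of the ``correlation decay counterbalances the walk-growth rate $\spradius(\simpleadj_G)$'' heuristic described in the introduction. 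Rather than bounding $\|\infmatrix^{\Lambda,\tau}_{G}\|_1$ or $\|\infmatrix^{\Lambda,\tau}_{G}\|_\infty$ directly — which the introduction warns is too crude here — I would bound the conjugated norm
\[
\lnorm (\bD)^{-1}\cdot \infmatrix^{\Lambda,\tau}_{G}\cdot \bD \rnorm_{\infty},
\]
where $\bD$ is the diagonal matrix with $\bD(w,w)=(\maxeigenv(w))^{t}$ and $\maxeigenv$ is the Perron eigenvector of $\simpleadj_G$ (positive by connectedness, so $\bD$ is non-singular), with $t\ge 1$ a parameter chosen to match the potential exponent. The point is that $\spradius(\infmatrix^{\Lambda,\tau}_{G})=\spradius((\bD)^{-1}\infmatrix^{\Lambda,\tau}_{G}\bD)\le \lnorm (\bD)^{-1}\infmatrix^{\Lambda,\tau}_{G}\bD \rnorm_{\infty}$, so an $O(1)$ bound on the conjugated norm finishes the theorem.

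To bound the conjugated norm I would expand, for a fixed root $w$, the sum $\sum_v |\infmatrix^{\Lambda,\tau}_{G}(w,v)|\,(\maxeigenv(v)/\maxeigenv(w))^{t}$ over the tree of self-avoiding walks: each length-$\ell$ path contributes at most (contraction factor)$^{\ell}$ times the eigenvector ratio along its endpoints, and summing the eigenvector weights over all walks of length $\ell$ emanating from $w$ is controlled using the eigenvalue equation $\simpleadj_G\maxeigenv=\spradius_G\maxeigenv$, which yields a factor $\spradius_G^{\ell}$ after telescoping the ratios (this is the ``walk-vector'' construction alluded to in Section \ref{sec:WalkVector}). The two exponential factors then combine to $\big((1-\epsilon')/\spradius_G\cdot\spradius_G\big)^{\ell}=(1-\epsilon')^{\ell}$, and the geometric series sums to $1/\epsilon'=O(1)$. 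The main obstacle — and the part requiring genuine care rather than bookkeeping — is making the potential-method contraction estimate for the Hard-core model interact correctly with the eigenvector conjugation: the Hard-core recursion is nonlinear, so one must verify that after passing to the potential coordinates the per-edge influence is bounded by the contraction factor times a quantity that linearizes against $\simpleadj_G$, and that the exponent $t$ in $\bD$ can be chosen consistently with the potential exponent so that the eigenvector ratios along a path telescope cleanly. Handling the degree-$\maxDeg$ vertices inside this nonlinear-to-linear comparison (so that no residual dependence on $\maxDeg$ survives beyond the $O(1)$ constant) is the delicate step; everything else is a geometric-series computation.
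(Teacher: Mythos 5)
Your plan follows essentially the same route as the paper: reduce to spectral independence via \cite{VigodaSpectralIndB}, represent $\infmatrix^{\Lambda,\tau}_{G}$ as a walk-matrix over the self-avoiding-walk tree, and bound $\lnorm \bD^{-1}\infmatrix^{\Lambda,\tau}_{G}\bD\rnorm_{\infty}$ with $\bD$ built from powers of the Perron eigenvector, telescoping the eigenvector ratios against the potential-method contraction $(1-\epsilon')/\spradius_G$ so that a geometric series gives an $O(1)$ bound. The ``delicate step'' you single out is exactly what the paper handles with its $(s,\delta,c)$-potential vector / walk-vector machinery (Theorems \ref{thrm:MySpectralMatrixNorm}--\ref{thrm:NormRedaux2WalkVector}), instantiated for the Hard-core model via the potential $\xdpotF(y)=\sqrt{e^y/(1+e^y)}$ and the contraction bounds imported from \cite{ConnectiveConst}.
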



%
Theorem \ref{thrm:HC4SPRadius}   follows from a technical result, i.e., Theorem 
\ref{thrm:Recurrence4InfluenceEigenBoundNonLinear},  which we use to establish  spectral  
independence for the Hard-core  model $\mu_G$ with fugacity $\lambda\leq (1-\epsilon)\lcritical(\spradius_G)$.
Once we establish spectral independence we use results from \cite{VigodaSpectralIndB} to derive
the bound on the mixing time.

The above improves on results in \cite{Hayes06} for the Hard-core model by allowing a wider rage for $\lambda$. 
The upper bound on $\lambda$ here is by a constant factor larger than the previous one. Particularly, for large
$\spradius_G$ this constant converges to $e$, i.e., the base of natural logarithms.

\subsubsection*{Notation} For the graph $G=(V,E)$ and the Gibbs distribution $\mu$ on the set of configurations
$\{\pm 1\}^V$.  For a configuration $\sigma$,  we let $\sigma(\Lambda)$ denote the configuration that $\sigma$
specifies on the set of vertices $\Lambda$.
We let   $\mu_{\Lambda}$  denote the marginal of $\mu$ at the  set $\Lambda$.
We let $\mu(\cdot \ |\ \Lambda, \sigma)$, denote the distribution 
$\mu$ conditional on the configuration at $\Lambda$ being $\sigma$. Also, we interpret the conditional
marginal $\mu_{\Lambda}(\cdot \ |\ \Lambda', \sigma)$, for $\Lambda'\subseteq V$, in the natural way.

\subsection{Applications  I}\label{sec:Applications}
There are a lot of interesting cases of graphs whose adjacency matrix has spectral radius much smaller 
than the maximum degree, and hence, our results give better rapid mixing bounds than the general one. 
 A standard example is the planar graphs for which we have the following result from \cite{SPRadiusPlannar}.

\begin{theorem}[\cite{SPRadiusPlannar}]\label{thrm:SPRadiusPlanar}
Suppose that $G=(V,E)$ is a planar  graph of maximum degree $\maxDeg$, then 
$\spradius(\simpleadj_G)\leq \spradius_{\rm planar}(\maxDeg)$
where 
\begin{align}\label{Def:SpRadPlanar}
\spradius_{\rm planar}(\maxDeg) &=  
\left \{ 
\begin{array}{lcl}
\maxDeg &\quad& \textrm{for $\maxDeg\leq 5$},\\
\sqrt{12\maxDeg-36} &\quad& \textrm{for $6\leq \maxDeg\leq 36$},\\
\sqrt{8(\maxDeg-2)}+2\sqrt{3}  &\quad& \textrm{for $37\leq \maxDeg$}.
\end{array}
\right. 
\end{align}
\end{theorem}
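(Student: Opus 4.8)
Since \eqref{Def:SpRadPlanar} is a bound taken from the literature (see \cite{SPRadiusPlannar}), I only sketch the approach one would take. The plan is to combine the variational description of the spectral radius with Euler's formula. Write
\[
\spradius(\simpleadj_G)=\max_{\lVert x\rVert_2=1}x^{\top}\simpleadj_G x=2\max_{\lVert x\rVert_2=1}\sum_{\{u,v\}\in E}x_u x_v,
\]
the maximum being attained at the nonnegative Perron eigenvector $f$ of $\simpleadj_G$; we may assume $G$ is connected, since otherwise we argue on each component (planar, with maximum degree at most $\maxDeg$). Equivalently, by Perron--Frobenius, $\spradius(\simpleadj_G)=\inf_{w>0}\max_{v\in V}\frac{1}{w_v}\sum_{u\sim v}w_u$, so it suffices to produce positive vertex weights $w$ with $\sum_{u\sim v}w_u\le\lambda\,w_v$ for every $v$, where $\lambda=\spradius_{\rm planar}(\maxDeg)$.

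First I would dispose of $\maxDeg\le 5$: for any graph $\spradius(\simpleadj_G)$ is at most the largest row sum of $\simpleadj_G$, namely $\maxDeg$, which is exactly $\spradius_{\rm planar}(\maxDeg)$ in that range; so assume $\maxDeg\ge 6$. Planarity then enters through two consequences of Euler's formula: a planar graph on $n\ge 3$ vertices has at most $3n-6$ edges and is $5$-degenerate; and the subgraph of $G$ consisting of the edges between two disjoint vertex sets is planar and bipartite, hence has at most $2t-4$ edges if $t$ is the number of vertices it touches, so a vertex of large degree has few neighbours of large degree. I would split $V$ into ``light'' vertices ($d(v)<D$) and ``heavy'' vertices ($d(v)\ge D$) for a threshold $D$ to be optimised, and bound $2\sum_{\{u,v\}\in E}f_u f_v$ by treating the light--light, light--heavy and heavy--heavy edges separately: the light--light edges contribute at most $D$, and the remaining contributions are controlled by the planar bipartite sparsity above (equivalently, one lets $w_v$ depend only on the light/heavy dichotomy and on the degeneracy order). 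Optimising $D$ should yield the two nontrivial bounds: a counting bound of order $\sqrt{12\maxDeg-36}$, which is the better one for $6\le\maxDeg\le 36$, and the asymptotically sharper $\sqrt{8(\maxDeg-2)}+2\sqrt{3}$, better for $\maxDeg\ge 37$. The constant $8$ is the planar analogue of the $4$ appearing in the spectral radius $2\sqrt{\maxDeg-1}$ of the $\maxDeg$-regular tree, the extra factor accounting for the cycles a planar graph carries; the additive $2\sqrt{3}$ and the shift by $2$ are lower-order corrections from the same estimates, and the three cut-offs in \eqref{Def:SpRadPlanar} are just the crossover points of these bounds, each tight for an explicit extremal planar graph.

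The hard part will be the large-$\maxDeg$ regime: the sharp constant $8$ can only come from using planarity \emph{globally}. No estimate depending only on the structure of a bounded-radius ball around a vertex can work, because the tree in which a root has $\maxDeg$ children and each child has a further $\maxDeg-1$ leaf-children --- planar, with maximum degree $\maxDeg$ --- already has $\sum_{u\sim v}d(u)=\maxDeg^{2}$ at its root, yet spectral radius only $\sqrt{2\maxDeg-1}$; thus the naive ``average $2$-degree'' bound $\spradius(\simpleadj_G)\le\max_v\sqrt{\sum_{u\sim v}d(u)}$ overshoots by a factor $\Theta(\sqrt{\maxDeg})$. So one must either find the correct non-local weighting $w$ (in practice tied to the degeneracy ordering furnished by a planar embedding), or, equivalently, bound the number of closed walks of length $2\ell$ in a planar graph by roughly $n\cdot(8(\maxDeg-2))^{\ell}$ and let $\ell\to\infty$ in $\spradius(\simpleadj_G)=\lim_{\ell\to\infty}\bigl(\operatorname{tr}\simpleadj_G^{2\ell}\bigr)^{1/(2\ell)}$; once such an estimate is available, checking the two crossover values and assembling the three-case formula is routine.
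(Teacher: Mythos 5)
The first thing to note is that the paper itself offers no proof of this statement: Theorem \ref{thrm:SPRadiusPlanar} is imported verbatim from \cite{SPRadiusPlannar} (Dvo\v{r}\'ak and Mohar) and used as a black box, so there is no internal argument to compare yours against. Judged as a proof attempt, however, what you have written is a plan rather than a proof, and the gap is precisely where the theorem lives: you never derive any of the three bounds. The correct general ingredients are on the table --- the Rayleigh quotient, the Perron--Frobenius characterisation $\spradius(\simpleadj_G)=\inf_{w>0}\max_v \frac{1}{w_v}\sum_{u\sim v}w_u$, the Euler-formula facts $|E|\le 3n-6$ and $|E|\le 2n-4$ for bipartite planar graphs, and a light/heavy vertex split --- but the step ``optimising $D$ should yield'' $\sqrt{12\maxDeg-36}$ and $\sqrt{8(\maxDeg-2)}+2\sqrt{3}$ is exactly the content of the Dvo\v{r}\'ak--Mohar argument, and it is asserted, not carried out. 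In particular you give no bound on the light--heavy and heavy--heavy contributions (only the remark that a heavy vertex has few heavy neighbours), no choice of weights $w$, and no computation showing how the constants $12$, $36$, $8$, $2$ and $2\sqrt{3}$ emerge or where the crossovers at $\maxDeg=6$ and $\maxDeg=37$ come from. The alternative route you mention --- bounding closed walks of length $2\ell$ by roughly $n\,(8(\maxDeg-2))^{\ell}$ and taking the trace limit --- is not a shortcut: obtaining such a walk estimate for planar graphs is at least as hard as the eigenvalue bound itself, so invoking it is circular as written.

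Two smaller points. The claim that each of the three bounds is ``tight for an explicit extremal planar graph'' is unsupported and, for the middle and large-$\maxDeg$ regimes, not something \cite{SPRadiusPlannar} establishes (the case $\maxDeg\le 5$ is fine, e.g.\ the icosahedron attains $\spradius=\maxDeg=5$; for the other two cases only upper bounds are known, and the true extremal value is conjecturally of the form $\sqrt{8\maxDeg}+O(1)$). Also the assertion that light--light edges ``contribute at most $D$'' needs an argument (it follows from the row-sum bound applied to the induced subgraph on light vertices, but you should say so), and the heuristic linking the constant $8$ to the $2\sqrt{\maxDeg-1}$ of the regular tree is motivation, not proof. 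Since the theorem is being used here purely as a cited input, the honest options are either to cite \cite{SPRadiusPlannar} and stop, or to reproduce their argument in full; the present sketch does neither.
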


In what follows, we show the implications of  the above theorem  to the mixing time of Glauber dynamics for
the Ising model and the Hard-core model.
We focus on   results for graphs of bounded maximum degree. 
The results for the unbounded case are straightforward  so we omit their statement. 

As far as the Ising model on planar graphs is concerned,  we have the following result. 

\begin{corollary}[Planar Ising model]\label{cor:PlanarIsing}
For  $\delta\in (0,1)$, for fixed $\maxDeg\geq 2$, consider the planar graph $G=(V,E)$ of  maximum 
degree $\maxDeg$. Let  $\mu_G$  be the zero external field Ising model on $G$ with   parameter 
$\beta$ such that
\begin{align}\nonumber
\beta \in \UnIsing \left( \spradius_{\rm planar}(\maxDeg), \delta\right),
\end{align}
where $\spradius_{\rm planar}(\maxDeg)$ is defined in \eqref{Def:SpRadPlanar}.
There is a constant  $C=C( \delta)$ such  the Glauber dynamics on $\mu_G$ 
exhibits  mixing time which is at most $C n\log n$. 
\end{corollary}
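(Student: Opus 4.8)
The plan is to derive Corollary \ref{cor:PlanarIsing} as an immediate consequence of Theorem \ref{thrm:Ising4SPRadius} together with the spectral-radius bound for planar graphs from Theorem \ref{thrm:SPRadiusPlanar}. The essential point is that Theorem \ref{thrm:Ising4SPRadius} gives rapid mixing for the zero external field Ising model whenever $\beta \in \UnIsing(\spradius_G, \delta)$, where $\spradius_G$ is the spectral radius of $\simpleadj_G$, so all that is needed is to control $\spradius_G$ for a planar $G$ and to check that the interval $\UnIsing(\cdot,\delta)$ behaves monotonically in its first argument.

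First I would fix a planar graph $G=(V,E)$ of maximum degree $\maxDeg$ and invoke Theorem \ref{thrm:SPRadiusPlanar} to get $\spradius(\simpleadj_G) \leq \spradius_{\rm planar}(\maxDeg)$. Since $\maxDeg \geq 2$ is fixed, $\spradius_{\rm planar}(\maxDeg)$ is a bounded constant, and one checks from \eqref{Def:SpRadPlanar} that $\spradius_{\rm planar}(\maxDeg) \geq 2$ in all three regimes; moreover, because $G$ is connected, finite and undirected, $\spradius_G \geq 1$ and the hypotheses of Theorem \ref{thrm:Ising4SPRadius} (namely a bounded $\spradius_G \geq 2$, or more precisely a bounded spectral radius to which the theorem applies) are met after the monotonicity step below. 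Second, I would observe that the map $z \mapsto \UnIsing(z,\delta) = [\frac{z-1+\delta}{z+1-\delta}, \frac{z+1-\delta}{z-1+\delta}]$ is \emph{nested increasing} in $z$: for $z_1 \leq z_2$ the function $z \mapsto \frac{z-1+\delta}{z+1-\delta}$ is increasing (its derivative has numerator $(z+1-\delta)-(z-1+\delta) = 2-2\delta > 0$) while $z \mapsto \frac{z+1-\delta}{z-1+\delta}$ is decreasing, so $\UnIsing(z_2,\delta) \subseteq \UnIsing(z_1,\delta)$. Hence if $\beta \in \UnIsing(\spradius_{\rm planar}(\maxDeg), \delta)$ and $\spradius_G \leq \spradius_{\rm planar}(\maxDeg)$, then $\beta \in \UnIsing(\spradius_G, \delta)$ as well.

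With these two observations in hand, the conclusion is immediate: given $\beta \in \UnIsing(\spradius_{\rm planar}(\maxDeg),\delta)$, the previous paragraph places $\beta \in \UnIsing(\spradius_G,\delta)$, so Theorem \ref{thrm:Ising4SPRadius} applies to $\mu_G$ and yields a constant $C = C(\delta)$ such that the Glauber dynamics on $\mu_G$ has mixing time at most $C n \log n$. There is a minor bookkeeping point to dispatch: Theorem \ref{thrm:Ising4SPRadius} is stated for $\spradius_G \geq 2$, so in the regime where $\spradius_G$ could a priori be smaller than $2$ one should either note that $\UnIsing(\spradius_G,\delta) \subseteq \UnIsing(2,\delta)$ by the same monotonicity and apply the theorem with the surrogate value $2$, or simply remark that for $\maxDeg \geq 2$ one always has $\spradius_G \geq 1$ and the argument goes through after replacing $\spradius_G$ by $\max\{\spradius_G, 2\}$ in the interval. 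I do not anticipate a real obstacle here — the content is entirely in Theorem \ref{thrm:Ising4SPRadius}; the only thing to get right is the direction of the monotonicity of $\UnIsing(\cdot,\delta)$ and the reduction of the planar bound to a fixed constant depending only on $\maxDeg$.
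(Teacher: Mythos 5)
Your derivation is essentially the paper's intended one: the paper states Corollary \ref{cor:PlanarIsing} as an immediate consequence of Theorem \ref{thrm:SPRadiusPlanar} together with Theorem \ref{thrm:Ising4SPRadius}, and your two steps --- the planar bound $\spradius(\simpleadj_G)\leq \spradius_{\rm planar}(\maxDeg)$ and the nestedness of the intervals, $\UnIsing(z_2,\delta)\subseteq\UnIsing(z_1,\delta)$ for $z_1\leq z_2$ (lower endpoint increasing, upper endpoint decreasing in $z$) --- are exactly what is needed to transfer $\beta\in\UnIsing(\spradius_{\rm planar}(\maxDeg),\delta)$ into $\beta\in\UnIsing(\spradius_G,\delta)$ and invoke the theorem. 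So in the main case the argument is correct and matches the paper.

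One caveat concerns your closing remark about the edge case $\spradius_G<2$ (which can genuinely occur, e.g.\ a path is planar with $\maxDeg=2$). First, the containment you wrote is reversed: by the very monotonicity you established, $\spradius_G\leq 2$ gives $\UnIsing(2,\delta)\subseteq\UnIsing(\spradius_G,\delta)$, not the other way around. Second, ``applying the theorem with the surrogate value $2$'' is not licensed by the black-box statement of Theorem \ref{thrm:Ising4SPRadius}, whose hypothesis ties $\spradius_G$ to the actual adjacency matrix of $G$; you cannot substitute a different number for it. The clean repair is to drop one level down: Lemma \ref{lemma:IsingInfNormBound} with $R=\spradius_{\rm planar}(\maxDeg)\geq 2$ gives $\lnorm\nabla\logtrecur_d\rnorm_{\infty}\leq(1-\delta)/\spradius_{\rm planar}(\maxDeg)\leq(1-\delta)/\spradius_G$, so the $\frac{1-\delta}{\spradius_G}$-contraction of Definition \ref{def:HContraction} holds, and Theorem \ref{thrm:Recurrence4InfluenceEigenBound} --- which only requires $\spradius_G\geq 1$ --- yields $\spradius(\infmatrix^{\Lambda,\tau}_{G})\leq\delta^{-1}$; combining with Theorem 1.9 of \cite{VigodaSpectralIndB} (as in Section \ref{sec:thrm:Ising4SPRadius}, using that $\maxDeg$ is fixed) gives the $Cn\log n$ bound without any restriction to $\spradius_G\geq 2$. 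With that substitution your proof is complete.
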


As far as the Hard-core model on planar graphs is concerned,  we have the following result. 

\begin{corollary}[Planar Hard-core model]\label{cor:PlanarHC}
For  $\epsilon \in (0,1)$,  for fixed $\maxDeg\geq 2$, consider  the planar graph $G=(V,E)$   of maximum 
degree $\maxDeg$.  Let  $\mu_G$ be  the Hard-core model on $G$ with  fugacity $\lambda$ such that
\begin{align}\nonumber
 \lambda\leq (1-\epsilon)\lcritical( \spradius_{\rm planar}(\maxDeg) ),
\end{align}
where $\spradius_{\rm planar}(\maxDeg)$ is defined in \eqref{Def:SpRadPlanar}. 
There is a constant  $C=C(\epsilon)$ such the  Glauber dynamics on $\mu_G$ 
exhibits  mixing time which is at most $C n\log n$.
\end{corollary}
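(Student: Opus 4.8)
The plan is to derive Corollary~\ref{cor:PlanarHC} as an immediate reduction to Theorem~\ref{thrm:HC4SPRadius}, using Theorem~\ref{thrm:SPRadiusPlanar} to control the spectral radius and the monotonicity of $\lcritical$ to translate the hypothesis on $\lambda$. First I would note that, since $G$ is planar of maximum degree $\maxDeg$, Theorem~\ref{thrm:SPRadiusPlanar} gives $\spradius(\simpleadj_G) \le \spradius_{\rm planar}(\maxDeg)$, with $\spradius_{\rm planar}(\maxDeg)$ as in \eqref{Def:SpRadPlanar}. Writing $\spradius_G := \spradius(\simpleadj_G)$, this quantity is bounded because $\maxDeg$ is fixed; moreover, since $G$ is connected and has a vertex of degree at least $2$, it contains the path $P_3$, so $\spradius_G \ge \spradius(P_3) = \sqrt{2} > 1$. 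Hence $G$ satisfies the structural hypotheses ($\spradius_G > 1$, bounded, maximum degree $\maxDeg \ge 2$) required by Theorem~\ref{thrm:HC4SPRadius}.

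Next I would check that the function $z \mapsto \lcritical(z) = \frac{z^{z}}{(z-1)^{z+1}}$ is strictly decreasing on $(1,\infty)$. Differentiating $\ln\lcritical(z) = z\ln z - (z+1)\ln(z-1)$ yields $\frac{d}{dz}\ln\lcritical(z) = \ln\!\frac{z}{z-1} - \frac{2}{z-1}$, which is negative since $\ln\!\big(1+\tfrac{1}{z-1}\big) < \tfrac{1}{z-1} < \tfrac{2}{z-1}$. Combining monotonicity with the spectral-radius bound above gives $\lcritical(\spradius_G) \ge \lcritical(\spradius_{\rm planar}(\maxDeg))$, so the corollary's hypothesis $\lambda \le (1-\epsilon)\,\lcritical(\spradius_{\rm planar}(\maxDeg))$ implies $\lambda \le (1-\epsilon)\,\lcritical(\spradius_G)$.

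Finally I would apply Theorem~\ref{thrm:HC4SPRadius} to $G$ with this $\epsilon$ and $\lambda$: it yields a constant $C = C(\epsilon)$ such that the Glauber dynamics on $\mu_G$ has mixing time at most $Cn\log n$, which is exactly the claim. There is no substantial obstacle in this argument --- it is a one-line corollary once the two quoted theorems are available; the only point requiring a little care is the low-degree regime (e.g.\ $\maxDeg = 2$), where one should check, as above, that $\spradius_G > 1$ so that $\lcritical(\spradius_G)$ is defined and Theorem~\ref{thrm:HC4SPRadius} genuinely applies.
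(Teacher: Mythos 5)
Your proposal is correct and is essentially the derivation the paper intends (the paper leaves it as a straightforward consequence): bound $\spradius(\simpleadj_G)$ via Theorem~\ref{thrm:SPRadiusPlanar}, use the monotonicity of $\lcritical(\cdot)$ (already noted in the paper) to transfer the fugacity condition, and invoke Theorem~\ref{thrm:HC4SPRadius}. Your extra check that $\spradius_G>1$ (via the standing connectivity assumption and a vertex of degree at least $2$) is a fine way to verify the hypotheses of Theorem~\ref{thrm:HC4SPRadius} in the low-degree case.
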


There are further examples  of graphs with spectral radius much smaller than the maximum degree. 
One very interesting  case, which  generalises  the aforementioned one, is the graphs that can
be embedded in a surface of small {\em Euler genus}.

\begin{theorem}[\cite{SPRadiusPlannar}]\label{thrm:GenusVsSPRadius}
Let the graph $G=(V,E)$ be of maximum degree $\maxDeg>0$. Suppose that $G$ can be embedded in a surface 
of Euler genus { $g\geq 0$}.  If $\maxDeg\geq d(g)+2$, then 
\begin{align}\nonumber 
\spradius(\simpleadj_G)\leq \sqrt{8(\maxDeg-d(g))}+d(g), 
\end{align}
where $d(g)$ is such that
\begin{align}
d(g) &=\left \{ 
\begin{array}{lcl}
10 & \quad & \textrm{if $g\leq 1$},\\
12 & \quad & \textrm{if $2\leq g\leq 3$}\\ 
\end{array}
\right.  &  \textrm{and} &&
d(g) &=\left \{ 
\begin{array}{lcl}
2g+6 & \quad & \textrm{if $4\leq g\leq 5$},\\
2g+4 & \quad & \textrm{if $6\geq g $}. 
\end{array}
\right.  \nonumber 
\end{align}
\end{theorem}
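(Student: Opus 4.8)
The statement to be proved (Theorem \ref{thrm:GenusVsSPRadius}) is a spectral-radius bound for bounded-degree graphs embeddable in a surface of Euler genus $g$, and it is attributed to \cite{SPRadiusPlannar}. Since the excerpt explicitly cites the reference, the ``proof'' here is really a sketch of how one would reconstruct the argument in \cite{SPRadiusPlannar} rather than a new argument. The plan is as follows. First I would recall the standard variational characterisation $\spradius(\simpleadj_G) = \max_{\|x\|_2 = 1} x^\top \simpleadj_G\, x = \max_x \frac{2\sum_{\{u,v\}\in E} x_u x_v}{\sum_v x_v^2}$, and fix a unit eigenvector $x=\maxeigenv$ realising the spectral radius. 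The goal is then to bound $2\sum_{\{u,v\}\in E} x_u x_v$ from above in terms of $\maxDeg$, $g$, and $\sum_v x_v^2 = 1$. A convenient reformulation uses $\sum_{\{u,v\}\in E}(x_u-x_v)^2 = \sum_v d(v) x_v^2 - 2\sum_{\{u,v\}\in E} x_u x_v$, so that $\spradius(\simpleadj_G) = \max_x \big( \sum_v d(v) x_v^2 - \sum_{\{u,v\}\in E}(x_u - x_v)^2 \big)$ over unit vectors; this trades the problem for controlling the interplay between the degree-weighted mass and the Dirichlet energy.

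Second, I would bring in the topological hypothesis. The key structural input is that a graph of Euler genus $g$ is \emph{sparse in a hereditary way}: every subgraph $H$ on $m$ vertices has at most $3m - 6 + 3g$ edges (the Euler-formula bound), and more usefully, bounded-degree genus-$g$ graphs have separators / are ``locally tree-like enough'' that one can decompose the vertex set according to degree. The standard trick (going back to arguments of this type for planar graphs) is to split vertices into low-degree and high-degree classes and to observe, via Euler's formula applied to the subgraph induced by high-degree vertices together with their incident edges, that high-degree vertices are few and their neighbourhoods overlap in a controlled way. One then bounds the quadratic form $x^\top \simpleadj_G x$ by separately estimating the contribution of edges within the low-degree part (where each row sum of $\simpleadj_G$ is small, so the contribution is at most roughly $(d(g)+\text{something})\sum x_v^2$) and the contribution of edges incident to high-degree vertices, using Cauchy--Schwarz together with the edge-count bound $|E(H)| \le 3|V(H)| - 6 + 3g = O(|V(H)|)$ to show this part contributes at most $\sqrt{8(\maxDeg - d(g))}\cdot\sum x_v^2$. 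Optimising the split threshold — which is exactly where the piecewise formula for $d(g)$ (values $10$, $12$, $2g+6$, $2g+4$ in the four genus ranges) comes from — gives the stated bound $\spradius(\simpleadj_G)\le \sqrt{8(\maxDeg-d(g))}+d(g)$ under the side condition $\maxDeg \ge d(g)+2$ needed to make the threshold meaningful.

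The main obstacle, and the part that requires genuine care rather than bookkeeping, is the estimate of the high-degree contribution: one must show that the edges incident to vertices of large degree, although individually they can carry large $x_u x_v$ products, collectively contribute only $O(\sqrt{\maxDeg})$ to the quadratic form. This rests on a clever application of the Euler bound to an auxiliary (bipartite-like) graph between high- and low-degree vertices, combined with a two-sided Cauchy--Schwarz that converts the linear-in-$m$ edge count into the $\sqrt{\maxDeg}$ factor; getting the constant $8$ and the precise genus-dependent correction $d(g)$ optimal is the delicate piece and is where \cite{SPRadiusPlannar} does its real work. For the present paper the correct course of action is simply to invoke \cite{SPRadiusPlannar} as a black box, note that it has been verified there, and proceed to the applications (Corollaries \ref{cor:PlanarIsing} and \ref{cor:PlanarHC} and their genus analogues), which only need the inequality as a quantitative input feeding into $\lcritical(\cdot)$ and $\UnIsing(\cdot,\delta)$.
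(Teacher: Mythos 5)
The paper offers no proof of this statement: Theorem \ref{thrm:GenusVsSPRadius} is quoted verbatim from \cite{SPRadiusPlannar} and used purely as a black box, which is exactly the course of action you settle on at the end of your proposal. Your sketch of how the Rayleigh-quotient/degree-splitting/Euler-formula argument in \cite{SPRadiusPlannar} might go is not something the paper attempts or needs, so there is nothing to check it against here; citing the reference suffices.
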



If, e.g.,  the Euler genus of $G$ is much smaller than $\maxDeg$, then, from the above theorem, it is immediate that 
 $\spradius(\simpleadj_G)\approx \sqrt{8\maxDeg}$. It is straightforward to   combine the above results with Theorems \ref{thrm:HC4SPRadius} and \ref{thrm:Ising4SPRadius} and get results 
analogous to what we have in Corollaries \ref{cor:PlanarIsing} and \ref{cor:PlanarHC}.  
 We omit the presentation of these results as their derivation is straightforward.

\subsection{Applications II - Beyond the adjacency matrix}

We further derive  rapid mixing results for the Ising model using the spectral radius of the 
non-backtracking  matrix of $G$, denoted as $\NBMatrix_G$.   What motivates the use of 
this  matrix instead of $\simpleadj_G$  is  that in many cases the 
spectral radius of $\NBMatrix_G$ is much smaller.   This,   potentially, 
could lead to  even better rapid mixing bounds.

The result we present here is only a preliminary one, while its statement is not simple.  
The purpose of this small section is, also,  to show that our techniques allow us to consider  matrices  
other than $\simpleadj_G$.

For the graph $G=(V,E)$,  let $\OrntEdges$ be the set of {\em oriented} edges obtained by doubling 
each  edge of $E$ into two directed edge, i.e.,  one edge for each direction. The non-backtracking 
matrix  $\NBMatrix_G$ is an $\OrntEdges \times \OrntEdges $ matrix with entries 
in $\{0,1\}$,  such that for any pair of  oriented edges $e=(u,w)$ and $f=(z, y)$ we have that
\begin{align}\nonumber
\NBMatrix_G(e, f) ={\bf 1}\{w=z \}\times {\bf 1}\{ u\neq y\}.
\end{align}
That is, $\NBMatrix_G(e, f)$ is equal to $1$, if $f$ follows the edge $e$ without creating a loop,  otherwise,  it is equal to zero, 
e.g. see an example in Figure \ref{fig:NBTExample}. 
As opposed to other  matrices we have seen here,    $\NBMatrix_G$ is index by  
{\em oriented edges}. Also,  note that $\NBMatrix_G$ is not normal. 
In general,  our  understanding of  $\NBMatrix_G$  is not as good as that
of $\simpleadj_G$.

 \begin{figure}
 \centering
		\includegraphics[width=.35\textwidth]{./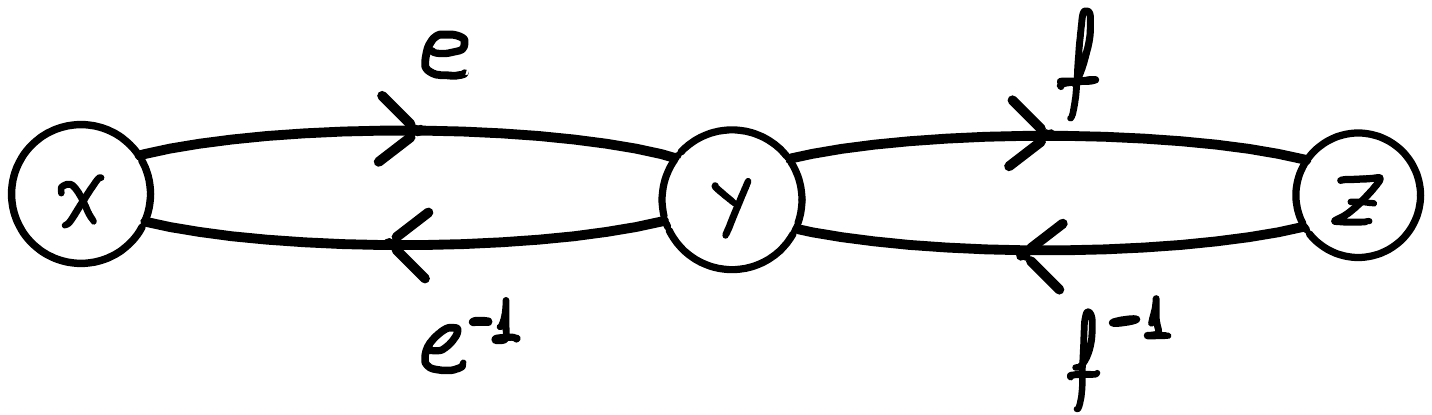}
		\caption{We have  $\NBMatrix_G(e, f)=1$, while $\NBMatrix_G(e^{-1}, f)=0$}
	\label{fig:NBTExample}
\end{figure}

In the following theorem, we use the Spectral Independence method to get a result 
for the Ising model where, rather than 
using $\simpleadj_G$,  we use   $\NBMatrix_G$.   Note that mixing time bound for Glauber dynamics
we get from the theorem below does not have a simple expression.

\begin{theorem}[Non-backtracking matrix]\label{thrm:Ising4SPRadiusNBK}
For any $\delta\in (0,1)$, for bounded $\maxDeg, \hspradius_G > 1$,  let $G=(V,E)$  
be  of maximum  degree  $\maxDeg$,  while assume that the non-backtracking matrix 
$\NBMatrix_G$ has spectral radius  $\hspradius_{G}$.
Furthermore,   let $\mu_G$  be the Ising model on $G$ 
with zero external field  and parameter  $\beta\in \UnIsing(\hspradius_G,\delta)$.  

There is a constant  $C=C( \delta)$ such that  the   Glauber dynamics on $\mu_G$ 
exhibits  mixing time  which is at most 
\begin{align}\nonumber 
\textstyle \exp\left(C \cdot \lnorm 
\left( \bI-\frac{1-\delta}{\hspradius_G}\simpleadj_G-(\frac{1-\delta}{\hspradius_G})^2(\bD-\bI) \right)^{-1}\rnorm_2 \right)n\log n
\end{align}
where $\simpleadj_G$ is the adjacency matrix of $G$ and $\bD$  is the $V\times V$ diagonal matrix such 
that for every $u\in V$ we have $\bD(u,u)={\tt degree}(u)$.
\end{theorem}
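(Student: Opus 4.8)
The plan is to mirror the argument for Theorem \ref{thrm:Ising4SPRadius}, but replace the walk-matrix $\sum_{\ell=0}^n \xi\cdot\simpleadj_G^\ell$ by one built from the non-backtracking matrix $\NBMatrix_G$, invoking the comparison of walk-matrices from Corollary \ref{cor:MaxElement4NB}. First I would recall that $\infmatrix^{\Lambda,\tau}_G$ is a walk-matrix (Lemma \ref{lemma:InfluenceMatrixIsWalkMatrix}) whose entries are sums of influences over self-avoiding walks in the tree of self-avoiding walks starting at each vertex; for the zero-field Ising model with $\beta\in\UnIsing(\hspradius_G,\delta)$ the per-edge influence is bounded in absolute value by $\tfrac{1-\delta}{\hspradius_G}$ (this is the standard translation of the uniqueness interval into a contraction factor, exactly as in the proof of Theorem \ref{thrm:Ising4SPRadius}, only with $\hspradius_G$ in place of $\spradius_G$). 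Hence $\spradius(\infmatrix^{\Lambda,\tau}_G)$ is dominated by the spectral radius of the walk-matrix that counts self-avoiding walks of each length with weight $(\tfrac{1-\delta}{\hspradius_G})^{\mathrm{length}}$.

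The key step is then to bound the number of non-backtracking walks, rather than all walks, and to transfer this to a spectral bound. Setting $\zeta=\tfrac{1-\delta}{\hspradius_G}$, I would use the Ihara–Bass-type identity relating generating functions of non-backtracking walks to $\bigl(\bI-\zeta\simpleadj_G+\zeta^2(\bD-\bI)\bigr)^{-1}$: the number of non-backtracking walks of length $\ell$ between $u,w$, weighted by $\zeta^\ell$ and summed over $\ell$, is controlled by the $(u,w)$-entry of that resolvent (this is the origin of the matrix $\bI-\zeta\simpleadj_G-\zeta^2(\bD-\bI)$ appearing in the statement). Since every self-avoiding walk is in particular non-backtracking, the walk-matrix dominating $\infmatrix^{\Lambda,\tau}_G$ is entrywise dominated by this resolvent, so by Corollary \ref{cor:MaxElement4NB} and monotonicity of the spectral radius under entrywise domination of nonnegative matrices (Perron–Frobenius),
\[
\spradius\bigl(\infmatrix^{\Lambda,\tau}_G\bigr)\ \leq\ \Bigl\|\bigl(\bI-\tfrac{1-\delta}{\hspradius_G}\simpleadj_G-(\tfrac{1-\delta}{\hspradius_G})^2(\bD-\bI)\bigr)^{-1}\Bigr\|_2 .
\]
The hypothesis $\hspradius(\NBMatrix_G)=\hspradius_G$ together with $\beta\in\UnIsing(\hspradius_G,\delta)$ is precisely what guarantees this resolvent exists and has bounded norm, i.e.\ that $\zeta$ is below the reciprocal of the non-backtracking spectral radius so the relevant generating function converges.

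Having bounded $\spradius(\infmatrix^{\Lambda,\tau}_G)$ by the quantity $\eta:=\bigl\|(\bI-\zeta\simpleadj_G-\zeta^2(\bD-\bI))^{-1}\bigr\|_2$ uniformly over all pinnings $(\Lambda,\tau)$, I would conclude spectral independence with constant $O(\eta)$, and then feed this into the mixing-time machinery of \cite{VigodaSpectralIndB} exactly as in Theorem \ref{thrm:Ising4SPRadius}; the loss from converting a spectral-independence constant of order $\eta$ into a mixing-time bound is exponential in $\eta$, which accounts for the $\exp(C\eta)\,n\log n$ form of the stated bound rather than the cleaner $Cn\log n$ obtained when the constant itself is $O(1)$. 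The main obstacle I expect is making the entrywise domination by the Ihara–Bass resolvent rigorous: $\NBMatrix_G$ is not normal and is indexed by oriented edges, so one must pass carefully between edge-indexed non-backtracking walk counts and vertex-indexed sums, control the boundary terms in the Ihara–Bass identity (the $\zeta^2(\bD-\bI)$ correction), and verify that the resolvent is in fact a nonnegative matrix in the regime $\zeta<1/\hspradius_G$ so that Perron–Frobenius monotonicity applies — that last point is exactly where the subtlety noted in the introduction, that one only recovers "at least as good as the adjacency bound," enters.
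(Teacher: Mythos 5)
Your proposal follows essentially the same route as the paper: the contraction bound of Lemma \ref{lemma:IsingInfNormBound} with $\hspradius_G$ in place of $\spradius_G$, the comparison of $\infmatrix^{\Lambda,\tau}_{G}$ with a non-backtracking walk-matrix via Corollary \ref{cor:MaxElement4NB}, identification of that walk-matrix with the Ihara--Bass resolvent, and then Theorem 1.9 of \cite{VigodaSpectralIndB}, whose exponential dependence on the spectral-independence constant is exactly what produces the $\exp(C\cdot\lnorm\cdot\rnorm_2)\,n\log n$ form. The one step you flag as an obstacle is closed in the paper (proof of Theorem \ref{thrm:Recur4InfSPBoundNBK}) more cleanly than your sketch suggests: no entrywise domination by the resolvent, and hence no nonnegativity of the resolvent, is ever needed. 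Nonnegative comparison is used only between walk-matrices, giving $\spradius(\infmatrix^{\Lambda,\tau}_{G})\leq \spradius(\Pweight_{\nbn,\bar{\bpsi}})\leq \spradius(\Pweight_{\nbInf,\bar{\bpsi}})$ (the symmetry needed in Corollary \ref{cor:MaxElement4NB} comes from PT-invariance via Lemma \ref{lemma:WNBKSymmetric}, and Lemma \ref{lemma:WalkMatrixVsHashimoto} handles the passage from edge-indexed $\NBMatrix_G$ to vertex-indexed matrices); one then proves the \emph{exact identity} $\Pweight_{\nbInf,\bar{\bpsi}}=\left( \bI-\zeta\simpleadj_G+\zeta^2(\bD-\bI)\right)^{-1}$ with $\zeta=\frac{1-\delta}{\hspradius_G}$, using the standard recursion $\bW^{(k)}=\simpleadj_G\bW^{(k-1)}-(\bD-\bI)\bW^{(k-2)}$ for non-backtracking walk counts (convergent since $\zeta<1/\hspradius_G$), and concludes with $\spradius\leq\lnorm\cdot\rnorm_2$ applied to this automatically nonnegative matrix. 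One small point: the correct sign in the resolvent is $+\zeta^2(\bD-\bI)$, as in Theorem \ref{thrm:Recur4InfSPBoundNBK}; your displayed inequality (and the theorem statement as printed) carries a minus sign there, which is inconsistent with the Ihara--Bass identity.
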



Note that the matrix in the 2-norm is the same as the one that  appears in Ihara's theorem \cite{Ihara}. 

We need to remark that for the bound on the  mixing time in Theorem \ref{thrm:Ising4SPRadiusNBK} 
we don't have guarantees that it is always  polynomial in $n$.  
However, we can argue that it is $O(n\log n)$, at least for the same values of $\beta$ that
Theorem \ref{thrm:Ising4SPRadius} implies $O(n\log n)$ bound for the mixing time. 
Of course, this  is not obvious at all from the  statement of the theorem. 
%
For further discussion on this matter, the reader is referred  to the end  of Section \ref{sec:SPComparisonBasedOnEntries}.

\section{Our Approach - High level overview}\label{sec:HighOverview}

Consider the graph $G=(V,E)$ and a two-spin Gibbs distribution $\mu$ on this graph.
In the heart of Spectral Independence (SI)  lies  the notion of the {\em pairwise 
influence matrix} $\infmatrix^{\Lambda,\tau}_{G}$.   Let us give  the description  of this matrix  
since this is the main subject of our discussion here.

For a  set of vertices $\Lambda\subset V$ and   a configuration  $\tau$ at $\Lambda$,  we let the pairwise influence
matrix $\infmatrix^{\Lambda,\tau}_{G}$,  indexed by the vertices in $V\setminus \Lambda$,
be such that 
\begin{align}\label{def:InfluenceMatrixHighLevelA}
\infmatrix^{\Lambda,\tau}_{G}(w,u) &= \mu_{u }(1\ |\ (\Lambda, \tau),  (\{w\}, 1))- \mu_{u }(1\ |\ (\Lambda, \tau),  (\{w\}, -1))
& \forall v,w\in V\setminus \Lambda.
\end{align} 
The Gibbs marginal    $\mu_{u }(1 \ |\ (\Lambda, \tau),  (\{w\}, 1))$ indicates the probability    that  vertex $u$ gets  $1$, 
conditional on the  configuration at $\Lambda$ being $\tau$ and the   configuration  at $w$ being $1$.  
We have  the analogous for the marginal $\mu_{u }(1 \ |\ (\Lambda, \tau),  (\{w\}, -1))$.
Note that in some works,  the entry $\infmatrix^{\Lambda,\tau}_{G}(w,u)$ is denoted as $\infmatrix^{\Lambda,\tau}_{G}(w\rightarrow u)$.

Our focus  is on $\eigenval_{\rm max}(\infmatrix^{\Lambda,\tau}_{G})$,  i.e., the {\em maximum eigenvalue}  of   
$\infmatrix^{\Lambda,\tau}_{G}$.  If for {\em any} choice of  $\Lambda, \tau$ we have that  $\eigenval_{\rm max}(\infmatrix^{\Lambda,\tau}_{G})=O(1)$, 
then we say that  the Gibbs distribution $\mu$ exhibits {\em spectral independence}. Hence, the name of the method. 
Spectral independence for $\mu$ implies that   the corresponding Glauber dynamics has polynomial mixing time 
\cite{OptMCMCIS}. The precise magnitude  of the mixing time in this case is a subject of intense study. Here we don't focus
on this  problem, i.e., we focus only on establishing  spectral independence for $\mu$. 

In \cite{OptMCMCIS, VigodaSpectralInd}  it was shown  that $\infmatrix^{\Lambda,\tau}_{G}$ 
 has a  remarkable  property. That is,  each  entry $\infmatrix^{\Lambda,\tau}_{G}(w,u)$ 
 can be  expressed in terms of a topological construction called tree of self-avoiding walks and a set influences on this tree. 
 The influences   are specified  by the parameters of our problem. 
At this point, it is worth giving a  high level (hence imprecise) description of the aforementioned relation.
For  for further details see  Section \ref{sec:TopoligcalMethod101}.

A walk is called self-avoiding if it does not repeat  vertices.  For each  vertex $w$ in $G$, 
we define  $\Tsaw(w)$,  the  tree of self-avoiding  walks, starting from $w$,  as follows:
Consider the set consisting of  every  walk  $v_0, \ldots, v_{r}$ in  the graph $G$ that emanates 
from vertex $w$, i.e., $v_0=w$,  while one of the following two holds
\begin{enumerate}
\item  $v_0, \ldots, v_{r}$ is a self-avoiding walk,
\item  $v_0, \ldots, v_{r-1}$ is a self-avoiding walk, while there is $j\leq r-3$  such that $v_{r}=v_{j}$.
\end{enumerate}
Each one of the walks in the set corresponds to a vertex in $\Tsaw(w)$.  Two vertices in $\Tsaw(w)$ are adjacent 
if the corresponding walks are adjacent.  Note that two  walks in the graph $G$ are considered to be  adjacent if one 
extends the other by one vertex \footnote{E.g.  the  walks    $P'=w_0, w_1, \ldots, w_{r}$ and 
$P=w_0, w_1, \ldots, w_{r}, w_{r+1}$ are adjacent with  each other.}.

We also use the following terminology:  for  vertex $u$ in $\Tsaw(w)$ that corresponds to the walk 
$v_0, \ldots, v_{r}$ in $G$ we say that   ``$u$ is a {\em copy} of vertex $v_{r}$ in $\Tsaw(w)$''.

In Figures \ref{fig:InitG4Tsaw} and \ref{fig:ResultingTsawA} we show an example of the above construction. 
Figure \ref{fig:InitG4Tsaw} shows the initial graph $G$, while Figure \ref{fig:ResultingTsawA} shows the 
tree of self-avoiding walks starting from vertex $A$.
Also, note that in   Figure \ref{fig:ResultingTsawA}, the vertices of the tree which are indicated with letter $A$
are exactly the copies of vertex $A$ in the initial graph. 

 \begin{figure}
 \begin{minipage}{.4\textwidth}
 \centering
		\includegraphics[width=.55\textwidth]{./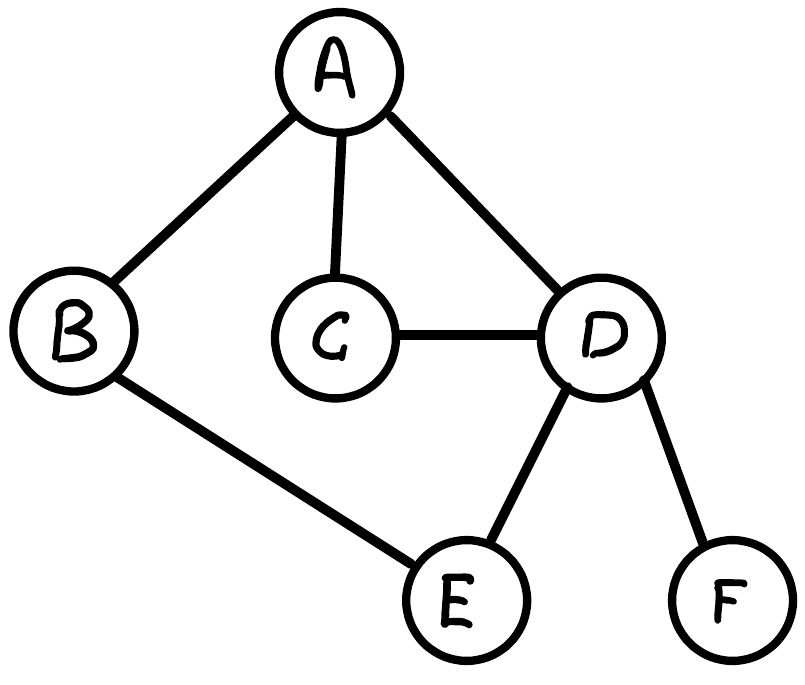}
		\caption{Initial graph $G$}
	\label{fig:InitG4Tsaw}
\end{minipage}
 \begin{minipage}{.56\textwidth}
 \centering
		\includegraphics[width=.35\textwidth]{./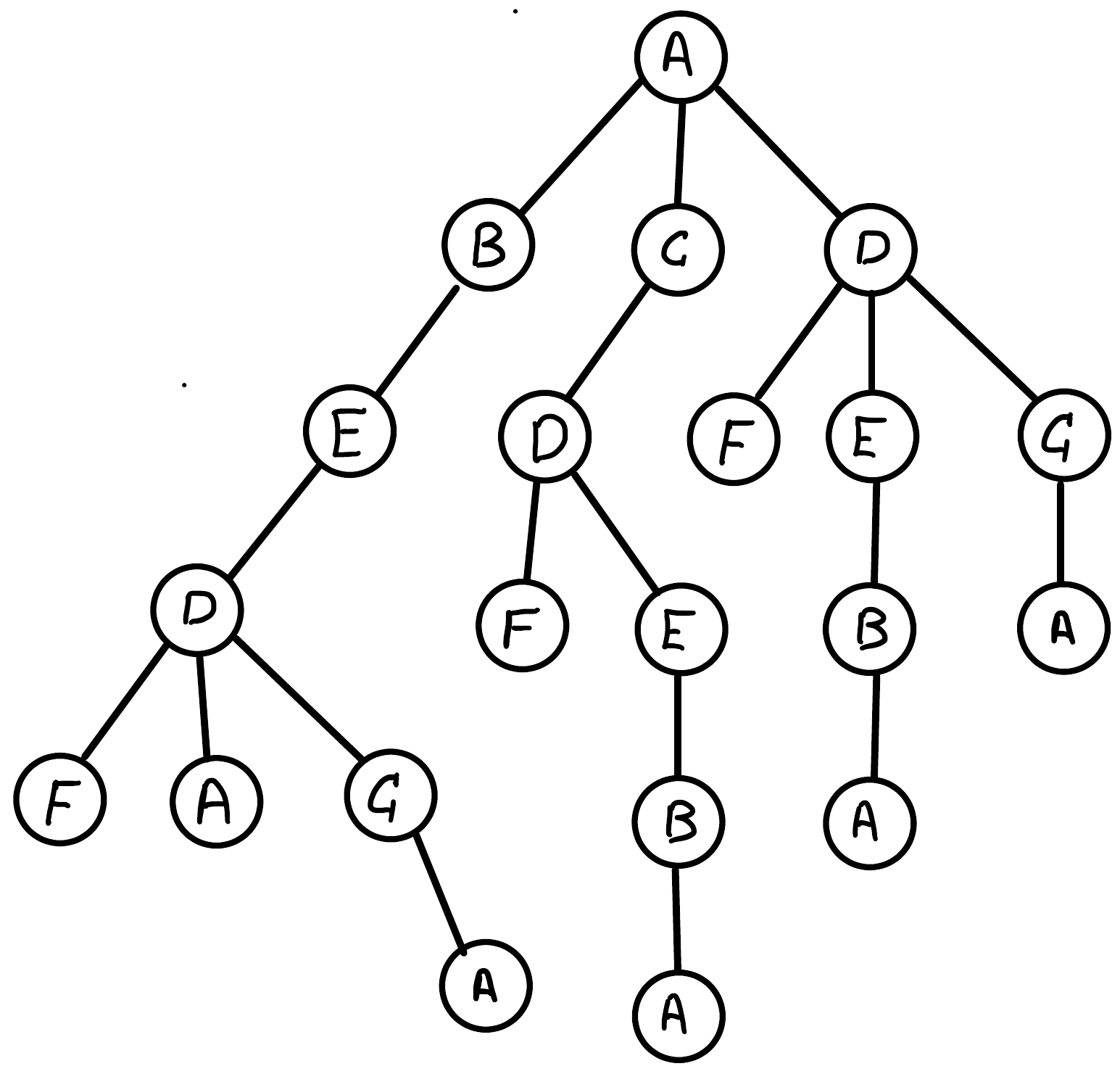}
		\caption{$T_{\rm SAW}$ of $G$  starting from $A$}
	\label{fig:ResultingTsawA}
\end{minipage}
\end{figure}

Each path in  $\Tsaw(w)$ is associated with a real number called  ``influence".
For a path of length $1$, which corresponds to an edge $e$ in the tree, we use
``{\rm Infl}(e)" to denote its influence. 
For a path $P$ with length $>1$, the influence is equal  to the following product, 
\begin{align}\nonumber
{\rm Infl}(P) &= \textstyle  \prod_{e\in P}{\rm Infl}(e).
\end{align}
That is, ${\rm Infl}(P) $ is equal to the product of influences of the edges in this  path.
\footnote{In the related literature,  influences are defined w.r.t. the vertices of the tree, not 
the edges.  In that respect, the influence of an edge $e=\{x,y\}$ here, corresponds to what is considered in other works 
as the influence at $y$, where $y$ is the child of vertex $x$ in the tree.}. 

Each entry $\infmatrix^{\Lambda,\tau}_{G}(w,u)$ can be expressed
in terms of a sum of  influences over paths in $\Tsaw(w)$, i.e., 
\begin{align}\nonumber 
\infmatrix^{\Lambda,\tau}_{G}(w,u) & =\textstyle \sum_{P} {\rm Infl}(P),
\end{align}
where $P$  in the summation varies over the paths from the root  to the copies  of vertex $u$ in $\Tsaw(w)$. 

The above construction is key to establishing spectral independence for $\mu$. This is for both
the techniques from previous works,  as well as the techniques we introduce here.

The approaches, prior to those we introduce here, establish spectral independence 
by  bounding  appropriately one of   $|| \infmatrix^{\Lambda,\tau}_{G} ||_1$  and  
$|| \infmatrix^{\Lambda,\tau}_{G} ||_{\infty}$.  Recall  that these two norms correspond 
to taking  the maximum absolute column/row sum  of the matrix, respectively.  The above 
construction,  with  $\Tsaw(w)$ and the influences, allows the  estimation of the aforementioned 
matrix norms using   recursion.

We also use the above construction to establish our results. 
We give a high level overview of  two alternative approaches  to establishing   spectral 
independence. For the sake of clarity,  in the presentation of the  results, as well as in the 
discussion that follows,  we explicitly  refer to bounding $\spradius(\infmatrix^{\Lambda,\tau}_{G})$,    
the  {\em spectral radius}  of the influence matrix, rather than the maximum eigenvalue. 
Note that bounding   the spectral radius is a stronger notion.

The first approach does not rely on matrix norms at all.  Actually, the argument  is quite simple, yet, 
powerful. It is useful to start  with   a concrete example. We   show that  there is  a real number   
$\xi>0$, which  depends on the parameters of the  Gibbs distribution $\mu$, such that  
\begin{align}\label{eq:ApprroachBasicInequalityIntro}
\spradius(\infmatrix^{\Lambda,\tau}_{G}) &\leq  \textstyle \spradius \left(\powadj_1  \right), & 
\textrm{where} && \powadj_1&= \sum^{n}_{\ell = 0} \left( \xi \cdot \simpleadj_G\right)^{\ell}.
\end{align}
The above  holds {\em without} any assumptions  about the underlying Gibbs distribution $\mu$, 
e.g., spatial mixing. 

We can directly use the the above inequality to establish spectral independence 
for the Gibbs distribution of  interest.  Particularly, we only  need to adjust the parameters of the 
Gibbs distribution such  that $\xi<(1-\epsilon)/\spradius(\simpleadj_G)$ for some fixed $\epsilon>0$. 
We rely on  the above inequality in order to get Theorem \ref{thrm:Ising4SPRadius}.

Having seen the relation between  $\infmatrix^{\Lambda,\tau}_{G}$ and the  tree of self-avoiding  walks,  
actually, it is not too hard to illustrate  (at least on a high level) how we derive the above inequality. Essentially, 
we  need to show that the matrix $\powadj_1$ from \eqref{eq:ApprroachBasicInequalityIntro} {\em dominates}   
$\infmatrix^{\Lambda,\tau}_{G}$ in the  following sense:  for any  $u,w\in V\setminus \Lambda$, it holds that
\begin{align}\label{eq:ApprroachBasicInequalityB}
\left|\infmatrix^{\Lambda,\tau}_{G}(w, u) \right| &\leq \textstyle  \powadj_1(w,u)  
\end{align}
Once we have  above, then it is  not too hard  to show that  
\eqref{eq:ApprroachBasicInequalityIntro} is true. %

Note that the  matrices $\powadj_1, \infmatrix^{\Lambda,\tau}_{G}$ are not necessarily of the 
same dimension, i.e.,  $\powadj_1$ is  indexed by the set  $V$, while  
$\infmatrix^{\Lambda,\tau}_{G}$ is  indexed by  $V\setminus \Lambda$. For our purposes, we   
only need to focus   on the  vertices in  $w, u \in V\setminus \Lambda$.

We choose $\xi$, the parameter for $\powadj_1$, such that 
\begin{align}\nonumber 
\xi \geq \max_{e}\{ |{\rm Infl}(e)| \},
\end{align}
where $e$ varies over the edges in all self-avoiding  trees. For such $\xi$,    it is not hard to see that
\begin{align}\nonumber
\left|\infmatrix^{\Lambda,\tau}_{G}(w, u) \right| \leq \sum_{\ell \geq 0} \left( \xi \right)^{\ell} \times 
\left(\textrm {\# length $\ell$ paths from the root to a copy of $u$ in $\Tsaw(w)$} \right), 
\end{align}
where ``$\textrm{\#}$" stands for ``number of".   Then, we argue  that 
\begin{align}\label{eq:NoOfPATHSVsWALKS}
 \left(\textrm {\# length $\ell$ paths from root to a copy of $u$ in $\Tsaw(w)$} \right) \leq \left( \simpleadj_G \right)^{\ell}(w,u). 
\end{align}
Combining the two above inequalities  we immediately get   \eqref{eq:ApprroachBasicInequalityB} and
consequently \eqref{eq:ApprroachBasicInequalityIntro}.

Perhaps, it is worth elaborating a bit more as to why the previous inequality holds, since we are 
extending  it for our results with the non-backtracking matrix.  Recalling the definition of  $\Tsaw(w)$,
 let us call ${\rm SAW}$ the set  of paths $P$ in $G$ that we used earlier  to build $\Tsaw(w)$.
It is not hard to see that the  number of  length $\ell$ paths in ${\rm SAW}$  
from  $w$ and $u$ is equal to the l.h.s. of  the inequality above. 
Then, \eqref{eq:NoOfPATHSVsWALKS} follows from  the observation that {\em any} set of length
$\ell$ paths in $G$  from  $w$ to $u$ does not have cardinality larger than the  {\em total} number walks 
of length $\ell$ from  $w$ to $u$, which is equal to  $\left( \simpleadj_G \right)^{\ell}(w,u)$. 
Recall that a {walk} of length $\ell$ in the graph $G$ is any sequence of vertices $w_0, \ldots, w_{\ell}$ 
such that each consecutive  pair $(w_{i-1},  w_{i})$ is an edge in $G$.

Our result for the non-backtracking matrix builds on the above by exploiting a further observation:
It is not hard to see that every path  in ${\rm SAW}$ is also a non-backtracking walk.
Recall that the  walk $w_0, \ldots, w_{r}$  is called  non-backtracking, if we have that  $w_{i-1}\neq w_i$ for all $i$.
Then, one might argue that in \eqref{eq:NoOfPATHSVsWALKS} we could instead have used for the upper bound 
the number of length $\ell$ non-backtracking walks from $w$ to $u$.  For a further discussion,  see  Section
 \ref{sec:SPComparisonBasedOnEntries}.

All the above  are useful  to prove our results for the Ising model.  For the Hard-core model, we need to work 
differently, i.e.,  we need a more involved analysis. We typically study  the  Hard-core model, but also  general 
two spin Gibbs distributions, by means of  the  so-called {\em potential method}. In that respect, it is somehow, 
natural to return back to using matrix norms. 

We establish spectral independence for the Hard-core model by using   the following matrix norm for $\infmatrix^{\Lambda,\tau}_{G}$: 
$$
\textstyle \lnorm  \bD^{-1} \cdot \infmatrix^{\Lambda,\tau}_{G}  \cdot \bD\rnorm_{\infty}.
$$
  $\bD$ is the diagonal matrix such that $\bD(w,w)=\left( \mathbold{f}_1(w)\right)^{1/t}$, for appropriate 
$t\geq 1$, while   $\mathbold{f}_1$ is the  eigenvector that  corresponds to $\eigenval_{\rm max}(\simpleadj_G)$,  
the maximum  eigenvalue  of the adjacency matrix $\simpleadj_G$.  

Since we have assumed that the graph $G$ is connected, it is not hard to verify that the above norm is well-defined. 
The Perron-Frobenius Theorem implies that   $\mathbold{f}_1$ is positive, i.e., for every $w\in V$ we have 
$\mathbold{f}_1(w)>0$.  Hence, $\bD$ is non-singular as all the diagonal entries are positive.

Showing that  the above norm is bounded for any choice of  $\Lambda$ and $\tau$, immediately  
implies  spectral independence  for $\mu$.

We give a high level overview of how we estimate the quantity 
$\scriptstyle \lnorm  \bD^{-1} \cdot \infmatrix^{\Lambda,\tau}_{G}  \cdot \bD\rnorm_{\infty}$.
%
Note that the entries of the matrix  $\bD^{-1}\cdot \infmatrix^{\Lambda,\tau}_{G}\cdot \bD$ satisfy  that
\begin{align}\nonumber
\bD^{-1}\cdot \infmatrix^{\Lambda,\tau}_{G}\cdot \bD (w,u) &={\textstyle \left( \frac{\mathbold{f}_1(u)}{\mathbold{f}_1(w)} \right)^{1/t} }\times \sum_{P} {\rm Infl}(P),
\end{align}
where $P$  in the summation varies over the paths from the root  to a copy of vertex $u$ in $\Tsaw(w)$. 

We  use the above and a recursion to estimate the absolute row-sum  
$\ \sum_{u} \left | \bD^{-1}\cdot \infmatrix^{\Lambda,\tau}_{G}\cdot \bD (w,u) \right |$.
In what follows, we show how to estimate  the contribution to the absolute row-sum above  coming  from paths of 
length $\ell\geq 1$ in  $\Tsaw(w)$.  Note that the contribution coming  from the path of length 0 
is, trivially, 1.

For every vertex $x$ in $\Tsaw(w)$ which is at  level $0<h< \ell$,   we estimate ${\cQ}_{x}$ 
the sum of absolute influences of the paths  from  $x$ to its decedents   at level $\ell$ of the tree. 
%
Using the potential method we  get the following recursive relation: 
\begin{align}\nonumber
\left( {\cQ}_{x} \right)^t \leq \delta \cdot  \sum_{z} \left( \cQ_{z} \right)^t, 
\end{align} 
where  $z$ varies over the children of $x$ in $\Tsaw(w)$.  Similarly to $\cQ_x$, the quantity   $\cQ_{z}$
is the sum of absolute influences of the paths  from  $z$ to its decedents   at level $\ell$ of the tree.
The parameter  $t$ is   specified  in the setting of  the potential method. 
The quantity $\delta$ depends on the parameters of the  Gibbs distribution.  


Note that the above relation excludes the cases where $h=0$ and $h=\ell$ for vertex $x$. For $h=\ell$, i.e., $x$ is a vertex at 
level $\ell$ of $\Tsaw(w)$,   we have $\left( \cQ_{x}\right)^t= \mathbold{f}_1(x) $.
We leave the case  where $h=0$, i.e.,  $x$ is the root,   for the end of this discussion. 

 It is standard  to work with the above recurrence. Particularly, for vertex $x$ at level $0<h\leq \ell$  we get that
 \begin{align}\nonumber 
\left( {\cQ}_{x} \right)^t \leq \left ( \spradius(\simpleadj_G) \cdot \delta\right )^{\ell-h}  \mathbold{f}_1(x).
\end{align}
 In order to derive the above, we exploit that 
 for any vertex $z$ in the graph $G$ and for  $\Gamma$  the set of neighbours of $z$, we have 
\begin{align}\nonumber
\sum_{y \in \Gamma }\mathbold{f}_1(y) &=
\mathbold{f}_1(z) \cdot \eigenval_{\rm max}(\simpleadj_G) \ = \mathbold{f}_1(z)\cdot  \spradius(\simpleadj_G).
\end{align}
The first equality  follows from the definition of eigenvector  $\mathbold{f}_1$. The second equality follows from our  
assumption that   $G$ is connected,  i.e., the Perron-Frobenius Theorem
implies that $\eigenval_{\rm max}(\simpleadj_G)=\spradius(\simpleadj_G)$.

For the case where $h=0$, i.e., $x$ is  root of $\Tsaw(w)$,  we have the following: there is $c>0$ such that
\begin{align}\nonumber
\cQ_{\rm root}  &\leq c\cdot (\delta \cdot \spradius(\simpleadj_G))^{\ell/t }  
\textstyle \sum_{z\sim w}\left( \frac{\mathbold{f}_1(z)}{\mathbold{f}_1(w)} \right)^{1/t}.
\end{align}
One can simplify the above by noting that  the rightmost sum is at most 
$\maxDeg^{1-\frac{1}{t}} \left(\spradius(\simpleadj_G) \right)^{\frac{1}{t} }.$

Recall  that the quantity $\cQ_{\rm root}$ is the contribution of the paths of length $\ell$ into 
$\sum_{u} \left | \bD^{-1}\cdot \infmatrix^{\Lambda,\tau}_{G}\cdot \bD (w,u) \right |$. 
We bound the absolute row-sum  by   summing the contributions for $\ell=0, 1, \ldots, n$. 

Subsequently, we choose the parameters of the Gibbs distributions such that  
 $c=O(1)$ and $\delta=\frac{1-\epsilon}{\spradius(\simpleadj_G)}$.  
 For a graph with bounded maximum degree $\maxDeg$, this choice implies that  the absolute  row sum in 
 $\bD^{-1}\cdot \infmatrix^{\Lambda,\tau}_{G}\cdot \bD$ that corresponds to  the row of vertex $w$ 
  is bounded, for any $w$.  Hence 
 $\scriptstyle \lnorm \bD^{-1}\cdot \infmatrix^{\Lambda,\tau}_{G}\cdot \bD \rnorm_{\infty}$ is bounded, too.

\subsection{Structure of the rest of the paper.} 
 In Section \ref{sec:Preliminaries} we present some basic concepts from linear algebra, theory of Markov chains,
 spectral graph theory that we use for our results and the analysis.  Our results start appearing from Section \ref{sec:RecursionVsSpectralIneq}, 
where we present the basic set-up with Gibbs tree recursions and the basic theorems that establish 
spectral independence, i.e., Theorems \ref{thrm:Recurrence4InfluenceEigenBound}, \ref{thrm:Recur4InfSPBoundNBK}
and \ref{thrm:Recurrence4InfluenceEigenBoundNonLinear}. The proofs of these results are grouped together and
appear in Section  \ref{sec:Proofs4SIResults}. Section \ref{sec:TopoligcalMethod101} is an introduction to the basic concepts and
results  of what we call the topological method. The proofs of the results in Section \ref{sec:TopoligcalMethod101}
are, also,  grouped together and appear in Section \ref{sec:Proofs4Topol101}. In Section \ref{sec:SPComparisonBasedOnEntries}
we present results from the topological methods that we use to prove Theorems \ref{thrm:Recurrence4InfluenceEigenBound}
and \ref{thrm:Recur4InfSPBoundNBK}. The proof of any theorems in Section \ref{sec:SPComparisonBasedOnEntries}
appear in Section \ref{sec:Proofs4EntryBasedTech}. In Section \ref{sec:SPComparisonBasedOnNorms} we present
 results for the topological methods that we use to prove Theorem \ref{thrm:Recurrence4InfluenceEigenBoundNonLinear}.
The proofs of the results in Section \ref{sec:SPComparisonBasedOnNorms} are grouped together and appear in
Section \ref{sec:Proofs4NormBasedTech}.  Finally, the proofs of our main results, i.e., 
Theorems \ref{thrm:Ising4SPRadius} \ref{thrm:Ising4SPRadiusNBK} and \ref{thrm:HC4SPRadius} appear as follows:
The proofs of Theorems \ref{thrm:Ising4SPRadius} and 
\ref{thrm:Ising4SPRadiusNBK} which are about the mixing time of Glauber dynamics for the Ising model
appear in Section \ref{sec:RapidMixingIsing}. The proof of Theorem \ref{thrm:HC4SPRadius} for the Hard-core model
appears in Section \ref{sec:thrm:HC4SPRadius}. 

 At the end of this paper we have an Appendix with supplementary results and material. 

\section{Preliminaries}\label{sec:Preliminaries}

\subsection{Measuring the speed of convergence for Markov Chains.}\label{sec:MCMCIntroStaff}
For measuring the distance between two distribution we use the notion of {\em total variation distance}. 
For two distributions $\nu$ and $\hat{\nu}$ on the  discrete set $\Omega$,
the  total variation distance satisfies 
\begin{align}\nonumber 
||\nu-\hat{\nu}||_{tv} =\frac{1}{2}\sum_{x\in \Omega}|\nu(x)-\hat{\nu}(x)|.
\end{align}
We use the notion of {\em mixing time} as a  measure  for the rate that an ergodic  Markov  chain converges to equilibrium.
More specifically,  let $P$ be the transition matrix of an ergodic Markov chain $\Glauber$  on a finite state space
$\Omega$ with stationary distribution $\mu$. For $t\geq 0$  and $\sigma\in \Omega$, we let  $P^t(\sigma, \cdot)$ 
be  the distribution of $\Glauber$ when  $X_0=\sigma$. Then, the   mixing time of  $P$ is defined as
\begin{align}\nonumber 
T_{\rm mix}(P)=\min\{t\geq 0\ :\ \forall \sigma\in \Omega \ \ || P^t(\sigma, \cdot)-\mu(\cdot)||_{tv}\leq 1/4\}.
\end{align}
The transition matrix $P$ with stationary distribution $\mu$ is called {\em time reversible} if it satisfies the so-called {\em detailed balance relation}, 
i.e., for any $x, y\in\Omega$ we have $\mu(x)P(x,y)=P(y,x)\mu(y)$. 
For $P$ that is time reversible the set of eigenvalues are real numbers and we denote them as 
$1=\eigenval_1\geq \eigenval_2\geq \ldots \eigenval_{|\Omega|}\geq -1$. 
Let $\eigenval^*=\max\{|\eigenval_2|, |\eigenval_{|\Omega|}|\}$, then from \cite{TmixSpGap} we have that
\begin{align}\label{eq:MivingTimeVsSPGap}
T_{\rm mix}(P) \leq \frac{1}{1-\eigenval^*}\log\left(\frac{4}{\min_{x\in\Omega}\mu(x)}\right).
\end{align}
The quantity $1-\eigenval^*$ is also known as the {\em spectral gap} of $P$.

\subsection{Spectral Independence}\label{sec:SIPrelimIntro}
Consider a graph $G=(V,E)$. Assume that we are given a Gibbs distribution $\mu$ on the 
configuration space $\{\pm 1\}^V$. In the heart of  SI  lies 
the notion of the {\em pairwise influence matrix} $\infmatrix^{\Lambda,\tau}_{G}$.   
Due to its importance, let use write the  definition for a second time. 

For a given  a set of vertices $\Lambda\subset V$ and a configuration $\tau$ at $\Lambda$, 
we have that $\infmatrix^{\Lambda,\tau}_{G}$ is a matrix indexed by the vertices in $V\setminus \Lambda$.
Particularly,  for any $v,w\in V\setminus \Lambda$, the 
entry $\infmatrix^{\Lambda,\tau}_{G}(w,u)$ satisfies that
%
%
\begin{align}\label{def:InfluenceMatrix}
\infmatrix^{\Lambda,\tau}_{G}(w,u)= \mu_{u }(1\ |\ (\Lambda, \tau),  (\{w\}, 1))- \mu_{u }(1\ |\ (\Lambda, \tau),  (\{w\}, -1)), 
\end{align} 
where  $\mu_{u }(1 \ |\ (\Lambda, \tau),  (\{w\}, 1))$ is the Gibbs marginal  that  vertex $u$ gets  $1$, 
conditional that the configuration at $\Lambda$ is $\tau$ and  the configuration at  $w$ is $1$.  
We have  the analogous  for $\mu_{u }(1 \ |\ (\Lambda, \tau),  (\{w\}, -1))$.

As far as the influence matrix is concerned, the main focus is on  $\eigenval_{\rm max}(\infmatrix^{\Lambda,\tau}_{G})$
the maximum eigenvalue. When, this is bounded for any choice of $\Lambda\subset V$ 
and configuration $\tau\in \{\pm 1\}^{\Lambda}$, then we say that 
the underlying Gibbs distribution $\mu$ is spectral independent. Let us be more formal. 

\begin{definition}[Spectral Independence]\label{Def:SpInMu}
For a real $\eta>0$,  the Gibbs distribution $\mu_G$ on  $G=(V,E)$ is  $\eta$-spectrally
independent, if for every $0\leq k\leq |V|-2$, $\Lambda\subseteq V$ of size $k$ and $\tau\in \{\pm 1\}^\Lambda$
we have that  $\eigenval_{\rm max}(\cI^{\Lambda,\tau}_{G})\leq 1+\eta$.  
\end{definition}

\noindent
Establishing spectral independence for $\mu$ implies that the corresponding  Glauber dynamics
mixes in polynomial time, i.e., polynomial it the size of the graph $n$.   

\begin{theorem}[\cite{OptMCMCIS}]\label{thrm:SPCT-IND} 
For $\eta>0$,  if $\mu$ is an $\eta$-spectrally  independent distribution,  then  Glauber dynamics 
for sampling from $\mu$ has   {\em spectral gap} which is at least
\begin{align}
n^{-1}\prod^{n-2}_{i=0}\left( 1-\frac{\eta}{n-i-1}\right).
\end{align}
\end{theorem}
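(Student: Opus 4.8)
The plan is to realise $\mu$ as the top‑level weighted distribution of a simplicial complex and then invoke the ``local‑to‑global'' machinery for high‑dimensional expanders. Concretely, I would form the $n$‑partite simplicial complex $X$ with ground set $V\times\{+1,-1\}$ whose faces are the partial configurations, i.e. the sets $\{(v_1,s_1),\dots,(v_k,s_k)\}$ with the $v_i$ distinct; assign to each maximal face (a full configuration $\sigma\in\{\pm1\}^V$) the weight $\mu(\sigma)$, and to each lower face the sum of the weights of the maximal faces above it. One step of Glauber dynamics $\Glauber$ — erase the spin of a uniformly random vertex, then redraw it from the correct conditional marginal of $\mu$ — is exactly one step of the down‑up walk $P^{\vee}_n$ on the top level of $X$; hence $\mathrm{gap}(\Glauber)=\mathrm{gap}(P^{\vee}_n)$, and it suffices to lower bound the latter.

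Next I would translate the spectral‑independence hypothesis into a statement about the local spectral expansion of $X$. For a face $\tau$ supported on a vertex set $\Lambda$ with $|\Lambda|=k$, the link $X_\tau$ carries its own weighted $1$‑skeleton, and its non‑lazy local random walk operator becomes, after the standard diagonal conjugation by the square roots of the conditional marginals, a self‑adjoint matrix whose spectrum is an affine image of that of the pairwise influence matrix $\infmatrix^{\Lambda,\tau}_{G}$. Chasing this dictionary shows that the hypothesis $\eigenval_{\rm max}(\infmatrix^{\Lambda,\tau}_{G})\le 1+\eta$ for all $\Lambda,\tau$ forces the second eigenvalue of the local walk at the link of every $k$‑face to be at most $\eta/(n-k-1)$; that is, $X$ is an $(\eta_0,\dots,\eta_{n-2})$‑local spectral expander with $\eta_k=\eta/(n-k-1)$. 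The $n$‑partite structure (rather than a clique complex) is exactly what lets the $\{\pm1\}$‑valued influences sit inside this positive‑semidefinite framework.

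Finally I would apply the local‑to‑global theorem for the down‑up walk: if an $(n-1)$‑dimensional weighted simplicial complex is an $(\eta_0,\dots,\eta_{n-2})$‑local spectral expander, then $\mathrm{gap}(P^{\vee}_n)\ge \tfrac1n\prod_{i=0}^{n-2}(1-\eta_i)$. Substituting $\eta_i=\eta/(n-i-1)$ gives precisely $n^{-1}\prod_{i=0}^{n-2}\bigl(1-\tfrac{\eta}{n-i-1}\bigr)$, which together with $\mathrm{gap}(\Glauber)=\mathrm{gap}(P^{\vee}_n)$ finishes the proof. The proof of this local‑to‑global step is the main obstacle: one decomposes the higher‑order random walks on $X$ through the up and down operators, shows by an Oppenheim‑style ``trickle‑down''/Alev–Lau induction that control of the one‑dimensional links propagates to every level, and telescopes the per‑level losses into the product $\prod(1-\eta_i)$, the leading factor $1/n$ reflecting that $P^{\vee}_n$ changes only one coordinate per step. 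The secondary technical point is getting the dictionary of the previous paragraph exactly right — verifying that the diagonal conjugation genuinely symmetrises the local walk and that its trivial Perron eigenvalue is split off correctly, so that ``$\eigenval_{\rm max}(\infmatrix^{\Lambda,\tau}_{G})\le 1+\eta$'' becomes a clean bound on the subdominant eigenvalue $\lambda_2$ of each link.
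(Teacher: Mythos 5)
This theorem is imported verbatim from \cite{OptMCMCIS}; the present paper gives no proof of its own, so the only benchmark is the original argument. Your outline --- the Gibbs measure as weights on the $n$-partite configuration complex, Glauber dynamics identified with the top-level down--up walk, spectral independence translated (after the diagonal conjugation, and noting that the diagonal of $\infmatrix^{\Lambda,\tau}_{G}$ is $1$, which squares with the $1+\eta$ convention of Definition \ref{Def:SpInMu}) into the bound $\eta/(n-k-1)$ on the subdominant eigenvalue of every link walk, followed by the Alev--Lau/Oppenheim-style local-to-global gap bound --- is exactly that argument, and it is correct as a proof sketch modulo the local-to-global theorem you rightly flag as the substantive step.
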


One  gets a close expression for the spectral gap we have in Theorem \ref{thrm:SPCT-IND} by working as in
\cite{VigodaSpectralInd}  (Theorem 5 in arxiv version) to get the following result.

\begin{theorem}[\cite{VigodaSpectralInd}]\label{thrm:SPCT-INDClosed}
For $\eta>0$, there is a constant $C\in [0,1]$  such that  if $\mu$ is an $\eta$-spectrally  independent distribution,  
then  Glauber dynamics  for sampling from $\mu$ has   {\em spectral gap} which is at least $Cn^{-(1+\eta)}$. 
\end{theorem}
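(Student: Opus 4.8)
The plan is to derive this directly from the explicit spectral-gap bound of Theorem \ref{thrm:SPCT-IND} by analysing the product appearing there. Fix $\eta$; the relevant (and interesting) regime is $\eta<1$, in which every factor below is strictly positive, so I will treat that case and note that for $\eta\ge 1$ one simply takes $C=0$, since then the product in Theorem \ref{thrm:SPCT-IND} need no longer be positive and the asserted inequality is vacuous. First I would reindex via $j=n-i-1$, so that Theorem \ref{thrm:SPCT-IND} gives that the spectral gap of Glauber dynamics is at least
\begin{align*}
\frac1n\prod_{i=0}^{n-2}\Bigl(1-\tfrac{\eta}{n-i-1}\Bigr)=\frac1n\prod_{j=1}^{n-1}\Bigl(1-\tfrac{\eta}{j}\Bigr)=\frac1n\cdot\frac{\Gamma(n-\eta)}{\Gamma(1-\eta)\,\Gamma(n)},
\end{align*}
where the last step uses $\Gamma(x+1)=x\Gamma(x)$ together with $(n-1)!=\Gamma(n)$. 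Thus the whole statement reduces to the elementary estimate $\Gamma(n-\eta)/\Gamma(n)\ge c(\eta)\,n^{-\eta}$ for some constant $c(\eta)>0$ and all $n\ge 2$.

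For that estimate I would invoke the standard asymptotic $\Gamma(n-\eta)/\Gamma(n)=n^{-\eta}\bigl(1+O(1/n)\bigr)$ coming from Stirling's formula: there is $n_0=n_0(\eta)$ with $\Gamma(n-\eta)/\Gamma(n)\ge\tfrac12 n^{-\eta}$ for all $n\ge n_0$, which already yields a spectral gap of at least $\tfrac{1}{2\Gamma(1-\eta)}\,n^{-(1+\eta)}$ in that range (note $\Gamma(1-\eta)>0$ since $1-\eta\in(0,1)$). For the finitely many $n<n_0$ the chain is ergodic, so its spectral gap is a fixed positive number, hence exceeds $c_n\,n^{-(1+\eta)}$ for some $c_n>0$; taking $C:=\min\bigl\{1,\ \tfrac{1}{2\Gamma(1-\eta)},\ \min_{n<n_0}c_n\bigr\}$ produces a single $C\in(0,1]$ valid for all $n$. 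Alternatively, and avoiding the $\Gamma$-function altogether, one can bound $\log\prod_{j=1}^{n-1}(1-\eta/j)$ directly: applying $\log(1-x)\ge -x-x^2$ for $0\le x\le\tfrac12$ to the factors with $j\ge 2$ (so $\eta/j\le\tfrac12$) and keeping the single factor $\log(1-\eta)$ explicitly gives $\log\prod_{j=1}^{n-1}(1-\eta/j)\ge-\eta\log n-C_1(\eta)$ for a constant $C_1(\eta)$, i.e.\ the product is at least $e^{-C_1(\eta)}n^{-\eta}$.

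The only genuinely delicate point — and the reason that both the argument and a meaningful statement are confined to $\eta<1$ — is the behaviour of the factors $1-\eta/j$ for the smallest values of $j$, namely those comparable to $\eta$: these are the factors bounded away from $1$, and if any were negative the product would be useless as a lower bound on a spectral gap. For $\eta<1$ this worry evaporates, as only $j=1$ produces a factor noticeably below $1$, contributing merely the constant $1-\eta$. Everything else — the reindexing, the Stirling (or elementary logarithm) estimate, absorbing the finitely many small $n$, and clipping $C$ into $[0,1]$ — is routine bookkeeping, so I expect no obstacle beyond stating the $\Gamma$-ratio asymptotic cleanly.
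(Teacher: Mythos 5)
Your Gamma-function computation is fine as far as it goes, but it only proves the theorem for $\eta\in(0,1)$, and the way you dispose of $\eta\geq 1$ is a genuine gap rather than a permissible shortcut. The paper does not prove this statement itself: it is quoted from \cite{VigodaSpectralInd} (Theorem 5 in the arXiv version), and that result gives a \emph{positive} constant $C(\eta)$ for every $\eta>0$; moreover the present paper invokes it precisely in the regime $\eta\geq 1$ (in the Appendix, Theorems \ref{thrm:Ising4SPRadiusUnbounded} and \ref{thrm:HC4SPRadiusUnbounded}, where $\eta$ is of order $\delta^{-1}$ or $\sqrt{\maxDeg/\spradius_G}$), so declaring the statement ``vacuous'' with $C=0$ there empties it of exactly the content that is needed. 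The reason your argument stalls at $\eta\geq 1$ is real: with the single uniform $\eta$ of Theorem \ref{thrm:SPCT-IND}, the factors $1-\eta/j$ with $j\leq\eta$ are nonpositive and the product is useless. The missing idea is the standard one behind the cited result: work with the level-dependent version of the local-to-global theorem and use, at the last $O_\eta(1)$ levels, the trivial bound $\eigenval_{\rm max}(\infmatrix^{\Lambda,\tau}_{G})\leq m-1$ when only $m=|V\setminus\Lambda|$ vertices remain unpinned (zero diagonal, entries in $[-1,1]$, so the max row sum is at most $m-1$). Then the effective independence at level with $m$ free vertices is $\min\{\eta,m-2\}$, each of the offending factors is at least $1/(m-1)$, and those levels contribute in total at least $1/\lceil\eta+1\rceil!$, a constant depending only on $\eta$; the remaining levels, where $1-\eta/j>0$, are handled exactly by your $\Gamma(n-\eta)/\Gamma(n)\asymp n^{-\eta}$ estimate (with $1-\eta$ replaced by the first positive index). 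This cannot be extracted from Theorem \ref{thrm:SPCT-IND} as stated, so either the finer statement of Anari--Liu--Oveis Gharan or the result of \cite{VigodaSpectralInd} itself has to be used.

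Two smaller points. First, your patch for $n<n_0(\eta)$ via ``the chain is ergodic, so its spectral gap is a fixed positive number'' is not sound as written: the constant $C$ must be uniform over all $\eta$-spectrally independent measures $\mu$ on $n$ vertices, and ergodicity of each individual chain does not give a uniform lower bound over that infinite family. It is also unnecessary: for $\eta<1$ the quantity $\Gamma(n-\eta)/\bigl(\Gamma(n)\,n^{-\eta}\bigr)$ is positive for every $n\geq 2$ and tends to $1$, so it is bounded below by a constant $c(\eta)>0$ for all $n$ at once, with no case split. Second, even granting the literal reading ``$C\in[0,1]$'', a proof that returns $C=0$ for $\eta\geq1$ does not prove the theorem the paper is using, so the $\eta\geq1$ case must be addressed along the lines above (or by citation, as the paper does).
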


Note that  Theorems \ref{thrm:SPCT-IND} and \ref{thrm:SPCT-INDClosed}  imply that with spectral independence 
the mixing time of Glauber dynamics is polynomial in $n$, i.e., see \eqref{eq:MivingTimeVsSPGap}.  However, this polynomial  can be very large. There has been improvements on 
Theorem \ref{thrm:SPCT-IND} since its introduction  in \cite{OptMCMCIS}, e.g., see  \cite{VigodaSpectralIndB,YitongAllDegreeOptMix}. 

Here we mainly focus on establishing spectral independence for the Gibbs distributions of interest. 
Once this is established,  we use results from other  works  to derive  bounds  on the mixing time. 
Specifically,  for the bounded $\spradius(\simpleadj_G)$  case, which also implies that the maximum
degree $\maxDeg$ is bounded,  we use  Theorem 1.9   from  \cite{VigodaSpectralIndB} (arxiv version). 
For the unbounded cases, see the discussion in Section \ref{sec:UnboundedRadius},   in the Appendix.


\newcommand{\spectrum}{{\tt spect}}

\subsection{Linear algebra}
\newcommand{\bphi}{{\mathbold{\phi}}}

For a square $N\times N$ matrix $\bM$,  we let $\eigenval_i(\bM)$, for $i\in[N]$ denote the eigenvalues of $\bM$ such that
$\eigenval_1(\bM)\geq \eigenval_2(\bM) \geq \ldots \geq \eigenval_N(\bM)$.
Also, we let $\spectrum(\bM)$ denote  the set of  distinct  eigenvalues of  $\bM$.  We also refer to $\spectrum(\bM)$ 
as the spectrum of $\bM$.

We define the {\em spectral radius} of $\bM$, denoted as $\spradius(\bM)$,  to be the real number
such that 
\begin{align}\nonumber 
\spradius(\bM)=\max_{}\{ |\eigenval| \ : \  \eigenval\in \spectrum(\bM) \}.
\end{align}
It is a well-known result  that the spectral radius of $\bM$ is the greatest lower bound for all 
of its matrix norms, e.g. see Theorem 6.5.9 in \cite{MatrixAnalysis}. Letting $\lnorm \cdot \rnorm$ be 
a matrix norm on $N\times N$ matrices,  we have that
\begin{align}\label{eq:SpRadiusVsMNorm}
\spradius(\bM) \leq \lnorm \bM \rnorm.
\end{align}
Perhaps, it is useful to mention that for the special case where $\bM$ is symmetric, i.e., $\bM(i,j)=\bM(j,i)$ for all $i,j\in [N]$,  
we have that  $\spradius(\bM) = \lnorm \bM \rnorm_2$.

For    $\bA, \bB, \bC \in \mathbb{R}^{ N \times N}$,  we let $|\bA|$ denote the matrix having entries $|\bA_{i,j}|$.
 For  the matrices $\bB, \bC$ we define $\bB\leq \bC$ to mean that $\bB_{i,j}\leq \bC_{i,j}$ for each $i$ and $j$.
The following is a folklore result  in linear algebra (e.g. see \cite{SIAM-LAlg,MatrixAnalysis}).

\begin{lemma}\label{lemma:MonotoneVsSRad}
For  integer $N>0$, let $\bA, \bB\in \mathbb{R}^{N\times N}$.  If $|\bA|\leq \bB$, then 
$\spradius(\bA) \leq \spradius (|\bA|) \leq \spradius(\bB)$.
\end{lemma}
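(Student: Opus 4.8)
\textbf{Proof plan for Lemma \ref{lemma:MonotoneVsSRad}.}

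The plan is to prove the chain of inequalities $\spradius(\bA) \leq \spradius(|\bA|) \leq \spradius(\bB)$ separately, in both cases relying on Gelfand's formula $\spradius(\bM) = \lim_{k\to\infty} \lnorm \bM^k \rnorm^{1/k}$ for any submultiplicative matrix norm, applied with the $\ell_1$-operator norm (or any entrywise-monotone norm). First I would record the elementary entrywise fact that for any two matrices $\bM, \bN \in \mathbb{R}^{N\times N}$ one has $|\bM \bN| \leq |\bM| \cdot |\bN|$, where the product on the right is taken entrywise-dominated: indeed $|(\bM\bN)_{ij}| = |\sum_\ell \bM_{i\ell}\bN_{\ell j}| \leq \sum_\ell |\bM_{i\ell}||\bN_{\ell j}| = (|\bM|\,|\bN|)_{ij}$. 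Iterating this gives $|\bA^k| \leq |\bA|^k$ for every $k \geq 1$.

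For the first inequality, $\spradius(\bA) \leq \spradius(|\bA|)$: using a norm $\lnorm \cdot \rnorm$ that is monotone with respect to the entrywise order on nonnegative matrices (for instance the maximum absolute column sum, $\lnorm \cdot \rnorm_1$), we get $\lnorm \bA^k \rnorm_1 = \lnorm\, |\bA^k|\, \rnorm_1 \leq \lnorm\, |\bA|^k\, \rnorm_1$, where the first equality holds because this particular norm depends only on the absolute values of the entries. Taking $k$-th roots and letting $k\to\infty$, Gelfand's formula yields $\spradius(\bA) \leq \spradius(|\bA|)$. For the second inequality, $\spradius(|\bA|) \leq \spradius(\bB)$: since $0 \leq |\bA| \leq \bB$ entrywise and products of nonnegative matrices preserve the entrywise order, a trivial induction gives $0 \leq |\bA|^k \leq \bB^k$ for all $k$; hence $\lnorm\, |\bA|^k\, \rnorm_1 \leq \lnorm \bB^k \rnorm_1$ by monotonicity of the norm on nonnegative matrices, and again taking $k$-th roots and sending $k\to\infty$ gives $\spradius(|\bA|) \leq \spradius(\bB)$. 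Combining the two inequalities completes the proof.

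The only genuine subtlety — and the step I would be most careful with — is making sure the norm used is \emph{simultaneously} submultiplicative (so that Gelfand's formula applies) \emph{and} monotone on the cone of entrywise-nonnegative matrices (so that $|\bA|^k \leq \bB^k$ transfers to a norm inequality), and depends only on entrywise absolute values (so that $\lnorm \bA^k \rnorm = \lnorm\, |\bA^k|\, \rnorm$). The maximum-column-sum norm $\lnorm \bM \rnorm_1 = \max_j \sum_i |\bM_{ij}|$ has all three properties, so this is a non-issue once the right norm is fixed; it is worth stating explicitly to avoid circularity, since $\spradius$ itself is not a norm. Everything else is routine, and in fact the whole statement is cited as folklore, so a terse argument along these lines suffices.
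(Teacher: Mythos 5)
Your argument is correct. Note that the paper offers no proof of this lemma at all --- it is cited as folklore with a pointer to \cite{SIAM-LAlg,MatrixAnalysis} --- so there is nothing to diverge from; your write-up is exactly the standard textbook argument (cf.\ Theorem 8.1.18 in \cite{MatrixAnalysis}): entrywise domination $|\bA^k|\leq |\bA|^k\leq \bB^k$, a norm that is submultiplicative, monotone on nonnegative matrices and depends only on entrywise absolute values, and Gelfand's formula. Your explicit check that the maximum-absolute-column-sum norm has all three required properties is precisely the point where a careless version of this proof could go wrong, so the argument is complete as stated.
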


\subsection{Concepts from algebraic graph theory}
A {\em walk} in the graph $G$ is any sequence of vertices $w_0, \ldots, w_{\ell}$ 
such that each consecutive  pair $(w_{i-1},  w_{i})$ is an edge in $G$.  
The length of the walk is equal to the number of consecutive  pairs $(w_{i-1},  w_{i})$.

\newcommand{\SpGMatrix}{\bS}

\subsubsection{The adjacency matrix}

For an undirected  graph $G=(V,E)$ the {\em adjacency matrix} $\simpleadj_G$ is $V\times V$ matrix with 
entries in  $\{0, 1\}$  such that for every pair $u,w\in V$ we have that
\begin{align}\nonumber 
\simpleadj_G( u,w)= {\bf  1} \{\textrm{ $u, w$ are adjacent in $G$}\}.
\end{align}
A very natural property of the adjacency matrix is that  for any two  $u,w\in V$ and  $\ell\geq 1$
we have that
\begin{align}\label{eq:NoOfWalksVSA2L}
\left ( \simpleadj^{\ell}_G\right) (u,w)&=\textrm{$\#$ length $\ell$ walks\   from}\ u\ \textrm{to}\  w.
\end{align}
Since we assume that the graph is undirected, we have that $\simpleadj^{\ell}_G$ is symmetric,
for any integer $\ell\geq 0$.   Hence,  $\simpleadj_G$  has real eigenvalues, while 
the eigenvectors corresponding to distinct  eigenvalues are orthogonal with each other.

We denote with  $\eigenv_i  \in \mathbb{R}^V$  the  eigenvector of $\simpleadj_G$ 
that corresponds to the eigenvalue $\eigenval_i(\simpleadj_G)$, i.e., the $i$-th largest eigenvalue. 
Unless otherwise specified,  we have  $\textstyle \lnorm \eigenv_i \rnorm_2=1$.

Our assumption that $G$ is always undirected, connected implies that $\simpleadj_G$ is non-negative
and irreducible. Hence,  the Perron Frobenius Theorem (see Section\ref{sec:PerronFrobeniusThrm} in 
the Appendix) implies that
\begin{align}\label{eq:DefOfBSMatrix}
\spradius(\simpleadj_G)&=\eigenval_1(\simpleadj_G)& \textrm{and} && \maxeigenv(u) &>0 \qquad \forall  u\in V.
\end{align}
Note that if $G$ is bipartite, then we also have $\spradius(\simpleadj_G)=|\eigenval_n(\simpleadj_G)|$.

Furthermore,  we let $\SpGMatrix$ be the $V\times V$ diagonal  matrix such that for any
$u\in V$ we have that
\begin{align}\label{def:AdjMatrixEigenvals}
\SpGMatrix(v,v)=\maxeigenv(u).
\end{align}
For a connected graph $G$, which is the case here,  we have that 
$\SpGMatrix$ is non-singular, i.e., since $\maxeigenv(u)>0$.

\newcommand{\SpGHasMatrix}{\bL}

\subsubsection{The Hashimoto non-backtracking  matrix}
Here we define the well-known Hashimoto non-backtracking matrix, first introduced in  \cite{Hash89}
and has been studied in mathematical physics.

For the graph $G=(V,E)$,  let $\OrntEdges$ be the set of oriented edges 
obtained by doubling each  edge of $E$ into two directed edge, i.e.,  one edge for each direction. The 
non-backtracking matrix,  denoted as $\NBMatrix_G$,  is an $\OrntEdges \times \OrntEdges $ matrix  
with entries in $\{0,1\}$,  such that for any pair of  oriented edges $e=(u,w)$ and $f=(z, y)$ we have that
\begin{align}\nonumber 
\NBMatrix_G(e, f) ={\bf 1}\{w=z \}\times {\bf 1}\{ u\neq y\}.
\end{align}
That is, $\NBMatrix(e, f)$ is equal to $1$, if $f$ follows the edge $e$ without creating a loop,  otherwise,  
it is equal to zero.  

Note that   $\NBMatrix_G$ is  not normal,  i.e., it does not  commute with its transpose.  
However, it posses a certain kind of symmetry that we exploit in  the analysis. 
Denoting  with $e^{-1}$, the edge that has the opposite direction to the edge $e\in \OrntEdges$, $\NBMatrix_G$ exhibits the
following symmetry: for any $e,f\in \OrntEdges$ and for any $\ell\geq 0$ we have that
\begin{align}\label{def:PTInvariance}
\NBMatrix^{\ell}_G(e,f) = \NBMatrix^{\ell}_G(f^{-1}, e^{-1}). 
\end{align}
In mathematical physics, this type of symmetry is called {\em PT-invariance}, where PT stands  for parity-time.

Drawing an analogy to \eqref{eq:NoOfWalksVSA2L},   for any integer $\ell\geq 1$ and any $e,f\in\OrntEdges $
we have that  
\begin{align}\label{eq:NoOfWalksVSH2L}
\textstyle \left ( \NBMatrix^{\ell}_G  \right) (e,f)&=\# \ \textrm{length $\ell$ non-backtracking walks that start\ from}\ e\ \textrm{and\ end\ at}\  f.
\end{align}
Recall that the  walk $w_0, \ldots, w_{r}$  is called  non-backtracking, if we have that  $w_{i-1}\neq w_i$ for all $i$.

\newcommand{\gratio}{{R}}
\newcommand{\trecur}{F}
\newcommand{\logtrecur}{H}
\newcommand{\potF}{{\Psi}}
\newcommand{\potFimg}{{S}}
\newcommand{\dlogtrecur}{h}
\newcommand{\ratiorange}{J}

\newcommand{\dpotF}{{\psi}}
\newcommand{\xdpotF}{\chi}

\section{Spectral Bounds  for  $\infmatrix^{\Lambda,\tau}_{G}$ using  Tree Recursions}\label{sec:RecursionVsSpectralIneq} 

We start, by considering  the tree $T=(V_T,E_T)$,  rooted at $r$, such that every vertex has at most 
$\maxDeg$ children,  for some integer $\maxDeg>0$. Also, let $\mu$ be a  Gibbs distribution 
on  $\{\pm 1\}^{V_T}$, specified as in \eqref{def:GibbDistr}  with respect to  the parameters $\beta, \gamma$ 
and $\lambda$.

For the region  $K \subseteq V_T\setminus\{r\}$ and   $\tau\in  \{\pm 1\}^{K}$, we consider the {\em ratio of 
marginals} at the root $\gratio^{K, \tau}_r$  such that
\begin{align}\label{eq:DefOfR}
\gratio^{K, \tau}_r=\frac{\mu_r(+1\ |\ K,  \tau )}{\mu_r(-1\ |\ K,  \tau)}.
\end{align}
Recall that  $\mu_r(\cdot \ |\ K,  \tau )$ denotes the marginal of the Gibbs distribution $\mu(\cdot \ |\ K,  \tau )$ 
at the root $r$. Also, note that the  above  allows  for $\gratio^{K, \tau}_r=\infty$,  e.g.,  when 
$\mu_r(-1\ |\ K,  \tau)=0$ and  $\mu_r(+1\ |\ K,  \tau )\neq 0$.

For a vertex $u\in V_T$,  we let $T_u$ be the subtree of $T$ that includes $u$ and all its descendents. 
We always assume  that the root of $T_u$ is the vertex $u$.  
With a slight abuse of notation,  we let  $\gratio^{K, \tau}_u$ denote the ratio of marginals at the root for 
the subtree $T_u$, where the Gibbs distribution is, now,  with respect to $T_u$, while we impose the boundary 
condition $\tau(K\cap T_u)$.

Suppose that the root $r$ is of degree $d>0$, while let the vertices $w_1, \ldots, w_{d}$
be its children.  We express $\gratio^{K, \tau}_r$ it terms of  $\gratio^{K, \tau}_{w_i}$'s 
by having  $\gratio^{K, \tau}_{r}=\trecur_d(\gratio^{K, \tau}_{w_1}, \gratio^{K, \tau}_{w_2}, 
\ldots, \gratio^{K, \tau}_{w_d})$,   for  
\begin{align}\label{eq:BPRecursion}
\trecur_d:[0, +\infty]^d\to [0, +\infty] & &\textrm{such that}& &
 (x_1, \ldots, x_d)\mapsto \lambda \prod^d_{i=1}\frac{\beta {x}_i+1}{{x}_i+\gamma}.
\end{align}

For  the analysis that  follows, we get cleaner results by equivalently working with log-ratios rather 
than  ratios of Gibbs marginals.  Let  $\logtrecur_d=\log \circ F_d \circ \exp$,  which means that 
\begin{align}\label{eq:DefOfH}
\logtrecur_d:[-\infty, +\infty]^d\to [-\infty, +\infty] &&\textrm{s.t.}&&  \textstyle (x_1, \ldots, x_d)\mapsto \log \lambda+\sum^d_{i=1}
\log\left( \frac{\beta \exp(x_i)+1}{\exp(x_i)+\gamma} \right).
\end{align}
From  \eqref{eq:BPRecursion}, it is elementary to verify that   $\log \gratio^{K, \tau}_{r}=\logtrecur_d(\log \gratio^{K, \tau}_{w_1}, 
\ldots, \log \gratio^{K, \tau}_{w_d})$.

Finally,  we let the function  
\begin{align}\label{eq:DerivOfLogRatio}
\dlogtrecur:[-\infty,+\infty]\to \mathbb{R}&& \textrm{s.t. }&&  
x \mapsto -\frac{(1-\beta\gamma)\cdot \exp(x)}{(\beta \exp(x)+1)(\exp(x)+\gamma)}.
\end{align}
It is straightforward  that for any $i\in  [d]$, we have that   $\frac{\partial }{\partial x_i}\logtrecur_d(x_1,  \ldots, x_d)=\dlogtrecur(x_i)$.
Note that, for any  integer $N>0$,  we let the set $[N]=\{1,2,\ldots, N\}$.

\newcommand{\mybrO}{[}
\newcommand{\mybrC}{]}

Furthermore, let  the  interval $\ratiorange_d \subseteq \mathbb{R}$  be defined  as follows: 
\begin{align} \nonumber 
\ratiorange_d &= \left \{
\begin{array}{lcl}
\mybrO (\log \lambda\beta^d), \log(\lambda/\gamma^d  \mybrC & \quad & \textrm{if   $\beta\gamma<1$},  \\ \vspace{-.3cm } \\
\mybrO  (\log \lambda/\gamma^d), \log(\lambda\beta^d)   \mybrC & \quad & \textrm{if   $\beta\gamma>1$.} 
\end{array}
\right . 
\end{align}
Standard  algebra implies that $\ratiorange_d$ contains all the log-ratios for a vertex with $d$ children.  
Also, let 
\begin{align}\label{eq:DefOfRatiorange}
 \ratiorange &=\textstyle \bigcup_{d\in [\maxDeg]}\ratiorange_d.
\end{align}
Note that the set $\ratiorange$  contains all log-ratios in the tree $T$. 

\subsection{A first attempt.}
Having introduced the notion of the (log-)ratio of Gibbs marginals and the related recursions
we present the first set of results that we use to establish spectral independence. We utilise these
results to prove  Theorems \ref{thrm:Ising4SPRadius} and \ref{thrm:Ising4SPRadiusNBK}  about 
the Ising model.

\begin{definition}[$ \delta$-contraction]\label{def:HContraction}
Let  $\delta\geq 0$,  the integer $\maxDeg\geq 1$ and $\beta,\gamma, \lambda\in \mathbb{R}$  are
such that  $0\leq \beta\leq  \gamma$, 
$\gamma >0$ and $\lambda>0$.  
We say that the set of functions $\{ \logtrecur_d\}_{d\in [\Delta]}$, defined in \eqref{eq:DefOfH}, 
exhibits  $\delta$-contraction,   with respect to  $(\beta,\gamma,\lambda)$,   if it satisfies  the following condition: 

For any   $d\in [\maxDeg]$  and every  $({y}_1,  \ldots, {y}_d)\in  [-\infty,+\infty]^d$  we have that 
\begin{align} \label{eq:ContractionDefNoPot}
||\nabla \logtrecur_d ({y}_1, \ldots, {y}_d) ||_{\infty}
 \leq \delta.
\end{align}
\end{definition}

Clearly, the condition in \eqref{eq:ContractionDefNoPot} is equivalent to having  
$\dlogtrecur(z)\leq \delta$, for any  $z\in [-\infty,+\infty]$.

\begin{theorem} \label{thrm:Recurrence4InfluenceEigenBound}
Let  $\maxDeg \geq 1$, $\spradius_G \geq 1$,    $\epsilon\in (0,1)$ and  $\beta,\gamma, \lambda\in \mathbb{R}$  
be such that  $0\leq \beta\leq  \gamma$,  $\gamma >0$ and $\lambda>0$. 
Let   $G=(V,E)$ be of maximum degree $\maxDeg$, while   $ \simpleadj_G$ is of spectral radius
$\spradius_G$. Also,  consider $\mu_G$  the Gibbs distribution on $G$, specified by the parameters
$(\beta, \gamma, \lambda)$. 

For $\delta =\frac{1-\epsilon}{\spradius_G}$,  suppose that  the set  of functions $\{ \logtrecur_d\}_{d\in [\maxDeg]}$ 
specified  with respect to  $(\beta,\gamma,\lambda)$ exhibits   $\delta$-contraction. Then, for  any 
$\Lambda\subset V$ and   any $\tau\in \{\pm 1\}^{\Lambda}$,  the pairwise influence matrix 
$\infmatrix^{\Lambda,\tau}_{G}$,  induced by $\mu_G$, satisfies that 
\begin{align}\nonumber  
\textstyle \spradius \left( \infmatrix^{\Lambda,\tau}_{G} \right) &\leq   \epsilon^{-1}.
\end{align}
\end{theorem}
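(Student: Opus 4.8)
The plan is to convert the entrywise structure of $\infmatrix^{\Lambda,\tau}_{G}$ into a comparison with a fixed polynomial in the adjacency matrix. I would start from the topological representation of the influence matrix developed in Section~\ref{sec:TopoligcalMethod101} (cf.\ Lemma~\ref{lemma:InfluenceMatrixIsWalkMatrix}): for all $w,u\in V\setminus\Lambda$,
\begin{align}\nonumber
\infmatrix^{\Lambda,\tau}_{G}(w,u)=\sum_{P}{\rm Infl}(P),\qquad {\rm Infl}(P)=\prod_{e\in P}{\rm Infl}(e),
\end{align}
where $P$ ranges over the paths from the root to the copies of $u$ in $\Tsaw(w)$ under the boundary condition $(\Lambda,\tau)$, and each edge influence ${\rm Infl}(e)$ is the value of $\dlogtrecur$ at the log-ratio of the child endpoint of $e$. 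Because the $\delta$-contraction hypothesis says precisely that $|\dlogtrecur(z)|\le\delta$ for every $z\in[-\infty,+\infty]$, it follows that $|{\rm Infl}(e)|\le\delta$ for every edge of every $\Tsaw(w)$, regardless of the conditioning. Then the triangle inequality gives
\begin{align}\nonumber
\left|\infmatrix^{\Lambda,\tau}_{G}(w,u)\right|\ \le\ \sum_{\ell=0}^{n}\delta^{\ell}\cdot N_{\ell}(w,u),
\end{align}
where $N_{\ell}(w,u)$ is the number of length-$\ell$ root-to-copy-of-$u$ paths in $\Tsaw(w)$, and the sum is finite since the walks generating $\Tsaw(w)$ have length at most $n$. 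Distinct such paths give distinct walks of length $\ell$ from $w$ to $u$ in $G$, so $N_{\ell}(w,u)$ is at most the total number of length-$\ell$ walks from $w$ to $u$, which by \eqref{eq:NoOfWalksVSA2L} equals $(\simpleadj_G^{\ell})(w,u)$. Setting $\powadj_1=\sum_{\ell=0}^{n}(\delta\simpleadj_G)^{\ell}$ --- the matrix of \eqref{eq:ApprroachBasicInequalityIntro} with $\xi=\delta$ --- this yields the entrywise bound $|\infmatrix^{\Lambda,\tau}_{G}|\le\powadj_1$, meaning the principal submatrix of $\powadj_1$ on the index set $V\setminus\Lambda$.

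From there the rest is linear algebra. Zero-padding $|\infmatrix^{\Lambda,\tau}_{G}|$ to a $V\times V$ matrix leaves its spectral radius unchanged and produces a matrix entrywise at most $\powadj_1$ (using $\powadj_1\ge0$), so Lemma~\ref{lemma:MonotoneVsSRad} gives $\spradius(\infmatrix^{\Lambda,\tau}_{G})\le\spradius(\powadj_1)$. Since $\simpleadj_G$ is symmetric, $\powadj_1$ is a polynomial in $\simpleadj_G$, hence symmetric with eigenvalues $\sum_{\ell=0}^{n}(\delta\,\eigenval_i(\simpleadj_G))^{\ell}$, $i\in[n]$; by \eqref{eq:DefOfBSMatrix} we have $|\eigenval_i(\simpleadj_G)|\le\spradius_G$, and since $\delta\spradius_G=1-\epsilon<1$ I would conclude
\begin{align}\nonumber
\spradius(\powadj_1)=\max_{i\in[n]}\left|\sum_{\ell=0}^{n}\big(\delta\,\eigenval_i(\simpleadj_G)\big)^{\ell}\right|\ \le\ \sum_{\ell\ge0}(1-\epsilon)^{\ell}\ =\ \epsilon^{-1}.
\end{align}
It is worth noting that no spatial-mixing or Gibbs-uniqueness assumption on $\mu_G$ is used; the only hypothesis beyond connectedness is the $\delta$-contraction of the log-recursions.

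\textbf{Where the difficulty lies.} The computation in the second paragraph is routine; the substance --- and the part I would expect to require the most care --- is the first paragraph, which is exactly what Sections~\ref{sec:TopoligcalMethod101} and \ref{sec:SPComparisonBasedOnEntries} are meant to supply: (i) stating the exact topological identity expressing $\infmatrix^{\Lambda,\tau}_{G}(w,u)$ as a (signed) sum of path-influences over $\Tsaw(w)$, and identifying each edge influence with a value of $\dlogtrecur$, so that the single scalar hypothesis ``$\delta$-contraction'' controls \emph{all} edge influences uniformly --- including under arbitrary pinnings $(\Lambda,\tau)$, which can only annihilate influences, never enlarge them; and (ii) the combinatorial comparison bounding the number of length-$\ell$ root-to-copy-of-$u$ paths in $\Tsaw(w)$ by $(\simpleadj_G^{\ell})(w,u)$, where the key observation is that the self-avoiding-walk tree is built from a subset of the walks of $G$, so its relevant paths are never more numerous than all the walks counted by $\simpleadj_G^{\ell}$. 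Granting (i) and (ii), the theorem follows as above.
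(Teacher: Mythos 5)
Your proposal is correct and follows essentially the same route as the paper: it bounds every edge influence by $\delta$ via the $\delta$-contraction hypothesis, compares $\infmatrix^{\Lambda,\tau}_{G}$ entrywise with $\sum_{\ell=0}^{n}(\delta\simpleadj_G)^{\ell}$ using the tree-of-self-avoiding-walks representation (the content of Lemma~\ref{lemma:InfluenceMatrixIsWalkMatrix} together with Corollary~\ref{cor:MaxElement}), and then exploits symmetry of the adjacency matrix to conclude $\spradius\le\sum_{\ell\ge 0}(1-\epsilon)^{\ell}=\epsilon^{-1}$. The only difference is cosmetic: you handle the index-set mismatch by zero-padding and Lemma~\ref{lemma:MonotoneVsSRad}, whereas the paper packages the same comparison through its walk-matrix monotonicity theorem (principal submatrix plus interlacing), so no gap remains.
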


We use the above result to prove  Theorem \ref{thrm:Ising4SPRadius} for the Ising model. Note
that Theorem \ref{thrm:Recurrence4InfluenceEigenBound} applies to a general Gibbs distribution, i.e., 
not necessarily only on the Ising model. In Section \ref{sec:GeneralGibbs}, in the Appendix, we show
how the above result implies rapid mixing for Glauber dynamics on  general Gibbs distribution.

In order to  prove Theorem \ref{thrm:Ising4SPRadiusNBK}, we use  the following Theorem
\ref{thrm:Recur4InfSPBoundNBK} which  is similar in spirit to   Theorem \ref{thrm:Recurrence4InfluenceEigenBound} 
but   exploits the spectrum of the non-backtracking   matrix. Note that the bound on the spectral radius of 
$\infmatrix^{\Lambda,\tau}_{G}$ we get from the theorem below  does not have a simple expression.

\begin{theorem}\label{thrm:Recur4InfSPBoundNBK}
Let  $\maxDeg \geq 1$, $\hspradius_G \geq 1$,    $\epsilon\in (0,1)$ and  $\beta,\gamma, \lambda\in \mathbb{R}$  
be such that  $0\leq \beta\leq  \gamma$,  $\gamma >0$ and $\lambda>0$. 
Let   $G=(V,E)$ be of maximum degree $\maxDeg$, while   $ \NBMatrix_G$ has  spectral radius
$\hspradius_G$. Also,  consider $\mu_G$  the Gibbs distribution on $G$, specified by the parameters
$(\beta, \gamma, \lambda)$. 

For $\delta =\frac{1-\epsilon}{\hspradius_G}$,  suppose that  the set  of functions $\{ \logtrecur_d\}_{d\in [\maxDeg]}$ 
specified  with respect to  $(\beta,\gamma,\lambda)$ exhibits   $\delta$-contraction. Then, for  any 
$\Lambda\subset V$ and   any $\tau\in \{\pm 1\}^{\Lambda}$,  the pairwise influence matrix $\infmatrix^{\Lambda,\tau}_{G}$, 
induced by $\mu_G$, satisfies that 
\begin{align}\label{eq:thrm:Recur4InfSPBoundNBK}
\textstyle \spradius \left( \infmatrix^{\Lambda,\tau}_{G} \right) &\leq  
\textstyle  \lnorm  \left( \bI -   \frac{1-\epsilon}{\hspradius_G} \simpleadj_G +\left( \frac{1-\epsilon}{\hspradius_G}\right)^2(\bD-\bI) \right)^{-1}   \rnorm_2, 
\end{align}
where $\simpleadj_G$ is the adjacency matrix of $G$ and $\bD$  is the $V\times V$ diagonal matrix such 
that for every $u\in V$ we have $\bD(u,u)={\tt degree}(u)$.
\end{theorem}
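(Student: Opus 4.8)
The plan is to mimic the strategy behind Theorem \ref{thrm:Recurrence4InfluenceEigenBound} but replace the counting of paths in $\Tsaw(w)$ by a count of non-backtracking walks, so that the adjacency matrix $\simpleadj_G$ is replaced by powers of $\NBMatrix_G$ and then translated back into $V\times V$ quantities via Ihara-type identities. First I would recall, as in Section \ref{sec:TopoligcalMethod101}, that every entry satisfies $\infmatrix^{\Lambda,\tau}_{G}(w,u)=\sum_P {\rm Infl}(P)$ where $P$ ranges over root-to-copy-of-$u$ paths in $\Tsaw(w)$, and that $\delta$-contraction gives $|{\rm Infl}(e)|\le\delta$ for every tree edge, hence $|{\rm Infl}(P)|\le\delta^{|P|}$. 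Therefore
\begin{align}\nonumber
\left|\infmatrix^{\Lambda,\tau}_{G}(w,u)\right| \le \sum_{\ell\ge 0}\delta^\ell\cdot\big(\text{\# length-}\ell\text{ paths in }{\rm SAW}\text{ from }w\text{ to }u\big).
\end{align}
The key combinatorial observation — already flagged in Section \ref{sec:HighOverview} — is that every path in ${\rm SAW}$ is a non-backtracking walk in $G$, so the number of such length-$\ell$ paths is at most the number of length-$\ell$ non-backtracking walks from $w$ to $u$. Summing the latter over the two orientations of the first and last edges and using \eqref{eq:NoOfWalksVSH2L}, this count is controlled by entries of $\NBMatrix^{\ell-1}_G$ (with appropriate incidence matrices $\vtooedge,\oedgetov$ converting between vertices and oriented edges, as in Corollary \ref{cor:MaxElement4NB}). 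This yields an entrywise bound $|\infmatrix^{\Lambda,\tau}_{G}|\le \bM$ for an explicit nonnegative matrix $\bM$ built from $\sum_{\ell\ge 0}\delta^\ell\NBMatrix^\ell_G$-type sums, and by Lemma \ref{lemma:MonotoneVsSRad} we get $\spradius(\infmatrix^{\Lambda,\tau}_{G})\le\spradius(\bM)$.

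Next I would turn the non-backtracking sum into a $V\times V$ expression. The generating-function version of Ihara's theorem (or the ``Bass'' determinant/resolvent identity) states that
\begin{align}\nonumber
\sum_{\ell\ge 0} t^\ell\,\NBMatrix^\ell_G \quad\text{is conjugate/related to}\quad \left(\bI - t\,\simpleadj_G + t^2(\bD-\bI)\right)^{-1},
\end{align}
valid for $|t|<1/\hspradius_G$. With $t=\delta=\frac{1-\epsilon}{\hspradius_G}<1/\hspradius_G$ this is exactly the matrix appearing inside the $2$-norm in \eqref{eq:thrm:Recur4InfSPBoundNBK}. So after the incidence-matrix bookkeeping collapses the oriented-edge indices back to vertex indices, the resulting $V\times V$ majorant $\bM$ has spectral radius at most $\lnorm(\bI-\delta\simpleadj_G+\delta^2(\bD-\bI))^{-1}\rnorm_2$; since this last matrix is symmetric its spectral radius equals its $2$-norm, which gives the stated bound. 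One point of care: the resolvent identity in its clean form concerns $\sum \NBMatrix^\ell$ acting on $\OrntEdges$, and one must check that the edge-to-vertex reduction (contracting by $\vtooedge,\oedgetov$) does not increase the spectral radius — this follows because $\oedgetov\,\vtooedge$ and the relevant products are nonnegative and the reduction is, up to the first/last steps of a walk, exactly the passage from non-backtracking walks to their vertex sequences.

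The main obstacle I anticipate is precisely this edge/vertex translation: making the incidence-matrix algebra rigorous so that the entrywise domination $|\infmatrix^{\Lambda,\tau}_{G}(w,u)|\le(\text{something})(w,u)$ holds with the ``something'' being exactly (a submatrix of) the Ihara resolvent, rather than merely being comparable to it up to boundary corrections at the first and last edge of each walk. The off-by-one in walk length (a length-$\ell$ path has $\ell$ edges but touches $\ell+1$ vertices, and $\NBMatrix_G$ is indexed by edges) must be absorbed cleanly, and one must confirm the PT-invariance \eqref{def:PTInvariance} or the symmetry of the final $V\times V$ matrix is what legitimizes replacing $\spradius$ by $\lnorm\cdot\rnorm_2$. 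Once that bookkeeping is pinned down — presumably via the auxiliary results of Section \ref{sec:SPComparisonBasedOnEntries}, in particular Corollary \ref{cor:MaxElement4NB} — the rest is the substitution $\delta=\frac{1-\epsilon}{\hspradius_G}$ and invoking Lemma \ref{lemma:MonotoneVsSRad}.
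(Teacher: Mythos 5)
Your overall route is the paper's route: dominate $\infmatrix^{\Lambda,\tau}_{G}$ (viewed, via Lemma \ref{lemma:InfluenceMatrixIsWalkMatrix}, as the walk-matrix $\Pweight_{\LamSAW,\bphi}$ with $\lnorm\bphi\rnorm_\infty\le\delta$ by $\delta$-contraction) by the constant-weight non-backtracking walk-matrix, and then identify that majorant with the Ihara-type resolvent $\left(\bI-\delta\simpleadj_G+\delta^2(\bD-\bI)\right)^{-1}$ for $\delta=\frac{1-\epsilon}{\hspradius_G}$, whose $2$-norm then bounds the spectral radius. Up to and including the appeal to Corollary \ref{cor:MaxElement4NB} — which is indeed where the dimension mismatch between the $(V\setminus\Lambda)$-indexed influence matrix and the $V$-indexed walk-matrix is absorbed (symmetry of $\Pweight_{\nbn,\bxi}$ plus interlacing inside Theorem \ref{thrm:Monotonicity4WeiightedMatrix}, not a bare application of Lemma \ref{lemma:MonotoneVsSRad}) — your steps coincide with the paper's.

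The gap is the final step, which you leave as ``conjugate/related to'' together with an unresolved worry that the edge-to-vertex contraction might increase the spectral radius. No such spectral comparison is needed, because the identity is exact at the vertex level, and this is precisely what the paper proves: let $\bW^{(k)}(u,w)$ be the number of non-backtracking walks of length exactly $k$ from $u$ to $w$; then $\bW^{(0)}=\bI$, $\bW^{(1)}=\simpleadj_G$, and for $k\ge 2$ one has the standard recursion $\bW^{(k)}=\simpleadj_G\bW^{(k-1)}-(\bD-\bI)\bW^{(k-2)}$. Since $\delta\hspradius_G=1-\epsilon<1$, the series $\sum_{k\ge 0}\delta^k\bW^{(k)}$ converges and equals $\left(\bI-\delta\simpleadj_G+\delta^2(\bD-\bI)\right)^{-1}$, and this series is exactly the majorant $\Pweight_{\nbInf,\bar{\bpsi}}=\bI+\delta\,\vtooedge\,(\bI-\delta\NBMatrix_G)^{-1}\oedgetov$ furnished by Lemma \ref{lemma:WalkMatrixVsHashimoto}. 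So the majorant \emph{is} the resolvent, not merely comparable to it; as written, your proposal does not establish this identity, and without it (or an equivalent Bass/Ihara computation carried out for the $V\times V$ walk-count matrices) the stated bound does not follow. Two smaller points: the first/last-edge bookkeeping involves no summation over ``two orientations'' — a length-$\ell$ non-backtracking walk from $w$ to $u$ determines a unique pair (first oriented edge leaving $w$, last oriented edge entering $u$), which is exactly what $\vtooedge\cdot\NBMatrix_G^{\ell-1}\cdot\oedgetov$ records; and the conclusion only needs $\spradius\le\lnorm\cdot\rnorm_2$ from \eqref{eq:SpRadiusVsMNorm}, so the symmetry of the resolvent, while true, is not essential.
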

The proof of Theorem \ref{thrm:Recur4InfSPBoundNBK} appears in Section \ref{sec:thrm:Recur4InfSPBoundNBK}

We need to remark that for the bound on $ \spradius \left( \infmatrix^{\Lambda,\tau}_{G} \right)$ we get from 
Theorem \ref{thrm:Recur4InfSPBoundNBK}  we don't have guarantees that is always  bounded.
The assumption of $\delta$-contraction for $\delta =\frac{1-\epsilon}{\hspradius_G}$ implies that the quantity 
 on the  r.h.s. of \eqref{eq:thrm:Recur4InfSPBoundNBK} 
is finite, however it might be increasing with  $n$.  

It  is interesting to compare the bounds  on spectral radius  we get from  Theorems \ref{thrm:Recur4InfSPBoundNBK} 
and  \ref{thrm:Recurrence4InfluenceEigenBound}.
It is not obvious at all from its  statement, but for the same parameters $\beta$ of the Ising model 
the bound we get  from Theorem \ref{thrm:Recur4InfSPBoundNBK}  is {\em at most} that we get for
 from   Theorem \ref{thrm:Recurrence4InfluenceEigenBound}.  For further discussion on this matter, the reader is referred  to
the end  of Section \ref{sec:SPComparisonBasedOnEntries}.

In light of all the above theorems, one might be tempted to use a condition which is weaker than $\delta$-contraction, e.g., 
consider norms of $\nabla \logtrecur_d$  different  than $\ell_{\infty}$. 
This is a perfectly reasonable idea and  has been investigated in the literature in various different settings. 
Interestingly, with this approach it is natural to use  the so-called {\em potential method}.
In what follows, we introduce   techniques that exploit  the potential method to derive further 
results, somehow, stronger than  those we have so far.

\newcommand{\potSpace}{\Sigma}

\subsection{A second attempt.} 
Perhaps it is interesting to mention that using Theorem \ref{thrm:Recurrence4InfluenceEigenBound}
and working as in the proof of Theorem \ref{thrm:Ising4SPRadius} one can retrieve the rapid mixing 
results for the Hard-core model  in  \cite{Hayes06}.
In order to get improved results for the Hard-core mode, we  make use of potential  functions, while  we exploit results   
from \cite{ConnectiveConst}.

Let $\potSpace$ be  the set of functions $F:[-\infty,+\infty]\to(-\infty,+\infty)$ which are {\em differentiable} 
and {\em increasing}.

\begin{definition}[$(s,\delta, c)$-potential]\label{def:GoodPotential}
Let $s\geq 1$,  allowing  $s=\infty$,  $\delta,c>0$ and  let the integer $\maxDeg\geq 1$. 
Also, let  $\beta,\gamma, \lambda\in \mathbb{R}$  be  such that  $0\leq \beta\leq  \gamma$, 
$\gamma >0$ and $\lambda>0$.

Consider $\{ \logtrecur_d\}_{d\in [\Delta]}$, defined in \eqref{eq:DefOfH} with respect to 
$(\beta,\gamma,\lambda)$. The function $\potF\in \potSpace$, with image $S_{\potF}$,  is called $(s, \delta, c)$-potential  
if it satisfies   the following two conditions:  
\begin{description}
\item[Contraction]
For    $d\in [\maxDeg]$,  for    $(\tilde{\bf y}_1,  \ldots, \tilde{\bf y}_d)\in  (S_{\potF})^d$,  
 and ${\bf m}=({\bf m}_1,  \ldots, {\bf m}_d)\in \mathbb{R}^{d}_{ \geq 0}$ we have that 
\begin{align} \label{eq:contractionRelationPF}
 \xdpotF\left( \logtrecur_d({\bf y}_1,  \ldots, {\bf y}_d) \right)  \cdot \sum^{d}_{j=1}
\frac{ \left| \dlogtrecur\left({\bf y}_j \right)\right|}{ \xdpotF\left( {\bf y}_j \right)}  \cdot  {\bf m}_j  \leq  \delta^{\frac{1}{s}} \cdot \lnorm {\bf m}  \rnorm_{s},
\end{align}
where $\xdpotF=\potF'$,  ${\bf y}_j=\potF^{-1}(\tilde{\bf y}_j)$,    while  $\dlogtrecur(\cdot)$  is the function defined in \eqref{eq:DerivOfLogRatio}.
\item[Boundedness] We have that
\begin{align}\label{eq:BoundednessPF}
 \max_{{\bf z}_1,{\bf z}_2\in \ratiorange}\left\{ \xdpotF({\bf z}_1) \cdot \frac{|\dlogtrecur({\bf z}_2)|}{\xdpotF({\bf z}_2)} \right\} & \leq c.
\end{align}
\end{description}
\end{definition}
Recall that the set $\ratiorange$ in the index of $\max$ in \eqref{eq:BoundednessPF}, is  defined in \eqref{eq:DefOfRatiorange}
and  includes all the values of the   log-ratios for a vertex with $d$ children, where $d\in [\maxDeg]$.

The notion of  $(s,\delta, c)$-potential function  we have above,  is a generalisation of the 
so-called ``$(\alpha,c)$-potential function"  that was introduced   in  \cite{VigodaSpectralInd}. Note that 
the notion of $(\alpha,c)$-potential function  implies the use of the   $\ell_1$-norm in the analysis. 
The setting we consider here is more general. The condition in \eqref{eq:contractionRelationPF},  
somehow, implies that  we are using the $\ell_{r}$-norm in our analysis, where $r$ is the H\"older conjugate  of the
parameter $s$ in  the $(s,\delta, c)$-potential function\footnote{That is, $r$ satisfies that  $r^{-1}+s^{-1}=1$. }.

\begin{theorem}\label{thrm:Recurrence4InfluenceEigenBoundNonLinear}
Let  $\maxDeg\geq 1$, $\spradius_G\geq 1$,   $s\geq 1$,  allowing $s=\infty$,  let $\epsilon\in (0,1)$
and $\zeta>0$. Also, let   $\beta,\gamma, \lambda\in \mathbb{R}$ be such that  $\gamma >0$,
$0\leq \beta\leq  \gamma$  and $\lambda>0$.  Consider the graph $G=(V,E)$ of  maximum degree $\maxDeg$,
while   $\simpleadj_G$  is of  spectral radius $\spradius_G$. Consider, also,  $\mu_G$  the Gibbs
distribution on $G$ specified by the parameters $(\beta, \gamma, \lambda)$.

For $\delta= \frac{1-\epsilon}{\spradius_{G}}$ and $c=\frac{\zeta}{\spradius_G}$, suppose that there
is a  $(s,\delta,c)$-potential  function $\potF$ with respect to  $(\beta,\gamma,\lambda)$. Then, for any
$\Lambda\subset V$, for any  $\tau\in \{\pm 1\}^{\Lambda}$,   the influence matrix
$\infmatrix^{\Lambda,\tau}_{G}$, induced by $\mu_G$, satisfies that 
\begin{align}\label{eq:thrm:Recurrence4InfluenceEigenBoundANonLin}
\spradius  \left( \infmatrix^{\Lambda,\tau}_{G} \right) &\leq
1+ \zeta \cdot (1-(1-\epsilon)^s)^{-1} \cdot \left( {\maxDeg}/{\spradius_G} \right)^{1-\frac{1}{s}}.
\end{align}
\end{theorem}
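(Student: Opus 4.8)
The plan is to bound the spectral radius of $\infmatrix^{\Lambda,\tau}_{G}$ by exhibiting a similarity transformation under which the $\ell_\infty$-operator norm is controlled, exactly as advertised in the high-level overview. Concretely, let $\bD$ be the $V\times V$ diagonal matrix with $\bD(w,w)=\left(\maxeigenv(w)\right)^{1/t}$ for a suitable exponent $t$ (to be matched with the H\"older conjugate of $s$, i.e.\ $t=s$ is the natural choice, so that $t^{-1}+r^{-1}=1$ with $r$ the conjugate of $s$). Since $G$ is connected, Perron--Frobenius gives $\maxeigenv(w)>0$ for all $w$, so $\bD$ is non-singular and restricting it to $V\setminus\Lambda$ keeps it non-singular; hence $\spradius(\infmatrix^{\Lambda,\tau}_{G})=\spradius\!\left(\bD^{-1}\infmatrix^{\Lambda,\tau}_{G}\bD\right)\le \lnorm \bD^{-1}\infmatrix^{\Lambda,\tau}_{G}\bD\rnorm_{\infty}$ by \eqref{eq:SpRadiusVsMNorm}. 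So it suffices to show the right-hand side of \eqref{eq:thrm:Recurrence4InfluenceEigenBoundANonLin} bounds this transformed $\infty$-norm.

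The first main step is the topological identity (the walk-matrix / $\Tsaw$ representation recalled in the overview): for $w,u\in V\setminus\Lambda$,
\begin{align}\nonumber
\bD^{-1}\infmatrix^{\Lambda,\tau}_{G}\bD(w,u)=\left(\frac{\maxeigenv(u)}{\maxeigenv(w)}\right)^{1/t}\sum_{P}{\rm Infl}(P),
\end{align}
with $P$ ranging over root-to-copy-of-$u$ paths in $\Tsaw(w)$. Grouping paths by length $\ell$, I would estimate the contribution of each level separately. For a fixed source $w$, I decompose the absolute row sum as $1+\sum_{\ell\ge 1}\cQ^{(\ell)}_{\rm root}(w)$, where $\cQ^{(\ell)}_{\rm root}(w)$ collects the $\left(\maxeigenv(\cdot)/\maxeigenv(w)\right)^{1/t}$-weighted absolute influences over length-$\ell$ paths. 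The heart of the argument is the recursion delivered by the $(s,\delta,c)$-potential: writing $\cQ_x$ for the analogous weighted absolute-influence sum from an internal vertex $x$ at level $h$ down to level $\ell$, the contraction condition \eqref{eq:contractionRelationPF} — applied with ${\bf m}_j=\cQ_{z_j}$ over the children $z_j$ of $x$ — yields $\left(\cQ_x\right)^{t}\le \delta\sum_{z\sim_{\Tsaw} x}\left(\cQ_z\right)^{t}$ for $0<h<\ell$, with the boundary value $\left(\cQ_x\right)^t=\maxeigenv(x)$ at level $\ell$. Iterating this down the tree and repeatedly using the eigenvector identity $\sum_{y\in\Gamma(z)}\maxeigenv(y)=\spradius(\simpleadj_G)\,\maxeigenv(z)$ (valid since, by Perron--Frobenius, $\eigenval_{\rm max}(\simpleadj_G)=\spradius(\simpleadj_G)$) gives $\left(\cQ_x\right)^t\le\left(\spradius(\simpleadj_G)\,\delta\right)^{\ell-h}\maxeigenv(x)$ for any internal $x$ at level $h$.

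The last step handles the root, where the boundedness condition \eqref{eq:BoundednessPF} enters (the root edge's influence is not bundled into the $\cQ$-recursion since the potential-transformed quantity at the root is $\sum_{z\sim w}(\maxeigenv(z)/\maxeigenv(w))^{1/t}$ times a single influence factor, controlled by $c$). Combining, one gets $\cQ^{(\ell)}_{\rm root}(w)\le c\,(\delta\,\spradius(\simpleadj_G))^{(\ell-1)/t}\cdot \maxeigenv(w)^{-1/t}\sum_{z\sim w}\maxeigenv(z)^{1/t}$, and the rightmost factor is at most $\maxDeg^{1-1/t}\spradius(\simpleadj_G)^{1/t}$ by H\"older's inequality (with conjugate exponents $t$ and $t/(t-1)$) together with the same eigenvector identity. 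Plugging $\delta=\frac{1-\epsilon}{\spradius_G}$ and $c=\frac{\zeta}{\spradius_G}$ makes $\delta\,\spradius(\simpleadj_G)=1-\epsilon$, so $\sum_{\ell\ge 1}\cQ^{(\ell)}_{\rm root}(w)\le \zeta\,(\maxDeg/\spradius_G)^{1-1/t}\sum_{\ell\ge 1}(1-\epsilon)^{(\ell-1)}$ up to the correct reindexing, and the geometric series sums to $(1-(1-\epsilon)^s)^{-1}$ when the exponent bookkeeping is done with $t=s$; adding the level-$0$ contribution of $1$ gives exactly \eqref{eq:thrm:Recurrence4InfluenceEigenBoundANonLin}, uniformly in $w$, hence in $\Lambda$ and $\tau$.

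I expect the main obstacle to be the bookkeeping that turns the per-level geometric estimate into the precise exponent structure $(1-(1-\epsilon)^s)^{-1}\cdot(\maxDeg/\spradius_G)^{1-1/s}$ — in particular getting the interaction between the exponent $1/t$ in $\bD$, the power $t$ in the potential recursion, and the H\"older step to line up so that $s$ plays all three roles consistently, and verifying that the contraction inequality \eqref{eq:contractionRelationPF} can indeed be invoked with the $\cQ_{z_j}$'s as the vector ${\bf m}$ at every internal node (which requires the log-ratios appearing as arguments to lie in $S_{\potF}$, i.e.\ a domain/invariance check that all intermediate $\logtrecur_d$-values stay in the allowed range $\ratiorange\subseteq$ image of $\potF$).
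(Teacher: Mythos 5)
Your plan is correct and follows essentially the same route as the paper: there, $\infmatrix^{\Lambda,\tau}_{G}$ is identified with the walk-matrix $\Pweight_{\LamSAW,\bphi}$ (Lemma \ref{lemma:InfluenceMatrixIsWalkMatrix}), the $(s,\delta,c)$-potential function is turned into a potential vector on the SAW-tree edges, and the norm $\lnorm \cdot \rnorm_{\SpGMatrix_{\cP},\frac{1}{s},\infty}$ is bounded through the walk-vector recursion (Theorems \ref{thrm:MySpectralMatrixNorm} and \ref{thrm:NormRedaux2WalkVector}), which is precisely your diagonal conjugation by powers of the Perron eigenvector, level-by-level row-sum decomposition on $\Tsaw(w)$, potential-driven recursion combined with the eigenvector identity, and the H\"older/concavity step at the root. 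One minor caveat, shared with the paper's own statement: with $\delta\spradius_G=1-\epsilon$ the geometric series actually evaluates to $\left(1-(1-\epsilon)^{1/s}\right)^{-1}$ rather than $\left(1-(1-\epsilon)^{s}\right)^{-1}$, so the constant you (and the theorem) quote requires this exponent slip to be reconciled, though it does not affect the qualitative conclusion.
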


Note that the bound  in \eqref{eq:thrm:Recurrence4InfluenceEigenBoundANonLin} includes the maximum
degree  $\maxDeg$. We can, easily, remove the dependency on $\maxDeg$ by using that  $\sqrt{\maxDeg} \leq \spradius_G$ 
and get 
\begin{align} \nonumber 
\spradius  \left( \infmatrix^{\Lambda,\tau}_{G} \right) &\leq 
1+ \zeta \cdot (1-(1-\epsilon)^s)^{-1}  \cdot \left( {\spradius_G} \right)^{1-\frac{1}{s}}.
\end{align}
We use Theorem \ref{thrm:Recurrence4InfluenceEigenBoundNonLinear} in order to prove 
Theorem \ref{thrm:HC4SPRadius} for the Hard-core model. 

Similar to Theorem \ref{thrm:Recurrence4InfluenceEigenBound}, the above result  applies to a general 
Gibbs distribution, i.e.,  not necessarily only to  the Hard-core model. In Section \ref{sec:GeneralGibbs}, in 
the Appendix, we show how Theorem \ref{thrm:Recurrence4InfluenceEigenBoundNonLinear} implies rapid mixing 
for Glauber dynamics on  general Gibbs distribution.

\section{The topological method - Basic Concepts}\label{sec:TopologicalMethod}\label{sec:TopoligcalMethod101}

\subsection{Walk-Trees}\label{sec:WalkTrees}

In this section, we introduce the notion of  {\em walk-tree}.  Walk-trees are  topological constructions which 
are defined with  respect to the   graph   $G=(V,E)$ and a set of walks $\cP$ in this graph. 
This notion generalises constructions  from the {algebraic graph theory}   and elsewhere such as the 
{\em tree of self-avoiding walks}, or  the {\em path-trees} (introduced  by Godsil, e.g., see  
\cite[Chapter 6]{GodsilBookComb}), or the   {\em universal cover}  (e.g. see  \cite{InterlacingFam}) etc.

In the graph $G$, a  {\em walk}  $P$ is a sequence  of  vertices  $w_0, \ldots, w_{\ell}$ 
such that each consecutive  pair $(w_{i-1},  w_{i})$ is an edge in $G$. The length of the  walk 
$P$, denoted as $|P|$, is equal to the number of consecutive  pairs $(w_{i-1},  w_{i})$. 
Unless otherwise specified  all the walks we consider will be of {finite length}. 
With the above definition, we consider the  single vertex to be a walk of  zero length.

Any two walks  are considered  to be {\em adjacent} with each other if and only if one  of them extends 
the other,  e.g.  the  walks    $P'=w_0, w_1, \ldots, w_{\ell}$ and $P=w_0, w_1, \ldots, w_{\ell}, w_{\ell+1}$ 
are adjacent. Furthermore,  the set $\cP$ of walk  in $G$ is called   {\em connected},  
if for every walk $P\in \cP$ of  length $\ell\geq 1$, there exists another  walk $P'\in \cP$ of length $\ell-1$, such 
that $P$ and $P'$ are adjacent.

The above definition implies that if the non-empty set $\cP$ is connected, then it should contain at least one
walk of length zero. Particularly,  the following holds for $\cP$: if there is a vertex $r$ and a path in  
$\cP$ that  emanates from $r$,  then $\cP$ must include the path of length zero that consists only
of the vertex $r$.

Let  $\cP$ be a  connected set of walks in $G$ and let  the vertex $r\in V$. Suppose  that there  is at least one 
walk in $\cP$  that starts from $r$.  We define $\walkT_{\cP}(r)$, the {\em walk-tree}  induced by  $(\cP, r)$, 
as follows:  the vertices  of $\walkT_{\cP}(r)$ correspond   to the walks in $\cP$ that emanate from the 
vertex $r$.  If two walks in $\cP$ are adjacent, then their corresponding vertices in $\walkT_{\cP}(r)$ are 
adjacent, too. The root of $\walkT_{\cP}(r)$ is the vertex  that corresponds to the walk that includes only 
the vertex $r$.

Since we always assume that $\cP$ is a finite set,   the corresponding walk-trees that are generated by
$\cP$ are finite graphs.   When $\cP$ does not include a path that starts from the vertex $r$, then we follow the 
convention that $\walkT_{\cP}(r)$ is the empty graph.

There are natural extensions of the notion of walk-tree $\walkT_{\cP}(r)$ when the set of walks $\cP$ is not 
connected.  We don't consider  these cases here, as we will always deal with sets of walks 
that are connected.  Unless otherwise specified  when we refer to a set of walks $\cP$ we 
{always} assume that it is connected.

\newcommand{\WEdges}{{\tt E}}
\newcommand{\WVertices}{\cp}

For each  $u\in V$, we let $\cp_{\cP, r}(u)$ be the subset  of vertices in $\walkT_{\cP}(r)$ which correspond to 
walks $w_0, \ldots, w_{\ell}$ in $\cP$ such that $w_{\ell}=u$ . We  refer to  $\cp_{\cP, r}(u)$ as the 
{\em set of  copies}  of vertex $u$ in $\walkT_{\cP}(r)$. Note that it could be that $\cp_{\cP, r}(u)=\emptyset$. 
Also, we let 
\begin{align}\nonumber
\WVertices_{\cP, r}=\textstyle\cup_{u\in V}\cp_{\cP, r}(u).
\end{align}
That is,   $\cp_{\cP, r}$ corresponds to the vertex set of $\walkT_{\cP}(r)$. We also let $\WEdges_{\cP, r}$ be 
the set of edges of $\walkT_{\cP}(r)$.

We consider   various  kinds of walk-trees  in order to prove our results.   In what follows we give  examples 
of  walk-trees which  we use in  the analysis. In the  first example  we have the tree of self-avoiding walks, 
encountered Section \ref{sec:HighOverview}. We present it now using the terminology of 
walk-trees.

\newcommand{\saw}{{\rm SAW}}

\subsubsection*{$\saw$ walk-tree.}
The walk $w_0, \ldots, w_{\ell}$ in $G$ is called  {\em self-avoiding}, if we have that $w_i\neq w_j$ for any  $i\neq j$.
A single vertex is also considered to be a self-avoiding walk. 

Let $\saw$ be the set  of  the  sequences of vertices $w_0, \ldots, w_{\ell}$, for $\ell\geq 0$, such that  one of 
the following two holds:
\begin{enumerate}
\item  $w_0, \ldots, w_{\ell}$ is a self-avoiding walk, 
\item  $w_0, \ldots, w_{\ell-1}$ is a self-avoiding walk, while there is $0\leq j\leq \ell-3$ such that $w_{\ell}=w_{j}$.
\end{enumerate}
It is straightforward that the longest sequence in $\saw$ has length $n$, i.e., the number of vertices in $G$. In that respect $\saw$ is a finite set. 
Furthermore, it is elementary to verify that for any $\ell\geq 1$ and $P=w_0, \ldots, w_{\ell-1}, w_{\ell}$ such that $P\in \saw$, 
the walk $P'=w_0, \ldots, w_{\ell-1}$ is also in $\saw$.  This implies that $\saw$ is connected. 

The $\saw$ walk-tree $\Tsaw(r)$,  (or the tree of self-avoiding walks that starts from $r$) is  the walk-tree that is induced by 
the set of walks $\saw$ and  vertex $r\in V$.

\newcommand{\nb}{\textrm{NB}}
\newcommand{\nbk}{\textrm{NB-$k$}}
\newcommand{\nbn}{\textrm{ NB-$n$}}
\newcommand{\nbInf}{\textrm{ NB-$\infty$}}

\subsubsection*{Non-backtracking walk-tree.}
Recall that the walk $w_0, \ldots, w_{\ell}$ in $G$ is called  {\em non-backtracking}, if we have that 
$w_{i-1}\neq w_i$ for all $i$.  A single vertex is also considered to be a non-backtracking walk. 

For  integer $k\geq 0$, we let $\nb$-$k$ be the set of  all  non-backtracking 
walks in $G$ which  are of length at most $k$. It is elementary to verify that $\nbk$  is connected, i.e.,  
for any $\ell\geq 1$  and  $P=w_0, \ldots, w_{\ell-1}, w_{\ell}$ such that $P\in\nbk$,   
the walk $P'=w_0, \ldots, w_{\ell-1}$ also belongs to  $\nbk$.

The  $\nbk$ walk-tree  $\walkT_{\nbk}(r)$  corresponds to the walk-tree that is induced by the 
set of walks $\nbk$  and   vertex $r\in V$.

A variant of the non-backtracking walk-tree  that is commonly used in the algebraic graph theory is the 
tree  $\walkT_{\nbk}(r)$, for $k= \infty$. This is known as the {\em universal cover}  of $G$,  e.g., see 
\cite{GodsilBookComb}.    Note that the universal cover is an infinite walk-tree.

\subsubsection*{$\KWalks$ walk-tree.}
Another  kind of walk-tree that we  consider here is  what we call   $\KWalks$ walk-tree, where $k\geq 0$ is an  integer 
parameter.  

 We let    $\KWalks$ be the set that contains all the  walks in the graph $G$ that  have  
length at most $k$.
Arguing as in the previous two examples, we have that   $\KWalks$ is connected.

The  $\KWalks$ walk-tree  $\Tall(r)$,  corresponds to the walk-tree that is induced by the set $\KWalks$  and 
the vertex $r\in V$.

\subsubsection*{Relations between walk-trees}
In our analysis it is common that we consider $\cP$ and $\cQ$  two  sets of walks in the graph $G=(V,E)$ and
we need to deduce a relationship between the walk-trees $\walkT_{\cQ}(w)$  and $\walkT_{\cP}(w)$, for some 
vertex $w\in V$.

Recall that   we denote $\WVertices_{\cP, w}, \WEdges_{\cP, w}$  the sets of vertices and edges, respectively, 
of the walk-tree $\walkT_{\cP}(w)$.  Similarly,  $\WVertices_{\cQ, w}, \WEdges_{\cQ, w}$  for the walk-tree $\walkT_{\cQ}(w)$.

Each element in $\WVertices_{\cP, w}$ and $\WVertices_{\cQ, w}$ corresponds to a walk in the graph $G$. 
If there are  elements $w_1\in  \WVertices_{\cP, w}$ and   $w_2\in \WVertices_{\cQ, w}$
that correspond to the same walk $P$ in the graph $G$, then we consider that $w_1$ and $w_2$ are {\em identical}. 
This allows us  to define the standard set relations  for  
$\WVertices_{\cP, w}$ and $\WVertices_{\cQ, w}$, e.g., containment, intersection, etc.

We work similarly for  $\WEdges_{\cP, w}$ and $\WEdges_{\cQ, w}$. Each element in  
$\WEdges_{\cP, w}$ (and similarly $\WEdges_{\cQ, w}$) corresponds to an edge that extends 
 walk $P'$  to its adjacent walk  $P$,  in the graph $G$. If there are elements $e_1\in  \WEdges_{\cP, w}$ 
and  $e_2\in \WEdges_{\cQ, w}$ that both correspond to the same edge that extends  walk 
$P'$ to its adjacent walk $P$, then we consider $e_1$ and $e_2$ to be identical.  As before, this 
allows us to define the standard set relations  for   $\WEdges_{\cP, w}$ and $\WEdges_{\cQ, w}$.

\begin{lemma}\label{lemma:StrongSubtreeRelation}
Let    $\cP$ and $\cQ$ be  two  sets of walks in   $G=(V,E)$
such  that  $\cQ\subseteq \cP$.  Suppose that for the vertex $w\in V$ the walk-trees $\walkT_{\cP}(w)$ and 
$\walkT_{\cQ}(w)$ are nonempty.  
Then, we have that $\walkT_{\cQ}(w)$, is a subtree of  $\walkT_{\cP}(w)$.
\end{lemma}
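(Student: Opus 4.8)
The plan is to show that the vertex set of $\walkT_{\cQ}(w)$ is a subset of the vertex set of $\walkT_{\cP}(w)$, and that the edge set likewise embeds, so that $\walkT_{\cQ}(w)$ is literally an induced (in fact, a full rooted) subtree. First I would unwind the definitions: the vertices of $\walkT_{\cQ}(w)$ are exactly the walks in $\cQ$ that emanate from $w$, and similarly for $\cP$. Since $\cQ\subseteq\cP$, every walk in $\cQ$ emanating from $w$ is also a walk in $\cP$ emanating from $w$; under the identification of vertices in different walk-trees that correspond to the same walk in $G$ (as set up in the paragraph preceding the lemma), this gives $\WVertices_{\cQ,w}\subseteq\WVertices_{\cP,w}$. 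In particular both walk-trees being nonempty means both contain the length-zero walk consisting of $w$ alone, and this common vertex is the root of each; so the roots are identified.

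Next I would handle the edges. An edge of $\walkT_{\cQ}(w)$ joins two adjacent walks $P'$ and $P=P' \frown x$ both lying in $\cQ$ (and emanating from $w$). Since $\cQ\subseteq\cP$, both $P'$ and $P$ lie in $\cP$, hence the corresponding pair of vertices is adjacent in $\walkT_{\cP}(w)$; under the edge-identification convention this shows $\WEdges_{\cQ,w}\subseteq\WEdges_{\cP,w}$. Conversely, I should note that $\walkT_{\cP}(w)$ restricted to the vertex set $\WVertices_{\cQ,w}$ has exactly the edges of $\walkT_{\cQ}(w)$: if $P',P\in\cQ$ are adjacent walks from $w$, the unique edge between them in $\walkT_{\cP}(w)$ is precisely the one present in $\walkT_{\cQ}(w)$. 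Thus the embedding is an induced-subgraph embedding, and since both are trees rooted at the common root $w$, $\walkT_{\cQ}(w)$ is a subtree of $\walkT_{\cP}(w)$ in the rooted sense.

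I do not anticipate a serious obstacle here; the lemma is essentially a bookkeeping exercise once the identification conventions for vertices and edges of distinct walk-trees are taken seriously. The one point requiring a little care is to make sure the word ``subtree'' is justified as opposed to merely ``subgraph'': one should observe that $\walkT_{\cQ}(w)$ is connected (it is itself a walk-tree, hence a tree, because $\cQ$ is assumed connected as a set of walks), contains the root, and that its vertex set is downward-closed is not needed --- what is needed is just that the inclusion map preserves adjacency and the root, which the two displayed containments supply. So the structure of the write-up would be: (1) recall the vertex/edge description of both walk-trees and the identification convention; (2) deduce $\WVertices_{\cQ,w}\subseteq\WVertices_{\cP,w}$ from $\cQ\subseteq\cP$; (3) deduce $\WEdges_{\cQ,w}\subseteq\WEdges_{\cP,w}$ and that these are exactly the induced edges; (4) note both roots coincide and conclude.
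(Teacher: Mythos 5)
Your proposal is correct and follows essentially the same route as the paper's proof: deduce $\WVertices_{\cQ,w}\subseteq\WVertices_{\cP,w}$ and $\WEdges_{\cQ,w}\subseteq\WEdges_{\cP,w}$ directly from $\cQ\subseteq\cP$ via the identification convention for vertices and edges of different walk-trees, and conclude. The extra remarks on the induced-subgraph property and the common root are harmless additions but not needed beyond what the paper's argument already establishes.
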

The  proof of Lemma \ref{lemma:StrongSubtreeRelation} appears in Section \ref{sec:lemma:StrongSubtreeRelation}.

\subsection{Walk-Matices}\label{sec:DefOfWalkMatrix}

For a set of walks  $\cP$ in  $G=(V,E)$,  we let $V_{\cP}\subseteq V$ be the set of vertices $r\in V$ 
for which  there is at least one  path $P\in \cP$ such that either $P$ start from vertex $r$, or $P$ ends at vertex $r$. 

For every $r\in V_{\cP}$, recall that  $\WEdges_{\cP, r}$ is the set of edges of the walk-tree $\walkT_{\cP}(r)$. 
We let
\begin{align}\nonumber 
\WEdges_{\cP}=\cup_{r\in V_{\cP}}\WEdges_{\cP, r}.
\end{align}
Note that it is possible that an edge $e$  appears in multiple walk trees. In $\WEdges_{\cP}$ 
we treat each occurrence of $e$ in the sets $\WEdges_{\cP, r}$ as a different element. 
That is, each element in $\WEdges_{\cP}$ is identified by the edge $e$ and the tree that this
edge belongs. 

We  let the set of weights $\bxi \in \mathbb{R}^{\WEdges_{\cP}}$. That is,  $\bxi$ assigns to each 
edge  $e\in \WEdges_{\cP}$ weight $\bxi(e)$.  Note that the definition of $\WEdges_{\cP}$ allows
the same edge in different trees  to take different weights.

We define the  {\em walk-matrix}   $\Pweight_{\cP, \bxi} \in \mathbb{R}^{V_{\cP}\times V_{\cP}}$  such that
for every pair of  vertices $r, u\in V_{\cP}$ we have that
\begin{align}\label{eq:WalkMatrixEntryAsSumOfPathweights}
\Pweight_{\cP, \bxi}(r,u) &=\textstyle \sum_{M}{\tt weight}(M),
\end{align} 
where $M$ varies over all paths in $\walkT_{\cP}(r)$ from the root to the set of copies of $u$ in the tree, 
i.e., the  set of vertices $\cp_{\cP, r}(u)$,  while 
\begin{align}\label{def:WeightPath}
{\tt weight}(M) &=\textstyle \prod_{e\in M}  \bxi(e).
\end{align}
That is, ${\tt weight}(M)$ is equal to the product of weights of the edges in $M$. When $M$ does not have 
any edges, i.e., $M$ is a single vertex path,  we follow the convention that ${\tt weight}(M)=1$.

For the sake of completeness  we consider  the following  extreme cases.  If there are $r, u\in V_{\cP}$ 
such that either $\cp_{\cP, r}(u)=\emptyset$,  
then  we have that $\Pweight_{\cP, \bxi}(r,u)=0$. Also,   if $\walkT_{\cP}(r)$  is empty, 
then $\Pweight_{\cP, \bxi}(r,u)=0$ for all $u\in V_{\cP}$.

We remark that  the walk-matrix $\Pweight_{\cP, \bxi}$ is a {\em general matrix} and 
{ does not}  necessarily have any special algebraic structure, e.g.,   symmetry,   irreducibility, or 
being positive etc. In what follows, we show some interesting cases of  walk-matrices, some of 
which we have already  encountered in our discussion, while some others we will
encounter in the analysis that follows.

For  integer $k\geq 0$, consider $\Pweight_{\KWalks, \bpsi}$, the walk-matrix that is induced 
by the set of walks $\KWalks$ and the constant vector $\bpsi\in \mathbb{R}^{\WEdges_{\KWalks}}$
such that for every  edge $e$ we have  $\bpsi(e)=\zeta$, where $\zeta>0$.

\begin{lemma}\label{lemma:WalkMatrixVsAdjacency}
For the matrix $\Pweight_{\KWalks, \bpsi}$ we define above
we have that
\begin{align}\nonumber 
\Pweight_{\KWalks, \bpsi} &= \textstyle \sum^{k}_{\ell=0} (\zeta \cdot \simpleadj_G)^{\ell}.
\end{align} 
\end{lemma}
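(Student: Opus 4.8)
The plan is to unwind the definition of the walk-matrix $\Pweight_{\KWalks,\bpsi}$ and match its entries against those of $\sum_{\ell=0}^k(\zeta\cdot\simpleadj_G)^\ell$. First I would recall that, by \eqref{eq:WalkMatrixEntryAsSumOfPathweights} and \eqref{def:WeightPath}, for each pair $r,u\in V$ the entry $\Pweight_{\KWalks,\bpsi}(r,u)$ equals $\sum_M {\tt weight}(M)$, where $M$ ranges over all root-to-copy-of-$u$ paths in the walk-tree $\Tall(r)$, and where ${\tt weight}(M)=\prod_{e\in M}\bpsi(e)=\zeta^{|M|}$ since every edge gets weight $\zeta$. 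Hence the sum splits by path-length: writing $N_\ell(r,u)$ for the number of length-$\ell$ paths from the root of $\Tall(r)$ to a copy of $u$, we get $\Pweight_{\KWalks,\bpsi}(r,u)=\sum_{\ell=0}^{k}\zeta^\ell\, N_\ell(r,u)$, where the outer sum stops at $k$ because $\KWalks$ contains only walks of length at most $k$, so $\Tall(r)$ has depth at most $k$.

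The key combinatorial step is to identify $N_\ell(r,u)$ with the number of length-$\ell$ walks from $r$ to $u$ in $G$, i.e.\ with $(\simpleadj_G^\ell)(r,u)$ by \eqref{eq:NoOfWalksVSA2L}. This is a direct consequence of the construction of the walk-tree $\Tall(r)$ from the set $\KWalks$ of all walks of length $\le k$: there is a natural bijection between vertices of $\Tall(r)$ at depth $\ell$ that are copies of $u$ and walks $w_0=r,w_1,\dots,w_\ell=u$ in $G$ of length $\ell$; moreover, by the definition of adjacency in a walk-tree, the unique root-to-vertex path in $\Tall(r)$ ending at such a copy has length exactly $\ell$. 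Indeed, since walk-trees are trees, there is exactly one path from the root to each vertex, so "number of length-$\ell$ paths to a copy of $u$" $=$ "number of copies of $u$ at depth $\ell$" $=$ $|\{\text{length-}\ell\text{ walks } r\to u \text{ in } G\}|=(\simpleadj_G^\ell)(r,u)$. I would make this precise by induction on $\ell$, using connectedness of $\KWalks$ so that every depth-$\ell$ vertex indeed has a depth-$(\ell-1)$ parent, and the defining adjacency rule that children of the copy corresponding to $w_0,\dots,w_{\ell-1}$ are exactly the copies corresponding to $w_0,\dots,w_{\ell-1},w_\ell$ for each neighbour $w_\ell$ of $w_{\ell-1}$ in $G$.

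Putting these together, for every $r,u\in V$ we obtain
\begin{align}\nonumber
\Pweight_{\KWalks,\bpsi}(r,u)=\sum_{\ell=0}^k \zeta^\ell (\simpleadj_G^\ell)(r,u)=\left(\sum_{\ell=0}^k(\zeta\cdot\simpleadj_G)^\ell\right)(r,u),
\end{align}
which is the claimed matrix identity (noting that $V_{\KWalks}=V$ since $G$ is connected and $k\ge 0$, so both matrices are indexed by the same set $V$; the $\ell=0$ term contributes the identity matrix on both sides, consistent with the convention ${\tt weight}(M)=1$ for the trivial path).

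The main obstacle I anticipate is purely bookkeeping: being careful that the "copies of $u$" in $\Tall(r)$ at a fixed depth are in exact bijection with walks of that length (no over- or under-counting), and handling the degenerate cases cleanly — the $\ell=0$ contribution, the entry $\Pweight_{\KWalks,\bpsi}(r,r)$ where the trivial path contributes $1$, and making sure the upper limit of the summation is genuinely $k$ rather than $n$ or $\infty$. None of this is deep, but it is where the proof needs to be written with care; the identification $N_\ell(r,u)=(\simpleadj_G^\ell)(r,u)$ is essentially the same counting argument already used in the excerpt around \eqref{eq:NoOfPATHSVsWALKS}, specialised to the case where the walk set $\KWalks$ imposes no constraint other than a length bound, so that the inequality there becomes an equality.
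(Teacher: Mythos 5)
Your proof is correct and follows essentially the same route as the paper's: both unwind the definition of $\Pweight_{\KWalks,\bpsi}(r,u)$ into a sum over walk lengths $\ell\le k$ of $\zeta^{\ell}$ times the number of length-$\ell$ walks from $r$ to $u$, and then invoke the standard fact \eqref{eq:NoOfWalksVSA2L} that this count is $(\simpleadj_G^{\ell})(r,u)$. The paper phrases the count as an indicator sum over vertex tuples while you phrase it as a bijection between depth-$\ell$ copies of $u$ in $\Tall(r)$ and length-$\ell$ walks, but this is the same counting argument.
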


The above results implies that $\sum^{k}_{\ell=0} (\zeta \cdot \simpleadj_G)^{\ell}$ can be regarded
as a walk-matrix.  The proof of Lemma \ref{lemma:WalkMatrixVsAdjacency} is standard. For the sake 
of completeness, the reader can find  its proof  in Section \ref{sec:lemma:WalkMatrixVsAdjacency}.

In the following example, we consider a walk-matrix that is related to the non-backtracking matrix $\NBMatrix_G$.
For  integer $k\geq 0$, consider $\Pweight_{\nbk, \bpsi}$, the walk-matrix  induced by the set of walks $\nbk$
and the constant vector $\bpsi\in \mathbb{R}^{\WEdges_{\KWalks}}$
such that for every  edge $e$ we have  $\bpsi(e)=\zeta$, where $\zeta>0$. 
%

\begin{lemma}\label{lemma:WalkMatrixVsHashimoto}
For the walk-matrix $\Pweight_{\nbk, \bpsi}$ we define above, we have that 
\begin{align}\label{eq:NBKVsHashimoto}
\Pweight_{\nbk, \bpsi} &= \textstyle \bI + \vtooedge \cdot \left( \sum^{k-1}_{\ell=0} \zeta^{\ell+1} \cdot \NBMatrix^{\ell}_G\right)\cdot \oedgetov,
\end{align} 
where $\NBMatrix_G$ is the $\OrntEdges\times \OrntEdges$ non-backtracking matrix of $G$, while 
$\vtooedge$, $\oedgetov$ are   $V\times \OrntEdges$ and $\OrntEdges\times V$  matrices, respectively,   such that 
for any $v\in V$ and for any $(x,z)\in \OrntEdges$ we have 
\begin{align}\label{def:HasVertex2EdgeMatrices}
\vtooedge(v,(x,z)) &= {\bf 1}\{x=v\}
& \textrm{and} &&
\oedgetov((x,z), v) &=  {\bf 1}\{z=v\}.
\end{align}
\end{lemma}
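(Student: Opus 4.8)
\noindent\emph{Proof plan.} The plan is to verify \eqref{eq:NBKVsHashimoto} by computing both sides entry by entry. Fix vertices $r,u\in V$; observe that $V_{\nbk}=V$, since the length-$0$ walk at any vertex lies in $\nbk$. By \eqref{eq:WalkMatrixEntryAsSumOfPathweights} and \eqref{def:WeightPath}, the entry $\Pweight_{\nbk,\bpsi}(r,u)$ is a sum, over all paths from the root of $\walkT_{\nbk}(r)$ to the copies $\cp_{\nbk,r}(u)$ of $u$, of the product of their edge-weights; since $\bpsi$ is the constant vector with value $\zeta$, such a path of length $\ell$ contributes $\zeta^{\ell}$. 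The first step is to translate this into a count of walks in $G$: by the construction of the walk-tree (its vertices are exactly the walks of $\nbk$ that emanate from $r$, and the unique tree-path from the root to the vertex corresponding to a walk $P$ has length $|P|$), the number of length-$\ell$ paths from the root to $\cp_{\nbk,r}(u)$ equals the number $N_{\ell}(r,u)$ of non-backtracking walks of length $\ell$ from $r$ to $u$ in $G$ whenever $\ell\leq k$, and equals $0$ for $\ell>k$. Hence
\begin{align}\nonumber
\Pweight_{\nbk,\bpsi}(r,u)=\textstyle\sum_{\ell=0}^{k}\zeta^{\ell}\,N_{\ell}(r,u).
\end{align}

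Next I would split off the term $\ell=0$, which contributes ${\bf 1}\{r=u\}$ and accounts for the $\bI$ summand, and treat $\ell\geq 1$ by the standard passage to the oriented-edge graph: a non-backtracking walk $r=w_0,w_1,\ldots,w_{\ell}=u$ corresponds bijectively to the sequence of oriented edges $e_i=(w_i,w_{i+1})\in\OrntEdges$ for $0\leq i\leq\ell-1$, which is precisely a length-$(\ell-1)$ non-backtracking walk in the sense of \eqref{eq:NoOfWalksVSH2L} from $e_0$ to $e_{\ell-1}$, with $e_0$ an edge leaving $r$ and $e_{\ell-1}$ an edge entering $u$; conversely any such oriented-edge walk reconstructs a unique non-backtracking vertex-walk from $r$ to $u$. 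Counting these edge-walks by \eqref{eq:NoOfWalksVSH2L} and summing with the matrices of \eqref{def:HasVertex2EdgeMatrices} --- $\vtooedge(r,e)$ detects that $e$ leaves $r$ and $\oedgetov(f,u)$ detects that $f$ enters $u$ --- yields $N_{\ell}(r,u)=\big(\vtooedge\cdot\NBMatrix^{\ell-1}_G\cdot\oedgetov\big)(r,u)$ for every $\ell\geq 1$.

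Finally I would substitute this back, reindex $\ell\mapsto\ell+1$ in the remaining sum, and read off
\begin{align}\nonumber
\Pweight_{\nbk,\bpsi}(r,u)={\bf 1}\{r=u\}+\textstyle\sum_{\ell=0}^{k-1}\zeta^{\ell+1}\,\big(\vtooedge\cdot\NBMatrix^{\ell}_G\cdot\oedgetov\big)(r,u),
\end{align}
which is exactly the $(r,u)$-entry of the right-hand side of \eqref{eq:NBKVsHashimoto}; as $r$ and $u$ were arbitrary, this proves the identity. The degenerate cases are covered by the same computation: when $k=0$ both sides equal $\bI$, and when $\cp_{\nbk,r}(u)=\emptyset$ all the counts $N_{\ell}(r,u)$ vanish. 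I expect the only genuinely delicate point to be the off-by-one in the vertex-walk/edge-walk correspondence (a length-$\ell$ vertex walk carries $\ell-1$ non-backtracking steps in the edge graph, and its two end-edges anchor the tail at $r$ and the head at $u$); everything else is routine and parallels the proof of Lemma \ref{lemma:WalkMatrixVsAdjacency}.
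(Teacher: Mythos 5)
Your proposal is correct and follows essentially the same route as the paper: an entrywise computation of $\Pweight_{\nbk,\bpsi}(r,u)$, splitting off the length-zero term to produce $\bI$, and using the correspondence between a non-backtracking vertex-walk of length $\ell$ and a length-$(\ell-1)$ walk on oriented edges, with $\vtooedge$ and $\oedgetov$ anchoring the first edge at $r$ and the last at $u$. The paper merely writes the same counting argument out via explicit indicator sums over vertex tuples rather than stating the bijection directly.
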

The proof of Lemma \ref{lemma:WalkMatrixVsHashimoto} appears in Section \ref{sec:lemma:WalkMatrixVsHashimoto}. 

Note that the matrices $\vtooedge$ and $\oedgetov$ are used to address the discrepancy between 
$\Pweight_{\nbk, \bpsi} $ and $\NBMatrix_G$ that the first matrix is a $V\times V$, while the second one 
is  $\OrntEdges\times \OrntEdges$.

In what follows,  we show that the influence matrix $\infmatrix^{\Lambda,\tau}_{G} $ can also be considered as 
a special case of walk-matrix. 
%
   
\subsection{$\infmatrix^{\Lambda,\tau}_{G}$ viewed as a walk-matrix}\label{sec:InfluenceVsWalkMatrix}
Consider the Gibbs distribution $\mu_G$ on the graph $G=(V,E)$,  defined as in \eqref{def:GibbDistr} with 
parameters $\beta,\gamma,\lambda\geq 0$.
For  $\Lambda\subseteq V$ and $\tau\in \{\pm1\}^{\Lambda}$, recall  the definition of
the  influence matrix $\infmatrix^{\Lambda,\tau}_{G}$ induced by $\mu$,  from \eqref{def:InfluenceMatrix}.

Consider the walk-tree $T=\Tsaw(w)$.   In what follows, we describe how the  entry 
$\infmatrix^{\Lambda,\tau}_{G}(w,v)$  can be expressed using   an appropriately defined  
spin-system on $T$.  The exposition  relies on  results from \cite{OptMCMCIS,VigodaSpectralInd}.

Assume w.l.o.g. that  there is a {\em total ordering} of the vertices in  $V$, i.e., the vertex set of 
$G$. Recall that for every $u\in V$, $\cp_{\saw,w}(u)$ corresponds to the set of copies of $u$ 
in $T$. In what follows, we abbreviate $\cp_{\saw,w}(u)$ to $\cp(u)$.

Let $\mu_T$ be a Gibbs distribution on $T$ which  has the same specification as $\mu_G$.  That is, 
for $\mu_T$ we use the same parameters $\beta,\gamma$ and $\lambda$ as those  we have for 
$\mu_G$.   Each  $z \in \cp(u)$ in the tree $T$, such that   $u \in \Lambda$, is  assigned fixed 
configuration  equal to  $\tau(u)$. Furthermore, if we have a vertex $z$ in $T$ which corresponds 
to  a  path  $w_0, \ldots, w_{\ell}$ in $G$  such  that  $w_{\ell}=w_j$, for   $0\leq j \leq \ell-3$, then 
we set a  boundary condition at vertex $z$, as well.  This boundary condition depends on the 
total ordering  of the vertices. Particularly, we  set at $z$
\begin{enumerate}[(a)]	
\item $-1$ if $w_{\ell}>w_{\ell-1}$, 
\item $+1$ otherwise.
\end{enumerate}
Let $\Gamma=\Gamma(G,\Lambda)$ be the set of vertices in $T$ which have a boundary condition 
in the above construction, while let $\sigma=\sigma(G,\tau)$  be the configuration we obtain  at $\Gamma$.

\newcommand{\infweight}{\mathbold{\beta}}
\newcommand{\LamSAW}{\mathcal{S}}

Consider the set $\WEdges_{\saw, w}$,  i.e., the set of edges in $\Tsaw(w)$. For  each 
$e\in \WEdges_{\saw, w}$  we specify  weight $\infweight(e)$ as follows: letting $e=\{x,z\}$ such 
that $x$ is the parent of $z$ in $\Tsaw(w)$, we set 
\begin{align}\label{def:OfInfluenceWeights}
\infweight(e)=\left \{ 
\begin{array}{lcl}
0 & \quad&\textrm{if there is  boundary condition at either $x$, or $z$},\\
\textstyle \dlogtrecur\left(\log \gratio^{\Gamma, \sigma}_{z}\right) & & \textrm{otherwise}.
\end{array}
\right .
\end{align}
The function $\dlogtrecur(\cdot)$ is from  \eqref{eq:DerivOfLogRatio},  while $\gratio^{\Gamma, \sigma}_{z}$ 
is a ratio of Gibbs marginals at $z$ (see definitions in Section \ref{sec:RecursionVsSpectralIneq}). 

It is easy to  see that  \eqref{def:OfInfluenceWeights} specifies weights for every edge 
$e\in \WEdges_{\saw, w}$, for every $w\in V$.  In that respect, all the above construction gives rise to 
the $V\times V$ walk-matrix $\Pweight_{\saw,\infweight}$. 
As we will show later (see proof of Lemma \ref{lemma:InfluenceMatrixIsWalkMatrix})  that
\begin{align}\label{eq:InfluenceEntryAsWeightedSum}
\infmatrix^{\Lambda,\tau}_{G}(w,v)=\sum_{M}\prod_{e\in M}\infweight(e),
\end{align}
where $M$ varies over all paths from the root of $\Tsaw(w)$ to the set of vertices in $\cp(v)$. 

In light of the above, it is immediate  that   $\infmatrix^{\Lambda,\tau}_{G}$ is a principal 
submatrix of $\Pweight_{\saw,\infweight}$. 
That is,   removing the rows and the columns of  $\Pweight_{\saw,\infweight}$  that correspond 
to the vertices $u\in \Lambda$ we obtain $\infmatrix^{\Lambda,\tau}_{G}$.

We can be more precise than the above. 
Consider walk-matrix  $\Pweight_{\LamSAW,\bphi}$, where $\LamSAW\subseteq \saw$ is the  set of self-avoiding 
walks  in $G$ that do not use vertices in $\Lambda$, while  $\bphi\in \mathbb{R}^{\WEdges_{\LamSAW}}$ is 
a restriction $\infweight$, i.e., 
\begin{align}\label{def:OfRestInfluenceWeights}
\bphi(e) &=\infweight(e),  & \forall e\in \WEdges_{\LamSAW}.
\end{align}
Note  that since $\LamSAW\subseteq \saw$, we also have that  
$\WEdges_{\LamSAW}\subseteq \WEdges_{\saw}$. Hence, $\bphi$ is well defined. 
In what follows, we show that $\Pweight_{\LamSAW,\bphi}$ and $\infmatrix^{\Lambda,\tau}_{G}$
are identical.

\begin{lemma}\label{lemma:InfluenceMatrixIsWalkMatrix}
For the graph $G=(V,E)$, for   $\beta,\gamma, \lambda$ such that  $\gamma>0$,  
$0\leq \beta\leq  \gamma$ and $\lambda>0$ consider  $\mu_G$, the Gibbs distribution on $G$ 
with  parameters $\beta,\gamma, \lambda$.  

 For $\Lambda\subset V$ and  $\tau\in \{\pm 1\}^{\Lambda}$, consider the  influence matrix $\infmatrix^{\Lambda,\tau}_{G}$ induced by $\mu_G$
 as well as the walk-matrix   $\Pweight_{\LamSAW,\bphi}$, where $\LamSAW$ and $\bphi\in \mathbb{R}^{\WEdges_{\cP}}$ are defined above. 
Then, we have that $\infmatrix^{\Lambda,\tau}_{G}=\Pweight_{\LamSAW,\bphi}$.
\end{lemma}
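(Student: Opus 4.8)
The plan is to prove the identity $\infmatrix^{\Lambda,\tau}_{G}=\Pweight_{\LamSAW,\bphi}$ in two stages. \emph{Stage 1} establishes equation~\eqref{eq:InfluenceEntryAsWeightedSum}, namely $\infmatrix^{\Lambda,\tau}_{G}(w,v)=\sum_{M}\prod_{e\in M}\infweight(e)$ with $M$ ranging over root-to-$\cp(v)$ paths in $\Tsaw(w)$; by \eqref{eq:WalkMatrixEntryAsSumOfPathweights} this says precisely $\infmatrix^{\Lambda,\tau}_{G}(w,v)=\Pweight_{\saw,\infweight}(w,v)$ for all $w,v\in V\setminus\Lambda$. \emph{Stage 2} then prunes $\Tsaw(w)$ down to $\walkT_{\LamSAW}(w)$, showing $\Pweight_{\saw,\infweight}(w,v)=\Pweight_{\LamSAW,\bphi}(w,v)$. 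Before either stage I would record the immediate fact that the two matrices share the same index set: every $u\in V\setminus\Lambda$ is the endpoint of the length-zero walk $(u)\in\LamSAW$, and no vertex of $\Lambda$ occurs in any walk of $\LamSAW$, so $V_{\LamSAW}=V\setminus\Lambda$.

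Stage 1 recaps the self-avoiding-walk-tree machinery of \cite{OptMCMCIS,VigodaSpectralInd}. By Weitz's correspondence, the Gibbs measure on $G$ conditioned on $(\Lambda,\tau)$, together with its dependence on the spin at $w$, is faithfully reproduced by $\mu_T$ on $T=\Tsaw(w)$ with the boundary $(\Gamma,\sigma)$ fixed in \eqref{def:OfInfluenceWeights}: concretely, $\mu^G_u(1\mid\Lambda,\tau,w=s)=\sum_{z\in\cp(u)}\mu^T_z(1\mid\Gamma,\sigma,\ \mathrm{root}=s)$. Since $T$ is a tree, conditioning on a vertex's spin decouples the subtree below it, so the effect of flipping the root propagates multiplicatively down each root-to-$z$ path: writing $D_x=\mu^T_x(1\mid\Gamma,\sigma,\mathrm{root}=1)-\mu^T_x(1\mid\Gamma,\sigma,\mathrm{root}=-1)$ and using $\sum_t\mu^T_x(t\mid\cdots)=1$, one obtains $D_x=D_{p(x)}\cdot\bigl(q_x(1)-q_x(-1)\bigr)$, where $p(x)$ is the parent of $x$, $q_x(t)=\mu^T_x(1\mid\text{boundary below }x,\ p(x)=t)$, and $D_{\mathrm{root}}=1$. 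A short computation — using that the ratio of marginals at $x$ in its subtree is $\gratio^{\Gamma,\sigma}_{x}$ and that the interaction across $\{p(x),x\}$ contributes the factors $\beta,\gamma$ — gives $q_x(1)-q_x(-1)=\dlogtrecur\!\bigl(\log\gratio^{\Gamma,\sigma}_{x}\bigr)$ with $\dlogtrecur$ as in \eqref{eq:DerivOfLogRatio}, which is exactly $\infweight(e)$ for $e=\{p(x),x\}$; the boundary cases of \eqref{def:OfInfluenceWeights} match because $D_x=0$ whenever $x$ or $p(x)$ is pinned. Telescoping along the path to $z$ and summing over $z\in\cp(v)$ yields \eqref{eq:InfluenceEntryAsWeightedSum}.

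Stage 2 is the genuinely new bookkeeping and is where I would spend most of the write-up. By \eqref{eq:WalkMatrixEntryAsSumOfPathweights}, $\Pweight_{\saw,\infweight}(w,v)=\sum_{M}\prod_{e\in M}\infweight(e)$ over all root-to-$\cp(v)$ paths $M$ in $\Tsaw(w)$. Classify such an $M$ by the walk $P$ in $G$ labelling its endpoint; the vertices of $M$ are exactly the prefixes of $P$. If $P$ is a self-avoiding walk avoiding $\Lambda$, every prefix is too, so no vertex of $M$ carries a boundary condition and $M$ lies inside $\walkT_{\LamSAW}(w)$ — which by Lemma~\ref{lemma:StrongSubtreeRelation} (applicable since $\LamSAW\subseteq\saw$) is a subtree of $\Tsaw(w)$ with the same root, and on whose edges $\bphi$ agrees with $\infweight$ by \eqref{def:OfRestInfluenceWeights}. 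Otherwise $P$ is either a ``type-(2)'' walk — its endpoint is then a leaf of $\Tsaw(w)$ carrying a pinned value, so the last edge of $M$ has $\infweight=0$ — or $P$ uses a vertex of $\Lambda$ at a \emph{proper} prefix $P'$ (the endpoint itself, a copy of $v$, cannot lie in $\Lambda$ as $v\notin\Lambda$), so the vertex of $M$ for $P'$ is pinned and the edge of $M$ incident to it has $\infweight=0$; in both cases $\prod_{e\in M}\infweight(e)=0$. Hence only paths lying in $\walkT_{\LamSAW}(w)$ contribute, and $\Pweight_{\saw,\infweight}(w,v)=\sum_{M\subseteq\walkT_{\LamSAW}(w)}\prod_{e\in M}\infweight(e)=\sum_{M\subseteq\walkT_{\LamSAW}(w)}\prod_{e\in M}\bphi(e)=\Pweight_{\LamSAW,\bphi}(w,v)$. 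Combined with Stage 1 this gives $\infmatrix^{\Lambda,\tau}_{G}=\Pweight_{\LamSAW,\bphi}$.

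The delicate part is Stage 1: making the Weitz-type correspondence precise for the \emph{conditional-on-$w$} marginals (rather than the root marginal of $\Tsaw(w)$), verifying that copies of $v$ are enumerated correctly, that pinning the root is compatible with the ratio recursion \eqref{eq:DefOfH}, and that the one-step influence is exactly $\dlogtrecur(\log\gratio^{\Gamma,\sigma}_{x})$ including the degenerate boundary cases. One may, if preferred, simply quote Stage 1 from \cite{OptMCMCIS,VigodaSpectralInd}; the content specific to this lemma is Stage 2, which is routine once Lemma~\ref{lemma:StrongSubtreeRelation} is available.
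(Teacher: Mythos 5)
Your proposal is correct and follows essentially the same route as the paper: it splits the identity into $\infmatrix^{\Lambda,\tau}_{G}(w,v)=\Pweight_{\saw,\infweight}(w,v)$ (which the paper also obtains by quoting the influence-factorization results of \cite{OptMCMCIS,VigodaSpectralInd}, exactly as you allow in your Stage 1) and $\Pweight_{\saw,\infweight}(w,v)=\Pweight_{\LamSAW,\bphi}(w,v)$, the latter by observing that every path of $\Tsaw(w)$ pruned away in $\walkT_{\LamSAW}(w)$ passes through a pinned vertex and hence carries an edge of weight zero under \eqref{def:OfInfluenceWeights}. Your Stage 2 case analysis matches the paper's argument, so no gap to report.
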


The proof of Lemma \ref{lemma:InfluenceMatrixIsWalkMatrix} appears in Section \ref{sec:lemma:InfluenceMatrixIsWalkMatrix}.

\section{Spectral Independence using   entry-based comparisons} \label{sec:SPComparisonBasedOnEntries}

In this section we present the first set of results that we use to prove Theorems 
\ref{thrm:Recurrence4InfluenceEigenBound}  and \ref{thrm:Recur4InfSPBoundNBK}.
The objective here is to use the notion from the topological method to develop criteria 
which  allow us to compare two walk-matrices in terms of their  corresponding spectral radii.  
To this end we prove the following result.

\begin{theorem}[Monotonicity for walk-matrices]\label{thrm:Monotonicity4WeiightedMatrix}
Let   $\cP$ and $\cQ$ be sets of  walks in the graph $G=(V,E)$ such that $\cQ\subseteq \cP$.
Furthermore, suppose that we have   $\bxi_1\in \mathbb{R}^{\WEdges_{\cQ}}$ and  $\bxi_2\in \mathbb{R}^{\WEdges_{\cP}}_{\geq 0}$ 
such that
\begin{align}
0\leq \left| \bxi_1(e)\right|  &\leq \bxi_2(e), & \forall e\in \WEdges_{\cQ}\subseteq \WEdges_{\cP}.
\end{align}
Then,  for any $u,v\in V_{\cQ}\subseteq V_{\cP}$, we have that  
\begin{align}\label{eq:thrm:Monotonicity4WeiightedMatrixA}
\Pweight_{\cQ,\bxi_1}(u,v) &\leq \Pweight_{\cP,\bxi_2}(u,v).
\end{align}
Furthermore, if either $V_{\cP}=V_{\cQ}$, or  the matrix $\Pweight_{\cP,\bxi_2}$ is a symmetric matrix, 
then  we also have that 
\begin{align}\label{eq:thrm:Monotonicity4WeiightedMatrixB}
\spradius(\Pweight_{\cQ,\bxi_1})\leq \spradius(\Pweight_{\cP,\bxi_2}).
\end{align}
\end{theorem}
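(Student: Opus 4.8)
The plan is to prove the entrywise inequality \eqref{eq:thrm:Monotonicity4WeiightedMatrixA} first and then deduce the spectral radius inequality \eqref{eq:thrm:Monotonicity4WeiightedMatrixB} from it using Lemma \ref{lemma:MonotoneVsSRad}. For the entrywise part, fix $u,v\in V_{\cQ}$. By Lemma \ref{lemma:StrongSubtreeRelation}, since $\cQ\subseteq\cP$, the walk-tree $\walkT_{\cQ}(u)$ is a subtree of $\walkT_{\cP}(u)$ (with the root of both being the zero-length walk at $u$). Under the identification of vertices and edges of the two walk-trees described before Lemma \ref{lemma:StrongSubtreeRelation}, every path $M$ in $\walkT_{\cQ}(u)$ from the root to a copy of $v$ is also a path in $\walkT_{\cP}(u)$ from the root to a copy of $v$; i.e. the index set of the sum in \eqref{eq:WalkMatrixEntryAsSumOfPathweights} for $\Pweight_{\cQ,\bxi_1}(u,v)$ injects into the corresponding index set for $\Pweight_{\cP,\bxi_2}(u,v)$. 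Then I would bound, for each such path $M$,
\begin{align}\nonumber
{\tt weight}_{\bxi_1}(M) = \prod_{e\in M}\bxi_1(e) \leq \prod_{e\in M}|\bxi_1(e)| \leq \prod_{e\in M}\bxi_2(e) = {\tt weight}_{\bxi_2}(M),
\end{align}
using the hypothesis $0\leq|\bxi_1(e)|\leq\bxi_2(e)$ on $\WEdges_{\cQ}\subseteq\WEdges_{\cP}$ (valid because each edge of $M\subseteq\walkT_{\cQ}(u)$ lies in $\WEdges_{\cQ, u}\subseteq\WEdges_{\cQ}$, and under the identification it is the same element of $\WEdges_{\cP}$). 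Summing over the paths $M$ in $\walkT_{\cQ}(u)$ and noting that all remaining paths contributing to $\Pweight_{\cP,\bxi_2}(u,v)$ add nonnegative terms (as $\bxi_2\geq 0$), we get $\Pweight_{\cQ,\bxi_1}(u,v)\leq\Pweight_{\cP,\bxi_2}(u,v)$, which is \eqref{eq:thrm:Monotonicity4WeiightedMatrixA}. The extreme cases where a copy set is empty or a walk-tree is empty are handled directly by the conventions following \eqref{def:WeightPath}.

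For the spectral radius bound, first consider the case $V_{\cP}=V_{\cQ}$. Then $\Pweight_{\cQ,\bxi_1}$ and $\Pweight_{\cP,\bxi_2}$ are square matrices of the same dimension, indexed by the same set. The entrywise inequality \eqref{eq:thrm:Monotonicity4WeiightedMatrixA} gives $\Pweight_{\cQ,\bxi_1}\leq\Pweight_{\cP,\bxi_2}$; moreover, applying \eqref{eq:thrm:Monotonicity4WeiightedMatrixA} with $\bxi_1$ replaced by $|\bxi_1|$ (which still satisfies $0\leq|\bxi_1(e)|\leq\bxi_2(e)$ and for which $\Pweight_{\cQ,|\bxi_1|}=|\Pweight_{\cQ,\bxi_1}|$ since products of nonnegative numbers are taken) shows $|\Pweight_{\cQ,\bxi_1}|\leq\Pweight_{\cP,\bxi_2}$. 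Now Lemma \ref{lemma:MonotoneVsSRad} yields $\spradius(\Pweight_{\cQ,\bxi_1})\leq\spradius(|\Pweight_{\cQ,\bxi_1}|)\leq\spradius(\Pweight_{\cP,\bxi_2})$, which is \eqref{eq:thrm:Monotonicity4WeiightedMatrixB}.

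The remaining case is when $V_{\cQ}\subsetneq V_{\cP}$ but $\Pweight_{\cP,\bxi_2}$ is symmetric. Here $\Pweight_{\cQ,\bxi_1}$ is (after the injection of index sets) a principal submatrix of $\Pweight_{\cP,|\bxi_1|}$, which in turn satisfies $\Pweight_{\cP,|\bxi_1|}\leq\Pweight_{\cP,\bxi_2}$ entrywise by \eqref{eq:thrm:Monotonicity4WeiightedMatrixA} applied on $\cP$ with weights $|\bxi_1|$ extended by the bound $\bxi_2$ on $\WEdges_{\cP}\setminus\WEdges_{\cQ}$ — more carefully, I would argue that the extra rows/columns of $\Pweight_{\cP,\bxi_2}$ indexed by $V_{\cP}\setminus V_{\cQ}$ only enlarge the matrix, and then invoke the Cauchy interlacing theorem for symmetric matrices, which gives $\eigenval_{\max}$ of a principal submatrix $\leq\eigenval_{\max}$ of the full symmetric matrix, and similarly for the minimal eigenvalue, hence $\spradius$ of the submatrix $\leq\spradius$ of the symmetric matrix. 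Combined with Lemma \ref{lemma:MonotoneVsSRad} to absorb the absolute values and the entrywise domination by $\Pweight_{\cP,\bxi_2}$, this gives \eqref{eq:thrm:Monotonicity4WeiightedMatrixB}.

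The main obstacle I anticipate is the bookkeeping around the different index sets: $\Pweight_{\cQ,\bxi_1}$ is indexed by $V_{\cQ}$ while $\Pweight_{\cP,\bxi_2}$ is indexed by $V_{\cP}\supseteq V_{\cQ}$, so "entrywise $\leq$" and "principal submatrix" must be stated carefully under the canonical identification of copies and edges supplied before Lemma \ref{lemma:StrongSubtreeRelation}, and one must check that Lemma \ref{lemma:StrongSubtreeRelation}'s subtree embedding really does identify root-to-copy-of-$v$ paths in $\walkT_{\cQ}(u)$ with such paths in $\walkT_{\cP}(u)$ (not merely with paths in $\walkT_{\cP}(u)$ ending somewhere). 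The case split on symmetry versus $V_{\cP}=V_{\cQ}$ is exactly what lets us avoid the (false in general) claim that a principal submatrix of a nonsymmetric nonnegative matrix has no larger spectral radius; I would make sure the write-up flags this.
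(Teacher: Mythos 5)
Your proposal is correct and follows essentially the same route as the paper's proof: the subtree embedding from Lemma \ref{lemma:StrongSubtreeRelation} gives an injective map of root-to-copy-of-$v$ paths with termwise weight domination, the extra paths contribute nonnegatively since $\bxi_2\geq 0$, and the spectral bound is obtained via Lemma \ref{lemma:MonotoneVsSRad} when $V_{\cP}=V_{\cQ}$ and via Cauchy interlacing applied to the principal submatrix of the symmetric matrix $\Pweight_{\cP,\bxi_2}$ otherwise. The only small slip is the claimed identity $\Pweight_{\cQ,|\bxi_1|}=|\Pweight_{\cQ,\bxi_1}|$: because of possible cancellations in the signed path sums this is only an entrywise inequality $|\Pweight_{\cQ,\bxi_1}|\leq \Pweight_{\cQ,|\bxi_1|}$, but that is exactly the direction your argument needs, so nothing breaks.
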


The proof of Theorem \ref{thrm:Monotonicity4WeiightedMatrix} appears in Section \ref{sec:thrm:Monotonicity4WeiightedMatrix}.

For  any set of walks  $\cP$ in $G$ such every $M\in \cP$ is  of length  at most $k\geq 0$,  
we have that 
$$\cP\subseteq\KWalks.$$ 
This follows by noting that every  $M\in \cP$ is a walk  in $G$ of length $\leq k$, hence,  we  also have that $M \in \KWalks$.

Combining the above observation with Theorem \ref{thrm:Monotonicity4WeiightedMatrix} 
we  get a natural upper bound for the spectral radius of {\em any} walk-matrix $\Pweight_{\cP,\bxi}$. 

\begin{corollary}\label{cor:MaxElement} 
For integer $k\geq 0$,  let  the set of walks $\cP$ in  $G=(V,E)$ be such  that  every $M\in \cP$ 
is of length $\leq k$.  Let $\bxi_1\in \mathbb{R}^{\WEdges_{\cP}}$
and ${\bf 1}\in  \mathbb{R}^{\WEdges_{\KWalks}}_{\geq 0}$ be  the all-ones vector.

For any $\zeta \geq  \lnorm \bxi_1 \rnorm_{\infty}$ and  $\bxi_{2}= \zeta\times {\bf 1}$,
we have that $\spradius(\Pweight_{\cP,\bxi_1})  \leq  \spradius(\Pweight_{\KWalks,\bxi_2}).$
\end{corollary}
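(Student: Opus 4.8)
The plan is to obtain Corollary \ref{cor:MaxElement} as a direct specialization of Theorem \ref{thrm:Monotonicity4WeiightedMatrix}, taking the pair $(\cQ,\cP)$ in that theorem to be $(\cP,\KWalks)$ here. First I would record the set inclusion: by hypothesis every $M\in\cP$ is a walk in $G$ of length at most $k$, and $\KWalks$ is by definition the set of \emph{all} walks in $G$ of length at most $k$, hence $M\in\KWalks$. Thus $\cP\subseteq\KWalks$, which (identifying walks/edges across the two walk-tree families as in Section \ref{sec:TopoligcalMethod101}) gives $\WEdges_{\cP}\subseteq\WEdges_{\KWalks}$ and $V_{\cP}\subseteq V_{\KWalks}$. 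Note in particular that $\KWalks$ contains all zero-length walks, so $V_{\KWalks}=V\supseteq V_{\cP}$; this is what makes the entrywise comparison below range over all of $V_{\cP}$.

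Next I would verify the weight-domination hypothesis of Theorem \ref{thrm:Monotonicity4WeiightedMatrix}. With $\bxi_2=\zeta\times\mathbf{1}$ and $\zeta\geq\lnorm\bxi_1\rnorm_{\infty}$, for every $e\in\WEdges_{\cP}\subseteq\WEdges_{\KWalks}$ we have
\[
0\leq |\bxi_1(e)|\leq \lnorm\bxi_1\rnorm_{\infty}\leq \zeta=\bxi_2(e).
\]
Hence Theorem \ref{thrm:Monotonicity4WeiightedMatrix} applies and yields the entrywise bound $\Pweight_{\cP,\bxi_1}(u,v)\leq\Pweight_{\KWalks,\bxi_2}(u,v)$ for all $u,v\in V_{\cP}$.

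To upgrade this to the spectral-radius inequality \eqref{eq:thrm:Monotonicity4WeiightedMatrixB}, I would check the auxiliary condition in the second part of Theorem \ref{thrm:Monotonicity4WeiightedMatrix}: it suffices that $\Pweight_{\KWalks,\bxi_2}$ be a symmetric matrix. Since $\bxi_2$ is the constant weight vector taking value $\zeta$ on every edge, Lemma \ref{lemma:WalkMatrixVsAdjacency} gives $\Pweight_{\KWalks,\bxi_2}=\sum_{\ell=0}^{k}(\zeta\cdot\simpleadj_G)^{\ell}$; as $G$ is undirected, $\simpleadj_G$ is symmetric, so each power $(\zeta\cdot\simpleadj_G)^{\ell}$ is symmetric and so is the sum. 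Therefore Theorem \ref{thrm:Monotonicity4WeiightedMatrix} gives $\spradius(\Pweight_{\cP,\bxi_1})\leq\spradius(\Pweight_{\KWalks,\bxi_2})$, which is the claim.

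There is essentially no real obstacle: the corollary is a routine instance of the monotonicity theorem, and the only two points requiring a moment's care are (i) observing that $\KWalks$ contains the zero-length walks so that $V_{\cP}\subseteq V_{\KWalks}$, and (ii) invoking Lemma \ref{lemma:WalkMatrixVsAdjacency} to see that $\Pweight_{\KWalks,\bxi_2}$ is symmetric, which is what licenses passing from the entrywise comparison to the comparison of spectral radii.
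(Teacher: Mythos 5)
Your proposal is correct and follows essentially the same route as the paper's own proof: observe $\cP\subseteq\KWalks$, invoke Theorem \ref{thrm:Monotonicity4WeiightedMatrix} with the constant weight vector dominating $|\bxi_1|$, and justify the spectral-radius step by using Lemma \ref{lemma:WalkMatrixVsAdjacency} to write $\Pweight_{\KWalks,\bxi_2}=\sum_{\ell=0}^{k}(\zeta\cdot\simpleadj_G)^{\ell}$ and conclude symmetry. Your additional remarks (explicitly checking $0\leq|\bxi_1(e)|\leq\zeta$ and that $V_{\cP}\subseteq V_{\KWalks}=V$) are fine elaborations of points the paper leaves implicit.
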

\begin{proof}
As argued before, our assumption that $\cP$ only contains paths of length $\leq k$ imply that $\cP\subseteq \KWalks$.
Then, the corollary follows from Theorem \ref{thrm:Monotonicity4WeiightedMatrix} 
by showing that  $\Pweight_{\KWalks,\bxi_2}$ is symmetric. 

To see  that  $\Pweight_{\KWalks,\bxi_2}$ is symmetric, note the following:  since  all the components of vector $\bxi_{2}$ 
have the same value  $\zeta>0$, from  Lemma \ref{lemma:WalkMatrixVsAdjacency} we have that 
$\Pweight_{\KWalks,\bxi_2} =\textstyle \sum^k_{\ell=0}\left(\zeta \cdot \simpleadj_G\right)^{\ell}.$
We conclude that   $\Pweight_{\KWalks,\bxi_2}$ is symmetric since  $\simpleadj^{\ell}_G$ is symmetric for any $\ell\geq 0$.  
The claim follows. 
\end{proof}

We use   Corollary \ref{cor:MaxElement} we prove  Theorem \ref{thrm:Recurrence4InfluenceEigenBound}.  For the  proof 
of Theorem \ref{thrm:Recurrence4InfluenceEigenBound},  see  Section \ref{sec:thrm:Recurrence4InfluenceEigenBound}.

Having $\infmatrix^{\Lambda,\tau}_{G}$ in mind, it is not hard to see that each walk in
$\saw$ is also non-backtracking, and hence 
\begin{align}\label{eq:SAWVsNBN}
\saw\subseteq \nbn. 
\end{align}
For the above, we also use the observation that $\saw$ does not contain paths of 
length larger than $n$.  We use the above observation to get the following corollary.

\begin{corollary}[Non-backtracking]\label{cor:MaxElement4NB}
For the graph $G=(V,E)$, for   $\beta,\gamma, \lambda$ such that  $\gamma>0$,  
$0\leq \beta\leq  \gamma$ and $\lambda>0$ consider  $\mu_G$, the Gibbs distribution on $G$ 
with  parameters $\beta,\gamma, \lambda$.  

 For $\Lambda\subset V$ and  $\tau\in \{\pm 1\}^{\Lambda}$, consider the  influence matrix 
 $\infmatrix^{\Lambda,\tau}_{G}$ induced by $\mu_G$. 
Let ${\bf 1}\in  \mathbb{R}^{\WEdges_{\nbn}}_{\geq 0}$ be  the all-ones vector, while 
consider the vector $\bphi$ defined in \eqref{def:OfRestInfluenceWeights}.

For any $\zeta \geq  \lnorm\bphi \rnorm_{\infty}$ and  $\bxi= \zeta\times {\bf 1}$,
we have that $\spradius(\infmatrix^{\Lambda,\tau}_{G} )  \leq  \spradius(\Pweight_{\nbn,\bxi}).$
\end{corollary}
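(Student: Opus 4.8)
The plan is to exhibit $\infmatrix^{\Lambda,\tau}_{G}$ as a walk-matrix and then apply the monotonicity result, Theorem~\ref{thrm:Monotonicity4WeiightedMatrix}, to the pair $\cQ=\LamSAW$, $\cP=\nbn$ with weights $\bxi_1=\bphi$ and $\bxi_2=\bxi$. Indeed, Lemma~\ref{lemma:InfluenceMatrixIsWalkMatrix} already gives $\infmatrix^{\Lambda,\tau}_{G}=\Pweight_{\LamSAW,\bphi}$, where $\LamSAW$ is the set of self-avoiding walks of $G$ that avoid $\Lambda$ and $\bphi$ is the restriction of the influence weights $\infweight$ from \eqref{def:OfRestInfluenceWeights}. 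So it is enough to establish $\spradius(\Pweight_{\LamSAW,\bphi})\leq\spradius(\Pweight_{\nbn,\bxi})$, which is precisely the conclusion \eqref{eq:thrm:Monotonicity4WeiightedMatrixB} of Theorem~\ref{thrm:Monotonicity4WeiightedMatrix} for these two walk-matrices.

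Next I would verify the hypotheses of Theorem~\ref{thrm:Monotonicity4WeiightedMatrix}. For the containment $\LamSAW\subseteq\nbn$: every walk in $\LamSAW$ belongs to $\saw$, and $\saw\subseteq\nbn$ by \eqref{eq:SAWVsNBN} (each walk in $\saw$ is non-backtracking and has length at most $n$). For the weight domination, $\bxi$ is the constant vector with all entries $\zeta$, so for every $e\in\WEdges_{\LamSAW}\subseteq\WEdges_{\nbn}$ we have $0\leq|\bphi(e)|\leq\lnorm\bphi\rnorm_{\infty}\leq\zeta=\bxi(e)$. This already yields the entrywise comparison \eqref{eq:thrm:Monotonicity4WeiightedMatrixA}; to upgrade it to \eqref{eq:thrm:Monotonicity4WeiightedMatrixB} I must check that one of the two extra conditions in the theorem holds.

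The index sets $V_{\LamSAW}$ and $V_{\nbn}$ generally differ (the latter is all of $V$, since single vertices are non-backtracking walks), so I would instead verify that $\Pweight_{\nbn,\bxi}$ is symmetric, and this is the one point that needs an argument. Since every edge of $\bxi$ has weight $\zeta$, the entry $\Pweight_{\nbn,\bxi}(u,v)$ equals $\sum_{\ell=0}^{n}\zeta^{\ell}N_\ell(u,v)$, where $N_\ell(u,v)$ counts the length-$\ell$ non-backtracking walks in $G$ from $u$ to $v$; the reversal map is a length-preserving bijection between the non-backtracking walks from $u$ to $v$ and those from $v$ to $u$, so $N_\ell(u,v)=N_\ell(v,u)$ and $\Pweight_{\nbn,\bxi}$ is symmetric. (Equivalently, one can read this symmetry off Lemma~\ref{lemma:WalkMatrixVsHashimoto}, $\Pweight_{\nbn,\bxi}=\bI+\vtooedge\big(\sum_{\ell=0}^{n-1}\zeta^{\ell+1}\NBMatrix_G^{\ell}\big)\oedgetov$, using the PT-invariance \eqref{def:PTInvariance} of $\NBMatrix_G$ and the definitions \eqref{def:HasVertex2EdgeMatrices} of $\vtooedge,\oedgetov$ to conclude that $\vtooedge\NBMatrix_G^{\ell}\oedgetov$ is symmetric.) Granting the symmetry, Theorem~\ref{thrm:Monotonicity4WeiightedMatrix} delivers $\spradius(\infmatrix^{\Lambda,\tau}_{G})=\spradius(\Pweight_{\LamSAW,\bphi})\leq\spradius(\Pweight_{\nbn,\bxi})$, as desired. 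The only mild obstacle is the symmetry verification; everything else is bookkeeping with the definitions of the walk-trees involved.
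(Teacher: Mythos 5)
Your proposal is correct and follows essentially the same route as the paper: identify $\infmatrix^{\Lambda,\tau}_{G}=\Pweight_{\LamSAW,\bphi}$ via Lemma~\ref{lemma:InfluenceMatrixIsWalkMatrix}, note $\LamSAW\subseteq\saw\subseteq\nbn$, and apply \eqref{eq:thrm:Monotonicity4WeiightedMatrixB} of Theorem~\ref{thrm:Monotonicity4WeiightedMatrix} after checking that $\Pweight_{\nbn,\bxi}$ is symmetric. The only cosmetic difference is that you prove the symmetry directly by the walk-reversal bijection, whereas the paper invokes Lemma~\ref{lemma:WNBKSymmetric} from the Appendix (whose proof is the PT-invariance argument you mention parenthetically), so the content is the same.
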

\begin{proof}
From Lemma \ref{lemma:InfluenceMatrixIsWalkMatrix} we have that $\infmatrix^{\Lambda,\tau}_{G}=\Pweight_{\LamSAW,\bphi}$.
Recall that  $\LamSAW\subseteq \saw$ is the  set of self-avoiding  walks  in $G$ that do 
not use vertices in $\Lambda$.  Since $\LamSAW\subseteq \saw$, from \eqref{eq:SAWVsNBN} we
also have that $\LamSAW\subseteq \nbn$. 

Furthermore, from Lemma \ref{lemma:WNBKSymmetric} in the Appendix we have that  $\Pweight_{\nbn,\bxi}$ is symmetric. 
The corollary follows by making a standard application of  Theorem \ref{thrm:Monotonicity4WeiightedMatrix}, i.e.,  using 
\eqref{eq:thrm:Monotonicity4WeiightedMatrixB}.  The claim follows. 
\end{proof}

We use Corollary \ref{cor:MaxElement4NB} to  prove  Theorem \ref{thrm:Recur4InfSPBoundNBK}.  For a  proof of this 
result see Section \ref{sec:thrm:Recur4InfSPBoundNBK}.

Comparing the  Corollaries \ref{cor:MaxElement} and \ref{cor:MaxElement4NB} one might remark  that the first one
is the most  general, while the second one is more specific to the influence matrix  $\infmatrix^{\Lambda,\tau}_{G}$. 
Combining the two corollaries
we get that
\begin{align}\label{eq:CorFromCor}
\spradius(\infmatrix^{\Lambda,\tau}_{G})  \leq  \spradius(\Pweight_{\nbn,\bpsi_1}) \leq \spradius(\Pweight_{\NWalks,\bpsi_2}),
\end{align}
where both $\bpsi_1$ and $\bpsi_2$ are constant vectors such that each entry, in both vectors, is equal to 
$\max_{e}\{|\bphi(e)|\}$ and  $\bphi(e)$ is defined in \eqref{def:OfRestInfluenceWeights}. 
That is, the first inequality follows from Corollary \ref{cor:MaxElement4NB} and the second one
from Corollary \ref{cor:MaxElement}.

From \eqref{eq:CorFromCor},  one might remark that the bound we get from Corollary \ref{cor:MaxElement4NB} 
for the spectral  radius of $\infmatrix^{\Lambda,\tau}_{G}$ is at least as good as that  we get from Corollary \ref{cor:MaxElement}. 
Furthermore, note that for Theorem \ref{thrm:Recurrence4InfluenceEigenBound} we use the bound
$\spradius(\infmatrix^{\Lambda,\tau}_{G})  \leq   \spradius(\Pweight_{\NWalks,\bpsi_2})$ while for 
 Theorem \ref{thrm:Recur4InfSPBoundNBK} we use the bound 
$\spradius(\infmatrix^{\Lambda,\tau}_{G})  \leq  \spradius(\Pweight_{\nbn,\bpsi_1})$.
In that respect, for the same parameters $\beta,\gamma$ and $\lambda$ of the Gibbs distribution 
the bound on the spectral radius of  $\infmatrix^{\Lambda,\tau}_{G}$ we get  from 
Theorem \ref{thrm:Recur4InfSPBoundNBK} is {\em at most} that we get from 
Theorem \ref{thrm:Recurrence4InfluenceEigenBound}. 

In light of the above, a similar relation holds for the bounds on the mixing time of
Glauber dynamics we get from Theorems \ref{thrm:Ising4SPRadius} and \ref{thrm:Ising4SPRadiusNBK}.
That is, for the same parameter $\beta$ in the zero external field Ising model, the 
bound on the mixing time of Glauber dynamics that we get from Theorem \ref{thrm:Ising4SPRadiusNBK}
is at most the corresponding bound we get from   Theorem \ref{thrm:Ising4SPRadius}.

\section{Spectral Independence Using Matrix Norms} \label{sec:SPComparisonBasedOnNorms}

In this section we consider the natural approach of bounding the  spectral radius of a walk-matrix using  norms. 
Consider $G=(V,E)$.  For  $K \subseteq V$ and the diagonal, non-singular matrix 
$\bD\in \mathbb{R}_{\geq 0}^{K\times K}$, for  $p\geq 1$  (allowing $p=\infty$)  and $t\geq 0$,  we let the matrix 
norm $\lnorm \cdot  \rnorm_{\bD,t,p}$   be such that for any  $K\times K$ matrix $\bM$ we have 
\begin{align}\label{def:MyNorm}
\lnorm \bM \rnorm_{\bD,t,p} &= \textstyle \lnorm  (\bD^{\circ t})^{-1} \cdot \bM \cdot \bD^{\circ t}\rnorm_{p}, 
\end{align}
where $\bD^{\circ N}$ denotes the $N$-th {\em Hadamard power} of the matrix $\bD$, i.e., $\bD^{\circ N}(w,u)=(\bD(w,u))^N$. 
It is elementary to verify that the norm  $\lnorm \cdot  \rnorm_{\bD,t,p}$ is a well-defined as long as $\bD$ is non-singular.  
Note that the matrix $\bD$ is assumed to be {\em non-negative}, i.e., all its entries are non-negative numbers. 

It is a well-known that the spectral radius of a matrix $\bA$  is the greatest lower bound for the values 
of all matrix norms of $\bA$, e.g., see  
Theorem 5.6.9 in \cite{MatrixAnalysis}.    In that respect, we have the following result.

\begin{corollary}\label{cor:MyNormVsSPRadius}
For any $K \subseteq V$, for any non-singular, diagonal  matrix $\bD\in \mathbb{R}^{K \times K}_{\geq 0}$, 
for  real numbers  $t\geq 0$ and $p\geq 1$, allowing $p=\infty$,  the following is true:
for any  $K \times K$ matrix  $\bM$, we have that
\begin{align}\nonumber 
\spradius(\bM) \leq \lnorm \bM \rnorm_{\bD,t,p}.
\end{align}
\end{corollary}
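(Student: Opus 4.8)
The plan is to recognise the norm $\lnorm \cdot \rnorm_{\bD,t,p}$ of \eqref{def:MyNorm} as the matrix $p$-norm conjugated by the fixed invertible matrix $S := \bD^{\circ t}$, and then to invoke the general inequality \eqref{eq:SpRadiusVsMNorm}, namely that the spectral radius of a matrix is a lower bound for each of its matrix norms. First I would record that $S$ is non-singular: since $\bD$ is a non-singular diagonal matrix with non-negative entries, every diagonal entry $\bD(w,w)$ is strictly positive, so $\bD(w,w)^{t}>0$ for each $t\geq 0$, and hence $S=\bD^{\circ t}$ is a diagonal matrix with strictly positive diagonal whose inverse $S^{-1}$ is diagonal with entries $\bD(w,w)^{-t}$. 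In particular $\lnorm \bM \rnorm_{\bD,t,p}=\lnorm S^{-1}\bM S \rnorm_{p}$ is well defined for every $K\times K$ matrix $\bM$.

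The one substantive point is to check that $\lnorm \cdot \rnorm_{\bD,t,p}$ is genuinely a matrix norm, so that \eqref{eq:SpRadiusVsMNorm} applies to it. Nonnegativity, definiteness, absolute homogeneity and the triangle inequality are inherited immediately from the matrix $p$-norm, because $\bM\mapsto S^{-1}\bM S$ is a linear bijection on the space of $K\times K$ matrices. Submultiplicativity is the only place where a (one-line) computation is needed, and it follows by inserting $SS^{-1}$ in the middle: for $K\times K$ matrices $\bM,\bN$,
\begin{align}\nonumber
\lnorm \bM\bN \rnorm_{\bD,t,p}=\lnorm S^{-1}\bM\bN S\rnorm_{p}=\lnorm (S^{-1}\bM S)(S^{-1}\bN S)\rnorm_{p}\leq \lnorm \bM\rnorm_{\bD,t,p}\,\lnorm \bN\rnorm_{\bD,t,p},
\end{align}
using submultiplicativity of $\lnorm\cdot\rnorm_{p}$.

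Granting this, the corollary is immediate: \eqref{eq:SpRadiusVsMNorm} applied to $\bM$ with the matrix norm $\lnorm\cdot\rnorm_{\bD,t,p}$ yields $\spradius(\bM)\leq \lnorm \bM \rnorm_{\bD,t,p}$. Equivalently, one may observe that $\bM$ and $S^{-1}\bM S$ are similar, hence have the same spectrum and in particular the same spectral radius, and then apply \eqref{eq:SpRadiusVsMNorm} to $S^{-1}\bM S$ with the ordinary matrix $p$-norm. I do not expect a genuine obstacle here; the only care needed is bookkeeping around the degenerate parameter values ($t=0$, $p=\infty$), which are harmless, and the elementary remark that non-singularity of $\bD$ is inherited by $\bD^{\circ t}$.
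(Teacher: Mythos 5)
Your proposal is correct and is essentially the argument the paper intends: the paper gives no separate proof, simply invoking the fact (Theorem 5.6.9 in \cite{MatrixAnalysis}, recorded as \eqref{eq:SpRadiusVsMNorm}) that the spectral radius is a lower bound for any matrix norm, together with the remark after \eqref{def:MyNorm} that the norm is well defined when $\bD$ is non-singular. Your filling-in of the details — checking that $\bD^{\circ t}$ is diagonal with positive entries, that the conjugated $p$-norm is submultiplicative, or equivalently noting that $\bM$ and $(\bD^{\circ t})^{-1}\bM\,\bD^{\circ t}$ are similar and applying \eqref{eq:SpRadiusVsMNorm} to the latter — is exactly the justification the paper leaves implicit.
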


The idea here is to use  the norm defined in \eqref{def:MyNorm} and Corollary \ref{cor:MyNormVsSPRadius} to bound the spectral
radius of walk-matrices, with particular focus on $\infmatrix^{\Lambda,\tau}_{G}$. For our purposes we consider  $p=\infty$.

Note that the  definition of the $(s,\delta, c)$-potential  function, i.e.,   Definition \ref{def:GoodPotential}, 
is a bit standard and it is tailored  to working  with  ratios of Gibbs marginals, e.g., it includes into its properties   
the function $\logtrecur_d(\cdot)$ from \eqref{eq:DerivOfLogRatio}, etc.  In the following definition we
describe the related notion of ``{\em potential vector}" which  can be applied to general walk-matrices,  
i.e., dealing with general weights on the edges of the walk-trees rather than distributional recursions.
For an  example on how to directly relate the  potential vectors with the  potential functions,  see  the proof  of Theorem
\ref{thrm:Recurrence4InfluenceEigenBoundNonLinear},  in Section \ref{sec:thrm:Recurrence4InfluenceEigenBoundNonLinear}.  

\begin{figure}
\centering
	\centering
		\includegraphics[width=.2\textwidth]{./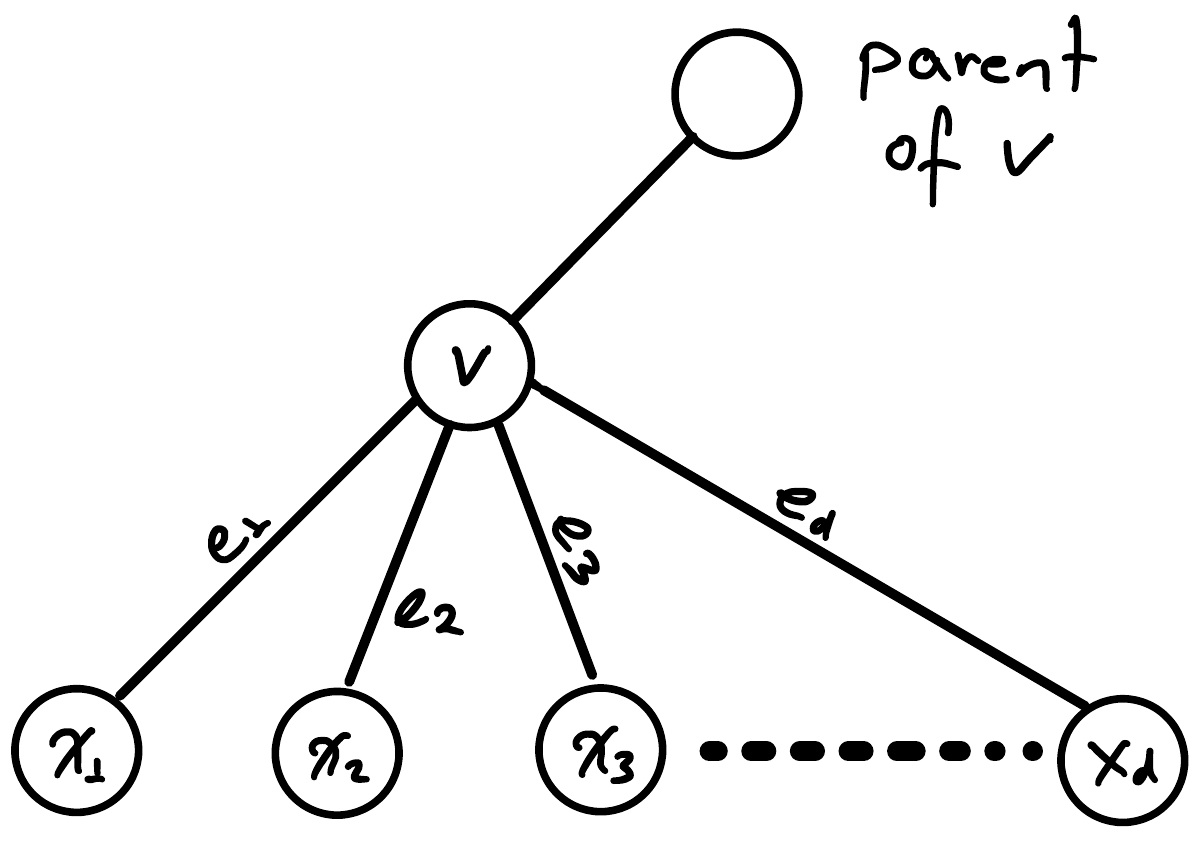}
		\caption{Vertex $v$ in Definition   \ref{def:PotentialWeights}. The vertex $x_i$ is the $i$-th child of $v$. }
	\label{fig:PotenWeight}
\end{figure}

\begin{definition}\label{def:PotentialWeights}
Let $s\geq 1$, allowing $s=\infty$, and $\delta,c>0$. For  a set of walks $\cP$ in $G=(V,E)$ and 
$\bpsi\in \mathbb{R}^{\WEdges_{\cP}}$, 
we say that $\bgamma\in \mathbb{R}^{\WEdges_{\cP}}_{>0}$ is  a $(s,\delta,c)$-potential vector  with 
respect to  $\cP$ and  $\bpsi$  if the following holds:  

For  each  $r\in V_{\cP}$,  for any  {\em non-root} vertex   $v\in \walkT_{\cP}(r)$ 
which has $d>0$ children and  for any ${\bf z}\in \mathbb{R}^d_{\geq 0}$ we have that
\begin{align}\nonumber
\bgamma(e)\cdot \sum^{d}_{i=1} \frac{|\bpsi(e_i)|}{\bgamma(e_i)} \cdot {\bf z}_i & \leq  \delta^{\frac{1}{s}}\cdot \lnorm {\bf z}\rnorm_{s}, 
\end{align}
where $e$ is the edge that connects $v$ to its parent and $e_i$, for $i=1,\ldots,d$, is the edge that 
connects $v$ to its $i$-th child in $\walkT_{\cP}(r)$ (e.g., see Figure \ref{fig:PotenWeight}).
Furthermore, we have that
\begin{align}\nonumber 
\max_{e_a,e_b\in  \WEdges_{\cP}} \left\{ \bgamma(e_a) \cdot \frac{| \bpsi(e_b)|}{\bgamma(e_b)} \right\} &\leq  c.
\end{align}
\end{definition}


In the standard setting,  potential functions are introduced   in the  recursions of the ratio of Gibbs marginals
by means of the  {\em mean value theorem} of the real analysis.  Here, instead, the potential vector arises naturally 
in our analysis by employing a  simple {\em telescopic trick},  e.g., see the proof of Theorem \ref{thrm:NormRedaux2WalkVector}.

The following theorem is the main result of this part of the paper.

\begin{theorem}\label{thrm:MySpectralMatrixNorm}
For  $\maxDeg \geq 1$, $\spradius_G\geq 1$, for $s\geq 1$,  allowing $s=\infty$,    for $\delta,  c>0$  and  integer $k\geq 0$,  let $G=(V,E)$ 
be a graph of  maximum degree $\maxDeg$, while   $\simpleadj_G$ has  spectral radius $\spradius_G$.

For any  set of walks $\cP$ in $G$   such that the longest walk in $\cP$ is of length $\leq k$ and 
for any  $\bxi\in \mathbb{R}^{\WEdges_{\cP}}$ such that there is 
$\bgamma\in \mathbb{R}^{\WEdges_{\cP}}_{>0}$ which is  $(s,\delta,c)$-potential vector with respect to  $\cP$ and  $\bxi$
the following is true:
 
For the walk-matrix $\Pweight_{\cP,\bxi}$ we have that 
\begin{align}\nonumber  
\lnorm \Pweight_{\cP,\bxi}  \rnorm_{\SpGMatrix_{\cP}, \frac{1}{s}, \infty}  &  \textstyle 
\leq 1+c \cdot (\maxDeg)^{1-\frac{1}{s}} \cdot \left( \spradius_G \right)^{\frac{1}{s}} \cdot  \sum^{k-1}_{\ell=0}   (\delta \cdot \spradius_G)^{\frac{\ell}{s}},
\end{align}
where   $\SpGMatrix_{\cP}$  is the $V_{\cP}\times V_{\cP}$ {\em diagonal} matrix such that 
$\SpGMatrix_{\cP}(u,u)=\maxeigenv(u)$, for  $u\in V_{\cP}$, while $\maxeigenv$ is from  \eqref{eq:DefOfBSMatrix}.   
\end{theorem}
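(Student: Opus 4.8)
The plan is to bound the infinity-norm $\lnorm (\SpGMatrix_{\cP}^{\circ 1/s})^{-1}\cdot \Pweight_{\cP,\bxi}\cdot \SpGMatrix_{\cP}^{\circ 1/s}\rnorm_{\infty}$ directly, by expanding the entries of $\Pweight_{\cP,\bxi}$ as sums of path-weights over the walk-tree $\walkT_{\cP}(r)$ as in \eqref{eq:WalkMatrixEntryAsSumOfPathweights}, and organizing the resulting absolute row-sum according to the level of the terminal vertex in the tree. Fix $r\in V_{\cP}$; the $r$-th absolute row sum of the conjugated matrix is
\begin{align}\nonumber
\sum_{u\in V_{\cP}} \left| \maxeigenv(r)^{-1/s}\,\Pweight_{\cP,\bxi}(r,u)\,\maxeigenv(u)^{1/s}\right|
\ \le\ \sum_{\ell=0}^{k}\ \sum_{\substack{x\in \walkT_{\cP}(r)\\ \text{level}(x)=\ell}} \maxeigenv(r)^{-1/s}\,\maxeigenv(x)^{1/s}\,\bigl|{\tt weight}(M_x)\bigr|,
\end{align}
where $M_x$ is the unique root-to-$x$ path and, with a mild abuse, $\maxeigenv(x)$ means $\maxeigenv$ evaluated at the $G$-vertex that $x$ is a copy of. The $\ell=0$ term contributes exactly $1$. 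The key quantity to control, for each non-root vertex $v$ of the tree at level $h$, is
\begin{align}\nonumber
\cQ_v \ :=\ \sum_{\substack{x \preccurlyeq v \text{ descendant}\\ \text{level}(x)=\ell}} \maxeigenv(x)^{1/s}\ \prod_{e\in M_{v\to x}} |\bxi(e)|,
\end{align}
the sum of absolute path-weights from $v$ down to its level-$\ell$ descendants, weighted by $\maxeigenv(x)^{1/s}$ at the bottom.

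The heart of the argument is a one-step recursion obtained by raising to the power $s$ (or, for $s=\infty$, taking a max) and invoking the defining inequality of the $(s,\delta,c)$-potential vector $\bgamma$. Writing $v$'s children as $x_1,\dots,x_d$ with connecting edges $e_1,\dots,e_d$, and $e$ for the edge above $v$, one applies the potential-vector inequality with ${\bf z}_i = \cQ_{x_i}/\bgamma(e_i)$ after factoring $\bgamma(e)/\bgamma(e)$; the telescoping with the $\bgamma$-weights is exactly what makes the norm $\lnorm\cdot\rnorm_{\SpGMatrix_{\cP},1/s,\infty}$ (rather than plain $\lnorm\cdot\rnorm_\infty$) the right object. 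Combined with the base case $\cQ_x = \maxeigenv(x)^{1/s}$ at level $\ell$, this yields, for $v$ at level $h$ with $0<h\le \ell$,
\begin{align}\nonumber
(\cQ_v)^s \ \le\ \bigl(\delta \cdot \spradius_G\bigr)^{\ell-h}\ \bigl(\tfrac{1}{\bgamma(e)}\bigr)^{?}\cdots\ \maxeigenv(v),
\end{align}
where the exponent bookkeeping on $\bgamma$ cancels against the $\bgamma$'s reintroduced at the next step; the Perron–Frobenius identity $\sum_{y\sim z}\maxeigenv(y)=\spradius_G\,\maxeigenv(z)$ (from \eqref{eq:DefOfBSMatrix}) is what produces the factor $\spradius_G$ per level. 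Unrolling from the root: at the root, one has $d\le\maxDeg$ children, so a H\"older step with exponents $(s,r)$, $r^{-1}+s^{-1}=1$, turns the sum over children into the $\maxDeg^{1-1/s}$ factor, and the remaining levels $\ell-1$ down to $1$ each contribute $(\delta\spradius_G)^{1/s}$, while the boundedness clause of the potential vector supplies the constant $c$ and one extra $\spradius_G^{1/s}$ from the topmost edge. Summing the level-$\ell$ contributions over $\ell=0,1,\dots,k$ gives precisely
\begin{align}\nonumber
1 + c\cdot\maxDeg^{1-\frac1s}\cdot\spradius_G^{\frac1s}\cdot\sum_{\ell=0}^{k-1}(\delta\spradius_G)^{\frac{\ell}{s}},
\end{align}
uniformly in $r$, which is the claimed bound on $\lnorm \Pweight_{\cP,\bxi}\rnorm_{\SpGMatrix_{\cP},1/s,\infty}$.

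The step I expect to be the main obstacle is the bookkeeping of the $\bgamma$-weights through the telescoping recursion: one must check that every $\bgamma(e_i)^{\pm 1}$ introduced at a given level is exactly cancelled either by the potential-vector inequality at the parent or by the factor reintroduced at the child, so that $\bgamma$ disappears entirely from the final bound and only $\delta$, $c$, $\maxDeg$, $\spradius_G$ remain. A secondary care point is handling $s=\infty$ uniformly (replacing $\lnorm\cdot\rnorm_s$ by $\max$, the exponents $1/s$ by $0$, and $\maxDeg^{1-1/s}$ by $\maxDeg$) and confirming the $\SpGMatrix_{\cP}$ diagonal conjugation is well-defined, which it is since $G$ connected forces $\maxeigenv>0$ by Perron–Frobenius, so $\SpGMatrix_{\cP}$ is non-singular and the norm $\lnorm\cdot\rnorm_{\SpGMatrix_{\cP},1/s,\infty}$ makes sense by Corollary \ref{cor:MyNormVsSPRadius}.
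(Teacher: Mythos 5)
Your plan is correct and is essentially the paper's own argument with the intermediate abstractions inlined: the paper factors your direct row-sum computation into Theorem \ref{thrm:NormRedaux2WalkVector} (where the telescoping with $\bgamma$, the $s$-th power recursion, and the boundedness clause absorbing the top edge into $c$ appear exactly as you anticipate, cf.\ Claim \ref{claim:AddPotential} and \eqref{eq:SRecurTopWeights}--\eqref{eq:SubContVsDeltaKWeights}), followed by walk-vector monotonicity (Theorem \ref{thrm:MonotoneWVector}, Corollary \ref{cor:MaxElement2}) to pass to the full $\KWalks$ tree, and then the Perron--Frobenius identity plus the concavity/H\"older step at the root in Section \ref{sec:thrm:MySpectralMatrixNorm}. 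The only cosmetic difference is that you stay inside $\walkT_{\cP}(r)$ and use $\sum_{y}\maxeigenv(y)\leq \spradius_G\,\maxeigenv(z)$ as an inequality over the available children, whereas the paper passes to $\KWalks$ where it is an equality; both give the stated bound.
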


Note that $\SpGMatrix_{\cP}$ is a principal submatrix of the $V\times V$ matrix $\SpGMatrix$  we define in  \eqref{def:AdjMatrixEigenvals}.
Specifically, we obtain $\SpGMatrix_{\cP}$ by removing all the rows and columns of $\SpGMatrix$ that correspond to
 vertices outside $V_{\cP}$.

Theorem  \ref{thrm:Recurrence4InfluenceEigenBoundNonLinear}  follows  immediately from  Theorem 
\ref{thrm:MySpectralMatrixNorm}, Corollary \ref{cor:MyNormVsSPRadius} and Lemma 
\ref{lemma:InfluenceMatrixIsWalkMatrix}.
For the full proof of Theorem \ref{thrm:Recurrence4InfluenceEigenBoundNonLinear}, 
see  Section \ref{sec:thrm:Recurrence4InfluenceEigenBoundNonLinear}.

\subsection{Walk-vectors}\label{sec:WalkVector}
In his section, as well as in the following one, we show how we derive Theorem  \ref{thrm:MySpectralMatrixNorm}.
Consider the norm $\lnorm \cdot  \rnorm_{\bD,t,p}$ from  \eqref{def:MyNorm} and set   $p=\infty$.
Having  $p=\infty$, essential the norm  corresponds to 
the maximum absolute row sum of the matrix $(\bD^{\circ t})^{-1} \cdot \bM \cdot  \bD^{\circ t}$. 

Evaluating the above norm in the context of walk-matrices  gives rise to another topological construction which 
we call   {\em walk-vector}.

\begin{definition}[Walk-Vector]\label{def:WalkVector}
For  $s\geq 1$, allowing  $s=\infty$,   and $\delta, c>0$,  for the set of walks $\cP$ in the graph $G=(V,E)$ and  
the invertible, diagonal matrix  $\bD\in \mathbb{R}^{K \times K}_{\geq 0}$, such that 
$V_{\cP}\subseteq K \subseteq V$,  we define the 
walk-vector $\DBounded=\DBounded(\cP, \bD, s, \delta, c)\in \mathbb{R}^{V_{\cP}}_{\geq 0}$ as follows:

For  $r\in V_{\cP}$, suppose that the root of  $\walkT_{\cP}(r)$ has $d>0$ children, while let $T_i$ be the subtree that
includes the $i$-th child of the root  and its decedents. Then, for the  component $\DBounded(r)$ of the walk-vector 
we have that
\begin{align}\label{eq:WeightNonLinearRelation}
\DBounded(r) & = 1+  \frac{c}{\bD(r,r)}\times \sum^d_{i=1} \sum_{\ell \geq 0} 
\left( \delta^{\ell} \cdot \sum_{w\in V_{\cP}} |\cp_{i,\ell}(w)| \cdot (\bD(w,w))^{s}
\right)^{\frac{1}{s}} >0 ,
\end{align}
where $\cp_{i,\ell}(w)$ is the set of copies of vertex $w$ in the subtree $T_i$ that are 
at distance $\ell$ from the  root of $T_i$.   
\end{definition}

\noindent
Note that $\DBounded$ is a positive vector.  For us here, the parameter $s$ is a bounded number, 
however, since the size of the  walk-trees is assumed to always be finite,  it is easy to see that 
the quantity in the r.h.s. of \eqref{eq:WeightNonLinearRelation} is well-defined even for $s=\infty$.

To get an intuition of what is the walk-vector  $\DBounded$ in Definition \ref{def:WalkVector}, consider the case 
where $\bD$ is the identity matrix, i.e., $\bD=\bI$ and $s=1$. Then, $\DBounded(r)$ is nothing more than
the weighted sum over all paths of the tree $\walkT_{\cP}(r)$ that start from the root, such that each path of length
$\ell>0$ has weight $c\cdot \delta^{\ell-1}$, while the path of length $0$ has weight $1$, that is
\begin{align}
\DBounded(r)=1+ c\cdot {\textstyle \sum_{P}}\  \delta^{|P|-1},
\end{align}
where $P$ varies over all paths of length greater than 0 in $\walkT_{\cP}(r)$ that emanate from the root. 

In the general case, the walk-vectors arise when we are dealing with a walk-matrix $\Pweight_{\cP,\bxi}$
which admits  a $(s,\delta,c)$-potential vector $\bgamma$.  Particularly,  the component $\DBounded(r)$ in 
\eqref{eq:WeightNonLinearRelation}  expresses a bound on the  absolute row sum  at the row $r$
of $(\bD^{\circ s})^{-1}\cdot \Pweight_{\cP,\bxi} \cdot \bD^{\circ s}$. We derive this bound 
by using the potential vector $\bgamma$. 
In that respect, the following result comes  naturally.

\begin{theorem}\label{thrm:NormRedaux2WalkVector}
For  $s\geq 1$,  allowing $s=\infty$, for $\delta,  c>0$, let the set of walks $\cP$ in $G$ 
and  $\bxi\in \mathbb{R}^{\WEdges_{\cP}}$,  while assume  that    $\bgamma\in \mathbb{R}^{\WEdges_{\cP}}_{>0}$ 
is  a $(s,\delta,c)$-potential vector with respect to  $\cP$ and  $\bxi$.  
For any diagonal, non-singular  matrix  $\bD\in\mathbb{R}^{V_{\cP}\times V_{\cP}}_{\geq 0}$, 
the walk-vector $\DBounded=\DBounded(\cP, \bD^{\circ \frac{1}{s}}, s, \delta, c)$ satisfies that
\begin{align}\label{eq:thrm:NormRedaux2WalkVector}
\lnorm \Pweight_{\cP,\bxi}  \rnorm_{\bD, \frac{1}{s}, \infty} \leq  \lnorm \DBounded  \rnorm_{\infty}.
\end{align}
\end{theorem}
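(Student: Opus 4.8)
The plan is to unfold the norm $\lnorm \Pweight_{\cP,\bxi} \rnorm_{\bD,\frac1s,\infty}$ into a sum over the walk-trees $\walkT_{\cP}(r)$, bound each entry of $\Pweight_{\cP,\bxi}$ by the triangle inequality (replacing every path-weight by the product of the absolute values $|\bxi(e)|$), and then control the resulting weighted sums by an induction down the walk-tree driven by the two defining inequalities of a $(s,\delta,c)$-potential vector. Throughout I would write $\bD'=\bD^{\circ\frac1s}$, which is diagonal, positive and non-singular (its diagonal entries are $\bD(w,w)^{1/s}>0$ since $\bD$ is a non-singular non-negative diagonal matrix), and recall from \eqref{def:MyNorm} that
\[
\lnorm \Pweight_{\cP,\bxi} \rnorm_{\bD,\frac1s,\infty}=\lnorm (\bD')^{-1}\cdot\Pweight_{\cP,\bxi}\cdot\bD'\rnorm_{\infty}=\max_{r\in V_{\cP}}\ \frac{1}{\bD'(r,r)}\sum_{u\in V_{\cP}}\bD'(u,u)\,\bigl|\Pweight_{\cP,\bxi}(r,u)\bigr|,
\]
i.e.\ the maximum absolute row sum of $(\bD')^{-1}\Pweight_{\cP,\bxi}\bD'$. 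It therefore suffices to show that for every $r\in V_{\cP}$ this $r$-th absolute row sum is at most $\DBounded(r)$, where $\DBounded=\DBounded(\cP,\bD',s,\delta,c)$.

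First I would fix $r$, write $T=\walkT_{\cP}(r)$, and for a tree-vertex $z$ let $v_z\in V$ be the endpoint of the walk $z$ represents and $M_z$ the unique root-to-$z$ path in $T$. Using \eqref{eq:WalkMatrixEntryAsSumOfPathweights} and the triangle inequality, $\bigl|\Pweight_{\cP,\bxi}(r,u)\bigr|\le\sum_{M}\prod_{e\in M}|\bxi(e)|$ with $M$ over root-to-copies-of-$u$ paths; summing against $\bD'(u,u)$ and reindexing by the endpoint $z$ of each path gives that the $r$-th absolute row sum is at most $\tfrac1{\bD'(r,r)}\sum_{z\in T}\bD'(v_z,v_z)\prod_{e\in M_z}|\bxi(e)|$. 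The term $z=\mathrm{root}$ contributes exactly $1$. For the rest I would partition according to which of the $d$ principal subtrees $T_1,\dots,T_d$ at the root $z$ lies in and its level $\ell$ inside $T_i$; writing $e_i$ for the edge joining the root to the root $x_i$ of $T_i$, this groups the remaining sum into $\tfrac1{\bD'(r,r)}\sum_{i=1}^d|\bxi(e_i)|\sum_{\ell\ge0}R^{(\ell)}_{x_i}$, where for a vertex $v$ of $T$ I set $R^{(m)}_v=\sum_{z}\bD'(v_z,v_z)\prod_{e}|\bxi(e)|$, the sum running over descendants $z$ of $v$ at distance $m$ and $e$ over the edges of the $v$-to-$z$ path (so $R^{(0)}_v=\bD'(v_v,v_v)$ and, for $m\ge1$, $R^{(m)}_v=\sum_j|\bxi(f_j)|R^{(m-1)}_{y_j}$ over the children $y_j$ of $v$ via edges $f_j$).

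The core is a recursive bound on $R^{(m)}_v$. For a non-root vertex $v$ with parent-edge $e_v$, I would apply the contraction inequality of Definition \ref{def:PotentialWeights} with the test vector $\mathbf z$ given by $\mathbf z_j=\tfrac{\bgamma(f_j)}{\bgamma(e_v)}R^{(m-1)}_{y_j}\ge0$; since $\bgamma(e_v)\sum_j\tfrac{|\bxi(f_j)|}{\bgamma(f_j)}\mathbf z_j=R^{(m)}_v$, this yields $R^{(m)}_v\le\tfrac{\delta^{1/s}}{\bgamma(e_v)}\bigl(\sum_j\bgamma(f_j)^s(R^{(m-1)}_{y_j})^s\bigr)^{1/s}$, i.e.\ with $S^{(m)}_v:=\bgamma(e_v)^s(R^{(m)}_v)^s$ exactly $S^{(m)}_v\le\delta\sum_jS^{(m-1)}_{y_j}$ — the $\bgamma$-factors telescope through the levels. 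Unrolling over $\ell$ levels gives $S^{(\ell)}_{x_i}\le\delta^{\ell}\sum_{z}\bgamma(e_z)^s\,\bD'(v_z,v_z)^s$, the sum over descendants $z$ of $x_i$ at distance $\ell$ with $e_z$ their parent-edge. Then the boundedness clause of Definition \ref{def:PotentialWeights}, namely $\bgamma(e_z)\tfrac{|\bxi(e_i)|}{\bgamma(e_i)}\le c$, lets me replace each $\bgamma(e_z)$ by $c\,\bgamma(e_i)/|\bxi(e_i)|$ (the case $\bxi(e_i)=0$ being vacuous since then the whole $i$-term is zero), and since $R^{(\ell)}_{x_i}=\tfrac1{\bgamma(e_i)}\bigl(S^{(\ell)}_{x_i}\bigr)^{1/s}$ this gives
\[
|\bxi(e_i)|\,R^{(\ell)}_{x_i}\le c\,\delta^{\ell/s}\Bigl(\textstyle\sum_{z\in T_i:\ \mathrm{lev}(z)=\ell}\bD'(v_z,v_z)^s\Bigr)^{1/s}=c\Bigl(\delta^{\ell}\textstyle\sum_{w\in V_{\cP}}|\cp_{i,\ell}(w)|\,\bD'(w,w)^s\Bigr)^{1/s},
\]
using that the level-$\ell$ vertices of $T_i$ partition by the vertex $w$ of $G$ they are copies of. Summing over $\ell\ge0$ and $i\in[d]$, dividing by $\bD'(r,r)$, and adding back the root's contribution $1$ reproduces the right-hand side of \eqref{eq:WeightNonLinearRelation} with the matrix there equal to $\bD'$; that is, the $r$-th absolute row sum is $\le\DBounded(r)$, and taking the maximum over $r$ proves \eqref{eq:thrm:NormRedaux2WalkVector}.

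The main obstacle is the bookkeeping in the recursive step: one must pick the test vector $\mathbf z$ so that the left side of the contraction inequality reproduces $R^{(m)}_v$ exactly while the $\bgamma$-weights cancel cleanly (the substitution $S^{(m)}_v=\bgamma(e_v)^s(R^{(m)}_v)^s$ is what makes the telescoping work), and then dispose of the surviving $\bgamma(e_z)$ factors via boundedness without destroying the $\ell_s$-aggregation $\bigl(\sum_z\bD'(v_z,v_z)^s\bigr)^{1/s}$ that is exactly the shape appearing in the walk-vector's definition. The degenerate cases — empty walk-trees, a root with $d=0$ children, edges with $\bxi(e)=0$, and the positivity of $\bgamma$ guaranteeing every division above is legitimate — should be flagged but are routine.
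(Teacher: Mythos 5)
Your proposal is correct and follows essentially the same route as the paper's proof: you bound the maximum absolute row sum of $(\bD^{\circ 1/s})^{-1}\cdot\Pweight_{\cP,\bxi}\cdot\bD^{\circ 1/s}$ by summing absolute path weights over the walk-tree, run the potential-vector contraction recursively down each subtree (your substitution $S^{(m)}_v=\bgamma(e_v)^s\bigl(R^{(m)}_v\bigr)^s$ plays exactly the role of the paper's telescoping Claim \ref{claim:AddPotential} together with the recursion \eqref{eq:SubContVsDeltaKWeights}), and invoke boundedness once per subtree to produce the factor $c$, arriving at precisely the walk-vector expression \eqref{eq:WeightNonLinearRelation}. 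The only difference is cosmetic bookkeeping: the paper applies the boundedness clause at the root step \eqref{eq:SRecurTopWeights}, whereas you absorb the leftover $\bgamma(e_z)$ factors at the leaf level against the subtree's top edge — equivalent arguments.
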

The proof of Theorem \ref{thrm:NormRedaux2WalkVector} appears in Section \ref{sec:thrm:NormRedaux2WalkVector}.

In light of Theorem \ref{thrm:NormRedaux2WalkVector},  Theorem \ref{thrm:MySpectralMatrixNorm} follows by
bounding appropriately $\lnorm \DBounded  \rnorm_{\infty}$, for the special case where  $\bD=\bS_{\cP}$, i.e.,
$\bS_{\cP}$ is  the matrix defined in the statement of Theorem \ref{thrm:MySpectralMatrixNorm}.  We study this problem in the 
following  section by  investigating  the properties of walk-vectors.

\subsection{Monotonicity properties for   walk-vectors}

In this section we prove monotonicity results for walk-vectors. These  are similar in spirit to what we had in 
Theorem \ref{thrm:Monotonicity4WeiightedMatrix} for walk-matrices.

\begin{theorem}[Monotonicity for walk-vectors]\label{thrm:MonotoneWVector}
Let  $s\geq 1$,  allowing $s=\infty$,  and $\delta, c \geq 0$.  For any  $\cP$ and $\cS$, sets  walks  in the graph $G=(V,E)$ 
such that $\cS\subseteq \cP$ and for any  invertible, diagonal matrix $\bD\in \mathbb{R}^{V_{\cP}\times V_{\cP}}_{\geq 0}$, 
we consider the walk-vectors $\DBounded_{\cP}=\DBounded_{\cP}(\cP, \bD, s, \delta, c) \in  \mathbb{R}^{V_{\cP}}_{\geq 0}$ and 
$\DBounded_{\cS}=\DBounded_{\cS}(\cS, \bD, s, \delta, c) \in  \mathbb{R}^{V_{\cS}}_{\geq 0} $.

For any $r\in V_{\cS}\subseteq V_{\cP}$ we have that
\begin{align}\label{eq:thrm:MonotoneWVector}
0<\DBounded_{\cS}(r)\leq \DBounded_{\cP}(r).
\end{align}
Furthermore, for any $t\geq 1$, including $t=\infty$, we have that
\begin{align}\label{eq:thrm:MonotoneWVectorBB}
\lnorm \DBounded_{\cS}  \rnorm_t & \leq  \lnorm \DBounded_{\cP} \rnorm_t. 
\end{align}
\end{theorem}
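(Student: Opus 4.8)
The plan is to deduce both inequalities from a single structural fact: since $\cS\subseteq\cP$, the walk-tree $\walkT_{\cS}(r)$ embeds into $\walkT_{\cP}(r)$ in a root- and distance-preserving way. First I would invoke Lemma \ref{lemma:StrongSubtreeRelation} to get that $\walkT_{\cS}(r)$ is a subtree of $\walkT_{\cP}(r)$; concretely, a vertex of $\walkT_{\cS}(r)$ is a walk of $\cS$ emanating from $r$, hence also a walk of $\cP$ emanating from $r$, i.e.\ a vertex of $\walkT_{\cP}(r)$, and since the parent of a walk $w_0,\dots,w_\ell$ is the walk $w_0,\dots,w_{\ell-1}$ in either tree, this embedding fixes the common root $r$, respects the parent relation, and hence preserves the distance of every vertex to the root. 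In particular, every child of the root of $\walkT_{\cS}(r)$ is a child of the root of $\walkT_{\cP}(r)$, and if $T^{\cS}_i$ and $T^{\cP}_i$ denote the corresponding child-subtrees, then $T^{\cS}_i$ is a subtree of $T^{\cP}_i$ sharing its root, with root-distances preserved. From this I would conclude that for every $w$ and every $\ell\geq 0$ the set of copies of $w$ at level $\ell$ in $T^{\cS}_i$, call it $\cp^{\cS}_{i,\ell}(w)$, is contained in the corresponding set $\cp^{\cP}_{i,\ell}(w)$ in $T^{\cP}_i$, so $|\cp^{\cS}_{i,\ell}(w)|\leq|\cp^{\cP}_{i,\ell}(w)|$; moreover $|\cp^{\cS}_{i,\ell}(w)|=0$ whenever $w\notin V_{\cS}$, so the sums over $V_{\cS}$ and over $V_{\cP}$ of such terms coincide.

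With this in hand, \eqref{eq:thrm:MonotoneWVector} is a term-by-term comparison in the defining formula \eqref{eq:WeightNonLinearRelation}, with the same matrix $\bD$ used on both sides (so the prefactor $c/\bD(r,r)$ is identical). Since $\bD$ is diagonal, non-negative and non-singular, each $\bD(w,w)>0$, and since $\delta,c\geq 0$ every summand in \eqref{eq:WeightNonLinearRelation} is non-negative; hence for a fixed child $i$ of the root of $\walkT_{\cS}(r)$ and a fixed $\ell\geq 0$, monotonicity of $x\mapsto x^{1/s}$ on $[0,\infty)$ (for $s=\infty$ one uses the limiting convention, under which the bracket becomes $\max\{\bD(w,w):\cp_{i,\ell}(w)\neq\emptyset\}$, and the same monotonicity applies) yields
\begin{align}\nonumber
\Big(\delta^{\ell}\sum_{w\in V_{\cP}}|\cp^{\cS}_{i,\ell}(w)|\,(\bD(w,w))^{s}\Big)^{1/s}\ \leq\ \Big(\delta^{\ell}\sum_{w\in V_{\cP}}|\cp^{\cP}_{i,\ell}(w)|\,(\bD(w,w))^{s}\Big)^{1/s}.
\end{align}
Summing over $\ell\geq 0$, then over the children $i$ of the root of $\walkT_{\cS}(r)$ (a subset of those of the root of $\walkT_{\cP}(r)$, the omitted ones contributing non-negative terms), and multiplying by $c/\bD(r,r)\geq 0$ gives $\DBounded_{\cS}(r)\leq\DBounded_{\cP}(r)$. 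The strict lower bound $\DBounded_{\cS}(r)>0$ is immediate, since the expression has the form $1+(\text{non-negative})\geq 1$, and the degenerate cases where $\walkT_{\cS}(r)$ is empty or its root has no children give $\DBounded_{\cS}(r)=1$ and are subsumed.

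Finally, \eqref{eq:thrm:MonotoneWVectorBB} follows from \eqref{eq:thrm:MonotoneWVector}, $V_{\cS}\subseteq V_{\cP}$, and positivity of $\DBounded_{\cP}$: for finite $t$,
\begin{align}\nonumber
\lnorm\DBounded_{\cS}\rnorm_t^{t}\ =\ \sum_{r\in V_{\cS}}\DBounded_{\cS}(r)^{t}\ \leq\ \sum_{r\in V_{\cS}}\DBounded_{\cP}(r)^{t}\ \leq\ \sum_{r\in V_{\cP}}\DBounded_{\cP}(r)^{t}\ =\ \lnorm\DBounded_{\cP}\rnorm_t^{t},
\end{align}
and one takes $t$-th roots; the case $t=\infty$ is identical with a maximum in place of the sums. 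I expect the only genuinely delicate point to be the structural claim of the first paragraph — that passing from $\cP$ to $\cS$ restricts each child-subtree of the root to a subtree with the same root \emph{and} preserves root-distances, so that the comparison of the copy-counts $|\cp_{i,\ell}(\cdot)|$ is legitimate; everything after that is routine monotonicity of power functions and of $\ell_t$-norms.
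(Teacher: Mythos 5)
Your proposal is correct and follows essentially the same route as the paper's own proof: invoke Lemma \ref{lemma:StrongSubtreeRelation} to get that $\walkT_{\cS}(r)$ is a subtree of $\walkT_{\cP}(r)$, compare the defining sums of Definition \ref{def:WalkVector} term by term using $d_{\cS}\leq d_{\cP}$, $|\cp_{\cS,i,\ell}(w)|\leq|\cp_{\cP,i,\ell}(w)|$ and non-negativity of all summands, and then deduce \eqref{eq:thrm:MonotoneWVectorBB} from the entrywise bound together with positivity of the entries. Your write-up is simply more explicit than the paper's on the root- and distance-preserving embedding, the $s=\infty$ convention, and the degenerate cases, but no new idea is involved.
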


The proof of Theorem \ref{thrm:MonotoneWVector} appears in Section \ref{sec:thrm:MonotoneWVector}.

Note that in the above theorem all the parameters of the two walk-vectors $\DBounded_{\cP}$ and $\DBounded_{\cS}$ are the 
same, apart from the set of the corresponding walks.

At this point, we need to recall   the observation we use for Corollary \ref{cor:MaxElement}. That is,  for  any set of walks  $\cP$ in $G$  
 such that there is no  walk in the set which is of length   greater than $k\geq 0$,  we have that $\cP\subseteq\KWalks$.
Combining this observation   with Theorem \ref{thrm:MonotoneWVector} we get the following result. 

\begin{corollary}\label{cor:MaxElement2}
Let  $k\geq 1$,   let   $s\geq 1$, allowing  $s=\infty$ and $\delta, c \geq 0$. For 
the  set  of walks $\cP$   in the graph $G=(V,E)$ 
such that the length of the longest walk in $\cP$ is at most $k$,    for any  invertible, diagonal matrix 
$\bD\in \mathbb{R}^{V\times V}_{\geq 0}$,  the following holds:

Consider the walk-vectors $\DBounded_{\cP}=\DBounded_{\cP}(\cP, \bD, s, \delta, c) \in  \mathbb{R}^{V_{\cP}}_{\geq 0}$ and 
$\DBounded_{\KWalks}=\DBounded_{\KWalks}(\KWalks,  \bD, s, \delta, c) \in  \mathbb{R}^{V}_{\geq 0}$.
For any $r\in V_{\cP}$ we have that  
\begin{align}\nonumber 
\DBounded_{\cP}(r)\leq \DBounded_{\KWalks}(r).
\end{align}
Furthermore, for any $t\geq 1$, allowing  for $t=\infty$,  we have that $\lnorm \DBounded_{\cP}  \rnorm_t \leq \lnorm \DBounded_{\KWalks}  \rnorm_t .$
\end{corollary}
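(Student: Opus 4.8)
The plan is to obtain this corollary as an immediate consequence of the monotonicity result for walk-vectors, Theorem \ref{thrm:MonotoneWVector}, in exact parallel with the way Corollary \ref{cor:MaxElement} follows from Theorem \ref{thrm:Monotonicity4WeiightedMatrix}. First I would record the set containment that makes everything work: since $\KWalks$ is by definition the set of \emph{all} walks of $G$ of length at most $k$, and every walk in $\cP$ has length at most $k$ by hypothesis, each $M\in\cP$ is also an element of $\KWalks$, so $\cP\subseteq\KWalks$. Consequently $V_{\cP}\subseteq V_{\KWalks}$; moreover every vertex $v\in V$ is an endpoint of the length-zero walk consisting of $v$ alone, which belongs to $\KWalks$ since $0\leq k$, so in fact $V_{\KWalks}=V$. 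Hence the given invertible diagonal matrix $\bD\in\mathbb{R}^{V\times V}_{\geq 0}$ is precisely a matrix in $\mathbb{R}^{V_{\KWalks}\times V_{\KWalks}}_{\geq 0}$, as required to form the walk-vector $\DBounded_{\KWalks}$, and since $V_{\cP}\subseteq V$ the same matrix also serves to form $\DBounded_{\cP}$.

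Next I would apply Theorem \ref{thrm:MonotoneWVector} with the identification ``$\cP$'' $\mapsto\KWalks$ and ``$\cS$'' $\mapsto\cP$ (its hypothesis $\cS\subseteq\cP$ being exactly the containment $\cP\subseteq\KWalks$ established above), keeping the same $s$, $\delta$, $c$ and the same diagonal matrix $\bD$. Estimate \eqref{eq:thrm:MonotoneWVector} of that theorem then gives $0<\DBounded_{\cP}(r)\leq\DBounded_{\KWalks}(r)$ for every $r\in V_{\cP}$, which is the first assertion of the corollary, and estimate \eqref{eq:thrm:MonotoneWVectorBB} gives $\lnorm\DBounded_{\cP}\rnorm_t\leq\lnorm\DBounded_{\KWalks}\rnorm_t$ for every $t\geq 1$, including $t=\infty$, which is the second. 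That completes the proof.

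The argument involves no genuine estimation; the only point that needs care — and the closest thing to an obstacle — is the bookkeeping about index sets: one must confirm that both walk-vectors are built from the \emph{same} invertible diagonal matrix living on a common vertex set containing $V_{\cP}\cup V_{\KWalks}$, which is automatic here because $V_{\KWalks}=V\supseteq V_{\cP}$, and that all remaining parameters of the two walk-vectors coincide, so that Theorem \ref{thrm:MonotoneWVector} applies verbatim.
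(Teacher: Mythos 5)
Your proposal is correct and follows exactly the paper's route: the containment $\cP\subseteq\KWalks$ (since every walk in $\cP$ has length at most $k$) combined with Theorem \ref{thrm:MonotoneWVector}, applied with $\cS\mapsto\cP$ and $\cP\mapsto\KWalks$ and the same $\bD,s,\delta,c$, which is precisely how the paper derives the corollary. The additional bookkeeping you record, that $V_{\KWalks}=V\supseteq V_{\cP}$ so the given $\bD\in\mathbb{R}^{V\times V}_{\geq 0}$ serves for both walk-vectors, is a valid and harmless verification of the hypotheses.
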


Theorems \ref{thrm:NormRedaux2WalkVector}, \ref{thrm:MonotoneWVector}  and Corollary \ref{cor:MaxElement2} 
imply that we can bound the quantity $\scriptsize \lnorm \Pweight_{\cP,\bxi}  \rnorm_{\SpGMatrix_{\cP}, \frac{1}{s}, \infty}$, 
in the statement of  Theorem \ref{thrm:MySpectralMatrixNorm}, by using  the $\ell_\infty$ norm of the walk-vector
$\DBounded_{\KWalks}=\DBounded_{\KWalks} \left( \KWalks, \SpGMatrix^{\circ ({1}/{s})}, s, \delta, c\right) $, 
where  $\SpGMatrix$ is the $V\times V$ matrix  defined in \eqref{def:AdjMatrixEigenvals} and the parameters 
$s,\delta, c$ are specified in Theorem \ref{thrm:MySpectralMatrixNorm}. Specifically, we have 
\begin{align}\nonumber  
\lnorm \Pweight_{\cP,\bxi}  \rnorm_{\SpGMatrix_{\cP}, \frac{1}{s}, \infty}  &  \leq  \lnorm \DBounded_{\KWalks} \rnorm_{\infty}. 
\end{align}
Theorem \ref{thrm:MySpectralMatrixNorm} follows by using the above and 
by  bounding appropriately $\lnorm \DBounded_{\KWalks} \rnorm_{\infty}$. For the full proof of Theorem 
\ref{thrm:MySpectralMatrixNorm}, see Section \ref{sec:thrm:MySpectralMatrixNorm}.

\section{Proof of Theorems \ref{thrm:Ising4SPRadius} and \ref{thrm:Ising4SPRadiusNBK}}\label{sec:RapidMixingIsing}

\noindent
For $d>0$,  consider the functions $\logtrecur_d$ and $\dlogtrecur(\cdot)$ defined in \eqref{eq:DefOfH} and \eqref{eq:DerivOfLogRatio}, 
respectively.   Recalling that the zero external field Ising model $\mu_G$ 
corresponds to setting the parameters of $\mu_G$ such that $\beta=\gamma$ and 
$\lambda=1$, we have that
\begin{align}\label{eq:LogRatioIsing}
\logtrecur_d:[-\infty, +\infty]^d\to [-\infty, +\infty] &&\textrm{s.t.}&&  \textstyle (x_1, \ldots, x_d)\mapsto  \sum^d_{i=1}
\log\left( \frac{\beta \exp(x_i)+1}{\exp(x_i)+\beta} \right).
\end{align}
Since  $\frac{\partial }{\partial x_i}\logtrecur_d(x_1, \ldots, x_d)=\dlogtrecur(x_i)$, we have that 
\begin{align}\label{eq:DerivOfLogRatioIsing}
\dlogtrecur(x) &\textstyle =-\frac{(1-\beta^2)  \exp(x)}{(\beta \exp(x)+1)(\exp(x)+\beta)}.
\end{align}

\begin{lemma}\label{lemma:IsingInfNormBound}
For any $d>0$,  $\zeta \in (0,1)$,   $R\geq 1$ and   $\beta\in \UnIsing(R,\zeta )$ we have the following: 
Consider the functions $\logtrecur_d$  defined in \eqref{eq:DefOfH} with respect to the Ising model with parameter $\beta$
and no external field. We have that 
\begin{align}\label{eq:lemma:IsingInfNormBound}
||\nabla \logtrecur_d ({y}_1, {y}_2, \ldots, {y}_d) ||_{\infty} &\leq {(1-\zeta)}/{R}.
\end{align}
\end{lemma}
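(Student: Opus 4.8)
The plan is to reduce the statement to a one-variable supremum estimate and then an endpoint check on the interval $\UnIsing(R,\zeta)$. First I would use that $\frac{\partial}{\partial x_i}\logtrecur_d(x_1,\ldots,x_d)=\dlogtrecur(x_i)$ for every $i\in[d]$, with $\dlogtrecur$ the Ising derivative from \eqref{eq:DerivOfLogRatioIsing}. Then
\[
||\nabla\logtrecur_d(y_1,\ldots,y_d)||_\infty=\max_{i\in[d]}|\dlogtrecur(y_i)|\le \sup_{x\in[-\infty,+\infty]}|\dlogtrecur(x)|,
\]
so it suffices to prove $\sup_x|\dlogtrecur(x)|\le (1-\zeta)/R$.

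Next I would compute this supremum exactly. Substituting $u=\exp(x)\in(0,\infty)$,
\[
|\dlogtrecur(x)|=\frac{|1-\beta^2|\,u}{(\beta u+1)(u+\beta)}=\frac{|1-\beta^2|}{\beta u+\beta/u+\beta^2+1}.
\]
Since $\beta>0$, the AM--GM inequality gives $\beta u+\beta/u\ge 2\beta$ with equality exactly at $u=1$, while the quantity tends to $0$ as $u\to0^+$ or $u\to\infty$; hence the supremum over $u>0$ is attained at $u=1$ and equals $\frac{|1-\beta^2|}{(\beta+1)^2}=\frac{|1-\beta|}{1+\beta}$.

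It then remains to show $\frac{|1-\beta|}{1+\beta}\le\frac{1-\zeta}{R}$ for every $\beta\in\UnIsing(R,\zeta)=\bigl[\frac{R-1+\zeta}{R+1-\zeta},\frac{R+1-\zeta}{R-1+\zeta}\bigr]$. The map $\beta\mapsto\frac{|1-\beta|}{1+\beta}$ equals $\frac{2}{1+\beta}-1$ on $(0,1]$ (decreasing) and $1-\frac{2}{1+\beta}$ on $[1,\infty)$ (increasing); since $R\ge1$ and $\zeta\in(0,1)$ guarantee $1\in\UnIsing(R,\zeta)$, the maximum on this interval is attained at one of its two endpoints. A short computation — at the upper endpoint $|1-\beta|=\beta-1$, at the lower one $|1-\beta|=1-\beta$ — gives in both cases
\[
\frac{|1-\beta|}{1+\beta}=\frac{(R+1-\zeta)-(R-1+\zeta)}{(R+1-\zeta)+(R-1+\zeta)}=\frac{2-2\zeta}{2R}=\frac{1-\zeta}{R},
\]
which yields \eqref{eq:lemma:IsingInfNormBound} (and in fact shows the bound is tight, met with equality at both endpoints).

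The argument is entirely elementary; the only place needing care is the sign bookkeeping in $|1-\beta|$ combined with the split $\beta\le1$ versus $\beta\ge1$, which is why I would organise the final step as a monotonicity analysis on the two sides of $\beta=1$ rather than a single estimate.
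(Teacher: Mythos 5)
Your proposal is correct and follows essentially the same route as the paper: reduce to bounding $\sup_x|\dlogtrecur(x)|$ by $\frac{|1-\beta|}{1+\beta}$ (your AM--GM step $\beta u+\beta/u\ge 2\beta$ is the same estimate the paper obtains from convexity of $e^x+e^{-x}$), and then show $\frac{|1-\beta|}{1+\beta}\le\frac{1-\zeta}{R}$ on $\UnIsing(R,\zeta)$ by the same monotonicity-on-both-sides-of-$\beta=1$ endpoint check. The only cosmetic difference is that you note the bound is attained (at $x=0$ and at the interval endpoints), which the paper does not emphasize.
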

\begin{proof}
It suffices to show that  any  $d>0$ and any 
$({y}_1, {y}_2, \ldots, {y}_d)\in [-\infty, +\infty]^d$  we have that 
\begin{align}\label{eq:Target4thrm:SI4Ising}
||\nabla \logtrecur_d ({y}_1, {y}_2, \ldots, {y}_d) ||_{\infty}
&\leq \frac{|\beta-1|}{\beta +1}.
\end{align}
Before showing that \eqref{eq:Target4thrm:SI4Ising} is true, let us show how it implies 
\eqref{eq:lemma:IsingInfNormBound}.  That is, we show that for any $\beta \in \UnIsing(R,\zeta)$,
we have that $\frac{|\beta-1|}{\beta +1}\leq \frac{1-\zeta}{R}$.

Consider the function $f(x)=\frac{|x-1|}{x+1}$ defined on the closed interval 
$\left[\frac{R-1}{R+1}, \frac{R+1}{R-1} \right]$. 
Taking derivatives,  it is elementary  to verify that  $f(x)$ is increasing in the interval 
$1<x  \leq \frac{R+1}{R-1}$, while it is decreasing
in the interval $\frac{R-1}{R+1}\leq x<1$. 
Furthermore, noting that  $f(1)=0$, it is direct that  
\begin{align}\nonumber
\sup_{\beta \in \UnIsing(R,\zeta)}f(\beta)=\textstyle f\left(\frac{R-1+\zeta}{R+1-\zeta}\right)= \textstyle f\left(\frac{R+1-\zeta}{R-1+\zeta}\right)
=\frac{1-\zeta}{R}.
\end{align}
It is immediate that  indeed \eqref{eq:Target4thrm:SI4Ising} implies \eqref{eq:lemma:IsingInfNormBound}.
Hence,  it remains to show that \eqref{eq:Target4thrm:SI4Ising} is true.

Since we have that  $\frac{\partial }{\partial x_i}\logtrecur_d(x_1, x_2, \ldots, x_d)=\dlogtrecur(x_i)$, 
it suffices to show that for any $x\in [-\infty, +\infty]$ we have that
\begin{align}\label{eq:TargetB4thrm:SI4Ising}
\left |  \dlogtrecur(x) \right | &\leq \textstyle \frac{|1-\beta | }{1+\beta}.
\end{align}
For the distribution we consider here, the function $\dlogtrecur()$ is given from \eqref{eq:DerivOfLogRatioIsing}.
From the above we get that
\begin{align}
|\dlogtrecur(x)| & \textstyle =   \frac{|1-\beta^2|\exp(x)}{(b \exp(x) +1)(b+ \exp(x))} \ =\ 
\frac{|1-\beta^2| }{(\beta+\exp(-x) )(\beta+ \exp(x) )}
\ =\   \frac{|1-\beta^2 |}{\beta^2+1 +\beta (\exp(-x)+\exp(x) ) }. \nonumber
\end{align}
It is straightforward to verify that  $\phi(x)=e^{-x}+e^{x}$ is convex and for any  $x\in [-\infty, +\infty]$
the function $\phi(x)$ attains its minimum at $x=0$, i.e., we have that $\phi(x)\geq 2$.  Consequently,   we get that
\begin{align} \nonumber 
|\dlogtrecur(x)|  & \leq   \textstyle  \frac{|1-\beta^2 |}{\beta^2+1 +2\beta } \ = \ 
\frac{|1-\beta^2 |}{(\beta+1)^2 } \ =\ \frac{|1-\beta | }{1+\beta},
\end{align}
for any $x\in [-\infty, +\infty]$.
The above proves that \eqref{eq:TargetB4thrm:SI4Ising} is true and  concludes our proof.
\end{proof}

\subsection{Proof of Theorem \ref{thrm:Ising4SPRadius}}\label{sec:thrm:Ising4SPRadius}

Note that if $\spradius_G$, the spectral radius of $\simpleadj_G$, is bounded, then the same holds for 
the maximum degree $\maxDeg$ of $G$.  This follows  from  the standard relation  that 
$\maxDeg\leq (\spradius_G)^2\leq \maxDeg^2$.   Similarly, if $\spradius_G$  is unbounded then 
$\maxDeg$  is unbounded, too.  

Using Lemma \ref{lemma:IsingInfNormBound} we get the following: for any $\beta\in \UnIsing(\spradius_G,\delta)$, 
the set of functions  $\{ \logtrecur_d\}_{d\in [\maxDeg]}$ defined in \eqref{eq:LogRatioIsing} with respect to the
zero external field Ising model with parameter $\beta$ exhibits  $ ({1-\delta})/{\spradius_G}$-contraction, i.e.,
\begin{align}\nonumber 
||\nabla \logtrecur_d ({y}_1, {y}_2, \ldots, {y}_d) ||_{\infty}  &\leq ({1-\delta})/{\spradius_G}  & \forall  d\in [\maxDeg].
\end{align}
The above, combined with Theorem  \ref{thrm:Recurrence4InfluenceEigenBound} imply that 
for   $\Lambda\subseteq V$ and $\tau\in \{\pm 1\}^{\Lambda}$,  the pairwise influence 
matrix $\infmatrix^{\Lambda,\tau}_{G}$,  induced by $\mu_G$,  satisfies that  
\begin{align}\label{eq:thrm:Ising4SPRadiusBasis}
\spradius(\infmatrix^{\Lambda,\tau}_{G})\leq \delta^{-1}.
\end{align}
The theorem follows as a corollary from \eqref{eq:thrm:Ising4SPRadiusBasis} and  Theorem 1.9 in  \cite{VigodaSpectralIndB}.
\hfill $\Box$

\subsection{Proof of Theorem \ref{thrm:Ising4SPRadiusNBK}}\label{sec:thrm:Ising4SPRadiusNBK}

Using Lemma \ref{lemma:IsingInfNormBound} we get the following: for any $\beta\in \UnIsing(\hspradius_G,\delta)$, 
the set of functions  $\{ \logtrecur_d\}_{d\in [\maxDeg]}$ defined in \eqref{eq:LogRatioIsing} with respect to the
zero external field Ising model with parameter $\beta$ exhibits  $ ({1-\delta})/{\hspradius_G}$-contraction, i.e.,
\begin{align}\nonumber 
||\nabla \logtrecur_d ({y}_1, \ldots, {y}_d) ||_{\infty}\leq {(1-\delta)}/{\hspradius_G}.
\end{align}
The above, combined with Theorem  \ref{thrm:Recur4InfSPBoundNBK} imply that 
for   $\Lambda\subseteq V$ and $\tau\in \{\pm 1\}^{\Lambda}$,  the pairwise influence 
matrix $\infmatrix^{\Lambda,\tau}_{G}$,  induced by $\mu_G$,  satisfies that  
\begin{align}\nonumber 
\spradius(\infmatrix^{\Lambda,\tau}_{G}) &\leq \textstyle  \lnorm  \left( \bI -   \frac{1-\delta}{\hspradius_G} \simpleadj_G +\left( \frac{1-\delta}{\hspradius_G}\right)^2(\bD-\bI) \right)^{-1}   \rnorm_2.
\end{align}
The theorem follows by making a standard use of and  Theorem 1.9   in  \cite{VigodaSpectralIndB}.
\hfill $\Box$

\newcommand{\HCFactor}{{\Xi}}

\section{Proof of Theorem \ref{thrm:HC4SPRadius}}\label{sec:thrm:HC4SPRadius}

We start our proof by deriving a bound on $\spradius  \left( \infmatrix^{\Lambda,\tau}_{G} \right)$,
the spectral radius of the influence matrix.

\begin{theorem}\label{thrm:SPIndependenceHC}
For any $\epsilon\in (0,1)$,  for $\maxDeg\geq 2$ and $\spradius_G\geq 2$,   let   $G=(V,E)$  be of maximum 
degree $\maxDeg$, while $\simpleadj_G$ has  spectral radius $\spradius_{G}$.  
Also,  let $\mu_G$ be the Hard-core model on $G$, with  fugacity $0< \lambda\leq (1-\epsilon)\lcritical(\spradius_G)$.

There is a constant  $0<z<1$, that depends only on $\epsilon$, such that 
for any  $\Lambda\subseteq V$ and $\tau\in \{\pm 1\}^{\Lambda}$,  the pairwise influence 
matrix  $\infmatrix^{\Lambda,\tau}_{G}$, induced by $\mu_{G}$,   satisfies that  
\begin{align} \nonumber 
\spradius  \left( \infmatrix^{\Lambda,\tau}_{G} \right) &\leq 
1+  e^3   \left( {\maxDeg}/{\spradius_G} \right)^{1/2}  z^{-1}.
\end{align}
\end{theorem}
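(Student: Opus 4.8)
The plan is to present $\infmatrix^{\Lambda,\tau}_{G}$ as a walk‑matrix and bound its spectral radius through the matrix norm $\lnorm\cdot\rnorm_{\SpGMatrix_{\cP},\frac12,\infty}$ of \eqref{def:MyNorm}. By Lemma~\ref{lemma:InfluenceMatrixIsWalkMatrix} we have $\infmatrix^{\Lambda,\tau}_{G}=\Pweight_{\LamSAW,\bphi}$, where $\LamSAW$ is the set of self‑avoiding walks of $G$ avoiding $\Lambda$ and $\bphi$ is the restricted influence weight of \eqref{def:OfRestInfluenceWeights}; every walk in $\LamSAW$ has length at most $n$. Hence, applying Corollary~\ref{cor:MyNormVsSPRadius} with $p=\infty$ and $\bD=\SpGMatrix_{\LamSAW}$ and then Theorem~\ref{thrm:MySpectralMatrixNorm} with $s=2$, it suffices to produce a vector $\bgamma\in\mathbb R^{\WEdges_{\LamSAW}}_{>0}$ that is a $(2,\delta,c)$‑potential vector with respect to $(\LamSAW,\bphi)$ with $\delta=(1-\epsilon')/\spradius_{G}$ and $c=\zeta/\spradius_{G}$, for some $\epsilon'=\epsilon'(\epsilon)\in(0,1)$ and some $\zeta=\zeta(\epsilon)\le e^{3}$. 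Indeed Theorem~\ref{thrm:MySpectralMatrixNorm} then yields $\spradius(\infmatrix^{\Lambda,\tau}_{G})\le 1+\tfrac{\zeta}{\spradius_{G}}\maxDeg^{1/2}\spradius_{G}^{1/2}\sum_{\ell=0}^{n-1}(1-\epsilon')^{\ell/2}\le 1+\zeta\,(\maxDeg/\spradius_{G})^{1/2}\,(1-\sqrt{1-\epsilon'})^{-1}$, which is the asserted bound with $z:=1-\sqrt{1-\epsilon'}\in(0,1)$, a quantity depending only on $\epsilon$. Working with a potential \emph{vector} here rather than with a potential function (as in Theorem~\ref{thrm:Recurrence4InfluenceEigenBoundNonLinear}) is essential: the contraction need only be checked at the Gibbs‑ratio configurations actually occurring in the self‑avoiding walk trees of $G$, and at their genuine branching (so at most $\maxDeg-1$ children at a non‑root vertex), not for all formal inputs.

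Next I would construct $\bgamma$. For the Hard‑core model ($\beta=0$, $\gamma=1$) the ratio recursion is $\gratio_v=\lambda\prod_i(1+\gratio_{u_i})^{-1}$, all ratios lie in $[0,\lambda]$, and the weight of an edge $e$ of a walk tree landing on a node $z$ of ratio $\gratio_z$ is $\bphi(e)=\dlogtrecur(\log\gratio_z)=-\gratio_z/(1+\gratio_z)$ unless $z$ carries a boundary condition, in which case $\bphi(e)=0$ (cf.\ \eqref{def:OfRestInfluenceWeights}, \eqref{eq:DerivOfLogRatio}). Following the Hard‑core potential analysis of \cite{ConnectiveConst}, take the potential $\potF$ in log‑ratio coordinates with $\xdpotF(y)=\potF'(y)=\sqrt{e^{y}/(1+e^{y})}$ (the $\mathrm{arcsinh}\sqrt{\cdot}$ potential) and set $\bgamma(e)=\xdpotF(\log\gratio_z)=\sqrt{\gratio_z/(1+\gratio_z)}$ on every edge $e$ landing on a non‑boundary node $z$, and $\bgamma(e)=\sqrt{\lambda/(1+\lambda)}$ on edges incident to boundary nodes; then $|\bphi(e)|/\bgamma(e)=\sqrt{\gratio_z/(1+\gratio_z)}$ for non‑boundary $z$. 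For the boundedness requirement of Definition~\ref{def:PotentialWeights} one gets, for any two edges $e_a,e_b$ landing on $z_a,z_b$,
\[
\bgamma(e_a)\cdot\frac{|\bphi(e_b)|}{\bgamma(e_b)}=\sqrt{\tfrac{\gratio_{z_a}}{1+\gratio_{z_a}}}\;\sqrt{\tfrac{\gratio_{z_b}}{1+\gratio_{z_b}}}\le\sqrt{\gratio_{z_a}\gratio_{z_b}}\le\lambda\le(1-\epsilon)\lcritical(\spradius_G),
\]
and since $\lcritical(z)\,z=(1+\tfrac1{z-1})^{z+1}$ is decreasing on $z\ge2$ with value $8$ at $z=2$ and limit $e$, we have $\lcritical(\spradius_G)\spradius_G\le 8<e^{3}$, so $c=\zeta/\spradius_G$ holds with $\zeta:=(1-\epsilon)\lcritical(\spradius_G)\spradius_G\le e^{3}$.

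The crux is the contraction condition. Fix a root $r$, a non‑root vertex $v$ of $\Tsaw(r)$ with $d\le\maxDeg-1$ children $u_1,\dots,u_d$ of ratios $\gratio_{u_i}$ (so $\gratio_v=\lambda\prod_i(1+\gratio_{u_i})^{-1}$), and $\mathbf z\in\mathbb R^{d}_{\ge0}$. By the Cauchy--Schwarz inequality (tight over $\mathbf z$) the required bound $\bgamma(e)\sum_{i}\tfrac{|\bphi(e_i)|}{\bgamma(e_i)}\mathbf z_i\le\delta^{1/2}\lnorm\mathbf z\rnorm_{2}$ is equivalent to $\bgamma(e)^{2}\sum_i\tfrac{|\bphi(e_i)|^{2}}{\bgamma(e_i)^{2}}\le\delta$, i.e.\ to
\[
\frac{\gratio_v}{1+\gratio_v}\sum_{i=1}^{d}\frac{\gratio_{u_i}}{1+\gratio_{u_i}}\le\frac{1-\epsilon'}{\spradius_{G}}.
\]
Writing $t_i=\gratio_{u_i}/(1+\gratio_{u_i})\in[0,1)$ and using $\prod_i(1-t_i)\le\exp(-\sum_i t_i)$, the left‑hand side is dominated by its value on the symmetric profile, and a routine optimisation shows its supremum over $d$ and admissible profiles is attained at the fixed point of the $d$‑branching recursion at fugacity $\lambda$. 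The quantitative input---this is exactly the content of the Hard‑core potential bounds of \cite{ConnectiveConst}, adapted so that the controlling growth parameter is $\spradius_{G}$---is that $\lambda\le(1-\epsilon)\lcritical(\spradius_{G})$, together with the fact that a graph of spectral radius $\spradius_G$ cannot realise the extremal low‑ratio/high‑branching profile for $d$ much larger than $\spradius_G$ (that would require a cascade of high‑degree vertices, which drives the spectral radius up), yields a uniform $\epsilon'=\epsilon'(\epsilon)\in(0,1)$ for which the displayed inequality holds at every non‑root vertex of every self‑avoiding walk tree of $G$. This establishes the $(2,\delta,c)$‑potential‑vector property and, via the first paragraph, the theorem.

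The main obstacle is precisely this last step: converting the \emph{tree} uniqueness threshold $\lambda<\lcritical(\spradius_{G})$ into a \emph{graph} contraction rate $(1-\epsilon')/\spradius_{G}$, uniformly over $G$. The delicate point is that the branching $\maxDeg-1$ of the self‑avoiding walk tree may exceed $\spradius_{G}$, so the naive per‑vertex contraction at a degree‑$(d{+}1)$ vertex can exceed $1/\spradius_{G}$; one must exploit that the adversarial configuration of child‑ratios making that happen is not simultaneously realisable along the walk tree of a graph whose spectral radius is only $\spradius_{G}$, and the non‑linearity carried by the potential (Definition~\ref{def:PotentialWeights}) is what makes this bookkeeping intricate. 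Once the contraction is in hand, the rest is the already‑established reduction through Theorem~\ref{thrm:MySpectralMatrixNorm} and the routine estimate $\lcritical(\spradius_G)\spradius_G\le e^{3}$.
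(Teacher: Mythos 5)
Your reduction is the same as the paper's (walk-matrix representation via Lemma \ref{lemma:InfluenceMatrixIsWalkMatrix}, the norm $\lnorm\cdot\rnorm_{\bD,t,\infty}$ and Theorem \ref{thrm:MySpectralMatrixNorm}, the potential $\xdpotF(y)=\sqrt{e^y/(1+e^y)}$, and the boundedness estimate giving $c\leq e^3/\spradius_G$), but there is a genuine gap exactly where you yourself flag ``the crux''. You fix $s=2$, so after Cauchy--Schwarz your contraction requirement becomes $\frac{\gratio_v}{1+\gratio_v}\sum_{i=1}^d\frac{\gratio_{u_i}}{1+\gratio_{u_i}}\leq (1-\epsilon')/\spradius_G$ at every non-root vertex. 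This inequality is \emph{not} a consequence of the bounds in \cite{ConnectiveConst}: those results (the paper's Lemmas \ref{lemma:InsteadOfHolder} and \ref{lemma:XiBoundAtFixPoint}) establish contraction at rate $\dcritical(\lambda)^{-1}$ only for the specific $\lambda$-dependent exponent $q$ with $q^{-1}=1-\frac{\dcritical-1}{2}\log(1+\frac{1}{\dcritical-1})$, not for $q=2$. Indeed, as a worst-case-over-inputs statement your $s=2$ inequality is false: take $\spradius_G=2$, $\maxDeg=4$, $\lambda$ near $\lcritical(2)=4$, a vertex with $d=3$ children each of ratio $x=0.6$; then $\gratio_v=\lambda(1+x)^{-3}\approx 0.98$ and the left-hand side is $\approx 0.556>1/2\geq(1-\epsilon')/\spradius_G$. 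So you are forced to argue that such child-ratio profiles are ``not simultaneously realisable'' in a graph of spectral radius $\spradius_G$ — a structural claim about which ratio configurations the self-avoiding-walk trees of $G$ can produce — and you give no proof of it; your appeal to ``routine optimisation'' attained at the symmetric fixed point is also not justified for the $\ell_2$ quantity. This missing realizability argument is the entire difficulty, and nothing in the paper's toolkit (nor in \cite{ConnectiveConst}) supplies it.

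The paper's proof avoids this issue altogether: it applies Theorem \ref{thrm:Recurrence4InfluenceEigenBoundNonLinear} with the $(s_0,\delta_0,c_0)$-potential of Theorem \ref{thrm:GoodPotentialHC}, where $s_0=s_0(\lambda)\in[1,2]$ is the exponent from \cite{ConnectiveConst} for which the contraction holds with rate $\delta_0\leq 1/\dcritical(\lambda)$ uniformly over \emph{all} degrees $d$ and \emph{all} positive ratio inputs. The spectral radius then enters purely through the fugacity hypothesis: $\lambda\leq(1-\epsilon)\lcritical(\spradius_G)$ and monotonicity of $\lcritical$ give $\dcritical(\lambda)\geq\spradius_G/(1-z)$ (Claim \ref{claim:InterpretGoodPotentialHC}), so $\delta_0\leq(1-z)/\spradius_G$ with no graph-structure argument; finally, since $s_0\in[1,2]$ and $\maxDeg\geq\spradius_G$, the factors $(\maxDeg/\spradius_G)^{1-1/s_0}$ and $(1-(1-z)^{s_0})^{-1}$ are bounded by $(\maxDeg/\spradius_G)^{1/2}$ and $z^{-1}$, which is how the stated bound arises. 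If you want to salvage your write-up, replace the fixed choice $s=2$ by this $\lambda$-dependent $s_0$ (or prove the realizability claim, which would be a new and substantial result).
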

The proof of Theorem \ref{thrm:SPIndependenceHC} appears in Section \ref{sec:thrm:SPIndependenceHC} .

Note that if $\spradius_G$ is unbounded, then there are no guarantees 
from Theorem \ref{thrm:SPIndependenceHC}  that  $\textstyle \spradius  \left( \infmatrix^{\Lambda,\tau}_{G} \right)$ 
is bounded. This is due to  the quantity  ${\maxDeg}/{\spradius_G}$, on the r.h.s. of 
\eqref{eq:thrm:Recurrence4InfluenceEigenBoundANonLin}, which can  be unbounded if $\spradius_G$ 
is unbounded.

\begin{proof}[Proof of Theorem \ref{thrm:HC4SPRadius}]

%

As argued in the proof of Theorem \ref{thrm:Ising4SPRadius},  if the spectral radius $\spradius_G$ is bounded, 
then the  same holds for the maximum degree $\maxDeg$ as we always have that 
$\maxDeg\leq (\spradius_G)^2\leq \maxDeg^2$. 

Since both $\spradius_G$ and $\maxDeg$ are bounded,  for  fugacity $0\leq \lambda\leq (1-\epsilon)\lcritical(\spradius_G)$,   
Theorem \ref{thrm:SPIndependenceHC} implies that  $\spradius  \left( \infmatrix^{\Lambda,\tau}_{G} \right)=O(1)$
for any $\Lambda\subseteq V$ and $\tau\in \{\pm 1\}^{\Lambda}$.

Then,  Theorem \ref{thrm:HC4SPRadius} follows by a standard  application of Theorem 1.9 in  \cite{VigodaSpectralIndB}.
\end{proof}

\newcommand{\oldpotF}{\Phi}
\subsection{Proof of Theorem \ref{thrm:SPIndependenceHC}} \label{sec:thrm:SPIndependenceHC}

In order to prove Theorem \ref{thrm:SPIndependenceHC} we introduce the  potential function
$\potF$ as follows: we define  $\potF$ indirectly, i.e., in terms of $\xdpotF=\potF'$.   We have that
\begin{align}\label{def:XPotentialFunction}
\xdpotF &: \mathbb{R}_{>0} \to \mathbb{R}  & \textrm{such that} &&  y&  \textstyle \mapsto  \sqrt{\frac{e^y}{1+e^y}},
\end{align}
while $\potF(0)=0$.   

Note that the  potential function $\potF$ was  proposed -in a more general form- in \cite{VigodaSpectralInd}. 
 It is standard to show that $\potF$ is   well-defined, e.g.,  see  \cite{VigodaSpectralInd}. 
Later in our analysis we need to use certain results from \cite{ConnectiveConst}, which (essentially) use 
another, but related   potential function from  \cite{LLY13}.   
We postpone this discussion  until later.

For any given $\lambda>0$,  we define, implicitly, the function $\dcritical(\lambda)$ to be the positive 
number $z>1$ such  that $\frac{z^{z}}{(z-1)^{(z+1)}}=\lambda$. 
Form its definition it is not hard to see that  $\dcritical(\cdot)$ is the inverse map of $\lcritical(\cdot )$, i.e., we have  that 
$\dcritical(x)=\lcritical^{-1}(x)$.  In that respect,  $\dcritical(x)$ is well-defined as  $\lcritical(x)$ is monotonically 
decreasing in $x$.

\begin{theorem}\label{thrm:GoodPotentialHC}
For $\lambda>0$,   let $\dcritical=\dcritical(\lambda)$.  The potential function  $\potF$ defined in \eqref{def:XPotentialFunction} is a 
$(s_0, \delta_0, c_0)$-potential  function (as in Definition \ref{def:GoodPotential}) such that
\begin{align}
s_0^{-1}&= \textstyle 1-\frac{\dcritical-1}{2}\log\left(1+\frac{1}{\dcritical-1} \right), &\delta_0& \textstyle \leq \frac{1}{\dcritical}  &\textrm{and} && c_0 \leq \textstyle \frac{\lambda}{1+\lambda}.
\end{align}
\end{theorem}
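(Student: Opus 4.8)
The plan is to verify the two defining conditions of an $(s,\delta,c)$-potential function for the explicit $\potF$ with $\xdpotF(y)=\sqrt{e^y/(1+e^y)}$, specialized to the Hard-core recursion (so $\beta=0$, $\gamma=1$, and the single-child activity is $\lambda$). First I would write down all the ingredients of Definition~\ref{def:GoodPotential} in closed form for the Hard-core model. Here $\logtrecur_d(x_1,\dots,x_d)=\log\lambda-\sum_{i=1}^d\log(1+e^{x_i})$, and $\dlogtrecur(x)=-\frac{e^x}{1+e^x}$, so $|\dlogtrecur(x)|=\frac{e^x}{1+e^x}=\xdpotF(x)^2\in(0,1)$. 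A convenient change of variable: for a log-ratio $y$, set $p=\frac{e^y}{1+e^y}\in(0,1)$, so that $\xdpotF(y)=\sqrt p$ and $|\dlogtrecur(y)|=p$; thus $\frac{|\dlogtrecur(y)|}{\xdpotF(y)}=\sqrt p$. For the image point, if $y'=\logtrecur_d(y_1,\dots,y_d)$ with corresponding $p'=\frac{e^{y'}}{1+e^{y'}}$, then $e^{y'}=\lambda\prod_i(1-p_i)$, so $1-p'=\frac{1}{1+\lambda\prod_i(1-p_i)}$ and $p'=\frac{\lambda\prod_i(1-p_i)}{1+\lambda\prod_i(1-p_i)}$; in particular $\xdpotF(y')=\sqrt{p'}$.

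\textbf{Boundedness.} The quantity to bound in \eqref{eq:BoundednessPF} is $\max_{z_1,z_2\in\ratiorange}\xdpotF(z_1)\cdot\frac{|\dlogtrecur(z_2)|}{\xdpotF(z_2)}=\max_{z_1}\sqrt{p(z_1)}\cdot\max_{z_2}\sqrt{p(z_2)}=\max_{z\in\ratiorange}p(z)$. Since for the Hard-core model every log-ratio at a vertex with $d\le\maxDeg$ children lies in $\ratiorange_d=[\log\lambda - d\log 2\text{-type bound}]$ — more precisely, $\gratio^{K,\tau}_r\le\lambda$ always (boundary condition ``all neighbors unoccupied'' maximizes the ratio), so $p(z)\le\frac{\lambda}{1+\lambda}$. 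Hence $c_0\le\frac{\lambda}{1+\lambda}$, which is the claimed bound on $c$; I would just double-check that $\ratiorange$ as defined in the paper indeed yields $e^z\le\lambda$, which follows since for $\beta=0,\gamma=1$ the recursion gives $\gratio^{K,\tau}_r\le\lambda\prod_i\frac{1}{x_i+1}\le\lambda$.

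\textbf{Contraction.} This is the main obstacle. Inequality \eqref{eq:contractionRelationPF} reads $\sqrt{p'}\sum_{j=1}^d\frac{|\dlogtrecur(y_j)|}{\xdpotF(y_j)}m_j\le\delta^{1/s}\|\mathbf m\|_s$, i.e. $\sqrt{p'}\sum_j\sqrt{p_j}\,m_j\le\delta^{1/s}\|\mathbf m\|_s$. By duality (H\"older with conjugate exponent $r$, $r^{-1}+s^{-1}=1$), $\sum_j\sqrt{p_j}\,m_j\le\big(\sum_j p_j^{r/2}\big)^{1/r}\|\mathbf m\|_s$, so it suffices to prove the vertex inequality $p'^{\,s/2}\sum_j p_j^{\,r/2}\le\delta$ uniformly over $d\le\maxDeg$ and $p_1,\dots,p_d\in(0,\frac{\lambda}{1+\lambda}]$ — actually over the full range consistent with $\ratiorange$. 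The worst case should be all $p_j$ equal to a common value $q$ (by symmetry/convexity considerations in the relevant regime), reducing to maximizing $\Big(\frac{\lambda(1-q)^d}{1+\lambda(1-q)^d}\Big)^{s/2}\cdot d\,q^{r/2}$ over $q\in(0,1)$ and $d\in[\maxDeg]$; one then relaxes $d$ to a continuous parameter. The key calculation — and this is exactly where \cite{ConnectiveConst} (building on \cite{LLY13}) is invoked — is that with the stated choice $s_0^{-1}=1-\frac{\dcritical-1}{2}\log(1+\frac{1}{\dcritical-1})$ (equivalently $r_0/2 = \frac{1}{2}\big/\big(\frac{\dcritical-1}{2}\log(1+\frac1{\dcritical-1})\big)$), the maximum of this expression, taken over all $d\ge 1$ at the uniqueness threshold activity $\lambda=\lcritical(\dcritical)$, equals exactly $\frac{1}{\dcritical}$, attained in the limit $d\to\infty$ with $q\to\frac{1}{\dcritical}$ — this is precisely the optimized potential-method computation for the Hard-core connective-constant/tree-uniqueness bound. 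I would therefore cite the relevant lemma from \cite{ConnectiveConst} (the one proving the analogous contraction with the $\ell_r$-weighted potential for the SAW tree) to conclude $\delta_0\le\frac{1}{\dcritical}$, rather than redo that optimization from scratch; monotonicity of $\lcritical$ then gives that for $\lambda\le(1-\epsilon)\lcritical(\spradius_G)$ one has $\dcritical(\lambda)\ge\spradius_G/(1-\epsilon')$-type control, but that comparison is deferred to the proof of Theorem~\ref{thrm:SPIndependenceHC} and is not needed here. The remaining routine checks are: $\potF$ is well-defined, differentiable and increasing (done in \cite{VigodaSpectralInd}), $s_0\ge 1$ (equivalently $\frac{\dcritical-1}{2}\log(1+\frac1{\dcritical-1})\in(0,1)$, which holds for all $\dcritical>1$), and that the H\"older reduction above is valid for $s=\infty$ as well (where it degenerates to the $\ell_1$-case already handled in \cite{VigodaSpectralInd}).
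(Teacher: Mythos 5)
Your proposal is correct in outline and reaches the theorem essentially the way the paper does: boundedness is verified by exactly the computation in Lemma \ref{lemma:PotBooundedness} (since $|\dlogtrecur(y)|/\xdpotF(y)=\xdpotF(y)$ and $e^{y}\le\lambda$ on $\ratiorange$, the product is at most $\lambda/(1+\lambda)$), and the contraction constant is not re-derived but imported from \cite{ConnectiveConst}. The difference lies in how the contraction condition is matched to the cited results. The paper passes to ratio variables and rewrites $\xdpotF(\logtrecur_d(\mathbf{y}))\sum_j\frac{|\dlogtrecur(\mathbf{y}_j)|}{\xdpotF(\mathbf{y}_j)}\mathbf{m}_j$ in terms of the LLY-type potential $\dpotF(y)=\frac12\sqrt{1/(y(1+y))}$ (Claim \ref{claim:OnePotVsOtherPot}), so that it coincides verbatim with the quantity treated by Lemma \ref{lemma:InsteadOfHolder} (reduction of the asymmetric multivariate expression to a symmetric univariate one with some $k\le d$ children at a single point $\bar x$) and Lemma \ref{lemma:XiBoundAtFixPoint} (that symmetric quantity is maximized at $d=\dcritical$ and the fixed point $\tilde x$ of $\trecur_{\dcritical,{\rm sym}}$, where it equals $\dcritical^{-1}$). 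You instead apply H\"older directly, which is fine since it computes the dual norm exactly, but three details need repair if written out: (i) the exponents in your ``vertex inequality'' are off — raising $\sqrt{p'}\,(\sum_j p_j^{r/2})^{1/r}\le\delta^{1/s}$ to the $s$-th power gives $p'^{s/2}(\sum_j p_j^{r/2})^{s/r}\le\delta$, and in the symmetric case $d^{\,s-1}q^{s/2}p'^{\,s/2}\le\delta$, not $d\,q^{r/2}p'^{\,s/2}\le\delta$; (ii) ``the worst case is all $p_j$ equal by symmetry/convexity'' is not a routine step — the reduction to a symmetric configuration (with possibly \emph{fewer} children) is precisely the nontrivial content of Lemma \ref{lemma:InsteadOfHolder} in \cite{ConnectiveConst}, so it should be quoted rather than asserted; (iii) the extremum is attained at the continuously relaxed degree $d=\dcritical$ at the fixed point (where indeed $q=1/\dcritical$), not in the limit $d\to\infty$: for fixed $\lambda$ and $s_0<2$ the symmetric expression tends to $0$ as $d\to\infty$. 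Since you ultimately propose to cite the contraction lemma of \cite{ConnectiveConst} — the same source, and effectively the same pair of lemmas, the paper invokes — these slips do not derail the argument, and together with the routine checks ($\potF$ well-defined and increasing from \cite{VigodaSpectralInd}, $s_0\in[1,2]$) your route delivers the stated $(s_0,\delta_0,c_0)$.
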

The proof of Theorem \ref{thrm:GoodPotentialHC} appears in Section \ref{sec:thrm:GoodPotentialHC}.

\begin{claim}\label{claim:InterpretGoodPotentialHC}
For  $\epsilon\in (0,1)$,   $R\geq 2$ and  $0<\lambda<(1-\epsilon)\lcritical(R)$ the following is true: 
There is $0<z<1$ , which only depend on $\epsilon$,  such that for $\dcritical=\dcritical(\lambda)$, we have 
\begin{align}\label{eq:claim:InterpretGoodPotentialHC}
\textstyle \frac{1-z}{R}  & \geq \frac{1}{\dcritical} &\textrm{and} && \frac{\lambda}{1+\lambda} &<\frac{e^3}{R}. 
\end{align}
\end{claim}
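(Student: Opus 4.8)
The plan is to exploit the strict monotonicity of $\lcritical$ on $(1,\infty)$ (hence of its inverse $\dcritical$) together with two elementary one–variable estimates on $\lcritical$. Note first that $0<\lambda<(1-\epsilon)\lcritical(R)<\lcritical(R)$ already forces $\dcritical(\lambda)>R$; the content of the claim is that the slack factor $1-\epsilon$ forces $\dcritical(\lambda)$ to exceed $R$ by a \emph{constant factor} depending only on $\epsilon$, which is what lets us pick $z$ uniformly.

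For the first inequality, recall that $\dcritical=\dcritical(\lambda)$ means $\lcritical(\dcritical)=\lambda$. Introduce a parameter $c>0$ (to be chosen) and set $z:=c/(1+c)$; then the target $\frac{1-z}{R}\ge\frac1{\dcritical}$ is exactly $\dcritical\ge(1+c)R$. By monotonicity of $\lcritical$ it therefore suffices to find $c=c(\epsilon)>0$ with
\[
\frac{\lcritical((1+c)R)}{\lcritical(R)}\ \ge\ 1-\epsilon\qquad\text{for all }R\ge 2,
\]
because then $\lcritical((1+c)R)\ge(1-\epsilon)\lcritical(R)>\lambda=\lcritical(\dcritical)$, and $\lcritical$ decreasing gives $(1+c)R<\dcritical$. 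To establish this analytic bound I would compute the log–derivative
\[
\frac{d}{dz}\log\lcritical(z)=\log\!\Big(1+\tfrac1{z-1}\Big)-\tfrac2{z-1}\ \ge\ -\tfrac2{z-1}\qquad(z>1),
\]
integrate over $[R,(1+c)R]$, and use $\frac{(1+c)R-1}{R-1}=1+\frac{cR}{R-1}\le 1+2c$ for $R\ge 2$ to obtain $\log\frac{\lcritical((1+c)R)}{\lcritical(R)}\ge-2\log(1+2c)$, i.e. $\frac{\lcritical((1+c)R)}{\lcritical(R)}\ge(1+2c)^{-2}$. Finally I would choose $c=c(\epsilon):=\tfrac12\big((1-\epsilon)^{-1/2}-1\big)>0$, which makes $(1+2c)^{-2}=1-\epsilon$ exactly, and then $z=c/(1+c)\in(0,1)$ depends only on $\epsilon$, as required.

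For the second inequality I would bound $\lcritical(R)$ crudely from above. Writing $\lcritical(R)=\frac1{R-1}\big(1+\tfrac1{R-1}\big)^{R}$ and using $\big(1+\tfrac1{R-1}\big)^{R-1}\le e$, $\big(1+\tfrac1{R-1}\big)\le 2$, and $R-1\ge R/2$ (all valid for $R\ge 2$) gives $\lcritical(R)\le\frac{2e}{R-1}\le\frac{4e}{R}$. Since $\frac{\lambda}{1+\lambda}<\lambda<\lcritical(R)\le\frac{4e}{R}<\frac{e^3}{R}$ (because $4e<e^{3}$), the second inequality holds, and it involves no $z$, so the same $z$ chosen above works for both parts.

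I do not anticipate a genuine obstacle here. The only delicate point is making the factor $1+c$ uniform over all $R\ge 2$, and this is precisely handled by the estimate $\frac{(1+c)R-1}{R-1}\le 1+2c$, whose worst case $R=2$ dictates the value of $c(\epsilon)$; everything else (the log–derivative formula, its sign, the two crude bounds on $\lcritical$) is routine calculus, so the write-up will be short.
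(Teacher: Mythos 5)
Your proof is correct, and its overall strategy matches the paper's: use that $\lcritical$ is strictly decreasing on $(1,\infty)$ (so $\dcritical$ exceeds $R$ with room to spare) for the first inequality, and chain $\frac{\lambda}{1+\lambda}<\lambda<\lcritical(R)$ with a crude upper bound on $\lcritical(R)$ for the second. The difference is in how the first inequality is closed. The paper only observes $\dcritical(\lambda)\ge\dcritical(\lcritical(R))=R$ and then simply \emph{asserts} that the slack factor $1-\epsilon$ yields some $z=z(\epsilon)\in(0,1)$ with $\dcritical(\lambda)\ge R/(1-z)$, without quantifying; you supply exactly this missing quantitative step by integrating the log-derivative $\frac{d}{dz}\log\lcritical(z)=\log\bigl(1+\tfrac{1}{z-1}\bigr)-\tfrac{2}{z-1}\ge-\tfrac{2}{z-1}$ over $[R,(1+c)R]$ and using $\frac{(1+c)R-1}{R-1}\le 1+2c$ for $R\ge2$, which gives $\lcritical((1+c)R)\ge(1+2c)^{-2}\lcritical(R)$ uniformly in $R$ and hence the explicit choice $c=\tfrac12\bigl((1-\epsilon)^{-1/2}-1\bigr)$, $z=c/(1+c)$. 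This buys an explicit, checkable dependence of $z$ on $\epsilon$ (and, as a byproduct, the monotonicity of $\lcritical$ itself via $\log(1+x)\le x$), at the cost of a slightly longer write-up. For the second inequality your bound $\lcritical(R)\le\frac{1}{R-1}\bigl(1+\tfrac{1}{R-1}\bigr)^{R}\le\frac{4e}{R}<\frac{e^{3}}{R}$ is the same idea as the paper's $\lcritical(R)\le\frac{1}{R}\exp\bigl(\tfrac{R+1}{R-1}\bigr)\le\frac{e^{3}}{R}$, just marginally sharper; both are fine since only $e^{3}/R$ is needed.
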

\begin{proof}
It elementary to verify that $\dcritical(z)$ is decreasing in $z$.  This implies that for $\lambda\leq (1-\epsilon)\lcritical(R)$,  
$\dcritical(\lambda)\geq  \dcritical(\lcritical(R))=R$. Particularly, this implies that there is $0<z<1$, which only depends on  
$\epsilon$  such that $\dcritical(\lambda) \geq \frac{R}{(1-z)}$.
This proves the leftmost inequality in  \eqref{eq:claim:InterpretGoodPotentialHC}.

As far as the rightmost inequality is concerned, we have that 
\begin{align}\label{eq:Base4Righteq:claim:InterpretGoodPotentialHC}
\textstyle \frac{\lambda}{1+\lambda} \leq \lambda  < \lcritical(R).
\end{align}
The first inequality follows since $\lambda>0$, while the second follows since $\lambda<\lcritical(R)$.
 From the definition of $\lcritical(\cdot)$, we have that 
\begin{align}
\lcritical(R)&\textstyle =\frac{R^{R}}{(R-1)^{(R+1)}}\ = \  
\frac{1}{R}\left( 1-R^{-1}\right)^{-(R+1)} \ =\  \frac{1}{R}\left( 1+\frac{1}{R-1}\right)^{R+1} \  \leq \  \frac{1}{R}\exp\left(\frac{R+1}{R-1}\right) 
\  \leq \ e^3/R. 
\end{align}
For the one before the last  inequality we use that $1+x\leq e^{x}$. For the last inequality we note that $\frac{R+1}{R-1}$ is decreasing in 
$R$, hence,  for $R\geq 2$, we have that $\frac{R+1}{R-1}\leq 3$. 
Plugging the above  bound into \eqref{eq:Base4Righteq:claim:InterpretGoodPotentialHC}, gives the rightmost inequality in
\ref{eq:claim:InterpretGoodPotentialHC}. 
The claim follows. 
\end{proof}

Furthermore, using the standard inequality that $\log(1+y)\leq y$, for the quantity $s_0$ in 
Theorem \ref{thrm:GoodPotentialHC},  we have that $1\leq s_0\leq 2$.

Combining this observation with Theorem \ref{thrm:GoodPotentialHC} and Claim \ref{claim:InterpretGoodPotentialHC} we get the following corollary.

\begin{corollary}\label{cor:GoodPotentialHCRange}
For  $0<\epsilon<1$, for $\maxDeg\geq 2$ and $\spradius_G\geq 2$, let $0<\lambda<(1-\epsilon) \lcritical(\spradius_G)$.
Let the graph $G=(V,E)$ be of maximum degree $\maxDeg$, whose adjacency matrix has spectral radius $\spradius_G$. 
Let, also the  Hard-core model $\mu_G$ on $G$,  with fugacity $\lambda$. There is a constant $0<z<1$, which 
depends on $\epsilon$, such that the following is true:  

The potential function $\potF$ defined in \eqref{def:XPotentialFunction} is $(q, \gamma, g)$-potential with respect to $\mu_G$
 where 
\begin{align}\nonumber
q &\in [1,2], &\gamma & \leq {(1-z)}/{\spradius_G}  &\textrm{and} && g\leq {e^3}/{\spradius_G}.
\end{align}
\end{corollary}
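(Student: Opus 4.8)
The plan is simply to assemble two ingredients already in hand: Theorem \ref{thrm:GoodPotentialHC}, which exhibits a triple $(s_0,\delta_0,c_0)$ for which the potential function $\potF$ of \eqref{def:XPotentialFunction} satisfies Definition \ref{def:GoodPotential}, and Claim \ref{claim:InterpretGoodPotentialHC}, which, applied with $R=\spradius_G$, converts the $\dcritical(\lambda)$-bounds into $\spradius_G$-bounds involving an $\epsilon$-dependent constant $z$. The only extra input is the elementary observation that $1\le s_0\le 2$.

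First I would apply Theorem \ref{thrm:GoodPotentialHC} to the Hard-core model $\mu_G$, i.e. to the Gibbs parameters $\beta=0$, $\gamma=1$ and fugacity $\lambda$ (note the notational clash: in the statement of the corollary $\gamma$ denotes a \emph{potential} parameter, not the Gibbs one). This gives that $\potF$ is an $(s_0,\delta_0,c_0)$-potential function with
\begin{align}\nonumber
s_0^{-1}=1-\tfrac{\dcritical-1}{2}\log\!\Big(1+\tfrac{1}{\dcritical-1}\Big),\qquad \delta_0\le\tfrac{1}{\dcritical},\qquad c_0\le\tfrac{\lambda}{1+\lambda},
\end{align}
where $\dcritical=\dcritical(\lambda)$. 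Since $\lcritical(\cdot)$ is decreasing, $0<\lambda<(1-\epsilon)\lcritical(\spradius_G)<\lcritical(\spradius_G)$ yields $\dcritical=\dcritical(\lambda)\ge\dcritical(\lcritical(\spradius_G))=\spradius_G\ge2$; in particular $\dcritical-1>0$, so $s_0$ is well-defined. Applying $\log(1+y)\le y$ with $y=1/(\dcritical-1)$ gives $\tfrac{\dcritical-1}{2}\log(1+\tfrac{1}{\dcritical-1})\in(0,\tfrac12]$, hence $s_0^{-1}\in[\tfrac12,1)$ and $s_0\in(1,2]\subseteq[1,2]$.

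Next I would invoke Claim \ref{claim:InterpretGoodPotentialHC} with $R=\spradius_G\ge2$ and $0<\lambda<(1-\epsilon)\lcritical(\spradius_G)$: it produces $0<z<1$, depending only on $\epsilon$, with $\tfrac{1-z}{\spradius_G}\ge\tfrac{1}{\dcritical}$ and $\tfrac{\lambda}{1+\lambda}<\tfrac{e^3}{\spradius_G}$. Combining, $\delta_0\le\tfrac{1}{\dcritical}\le\tfrac{1-z}{\spradius_G}$ and $c_0\le\tfrac{\lambda}{1+\lambda}<\tfrac{e^3}{\spradius_G}$. Taking $q=s_0$, $\gamma=\delta_0$, $g=c_0$, Theorem \ref{thrm:GoodPotentialHC} says $\potF$ is a $(q,\gamma,g)$-potential function with respect to $\mu_G$, and we have checked $q\in[1,2]$, $\gamma\le(1-z)/\spradius_G$, $g\le e^3/\spradius_G$, which is exactly the claim. (Should one prefer the parameters named literally as $(q,(1-z)/\spradius_G,e^3/\spradius_G)$, one uses the obvious monotonicity in Definition \ref{def:GoodPotential}: with $s$ fixed, enlarging $\delta$ or $c$ only weakens \eqref{eq:contractionRelationPF} and \eqref{eq:BoundednessPF}.)

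There is no genuine obstacle here — the corollary is bookkeeping built on Theorem \ref{thrm:GoodPotentialHC} and Claim \ref{claim:InterpretGoodPotentialHC}. The only two points that need a word of care are (a) checking $\dcritical(\lambda)>1$, so that $s_0$ and the bound $\log(1+y)\le y$ are meaningful, and (b) noting that relaxing the pair $(\delta_0,c_0)$ to $((1-z)/\spradius_G,\,e^3/\spradius_G)$ preserves the potential-function property; both are immediate.
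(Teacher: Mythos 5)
Your proposal is correct and follows exactly the paper's route: combine Theorem \ref{thrm:GoodPotentialHC} with Claim \ref{claim:InterpretGoodPotentialHC} (applied with $R=\spradius_G$) and the elementary bound $\log(1+y)\le y$ to get $s_0\in[1,2]$. The extra remarks on $\dcritical(\lambda)\ge\spradius_G>1$ and on monotonicity in $(\delta,c)$ are harmless and only make explicit what the paper leaves implicit.
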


\noindent
Theorem \ref{thrm:SPIndependenceHC} follows by combining Corollary \ref{cor:GoodPotentialHCRange} and  Theorem \ref{thrm:Recurrence4InfluenceEigenBoundNonLinear}.
\hfill $\Box$

\subsection{Proof of Theorem \ref{thrm:GoodPotentialHC}}\label{sec:thrm:GoodPotentialHC}
Recall  that  the ratio of Gibbs marginals $\gratio^{\Lambda, \tau}_x$, defined in Section 
\ref{sec:RecursionVsSpectralIneq},  is possible to be equal to  zero, or $\infty$.  Typically, this 
happens if the vertex $x$ with respect to which we consider the ratio is a part of  the boundary 
set $\Lambda$, or has a neighbour in $\Lambda$.  When we are dealing with the Hard-core model, 
there is a standard way to avoid these infinities  and zeros in our calculations.

Suppose that we have  the Hard-core model with fugacity $\lambda>0$ on a tree $T$, while at the 
set of vertices $\Lambda$ we have a boundary condition $\tau$. Then, it is elementary to verify that 
this distribution is identical to the Hard-core model with the same fugacity on the tree (or forest) $T'$ 
which is obtained from $T$ by working  as follows: we remove from $T$ every vertex $w$ which
 either belongs to  $\Lambda$, or   has  a neighbour  $u\in \Lambda$ such  that $\tau(u)=$``occupied".

Perhaps it is useful to  write down the functions that arise from the  recursions
in Section \ref{sec:RecursionVsSpectralIneq}, for 
the Hard-core model with fugacity  $\lambda$.  Recall that, in this case, we have $\beta=0$ and $\gamma=1$. 
In the following definitions, we take into consideration that boundary conditions have been removed as 
described above. 

For   integer $d\geq 1$,  we have that
\begin{align}\label{eq:HC-BPRecursion}
\trecur_d:\mathbb{R}^d_{>0}\to (0, \lambda) & &\textrm{such that}& &
\textstyle (x_1, \ldots, x_d)\mapsto \lambda \prod^d_{i=1}\frac{1}{{x}_i+1}.
\end{align}
We also define $\trecur_{d,{\rm sym}}:\mathbb{R}^d_{>0}\to (0, \lambda)$  the {\em symmetric} version of the above function, that is
\begin{align}\label{def:SymTreeHC}
x&\mapsto \trecur_d(x,x,\ldots, x).
\end{align}
Recall, also, that   $\logtrecur_d=\log \circ F_d \circ \exp$. For the  Hard-core model with fugacity $\lambda$, we have that
\begin{align}\label{eq:HC-DefOfH}
\logtrecur_d:\mathbb{R}^d\to \mathbb{R} &&\textrm{s.t.}&&  \textstyle (x_1, \ldots, x_d)\mapsto \log \lambda+\sum^d_{i=1}
\log\left( \frac{1}{\exp(x_i)+1} \right).
\end{align}
For $\dlogtrecur(\cdot)$ such that  $\frac{\partial }{\partial x_i}\logtrecur_d(x_1, \ldots, x_d)=\dlogtrecur(x_i)$, 
we have
\begin{align}\label{def:GradientRecHC}
\dlogtrecur:\mathbb{R} \to \mathbb{R}& &\textrm{such that}& &
\textstyle x\mapsto -\frac{e^{x_i}}{e^{x_i}+1}.
\end{align}
Finally,  the set of log-ratios $\ratiorange$, defined in \eqref{eq:DefOfRatiorange}, 
satisfies that 
\begin{align}\label{eq:RangeJ4HC}
 \ratiorange=(-\infty, \log(\lambda)).
\end{align}  
Note also, that the image of $\potF$, i.e., the set $S_{\potF}$, satisfies that  $S_{\potF}=(-\infty, \infty)$.

\newcommand{\easyspace}{Q_{\potF}}
\newcommand{\easyspaceX}{L_{\potF}}

With all the above, we proceed to prove the theorem. We need to show that $\potF$ satisfies the contraction
and the boundedness conditions, for appropriate parameters. 

We start with the contraction.   For any integer  $d>0$, we let 
$\cE_{d}:\mathbb{R}^d\times \mathbb{R}^d \to \mathbb{R}$ be such that 
for ${\bf m}=({\bf m}_1,  \ldots, {\bf m}_d)\in \mathbb{R}^{d}_{\geq 0}$, and 
${\bf y}=({\bf y}_1,  \ldots, {\bf y}_d)\in \mathbb{R}^d$ 
we have that 
\begin{align}
\cE_{d}({\bf m}, {\bf y})={\textstyle \xdpotF\left(\logtrecur_{d}({\bf y}) \right) }\sum^{d}_{j=1}
\frac{\textstyle \left| \dlogtrecur\left( {\bf  y}_j \right)\right|}{\textstyle \xdpotF\left(   {\bf y}_j \right)} 
\times  {\bf m}_j. \nonumber
\end{align}

\begin{proposition}[contraction]\label{prop:PotContracts}
For $\lambda>0$, let $\dcritical=\dcritical(\lambda)$.  Let  $q>0$ be such that 
\begin{align}
q^{-1}&= \textstyle 1-\frac{\dcritical-1}{2}\log\left(1+\frac{1}{\dcritical-1} \right).
\end{align}
For  $d>0$, for  ${\bf m}\in \mathbb{R}^{d}_{\geq 0}$ we have that
\begin{align}
\sup_{{\bf y}\in (\easyspace)^d }\{ \cE_{d}({\bf m}, {\bf y}) \} \leq  \dcritical^{-\frac{1}{q}} \cdot \lnorm {\bf m} \rnorm_{q},
\end{align}
where $\easyspace \subseteq \mathbb{R}$  contains  every $y \in \mathbb{R}$ such that  there is 
$\tilde{y} \in S_{\potF} $ for which we have ${y}=\potF^{-1}(\tilde{y})$. 
\end{proposition}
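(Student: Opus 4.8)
The plan is to specialise everything to the Hard-core model ($\beta=0$, $\gamma=1$) and reduce the stated $\ell_q$ inequality to a one-variable optimisation governed by the defining relation of $\dcritical(\lambda)$. First I would substitute the explicit forms \eqref{eq:HC-DefOfH} and \eqref{def:GradientRecHC} together with the potential $\xdpotF(y)=\sqrt{e^{y}/(1+e^{y})}$ of \eqref{def:XPotentialFunction}. Putting $x_j=e^{{\bf y}_j}$ and $a_j=\tfrac{x_j}{x_j+1}\in(0,1)$, a short computation gives $\tfrac{|\dlogtrecur({\bf y}_j)|}{\xdpotF({\bf y}_j)}=\sqrt{a_j}$, and with $p:=\trecur_d(x_1,\dots,x_d)=\lambda\prod_j\tfrac{1}{x_j+1}$ one has $\xdpotF(\logtrecur_d({\bf y}))=\sqrt{\tfrac{p}{1+p}}$, so that
\begin{align}\nonumber
\cE_d({\bf m},{\bf y})=\sqrt{\tfrac{p}{1+p}}\ \sum_{j=1}^{d}\sqrt{a_j}\,{\bf m}_j .
\end{align}
Since $\easyspace$, under the change of variables $x=e^{y}$, corresponds to a subinterval of $(0,\infty)$, it suffices to bound the supremum over all ${\bf x}\in(0,\infty)^{d}$.

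Next I would apply H\"older's inequality with exponents $q$ and its conjugate $q'$ (so $q'-1=q'/q$, and the degenerate case $q=1$, $q'=\infty$ is handled as a limit): this gives $\cE_d({\bf m},{\bf y})\le \sqrt{\tfrac{p}{1+p}}\bigl(\sum_{j}a_j^{q'/2}\bigr)^{1/q'}\lnorm {\bf m}\rnorm_q$, and hence the proposition reduces to the ${\bf m}$-free claim
\begin{align}\nonumber
\Bigl(\tfrac{p}{1+p}\Bigr)^{q'/2}\sum_{j=1}^{d}a_j^{q'/2}\ \le\ \dcritical^{\,1-q'}\qquad\text{for all }d\ge1,\ {\bf x}\in(0,\infty)^{d},
\end{align}
with $p=\lambda\prod_{j}\tfrac{1}{x_j+1}$ and $a_j=\tfrac{x_j}{x_j+1}$. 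This first reduction is routine; the work is in the displayed inequality. One cannot simply bound $\tfrac{p}{1+p}<p$ here (that estimate is too lossy, as one checks already at the critical configuration below), so the factor $\tfrac{p}{1+p}$ must be retained throughout, which is what makes the estimate delicate.

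For the optimisation I would proceed in two stages. First, reduce to the symmetric configuration $x_1=\dots=x_d=x$, i.e.\ to the symmetric recursion $\trecur_{d,{\rm sym}}$ of \eqref{def:SymTreeHC}; this is the step I would carry out using the results of \cite{ConnectiveConst}, which rest on the potential of \cite{LLY13} --- of which $\potF$ is, up to the substitution $x=e^{y}$, precisely the log-coordinate form ($\potF'(e^{y})e^{y}=\sqrt{e^{y}/(1+e^{y})}$). Second, on the symmetric configuration the inequality reads $\bigl(\tfrac{p}{1+p}\bigr)^{q'/2}d\,a^{q'/2}\le\dcritical^{1-q'}$ with $p=\lambda/(1+x)^{d}$, a one-variable problem in $x$ together with the ``degree'' parameter $d$ (treated as a positive real). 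Note that at the critical configuration $d=\dcritical$, $x=\tfrac1{\dcritical-1}$, where $a=\tfrac{p}{1+p}=\tfrac1\dcritical$ and $\lambda=\lcritical(\dcritical)$, both sides equal $\dcritical^{1-q'}$ for every exponent; thus the role of the specific value $q^{-1}=1-\tfrac{\dcritical-1}{2}\log\!\bigl(1+\tfrac1{\dcritical-1}\bigr)$ is that it is the threshold exponent for which this critical configuration is in fact the global maximiser --- for any smaller $q$ the inequality would fail. Establishing that this $q$ is exactly right (equivalently, a second-order analysis at $d=\dcritical$, $x=\tfrac1{\dcritical-1}$ showing that the maximum is attained there) is the analytic heart of the argument and the step I expect to be the main obstacle; this is precisely where the potential-tuning computations of \cite{LLY13,ConnectiveConst} are invoked.
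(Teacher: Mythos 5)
Your proposal is correct and follows essentially the same route as the paper: after passing to ratio coordinates your expression $\sqrt{p/(1+p)}\sum_j\sqrt{a_j}\,{\bf m}_j$ is exactly the paper's rewriting of $\cE_d$ in terms of the \cite{LLY13} potential $\dpotF(y)=\frac12\sqrt{1/(y(1+y))}$ (Claim \ref{claim:OnePotVsOtherPot}), and the two steps you defer to \cite{ConnectiveConst} --- reduction to the symmetric recursion and the optimisation showing the fixed point $x=p=1/(\dcritical-1)$ of $\trecur_{\dcritical,\rm sym}$ is the maximiser for the stated $q$ --- are precisely the two lemmas the paper cites (Lemmas \ref{lemma:InsteadOfHolder} and \ref{lemma:XiBoundAtFixPoint}). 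The only cosmetic difference is that you apply H\"older first to strip off ${\bf m}$ (which is lossless, since H\"older is tight for the dual norm), whereas the cited lemma from \cite{ConnectiveConst} handles the ${\bf m}$-weighted sum and the symmetrisation in one statement.
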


The proof of Proposition \ref{prop:PotContracts} appears in Section \ref{sec:prop:PotContracts}.

Note that  Proposition \ref{prop:PotContracts} implies that $\potF$ satisfies the contraction condition with the parameter 
we need in order to prove our theorem. We now focus on establishing the boundedness property of $\potF$.

\begin{lemma}[boundedness]\label{lemma:PotBooundedness}
For $\lambda>0$,  we have that $\textstyle \max_{y_1,y_2\in \ratiorange }\left\{ \xdpotF(y_2) \cdot \frac{|\dlogtrecur(y_1)|}{\xdpotF(y_1)} \right\} 
\leq \frac{\lambda}{1+\lambda}$.
\end{lemma}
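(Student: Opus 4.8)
The plan is to evaluate the two factors of the maximand explicitly for the Hard-core model, spot an algebraic cancellation that turns the expression into a product of two copies of a single monotone function, and then maximize each copy at the right endpoint of the log-ratio range $\ratiorange$.

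First I would substitute the Hard-core formulas $\xdpotF(y)=\sqrt{e^{y}/(1+e^{y})}$ from \eqref{def:XPotentialFunction} and $\dlogtrecur(y)=-e^{y}/(e^{y}+1)$ from \eqref{def:GradientRecHC}, so that $|\dlogtrecur(y_1)| = e^{y_1}/(e^{y_1}+1)$. The key observation is the cancellation
\[
\frac{|\dlogtrecur(y_1)|}{\xdpotF(y_1)} \;=\; \frac{e^{y_1}/(e^{y_1}+1)}{\sqrt{e^{y_1}/(1+e^{y_1})}} \;=\; \sqrt{\frac{e^{y_1}}{1+e^{y_1}}} \;=\; \xdpotF(y_1),
\]
which identifies the quantity being maximized with $\xdpotF(y_2)\,\xdpotF(y_1)$, a product of two copies of $\xdpotF$ evaluated at points of $\ratiorange$.

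Next I would record that $\xdpotF(y)=(1+e^{-y})^{-1/2}$ is strictly increasing in $y$, and that for the Hard-core model $\ratiorange=(-\infty,\log\lambda)$ by \eqref{eq:RangeJ4HC}. Consequently, for all $y_1,y_2\in\ratiorange$ each factor is at most $\xdpotF(\log\lambda)=\sqrt{\lambda/(1+\lambda)}$, and therefore
\[
\xdpotF(y_2)\,\xdpotF(y_1) \;\le\; \Bigl(\sqrt{\tfrac{\lambda}{1+\lambda}}\Bigr)^{2} \;=\; \frac{\lambda}{1+\lambda},
\]
which is exactly the asserted bound.

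I do not expect any genuine obstacle: the statement is a short computation once the cancellation in the first step is noticed. The only minor point of care is that $\ratiorange$ is the open interval $(-\infty,\log\lambda)$, so the supremum $\lambda/(1+\lambda)$ is not attained; but since the lemma is stated with a non-strict inequality this is harmless, and one may equally pass to the closure $(-\infty,\log\lambda]$ if one prefers a literal maximum.
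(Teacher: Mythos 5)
Your proof is correct and follows essentially the same route as the paper: both exploit the cancellation $|\dlogtrecur(y_1)|/\xdpotF(y_1)=\xdpotF(y_1)$ (the paper writes the resulting product as $\sqrt{\dlogtrecur(y_1)\dlogtrecur(y_2)}$, which is the same quantity) and then bound each factor via monotonicity of $e^{y}/(1+e^{y})$ together with $e^{y}\leq\lambda$ on $\ratiorange$. Your remark about the open interval is a harmless refinement of the same argument.
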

\begin{proof}
Using the definitions of the functions $\xdpotF$ and $\dlogtrecur$ from \eqref{def:PotentialFunction} and 
\eqref{def:GradientRecHC}, respectively, we have that
\begin{align}
 \max_{y_1,y_2\in \ratiorange }\left\{ \xdpotF(y_2) \cdot \frac{|\dlogtrecur(y_1)|}{\xdpotF(y_1)} \right\} &=
 \max_{y_1,y_2\in \ratiorange} \left\{\sqrt{\dlogtrecur(y_1)\dlogtrecur(y_2) }  \right\}  
 \ =\  \max_{y_1,y_2\in \ratiorange} \left\{\sqrt{ \frac{e^{y_1}}{1+e^{y_1}}\frac{e^{y_2}}{1+e^{y_2}} }  \right\} 
\ =\ {  \frac{\lambda}{1+\lambda}}.  \nonumber
\end{align}
The last inequality follows from the observation that the function $g(x)=\frac{e^x}{1+e^x}$ is increasing in $x$, 
while, from \eqref{eq:RangeJ4HC}, we have that   $e^{y_1},e^{y_2}\leq \lambda$. 
The claim follows. 
\end{proof}

The  above lemma implies the contraction we need in order to prove our result. 
In light of Proposition \ref{prop:PotContracts} and Lemma \ref{lemma:PotBooundedness}, Theorem \ref{thrm:GoodPotentialHC} follows. 
 \hfill $\Box$

\subsection{Proof of Proposition \ref{prop:PotContracts}}\label{sec:prop:PotContracts}

The proposition follows by using results from \cite{ConnectiveConst}. However, in order to apply these results, we need
to bring $\cE_{d}({\bf m}, {\bf y})$ into an  appropriate form. 

For any $d>0$, we let $\cJ_{d}:\mathbb{R}^d_{\geq 0}\times \mathbb{R}^d_{\geq 0}\to \mathbb{R}$ be such that 
for ${\bf m}=({\bf m}_1,  \ldots, {\bf m}_d)\in \mathbb{R}^{d}_{\geq 0}$ and 
${\bf z}=({\bf z}_1,  \ldots, {\bf z}_d)\in \mathbb{R}^{d}_{\geq 0}$ we  have
\begin{align}
\cJ_{d}({\bf m}, {\bf z}) &= \textstyle \xdpotF\left(\log \trecur_{d}({\bf z}) \right)  \sum^{d}_{j=1}
\frac{\textstyle \left| \dlogtrecur\left(\log {\bf  z}_j \right)\right|}{\textstyle \xdpotF\left( \log  {\bf z}_j \right)} 
\times  {\bf m}_j. \nonumber
\end{align}
Using the definitions in \eqref{eq:HC-BPRecursion} and \eqref{eq:HC-DefOfH}, it is elementary to verify that for any  $d>0$, for any  
${\bf m}\in \mathbb{R}^d_{\geq 0}$,   ${\bf z}\in \mathbb{R}^d_{>0}$ and ${\bf y}\in \mathbb{R}^{d}$ such that ${\bf z}_j=e^{{\bf y}_j}$,
we have that
\begin{align}\nonumber
\cJ_{d}({\bf m}, {\bf z})= \cE_{d}({\bf m}, {\bf y}).
\end{align}
In light of the above, the proposition follows by showing that 
\begin{align}\label{prop:Target4PotContractsA}
{\textstyle \sup_{{\bf z}\in  \mathbb{R}^d_{>0}}} \{ \cJ_{d}({\bf m}, {\bf z}) \} \leq  \dcritical^{-{1}/{s}} \cdot \lnorm {\bf m} \rnorm_{s}.
\end{align}
In order to prove \eqref{prop:Target4PotContractsA}, we let 
\begin{align}\label{def:PotentialFunction}
\dpotF &: \mathbb{R}_{>0}\to \mathbb{R}  & \textrm{such that} &&  y& \textstyle  \mapsto \frac12\sqrt{\frac{1}{y(1+y)}}.
\end{align}

\begin{claim}\label{claim:OnePotVsOtherPot}
For any ${\bf m}=({\bf m}_1,  \ldots, {\bf m}_d)\in \mathbb{R}^{d}_{ \geq 0}$ and  
${\bf z}=({\bf z}_1,  \ldots, {\bf z}_d)\in \mathbb{R}^{d}_{ > 0}$
we have that
\begin{align}\label{eq:SubCondVsPotentials}
\cJ_{d}({\bf m}, {\bf z})&\textstyle = 
\dpotF (\trecur_{d}({\bf z})) \times \sum^{d}_{i=1} \frac{{\bf m}_i }{\dpotF({\bf z}_i)} \left .  
\left| \frac{\partial }{\partial {\bf t}_i}\trecur_{d}({\bf t}) \right|_{{\bf t} ={\bf z}}\right|,   
\end{align}
where   $\trecur_{d}$ and $\dpotF$  are defined
in \eqref{eq:HC-BPRecursion} and \eqref{def:PotentialFunction}, respectively. 
\end{claim}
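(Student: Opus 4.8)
The statement to prove, Claim~\ref{claim:OnePotVsOtherPot}, is purely a computational identity relating the two expressions for $\cJ_d$. The plan is to verify it by direct substitution of the explicit formulas for all the functions involved.

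First I would write down the four ingredients explicitly: the recursion $\trecur_d({\bf z}) = \lambda\prod_{i=1}^d \frac{1}{{\bf z}_i+1}$ from \eqref{eq:HC-BPRecursion}, the derivative $\frac{\partial}{\partial {\bf t}_i}\trecur_d({\bf t})\big|_{{\bf t}={\bf z}} = -\frac{\trecur_d({\bf z})}{{\bf z}_i+1}$ (obtained by differentiating the product), the function $\dlogtrecur(\log {\bf z}_j) = -\frac{{\bf z}_j}{{\bf z}_j+1}$ from \eqref{def:GradientRecHC}, and the potential derivative $\xdpotF(y) = \sqrt{\frac{e^y}{1+e^y}}$ from \eqref{def:XPotentialFunction}, so that $\xdpotF(\log {\bf z}) = \sqrt{\frac{{\bf z}}{1+{\bf z}}}$.

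Second, I would relate $\dpotF$ to $\xdpotF$. From \eqref{def:PotentialFunction}, $\dpotF(y) = \frac12\sqrt{\frac{1}{y(1+y)}}$. The key observation is that for $y>0$,
\begin{align}\nonumber
y \cdot \dpotF(y) = \tfrac12 \sqrt{\tfrac{y}{1+y}} = \tfrac12 \xdpotF(\log y),
\end{align}
equivalently $\xdpotF(\log y) = 2y\,\dpotF(y)$. This is the bridge between the ``log-ratio'' formulation (with $\xdpotF$ and $\dlogtrecur$) and the ``ratio'' formulation (with $\dpotF$ and $\trecur_d$).

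Third, I would substitute everything into the right-hand side of \eqref{eq:SubCondVsPotentials} and into the definition of $\cJ_d$ and check they agree term by term. On the RHS, $\left|\frac{\partial}{\partial {\bf t}_i}\trecur_d({\bf t})\big|_{{\bf t}={\bf z}}\right| = \frac{\trecur_d({\bf z})}{{\bf z}_i+1}$, so the $i$-th summand of the RHS is $\dpotF(\trecur_d({\bf z})) \cdot \frac{{\bf m}_i}{\dpotF({\bf z}_i)}\cdot \frac{\trecur_d({\bf z})}{{\bf z}_i+1}$. Using $\dpotF(\trecur_d({\bf z}))\cdot\trecur_d({\bf z}) = \frac12\xdpotF(\log\trecur_d({\bf z}))$ and $\frac{1}{\dpotF({\bf z}_i)} = \frac{2{\bf z}_i}{\xdpotF(\log {\bf z}_i)}$ (from the bridge identity), this becomes $\xdpotF(\log\trecur_d({\bf z}))\cdot \frac{{\bf m}_i\,{\bf z}_i}{{\bf z}_i+1}\cdot\frac{1}{\xdpotF(\log {\bf z}_i)} = \xdpotF(\log\trecur_d({\bf z}))\cdot \frac{|\dlogtrecur(\log {\bf z}_i)|}{\xdpotF(\log {\bf z}_i)}\cdot {\bf m}_i$, since $|\dlogtrecur(\log {\bf z}_i)| = \frac{{\bf z}_i}{{\bf z}_i+1}$. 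Summing over $i$ gives exactly $\cJ_d({\bf m},{\bf z})$.

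This is a short direct verification with no real obstacle; the only point requiring care is getting the factors of $2$ and the $e^y$-versus-$y$ arguments of $\xdpotF$ straight when passing between $\dpotF$ and $\xdpotF$, i.e. keeping track that $\xdpotF$ is evaluated at $\log$ of the arguments while $\dpotF$ is evaluated at the arguments themselves. I would present the bridge identity $\xdpotF(\log y) = 2y\,\dpotF(y)$ as a small displayed lemma at the start of the proof and then do the substitution cleanly.
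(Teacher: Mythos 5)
Your proposal is correct and matches the paper's own argument: both are direct substitutions of the explicit formulas for $\trecur_d$, $\dlogtrecur$, $\xdpotF$ and $\dpotF$ followed by algebraic rearrangement, with your ``bridge identity'' $\xdpotF(\log y)=2y\,\dpotF(y)$ just making explicit the cancellation of the factors of $2$ that the paper performs implicitly. The only cosmetic difference is that you verify the identity from the right-hand side toward $\cJ_d$, whereas the paper rewrites $\cJ_d$ into the right-hand side.
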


\begin{proof}
The claim follows by using simple rearrangements.  We have that
\begin{align}
\cJ_{d}({\bf m}, {\bf  z}) &= 
\textstyle \xdpotF\left(\log \trecur_{d}({\bf z}) \right)  \sum^{d}_{j=1}
\frac{\textstyle \left| \dlogtrecur\left(\log {\bf  z}_j \right)\right|}{\textstyle \xdpotF\left( \log  {\bf z}_j \right)} 
\times  {\bf m}_j \nonumber\\
&= \textstyle \sqrt{\frac{\trecur_{d}({\bf z})}{1+\trecur_{d}(\bf z)}}
\sum^{d}_{j=1} \sqrt{\frac{{\bf  z}_j}{1+{\bf  z}_j}}\times {\bf m}_j  \label{eq:AppOfFX}\\
&= \textstyle \sqrt{\frac{1}{\trecur_{d}({\bf z})(1+\trecur_{d}(\bf z))}}
\sum^{d}_{j=1} \sqrt{{\bf  z}_j(1+{\bf  z}_j) }\times\frac{ \trecur_{d}(\bf z)}{1+{\bf  z}_j}  \times {\bf m}_j.   \nonumber
\end{align}
In \eqref{eq:AppOfFX}, we  substitute  $\xdpotF$ and $\dlogtrecur$ according to 
\eqref{def:XPotentialFunction} and \eqref{def:GradientRecHC}, respectively.
Using the definition of $\dpotF $ from \eqref{def:PotentialFunction}, we get that 
\begin{align}
\cJ_{d}({\bf m}, {\bf z})  &=  \textstyle \dpotF(\trecur_{d}({\bf z})) 
\sum^{d}_{j=1}\frac{1}{ \dpotF({\bf z}_j)}  \times  \frac{ \trecur_{d}(\bf z)}{1+{\bf z}_i} \times {\bf m}_j. \nonumber
\end{align}
The above implies  \eqref{eq:SubCondVsPotentials},  note that  
$\left|  \frac{\partial }{\partial {\bf t}_i}\trecur_{d}({\bf t}) \right|=\frac{\trecur_{d}({\bf t})}{1+{\bf t}_i}$, 
for any $i\in [d]$.  
The claim follows.
\end{proof}

In light of  Claim \ref{claim:OnePotVsOtherPot}, \eqref{prop:Target4PotContractsA} follows by showing that
for any ${\bf m}=({\bf m}_1,  \ldots, {\bf m}_d)\in \mathbb{R}^{d}_{ \geq 0}$ and  
${\bf z}=({\bf z}_1,  \ldots, {\bf z}_d)\in \mathbb{R}^{d}_{>0}$  we have that
\begin{align}\label{prop:Target4PotContractsB}
\dpotF (\trecur_{d}({\bf z})) \times \sum^{d}_{i=1} \frac{{\bf m}_i }{\dpotF({\bf z}_i)} \left .  
\left| \frac{\partial }{\partial {\bf t}_i}\trecur_{d}({\bf t}) \right|_{{\bf t} ={\bf z}}\right|
& \leq  \dcritical^{-\frac{1}{s}} \cdot \lnorm {\bf m} \rnorm_{s}. 
\end{align}

\noindent 
The above follows by using standard results form \cite{ConnectiveConst}.  For any $s\geq 1, d>0$ and $x\geq 0$,  we let the function
\begin{align} \nonumber 
\HCFactor(s, d, x)=\frac{1}{d}\left( \frac{\dpotF(\trecur_{d,{\rm sym}}(x))}{\dpotF(x)} \trecur'_{d,{\rm sym}}(x) \right)^s,
\end{align}
where the  functions $\trecur_{d,{\rm sym}}$, $\dpotF$ are defined in \eqref{def:SymTreeHC} and \eqref{def:PotentialFunction}, 
respectively,  while  $\trecur'_{d,{\rm sym}}(x)=\frac{d}{dx}\trecur_{d,{\rm sym}}(x)$.

\begin{lemma}[\cite{ConnectiveConst}]\label{lemma:InsteadOfHolder}
For any  $\lambda>0$, for integer $d\geq 1$, for  $s\geq 1$,  for ${\bf x}\in \mathbb{R}^d_{>0}$ 
and  ${\bf m}\in \mathbb{R}^d_{\geq 0}$,    the following holds: 
there exists $\bar{x}>0$  and  integer  $0\leq k\leq d$ such that 
\begin{align}\nonumber
\dpotF (\trecur_d({\bf x})) \times \sum^d_{i=1} \frac{{\bf m}_i }{\dpotF({\bf x}_i)} 
\left| \left .  \frac{\partial }{\partial z_i}\trecur_d({\bf z}) \right|_{{\bf z} ={\bf x}}  \right|
& \leq  \left( \HCFactor(s, k, \bar{x}) \right)^{1/s} \times || {\bf m}||_{s},
\end{align}
where ${\bf x}=({\bf x}_1,  \ldots, {\bf x}_d)$  and ${\bf m}=({\bf m}_1,  \ldots, {\bf m}_d)$. 
\end{lemma}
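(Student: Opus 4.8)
The plan is to run the standard two–stage ``separate, then symmetrise'' argument for $\ell_s$–moment contraction estimates, exploiting that $\trecur_d$ in \eqref{eq:HC-BPRecursion} is a \emph{product} recursion. Fix ${\bf x}=({\bf x}_1,\dots,{\bf x}_d)\in\mathbb{R}^d_{>0}$ and ${\bf m}\in\mathbb{R}^d_{\ge 0}$, and write $\partial_i\trecur_d({\bf x})$ for $\left.\tfrac{\partial}{\partial z_i}\trecur_d({\bf z})\right|_{{\bf z}={\bf x}}$. First I would peel off ${\bf m}$: since the left–hand side is a nonnegative linear functional of ${\bf m}$, the duality between $\lnorm\cdot\rnorm_s$ and $\lnorm\cdot\rnorm_r$ (with $r$ the H\"older conjugate of $s$, and the usual conventions at $s\in\{1,\infty\}$) gives
\begin{align}\nonumber
\dpotF\!\left(\trecur_d({\bf x})\right)\sum_{i=1}^{d}\frac{{\bf m}_i\,\left|\partial_i\trecur_d({\bf x})\right|}{\dpotF({\bf x}_i)}
\ \le\ \lnorm {\bf m}\rnorm_{s}\cdot\dpotF\!\left(\trecur_d({\bf x})\right)\left(\sum_{i=1}^{d}\left(\frac{\left|\partial_i\trecur_d({\bf x})\right|}{\dpotF({\bf x}_i)}\right)^{r}\right)^{1/r}.
\end{align}
So it suffices to bound $G({\bf x}):=\dpotF(\trecur_d({\bf x}))\big(\sum_i(|\partial_i\trecur_d({\bf x})|/\dpotF({\bf x}_i))^r\big)^{1/r}$ by $(\HCFactor(s,k,\bar x))^{1/s}$ for a suitable $k\le d$ and $\bar x>0$.

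Next I would simplify $G$ using the product form $\trecur_d({\bf x})=\lambda\prod_i h({\bf x}_i)$ with $h(x)=1/(x+1)$: then $|\partial_i\trecur_d({\bf x})|=\trecur_d({\bf x})\cdot|h'({\bf x}_i)|/h({\bf x}_i)$, hence $G({\bf x})=\dpotF(\trecur_d({\bf x}))\,\trecur_d({\bf x})\,\big(\sum_i b({\bf x}_i)^r\big)^{1/r}$, where $b(x):=|h'(x)|/(h(x)\dpotF(x))$ is a function of one variable, equal to $b(x)=2\sqrt{x/(x+1)}$ for the $\dpotF$ of \eqref{def:PotentialFunction}. Substituting $u_i:=-\log h({\bf x}_i)=\log({\bf x}_i+1)\ge 0$, the key observation is that $\trecur_d({\bf x})=\lambda e^{-L}$ depends on ${\bf x}$ \emph{only} through $L:=\sum_i u_i$; writing $\tilde b(u):=b(e^u-1)=2\sqrt{1-e^{-u}}$ we get $G({\bf x})=g(\lambda e^{-L})\cdot\big(\sum_i\tilde b(u_i)^r\big)^{1/r}$ with $g(P):=\dpotF(P)P=\tfrac12\sqrt{P/(1+P)}$. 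Thus, for each fixed $L$ the prefactor $g(\lambda e^{-L})$ is a constant and I only need to maximise $\sum_i\tilde b(u_i)^r$ over the simplex $\{u_i\ge 0,\ \sum_i u_i=L\}$.

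Finally, the symmetrisation. One checks that $u\mapsto\tilde b(u)^r=2^r(1-e^{-u})^{r/2}$ is increasing and is either concave on $[0,\infty)$ (when $r\le 2$) or convex–then–concave (when $r>2$), with the change of convexity located at $u=\log(r/2)$. In either case a Lagrange/boundary analysis forces the maximiser of $\sum_i\tilde b(u_i)^r$ under the linear constraint to have some coordinates equal to a common value $\bar u$ and the rest pushed to the boundary $u_i=0$; the boundary coordinates drop out of both $\trecur_d$ (since $h(0)=1$) and the sum over $b$ (since $b(0)=0$), leaving an effective arity $k\le d$ and a common value $\bar x:=e^{\bar u}-1$. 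Evaluating $G$ at ${\bf x}=(\bar x,\dots,\bar x)\in\mathbb{R}^k_{>0}$ and using $\sum_{i\le k}|\partial_i\trecur_k(\bar x,\dots,\bar x)|=|\trecur'_{k,{\rm sym}}(\bar x)|$ together with $\trecur_k(\bar x,\dots,\bar x)=\trecur_{k,{\rm sym}}(\bar x)$, a routine computation shows $G(\bar x,\dots,\bar x)=k^{-1/s}\,\dpotF(\trecur_{k,{\rm sym}}(\bar x))\,|\trecur'_{k,{\rm sym}}(\bar x)|/\dpotF(\bar x)=(\HCFactor(s,k,\bar x))^{1/s}$, which combined with the first display yields the asserted bound. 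The main obstacle is precisely this symmetrisation step — pinning down the sign/convexity behaviour of $u\mapsto\tilde b(u)^r$ and proving that it forces the constrained extremum onto a ``$k$ equal active coordinates plus boundary'' configuration; this is the technical heart carried out in \cite{ConnectiveConst}, which I would import verbatim, the separation and the final identification being elementary.
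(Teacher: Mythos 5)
The paper does not actually prove this statement: it is imported wholesale from \cite{ConnectiveConst} (as is the companion Lemma \ref{lemma:XiBoundAtFixPoint}), so there is no internal argument to compare yours against. Judged on its own terms, your reconstruction is sound in all the steps you carry out explicitly. The H\"older separation is the right first move and is exactly what the ``there exists $\bar{x}$ and $k$'' form of the conclusion calls for; the product structure gives $\left|\partial_i\trecur_d({\bf x})\right|=\trecur_d({\bf x})/(1+{\bf x}_i)$, so the dual-norm coefficient is $\trecur_d({\bf x})\,b({\bf x}_i)$ with $b(x)=2\sqrt{x/(x+1)}$ as you computed; and your evaluation at a configuration with $k$ coordinates equal to $\bar{x}$ and the remaining $d-k$ at the (limiting) value $0$ does give exactly $k^{-1/s}\,\dpotF(\trecur_{k,{\rm sym}}(\bar{x}))\left|\trecur'_{k,{\rm sym}}(\bar{x})\right|/\dpotF(\bar{x})=\left(\HCFactor(s,k,\bar{x})\right)^{1/s}$ (up to the absolute value that the paper's definition of $\HCFactor$ omits but clearly intends, since $\trecur'_{k,{\rm sym}}<0$ for the Hard-core recursion).

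The one place where you are too quick is the symmetrisation itself. Your convexity computation for $\phi(u)=2^r(1-e^{-u})^{r/2}$ is correct (concave for $r\le 2$, convex-then-concave with inflection at $u=\log(r/2)$ for $r>2$), but the asserted conclusion that a constrained maximiser must consist of ``$k$ equal active coordinates plus zeros'' does not follow from the first-order Lagrange condition alone: for $r>2$ the condition $\phi'(u_i)=\phi'(u_j)$ also admits critical points with two distinct positive values on opposite sides of the inflection, and excluding or reducing such configurations is precisely the nontrivial content of the result in \cite{ConnectiveConst}. Since you explicitly flag this as the step you import verbatim from that reference -- which is no weaker than what the paper does, as it cites the entire lemma -- the proposal stands, but you should present the pinning-down of the maximiser as the borrowed technical core rather than as a routine consequence of the convexity bookkeeping.
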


In light of the above lemma, our proposition follows as a corollary from the following result.

\begin{lemma}[\cite{ConnectiveConst}] \label{lemma:XiBoundAtFixPoint}
For $\lambda>0$, consider  $\dcritical=\dcritical(\lambda)$ and $\trecur_{\dcritical, {\rm sym}}$ with fugacity $\lambda$.
Let $q\geq 1$ be such that
\begin{align}\nonumber
q^{-1}&=\textstyle 1-\frac{\dcritical-1}{2}\log\left(1+\frac{1}{\dcritical-1} \right).
\end{align}
For any $x> 0$, $d> 0$, we have that  
\begin{align}\nonumber
\HCFactor(q,d,x) \leq  \HCFactor(q,\dcritical, \tilde{x})=(\dcritical)^{-1},
\end{align}
where $\tilde{x}\in[0,1]$ is the unique fix-point of $\trecur_{\dcritical, {\rm sym}}$, i.e.,  $\tilde{x}=\trecur_{\dcritical, {\rm sym}}(\tilde{x})$. 
\end{lemma}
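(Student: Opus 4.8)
The plan is to turn this two‑parameter optimization into a transparent one‑variable calculus problem. Using $\trecur_{d,{\rm sym}}(x)=\lambda(1+x)^{-d}$ together with $\dpotF(y)=\tfrac12\big(y(1+y)\big)^{-1/2}$, a direct computation of the quantity in the definition of $\HCFactor$ gives
\[
\HCFactor(q,d,x)=d^{\,q-1}\left(\frac{x\,F}{(1+x)(1+F)}\right)^{q/2},\qquad F:=\trecur_{d,{\rm sym}}(x)=\lambda(1+x)^{-d}.
\]
For fixed $x$ the map $d\mapsto F$ is a decreasing bijection of $(0,\infty)$ onto $(0,\lambda)$, so I would reparametrize by $(F,x)\in(0,\lambda)\times(0,\infty)$, with $d=\log(\lambda/F)/\log(1+x)$. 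Set $\kappa:=2(q-1)/q$; the hypothesis on $q$ is equivalent to $\kappa=(\dcritical-1)\log\frac{\dcritical}{\dcritical-1}$, and one checks that $\kappa\in(0,1)$, hence $q\in(1,2)$. This identity is the only place the precise value of $q$ enters the argument, and it is what makes everything fit together.

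First I would show $\HCFactor\to 0$ on the boundary of the $(F,x)$‑domain: as $F\to 0^+$ or $F\to\lambda^-$, and as $x\to\infty$, this is immediate from the closed form (the factor $F^{q/2}$, respectively $d^{\,q-1}$, vanishes); for $x\to 0^+$ one uses $q<2$, so that the leading behaviour is a positive power $x^{\,1-q/2}\to 0$. Hence the supremum is attained at an interior critical point. Computing the partial derivatives of $\log\HCFactor$ in the $(F,x)$‑coordinates, the two critical‑point equations \emph{decouple}: $\partial_F\log\HCFactor=0$ reduces to $\log(\lambda/F)=\kappa(1+F)$, and $\partial_x\log\HCFactor=0$ reduces to $\log(1+x)=\kappa x$. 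In the first equation the left side is strictly decreasing and the right side strictly increasing in $F$, so there is a unique root $F^\ast\in(0,\lambda)$; for the second, $h(x):=\kappa x-\log(1+x)$ satisfies $h(0)=0$, $h'(0)=\kappa-1<0$, $h''>0$ and $h(\infty)=+\infty$, giving a unique positive root $x^\ast$. Thus there is a single interior critical point, which is therefore the global maximum of $\HCFactor$ over $(d,x)\in(0,\infty)^2$.

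It remains to evaluate $\HCFactor$ there. Using $\lambda=\lcritical(\dcritical)=\dcritical^{\dcritical}/(\dcritical-1)^{\dcritical+1}$ and $\kappa=(\dcritical-1)\log\frac{\dcritical}{\dcritical-1}$, one checks directly that $\tilde x:=1/(\dcritical-1)$ solves both $\log(1+x)=\kappa x$ and $\log(\lambda/F)=\kappa(1+F)$; by the uniqueness just established, $F^\ast=x^\ast=\tilde x$, whence $d=\log(\lambda/\tilde x)/\log(1+\tilde x)=\dcritical$, so $\tilde x$ is precisely the fixed point of $\trecur_{\dcritical,{\rm sym}}$ (and $\tilde x\in(0,1]$ whenever $\dcritical\ge 2$, the regime relevant to our application). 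Finally, since $\tilde x/(1+\tilde x)=1/\dcritical$,
\[
\HCFactor(q,d,x)\ \le\ \HCFactor(q,\dcritical,\tilde x)=\dcritical^{\,q-1}\Big(\tfrac{\tilde x}{1+\tilde x}\Big)^{q}=\dcritical^{\,q-1}\cdot\dcritical^{-q}=\dcritical^{-1},
\]
which is the assertion.

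The argument is elementary calculus, so the real work is organizational: massaging the critical‑point equations into the decoupled forms $\log(\lambda/F)=\kappa(1+F)$ and $\log(1+x)=\kappa x$, checking the four boundary limits (the input $q<2$ at $x\to 0^+$ being the only one that is not completely routine) so that the unique interior critical point is genuinely the global maximum, and — the actual crux — recognizing that the prescribed value of $q$ is exactly the one that forces $F^\ast=x^\ast=\tilde x$ and $d^\ast=\dcritical$, i.e.\ that pushes the worst case onto the symmetric fixed point at the uniqueness threshold. Since this statement is quoted from \cite{ConnectiveConst}, an alternative is simply to invoke it, after matching the normalization of $\dpotF$ and $\trecur_{d,{\rm sym}}$ used there.
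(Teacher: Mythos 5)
Your argument is correct, but it is a genuinely different route from the paper's: the paper supplies no proof at all for this lemma, it simply imports the bound from \cite{ConnectiveConst}, where it emerges from the analysis of the symmetric $d$-ary tree recursion with the potential $\dpotF$ and the worst case is identified with the fixed point at the uniqueness threshold. Your self-contained derivation reproduces exactly that extremal structure by elementary means: the closed form $\HCFactor(q,d,x)=d^{\,q-1}\bigl(xF/((1+x)(1+F))\bigr)^{q/2}$ with $F=\lambda(1+x)^{-d}$ is right, the change of variables $(d,x)\mapsto(F,x)$ is a legitimate diffeomorphism of $(0,\infty)^2$ onto $(0,\lambda)\times(0,\infty)$, the two decoupled critical-point equations $\log(\lambda/F)=\kappa(1+F)$ and $\log(1+x)=\kappa x$ with $\kappa=2(q-1)/q=(\dcritical-1)\log\frac{\dcritical}{\dcritical-1}$ are what the stated $q$ is designed to produce, and the verification that $\tilde x=1/(\dcritical-1)$ solves both (using $\lambda=\lcritical(\dcritical)$), forces $d^{\ast}=\dcritical$, and yields the value $\dcritical^{-1}$ is a correct computation. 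What your route buys is transparency about why this particular $q$ is the right exponent; what the citation buys is brevity and consistency of normalization with the source. Two small points to tighten if you keep the direct proof: the paper's definition of $\HCFactor$ involves $\trecur'_{d,{\rm sym}}(x)<0$ raised to the power $s$, so you should state explicitly that the absolute value is intended (as in Lemma \ref{lemma:InsteadOfHolder}); and in the boundary check the limit $F\to 0^{+}$ is not quite ``immediate from the factor $F^{q/2}$'' since $d=\log(\lambda/F)/\log(1+x)\to\infty$ there, so you need the power-beats-logarithm comparison, and to conclude that the supremum is attained at the interior critical point you should verify vanishing along arbitrary sequences leaving the domain (joint limits such as $F\to 0$ with $x\to\infty$), which is routine but worth a sentence.
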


By combining Lemmas \ref{lemma:InsteadOfHolder} and \ref{lemma:XiBoundAtFixPoint} we get 
\eqref{prop:Target4PotContractsB}. This concludes the   proof of Proposition \ref{prop:PotContracts}. 
\hfill $\Box$

\section{Proofs of the results in Section \ref{sec:RecursionVsSpectralIneq}}\label{sec:Proofs4SIResults}

\subsection{Proof of Theorem \ref{thrm:Recurrence4InfluenceEigenBound}}\label{sec:thrm:Recurrence4InfluenceEigenBound}
Firstly we note that for  $\bphi\in \mathbb{R}^{\WEdges_{\LamSAW}}$  as defined in 
\eqref{def:OfRestInfluenceWeights}, we have that  
\begin{align}\label{eq:thrm:Recurrence4InfluenceEigenBoundStep1A}
\max_{e\in \WEdges_{\LamSAW}} \left\{ |\bphi(e)| \right\} &\leq \sup\{ h(x)\  |\  x\in [-\infty,+\infty] \}.
\end{align}
The assumption that  the set of functions  $\{ \logtrecur_d\}_{d\in [\maxDeg]}$ exhibits  $\delta$-contraction 
implies that
\begin{align}\label{eq:thrm:Recurrence4InfluenceEigenBoundStep1B}
\sup\{ h(x)\  |\  x\in [-\infty,+\infty] \} \leq \delta.
\end{align}
Combining \eqref{eq:thrm:Recurrence4InfluenceEigenBoundStep1A} and \eqref{eq:thrm:Recurrence4InfluenceEigenBoundStep1B},
 we have that
\begin{align}\label{eq:thrm:Recurrence4InfluenceEigenBoundStep1C}
\max_{e\in \WEdges_{\LamSAW}} \left\{ |\bphi(e)| \right\} &\leq \delta.
\end{align}
Recall from  Lemma \ref{lemma:InfluenceMatrixIsWalkMatrix}  that $\infmatrix^{\Lambda,\tau}_{G}$
is a walk-matrix. Furthermore, using   Corollary  \ref{cor:MaxElement}   we get  that
\begin{align}\label{eq:InluenceVsNWalkSPRadBasicConc}
\textstyle \spradius\left( \infmatrix^{\Lambda,\tau}_{G} \right) & \leq 
\textstyle \spradius\left(\Pweight_{\NWalks, \bar{\bpsi}} \right). 
\end{align} 
where the vector of weights  $\bar{\bpsi}$ has all its components equal to $\delta$. 
Note that we  choose $\NWalks$ because the  length of any  self-avoiding path is $\leq n$. 
Since all the components of $\bar{\bpsi}$ have the same value $\delta>0$, Lemma 
\ref{lemma:WalkMatrixVsAdjacency} implies that
\begin{align} \nonumber  
\Pweight_{\NWalks, \bar{\bpsi}}  &= \textstyle \sum^{n}_{\ell=0}\left(\delta \cdot \simpleadj_G\right)^{\ell}. 
\end{align}
We have that
\begin{align}\nonumber
\spradius\left (\Pweight_{\NWalks, \bar{\bpsi}}  \right ) & \leq \lnorm \Pweight_{\NWalks, \bar{\bpsi}}  \rnorm_2  \ = \ \textstyle \lnorm \sum^{n}_{\ell=0}(\delta \simpleadj_G)^{\ell} \rnorm_2
\ \leq \  \textstyle  \sum^{n}_{\ell=0} |\delta|^{\ell} \cdot \lnorm (\simpleadj_G)^{\ell} \rnorm_2.
\end{align}
Furthermore, since  $\delta=\frac{1-\epsilon}{\spradius_G}>0$ 
and  $\lnorm (\simpleadj_G)^{\ell} \rnorm_2 = (\spradius_G)^{\ell}$, because $\simpleadj_G)^{\ell}$ is symmetric,  
we get that
\begin{align}\nonumber
\spradius\left (\Pweight_{\NWalks, \bar{\bpsi}}  \right ) 
& \leq  \textstyle  \sum^{n}_{\ell=0} (1-\epsilon)^{\ell} \ \leq \  \sum^{\infty}_{\ell=0} (1-\epsilon)^{\ell}=\epsilon^{-1}.
\end{align}
The theorem follows  by plugging the above into \eqref{eq:InluenceVsNWalkSPRadBasicConc}.
\hfill $\Box$

\subsection{Proof of Theorem \ref{thrm:Recurrence4InfluenceEigenBoundNonLinear}.}
\label{sec:thrm:Recurrence4InfluenceEigenBoundNonLinear}
As we prove in Lemma \ref{lemma:InfluenceMatrixIsWalkMatrix} the matrix 
$\infmatrix^{\Lambda,\tau}_{G}$ is identical to the  walk-matrix $\Pweight_{\LamSAW,\bphi}$, 
where  $\LamSAW$ is the  set of self-avoiding walks in  $G$ that do not use vertices in 
$\Lambda$, while $\bphi\in \mathbb{R}^{\WEdges_{\LamSAW}}$ is defined in 
\eqref{def:OfRestInfluenceWeights}.  In that respect, it suffices to prove that under the 
assumption of Theorem \ref{thrm:Recurrence4InfluenceEigenBoundNonLinear} we have that
\begin{align}\nonumber  
\spradius  \left( \Pweight_{\LamSAW,\bphi}  \right) &\leq 
1+ \zeta \cdot (1-(1-\epsilon)^s)^{-1}  \cdot \left( {\maxDeg}/{\spradius_G} \right)^{1-\frac{1}{s}},
\end{align}
where $\zeta, s$ and $\epsilon$ are specified  in the statement of Theorem \ref{thrm:Recurrence4InfluenceEigenBoundNonLinear}.

In light of Corollary \ref{cor:MyNormVsSPRadius} the above follows by showing that 
\begin{align}\label{eq:thrm:Recurrence4InfluenceEigenBoundANonLinBasis}
\lnorm \Pweight_{\LamSAW,\bphi}  \rnorm_{\SpGMatrix_{\cP}, \frac{1}{s}, \infty}    &\leq 
1+ \zeta \cdot (1-(1-\epsilon)^s)^{-1}\cdot \left( {\maxDeg}/{\spradius_G} \right)^{1-\frac{1}{s}},
\end{align}
where  $\SpGMatrix_{\cP}\in \mathbb{R}^{V_{\cP}\times V_{\cP}}_{\geq 0}$ is the diagonal matrix 
whose entry$\SpGMatrix(u,u)=\maxeigenv(u)>0$.

In order to prove \eqref{eq:thrm:Recurrence4InfluenceEigenBoundANonLinBasis}, we use Theorem 
\ref{thrm:MySpectralMatrixNorm}. Particularly, \eqref{eq:thrm:Recurrence4InfluenceEigenBoundANonLinBasis}  
follows from Theorem \ref{thrm:MySpectralMatrixNorm} 
once we show that the assumptions of Theorem \ref{thrm:Recurrence4InfluenceEigenBoundNonLinear} imply that 
we can have $\bgamma\in \mathbb{R}^{\WEdges_{\LamSAW}}_{>0}$ which is a $(s',\delta',c')$-potential with respect 
to  $\LamSAW$ and  $\bphi$ where  $s'=s$, $\delta'=\frac{1-\epsilon}{\spradius_G}$  and $c'=\frac{\zeta}{\spradius_G}$.
As mentioned above,  $s, \epsilon$ and  $\zeta$  are specified in the statement of Theorem \ref{thrm:Recurrence4InfluenceEigenBoundNonLinear}.

Note that Theorem \ref{thrm:Recurrence4InfluenceEigenBoundNonLinear} assumes the existence of a
$(s,\delta, c)$-potential $\potF$, for  $s\geq 1$, $\delta=\frac{1-\epsilon}{\spradius_G}$ and $c=\frac{\zeta}{\spradius_G}$. We let the function 
$\xdpotF=\potF'$.
We define  the weights $\bgamma\in \mathbb{R}^{\WEdges_{\LamSAW}}_{>0}$ as follows: for any edge $e\in \WEdges_{\LamSAW}$ such 
that $e=\{u,w\}$ and $w$ is the child of $u$, we have 
\begin{align}\nonumber 
\bgamma(e) &=\xdpotF(w). 
\end{align}
From the fact that $\potF$ is a $(s,\delta, c)$-potential,  it follows  immediately  (i.e., from Definitions \ref{def:GoodPotential}, \ref{def:PotentialWeights}) 
that the weights  $\bgamma$ are $(s',\delta',c')$-potential with respect to  $\LamSAW$ and  $\bphi$.
For the above it is useful to recall that the edge weights $\bphi$ are specified in \eqref{def:OfRestInfluenceWeights}.
This  concludes the proof of Theorem \ref{thrm:Recurrence4InfluenceEigenBoundNonLinear}.
\hfill $\Box$

\subsection{Proof of Theorem \ref{thrm:Recur4InfSPBoundNBK}}\label{sec:thrm:Recur4InfSPBoundNBK}
Throughout the proof we use either $\hspradius_G$, or $\spradius(\NBMatrix_G)$ to refer 
to the spectral radius of $\NBMatrix_G$ and  this should create no confusion.

Working as in the proof of Theorem \ref{thrm:Recurrence4InfluenceEigenBound}, we have the following:
for  $\bphi$ defined in \eqref{def:OfRestInfluenceWeights}, we have that
\begin{align}\nonumber 
\max_{e} \left\{ |\bphi(e)| \right\} &\leq \delta.
\end{align}
Furthermore, using the above and  Corollary  \ref{cor:MaxElement4NB}   we get  that
\begin{align} \nonumber 
\textstyle \spradius\left( \infmatrix^{\Lambda,\tau}_{G} \right) & \leq 
\textstyle \spradius\left( \Pweight_{\nbn, \bar{\bpsi}} \right) \leq \spradius\left( \Pweight_{\nbInf, \bar{\bpsi}} \right).
\end{align} 
For the second inequality   we use  Lemma \ref{lemma:MonotoneVsSRad} and the fact that 
both $\Pweight_{\nbn, \bar{\bpsi}}$ and $\Pweight_{\nbInf, \bar{\bpsi}}$ are 
non-negative matrices. 
 Since $\spradius\left( \Pweight_{\nbInf, \bar{\bpsi}} \right)\leq \lnorm  \Pweight_{\nbInf, \bar{\bpsi}} \rnorm_2$,
we have that
 \begin{align}\label{eq:InluenceVsNNBKWalkSPRadBasicConc}
 \textstyle \spradius\left( \infmatrix^{\Lambda,\tau}_{G} \right) & \leq \lnorm  \Pweight_{\nbInf, \bar{\bpsi}} \rnorm_2.
 \end{align}

The assumption   that $\delta =  (1-\epsilon)/\spradius(\NBMatrix_G)$,  implies that $\spradius(\delta  \NBMatrix_G)<1$, 
hence,   it is standard that $ \sum_{\ell\geq 0} \delta^{\ell}  \NBMatrix^{\ell}_G=(\bI-\delta \cdot \NBMatrix_G)^{-1}.$ 
Hence, we get that
\begin{align}\label{eq:WnbInfDeltaWell}
\Pweight_{\nbInf, \bar{\bpsi}} &=\bI+ \delta\cdot \vtooedge\cdot  (\bI-\delta \cdot \NBMatrix_G)^{-1} \cdot\oedgetov.
\end{align}
The theorem follows by simplifying the above and showing that 
\begin{align}\label{eq:Target4thrm:Recur4InfSPBoundNBK}
\Pweight_{\nbInf, \bar{\bpsi}}=\left( \bI -   \delta\simpleadj_G +\delta^2(\bD-\bI) \right)^{-1}.
\end{align}

For $k\geq 0$,  let $\bW^{(k)}$ be the $V\times V$ matrix, such that 
for every $u,w\in V$ the entry $\bW^{(k)}(u,w)$ is equal to the number of 
non-backtracking walks  of length exactly  $k$ from vertex $u$ to vertex $w$.  
Note that 
\begin{align}
\Pweight_{\nbInf, \bar{\bpsi}} &=\sum^{\infty}_{k=0} \delta^{k}\cdot \bW^{(k)}.
\end{align}
From \eqref{eq:WnbInfDeltaWell} we have that the above summation is well defined.
Furthermore, we have that  $\bW^{(0)}=\bI$ and $\bW^{(1)}=\simpleadj_G$,  while for $k \geq 2$, we have 
the following recursive relation:
\begin{align}\label{eq:RecurWK}
\bW^{(k)}&= \simpleadj_G\cdot \bW^{(k-1)}-(\bD-\bI) \bW^{(k-2)}.
\end{align}
The above recursive relation is standard in the literature, e.g. see \cite{PhDNBM}. 

Consider the generating function $\bF(x)=\sum^{\infty}_{k=0}x^k\cdot \bW^{(k)}$.  
Eq.  \eqref{eq:WnbInfDeltaWell} implies that  the radius of convergence for $\bF(x)$ 
is the open interval $(-\hspradius^{-1}_G, \hspradius^{-1}_G)$.
 Specifically, from \eqref{eq:RecurWK} it elementary to show that for any $x\in (-\hspradius^{-1}_G, \hspradius^{-1}_G)$ 
 we have that  $\bF(x)=(\bI-x\simpleadj_G+x^2(\bD-\bI))^{-1}$. 
Then, \eqref{eq:Target4thrm:Recur4InfSPBoundNBK} follows by noting that  $\Pweight_{\nbInf, \bar{\bpsi}} = \bF(\delta)$.
The theorem follows.
\hfill $\Box$

\section{Proof of results from Section \ref{sec:TopoligcalMethod101}}\label{sec:Proofs4Topol101}

\subsection{Proof of Lemma \ref{lemma:StrongSubtreeRelation}}\label{sec:lemma:StrongSubtreeRelation}

Consider the trees $\walkT_{\cP}(w)$ and  $\walkT_{\cQ}(w)$.  Using standard notation, for $\walkT_{\cP}(w)$
we have $\WVertices_{\cP, w},  \WEdges_{\cP, w}$   the set of vertices and edges. Similarly for 
$\walkT_{\cQ}(w)$ we have the corresponding sets $\WVertices_{\cQ, w}$ and  $\WEdges_{\cQ, w}$. 
The lemma follows by showing that $\WVertices_{\cQ, w}\subseteq \WVertices_{\cP, w}$ and 
$\WEdges_{\cQ, w}\subseteq \WEdges_{\cP, w}$.  

The relation that $\WVertices_{\cQ, w}\subseteq \WVertices_{\cP, w}$ follows immediately from the fact that
$\cQ\subseteq \cP$. Specifically, every walk $M\in \cQ\cap \cP$  corresponds to the same vertex $z$ in both trees. 
Since $\cQ\subseteq \cP$ we have that for $M\in \cQ$ we  have $M\in \cQ\cap \cP$, as well. This implies that every
vertex $z$ in the tree $\walkT_{\cQ}(w)$ also belongs to the tree  $\walkT_{\cP}(w)$.

In order to prove that $\WEdges_{\cQ, w}\subseteq \WEdges_{\cP, w}$ we work as follows:
Let the edge $e=\{x, z\}$ in $\WEdges_{\cQ, w}$, while assume that $x$ is the parent of $z$
in the tree $\walkT_{\cQ}(w)$. There are walks  $M_a=z_0,z_1, \ldots, z_{\ell}$ and 
$M_b=z_0,z_1, \ldots, z_{\ell-1}$, for $\ell\geq 1$ such that $M_a, M_b\in \cQ$ while
$M_a$  corresponds to the vertex $z$ in $\walkT_{\cQ}(w)$ and 
$M_b$  corresponds to the vertex $x$. 
Since we have assumed that $\cQ\subseteq \cP$, we have that $M_a, M_b\in \cP$.
Consequently vertices $x,z$ appear in $\walkT_{\cP}(w)$, while they are adjacent and $x$ is the parent of $z$. 
This implies  that the edge $e\in \WEdges_{\cP, w}$.  Hence, we conclude that $\WEdges_{\cQ, w}\subseteq \WEdges_{\cP, w}$.

The above concludes the proof of the lemma.  \hfill $\Box$

\subsection{Proof of Lemma \ref{lemma:WalkMatrixVsAdjacency}}\label{sec:lemma:WalkMatrixVsAdjacency}
Let  $\bC=  \textstyle \sum^{k}_{\ell=0} (\zeta \cdot \simpleadj_G)^{\ell}$.
We prove the lemma by showing   that for any  $k\geq 0$ and any pair of 
vertices   $u, v\in V$, we have that
\begin{align}\label{eq:Target4thrm:MaxElement}
\Pweight_{\KWalks,\bpsi}(u,  v) &=   \bC(u, v).
\end{align}
Note that both $\Pweight_{\KWalks,\bpsi}, \bC \in \mathbb{R}^{V\times V}$.

For any $u, v\in V$ and  any integer $\ell\geq 1$,  let $\cM^{\ell}_{u,v}\subseteq V^{\ell+1}$ contain
every $\ell+1$-tuple of vertices  $w_0, w_1, \ldots, w_{\ell} \in V^{\ell+1}$ such that $w_0=u$ and $w_{\ell}=v$.
Note that we allow $w_i=w_j$.
We also define $\cM^{\ell}_{u,v}$ for $\ell=0$. In this case, we have that $\cM^{0}_{u,v}=\{u\}$ only if $u=v$.
Otherwise, the set is empty.

For $\ell\geq 0$,  let $S_{\ell}$ be the set of all walks of length exactly $\ell$ that start from $u$ and end at $v$ in the graph
$G$. 
From the definition of the set $\KWalks$  we have that 
\begin{align}\label{eq:WNmaxUVExplicit}
\Pweight_{\KWalks,\bpsi}(u,  v) & =\  \sum^k_{\ell=0}  \sum_{w_0, \ldots, w_{\ell} \in \cM^{\ell}_{u,v}} \zeta^{\ell}\times
 {\bf 1}\{w_0,  \ldots, w_{\ell}\in S_{\ell}\}.
\end{align}
Now, consider  the adjacency matrix $\simpleadj_G$.
It is standard that for any  $u,v\in V$ and  any $\ell\geq 0$,  we have that
\begin{align} \nonumber
\left( \simpleadj_G \right)^{\ell}(u,v) & =\sum_{w_0,  \ldots, w_{\ell} \in \cM^{\ell}_{u,v}}\prod_{i\in [\ell]}\simpleadj_G(w_{i-1},w_{i}).
\end{align}
Note that each summad on the r.h.s. is equal to $1$ if $w_0, w_1, \ldots, w_{\ell}$ is a walk from $u$ to $v$ in
$G$. Otherwise the summad is zero.  Using this observation, we conclude that
\begin{align} \nonumber 
\left( \simpleadj_G \right)^{\ell}(u,v) & = \sum_{w_0, \ldots, w_{\ell} \in \cM^{\ell}_{u,v}} {\bf 1}\{w_0,  \ldots, w_{\ell}\in S_{\ell}\}.
\end{align}
The above implies that  
\begin{align}\label{eq:DeltaUVExplicit}
\bC (u,v)  & =  \sum^{k}_{\ell=0} 
\sum_{w_0,  \ldots, w_{\ell} \in \cM^{\ell}_{u,v}} \zeta ^{\ell}\times {\bf 1}\{w_0, \ldots, w_{\ell}\in S_{\ell}\}.
\end{align}
Comparing \eqref{eq:DeltaUVExplicit} and \eqref{eq:WNmaxUVExplicit} we immediately get 
\eqref{eq:Target4thrm:MaxElement}. 
The lemma follows. \hfill $\Box$

\subsection{Proof of Lemma \ref{lemma:WalkMatrixVsHashimoto}}\label{sec:lemma:WalkMatrixVsHashimoto}
Let $\bM =\sum^{k-1}_{\ell=0} \zeta^{\ell+1} \cdot \NBMatrix^{\ell}_G$, while  let  $\bD=  \bI+\textstyle \vtooedge \cdot \bM \cdot \oedgetov$, 
where $\bI$ is the $V\times V$ identity matrix and  $\vtooedge, \oedgetov$ are defined in \eqref{def:HasVertex2EdgeMatrices}.
Note that $\bM$ is a $\OrntEdges\times \OrntEdges$ matrix, while 
both $\Pweight_{\nbk,\bpsi}$ and  $\bD$ are $ V\times V$.

We prove the lemma by showing   that for any  $k\geq 0$ and any pair of
vertices   $u, v\in V$, we have that
\begin{align}\label{eq:Target4lemma:WalkMatrixVsHashimoto}
\Pweight_{\nbk,\bpsi}(u,  v) &=   \bD(u, v).
\end{align}
For   $(x,y), (r,s) \in V\times  V$,  allowing repetitions, for  integer $\ell\geq 3$, let $\cM^{\ell}_{(x,y),(r,s)}\subseteq V^{\ell+1}$ 
contain every $\ell+1$-tuple of vertices  
$w_0, w_1, \ldots, w_{\ell} \in V^{\ell+1}$ such that $w_0=x$,  $w_1=y$, $w_{\ell-1}=r$ and $w_{\ell}=s$.
Note that we allow $w_i=w_j$.  

We also define $\cM^{\ell}_{(x,y),(r,s)}$ for $1 \leq \ell  \leq 2$, but there are some restriction on $x,y,r,s$. 
For  $\ell=1$, we have  that  $\cM^{1}_{(x,y),(r,s)}=\{(x,y)\}$ only if $x=r$ and $y=s$. 
Otherwise, we have $\cM^{1}_{(x,y),(r,s)}=\emptyset$. 
Similarly,  for  $\ell=2$, we have $\cM^{2}_{(x,y),(r,s)}=\{x,y,s\}$  is only  defined for $y=r$. Otherwise, we have
 $\cM^{2}_{(x,y),(r,s)}=\emptyset$.

Furthermore, for $\ell\geq 1$,  let $S_{\ell}(e,f)$ be the set of all non-backtracking walks in $G$ of length exactly $\ell$ that 
start with edge $e$, while the last edge in the path is $f$. For $e=(x,y)$  and $f=(r,s)$,   $S_{\ell}(e,f)$ 
contains the  non-backtracking walks such that  the first vertex in the walk is $x$, the second is $y$
while the one before the last vertex is $r$ and the last vertex is $s$.
Note that for  the extreme case $\ell=1$,  $S_{\ell}(e,f)$ is non-empty only if $e=f$.

From the definition of the set $\nbk$  we have that 
\begin{align}\label{eq:WNBkUVExplicit}
\Pweight_{\nbk,  \bpsi}(u,  v) & = {\bf 1}\{u=v\}+\sum_{\substack{e=(x,y)\in \OrntEdges :\\ x=u}} \ \  
\sum_{\substack{f=(r,s)\in \OrntEdges:\\ s=v}}\ \    \sum_{\ell \in [k] } \ \  \sum_{w_0, \ldots, w_{\ell} \in \cM^{\ell}_{e,f}} \zeta^{\ell}\times
 {\bf 1}\{w_0,  \ldots, w_{\ell}\in S_{\ell}(e,f)\}.
\end{align}
Working as in the proof of Lemma \ref{lemma:WalkMatrixVsAdjacency}, we get the following: 
for any $\ell\geq 0$ and $e, f\in \OrntEdges$, we have that
\begin{align}\nonumber
 \NBMatrix^{\ell}_G(e,f) &=  \textstyle \sum_{w_0, \ldots, w_{\ell} \in \cM^{\ell+1}_{e,f}} 
 {\bf 1}\{w_0,  \ldots, w_{\ell}\in S_{\ell+1}(e,f)\}.
\end{align}
Hence, we have that 
\begin{align}\nonumber 
\bM(e,f) &=\sum^{k-1}_{\ell =0 } \zeta^{\ell+1}\times  \sum_{w_0, \ldots, w_{\ell} \in \cM^{\ell+1}_{e,f}} 
 {\bf 1}\{w_0,  \ldots, w_{\ell}\in S_{\ell+1}(e,f)\}.
\end{align}
Combining the above with  \eqref{eq:WNBkUVExplicit},  we get that
\begin{align}\label{eq:WnbkVsBMef}
\Pweight_{\nbk,\bpsi}(u,  v) & = {\bf 1}\{u=v\}+ \sum_{\substack{e=(x,y)\in \OrntEdges :\\ x=u}} \ \  
\sum_{\substack{f=(r,s)\in \OrntEdges:\\ s=v} }\bM(e,f)
\end{align}
Furthermore,  the definition of $\bD$, $\vtooedge$ and $\oedgetov$, implies that
\begin{align}
\textstyle  \bD(u,v) & = \ \left( \bI+\vtooedge \cdot \bM   \cdot \oedgetov\right)(u,v) 
\ =\ {\bf 1}\{u=v\}+\sum_{(x,z)} \sum_{(y,q)} \vtooedge_{u,(x,z)} \bM_{(x,z), (y,q)} \oedgetov_{(y,q),v} \nonumber \\
&={\bf 1}\{u=v\}+\sum_{(x,z)} \sum_{(y,q)} {\bf 1}\{u=x\}\times {\bf 1}\{q=v\}  \times \bM_{(x,z), (y,q)} \nonumber \\
&={\bf 1}\{u=v\}+\sum_{\substack{e=(x,y)\in \OrntEdges :\\ x=u}} \sum_{\substack{f=(r,s)\in \OrntEdges:\\ s=v}}  \bM_{e, f}. \label{eq:AppropriateNBSumOfEntries} 
\end{align}
From \eqref{eq:WnbkVsBMef} and \eqref{eq:AppropriateNBSumOfEntries} we conclude that 
\eqref{eq:Target4lemma:WalkMatrixVsHashimoto} is true.  The lemma follows.

\subsection{Proof of Lemma \ref{lemma:InfluenceMatrixIsWalkMatrix}}\label{sec:lemma:InfluenceMatrixIsWalkMatrix}

Consider $\Pweight_{\saw,\infweight}$ where $\infweight$ is defined in \eqref{def:OfInfluenceWeights}. 
From the definition of the matrix we have that $\Pweight_{\saw,\infweight}$ is indexed by the vertices in $V$.
Also, we have  that both  $\infmatrix^{\Lambda,\tau}_{G}$ and $\Pweight_{\cP,\bphi}$
are matrices  indexed by the vertices in $V\setminus \Lambda$.

The lemma will follow by showing the following two relations:
 for any  $w,v\in V\setminus \Lambda$, we have that
\begin{align}\label{eq:Base4lemma:InfluenceMatrixIsWalkMatrixA}
\infmatrix^{\Lambda,\tau}_{G}(w,v) &= \Pweight_{\saw,\infweight}(w,v),
\end{align}
while, for the same $w,v$ we also have that
\begin{align}\label{eq:BaseB4lemma:InfluenceMatrixIsWalkMatrixA}
\Pweight_{\LamSAW,\bphi}(w,v) &= \Pweight_{\saw,\infweight}(w,v).
\end{align}
Note that the lemma follows by  combining the above with \eqref{eq:Base4lemma:InfluenceMatrixIsWalkMatrixA}.

We start by proving that \eqref{eq:Base4lemma:InfluenceMatrixIsWalkMatrixA} is true.
From the definition of walk-matrix, for $\infweight\in \mathbb{R}^{\WEdges_{\saw}}$, we have that 
$\Pweight_{\saw,\infweight}\in \mathbb{R}^{V\times V}$,  while   
\begin{align}\label{eq:Base4lemma:InfluenceMatrixIsWalkMatrixB}
\Pweight_{\saw,\infweight}(w,v)={\textstyle \sum_{M} \prod_{e\in M} }\infweight(e),
\end{align}
where $M$ varies over the paths in $\Tsaw(w)$ from the root to the set $\cp_{\saw,w}(v)$,
while, as mentioned above,  $\infweight$ is specified in \eqref{def:OfInfluenceWeights}.
In light of the above, it is immediate that \eqref{eq:Base4lemma:InfluenceMatrixIsWalkMatrixA}
is true  by showing  formally that \eqref{eq:InfluenceEntryAsWeightedSum} is true.

Consider $T=\Tsaw(w)$ and the Gibbs distribution $\mu_T(\cdot\ |\ \Gamma, \sigma)$, where $\Gamma$
and $\sigma\in \{\pm 1\}^{\Gamma}$ are  defined in Section \ref{sec:InfluenceVsWalkMatrix}. 
With respect to the aforementioned Gibbs distribution, let  $\infmatrix^{\Gamma,\sigma}_{T}$ be the 
corresponding pairwise influence matrix.  
The following result from \cite{VigodaSpectralInd} shows a useful relationship between the two 
influences matrices $\infmatrix^{\Lambda,\tau}_{G}$ and $\infmatrix^{\Gamma,\sigma}_{T}$.

\begin{lemma}\label{lemma:Redaux2Tree}
Let vertex $r$ be the root of the tree  $\Tsaw(w)$. We have that 
\begin{align}\label{eq:lemma:Redaux2Tree}
\infmatrix^{\Lambda,\tau}_{G}(w,v)=\sum_{ u \in {\cp}(v)} \infmatrix^{\Gamma,\sigma}_{T}(r,u),
\end{align}
where for $s\in V$,  $\cp(s)$ is the set of  copies of vertex $s$ in $\Tsaw(w)$ which do not belong in $\Gamma$. 
\end{lemma}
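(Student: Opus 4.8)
The plan is to prove \eqref{eq:lemma:Redaux2Tree} by induction on $|V|$, peeling off the vertex $w$ and matching the recursive structure of the conditional Gibbs measure on $G$ with that of the walk-tree $T=\Tsaw(w)$. This is the argument of \cite{OptMCMCIS,VigodaSpectralInd}, and it rests on two standard facts. The first is the recursive form of Weitz's self-avoiding-walk-tree construction: writing $u_1,\dots,u_d$ for the neighbours of $w$ in $G$, the root $r$ of $T$ has children $r_1,\dots,r_d$ (the copies of the length-one walks $w,u_i$), and the subtree $T_i$ hanging below $r_i$ is exactly the self-avoiding-walk tree of $u_i$ inside the graph $G_i$ obtained from $G$ by deleting $w$ (and, following the fixed vertex ordering, the neighbours of $w$ that precede $u_i$), with $T_i$ carrying the boundary $(\Gamma_i,\sigma_i)=(\Gamma,\sigma)$ restricted to $T_i$, which is the same as the boundary that $(\Lambda,\tau)$ and the loop-closing rule prescribe on it. Consequently $\cp(w)=\{r\}$, and for $v\ne w$ every copy of $v$ in $T$ lies in exactly one $T_i$, so $\cp(v)$ is the disjoint union over $i$ of the non-boundary copies of $v$ in $T_i$. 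The second fact is the one-edge chain rule on trees: for the root $r$ of $T$, a non-boundary vertex $u\ne r$ in the subtree $T_i$, and the child $r_i$ of $r$ on the root-to-$u$ path, $\infmatrix^{\Gamma,\sigma}_T(r,u)=\dlogtrecur(\log\gratio^{\Gamma,\sigma}_{r_i})\cdot\infmatrix^{\Gamma_i,\sigma_i}_{T_i}(r_i,u)$; this is obtained by conditioning on the cut vertex $r_i$, which factors the influence as $\infmatrix^{\Gamma,\sigma}_T(r,r_i)\cdot\infmatrix^{\Gamma_i,\sigma_i}_{T_i}(r_i,u)$, together with the direct computation that $\infmatrix^{\Gamma,\sigma}_T(r,r_i)$ equals the partial derivative of \eqref{eq:DefOfH}, namely $\dlogtrecur$ of \eqref{eq:DerivOfLogRatio}, evaluated at $\log\gratio^{\Gamma,\sigma}_{r_i}$ --- exactly the weight in \eqref{def:OfInfluenceWeights}.

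Granting these, the induction runs as follows. In the base cases, if $v=w$ both sides of \eqref{eq:lemma:Redaux2Tree} equal $1$ (since $\infmatrix^{\Gamma,\sigma}_T(r,r)=1$ and $\cp(w)=\{r\}$), and if $v$ is unreachable from $w$ in $G$ after deleting $\Lambda$ both sides are $0$. For the inductive step, conditioning $\mu_G(\cdot\mid\Lambda,\tau)$ on the spins $\sigma_{u_1},\dots,\sigma_{u_d}$ and running the same cut-vertex-plus-chain-rule argument at $w$ that underlies Weitz's construction gives $\infmatrix^{\Lambda,\tau}_G(w,v)=\sum_{i=1}^{d}\dlogtrecur(\log\gratio^{\Gamma,\sigma}_{r_i})\cdot\infmatrix^{\Lambda_i,\tau_i}_{G_i}(u_i,v)$, where $(\Lambda_i,\tau_i)$ is the boundary induced by $(\Lambda,\tau)$ on $G_i$. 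Because each $G_i$ has fewer vertices than $G$, the induction hypothesis rewrites $\infmatrix^{\Lambda_i,\tau_i}_{G_i}(u_i,v)$ as the sum of $\infmatrix^{\Gamma_i,\sigma_i}_{T_i}(r_i,u)$ over the non-boundary copies $u$ of $v$ in $T_i$. Substituting this and applying the one-edge chain rule collapses the double sum into $\sum_{u\in\cp(v)}\infmatrix^{\Gamma,\sigma}_T(r,u)$, which is \eqref{eq:lemma:Redaux2Tree}. Throughout, the copies of $v$ that fall in $\Gamma$ --- copies of $\Lambda$-vertices and loop-closing copies --- are pinned and therefore contribute nothing, which is exactly why the right-hand sum is restricted to $\cp(v)$.

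The delicate point is not the algebra but the first fact: one must verify that Weitz's correspondence is faithful for the whole spin vector, so that conditioning on $\sigma_{u_1},\dots,\sigma_{u_d}$ genuinely decouples the $G_i$'s and reproduces the tree measures on the $T_i$ on \emph{all} marginals, not merely at their roots, and that the copies of $v$ created by deleting $w$ redistribute among the $T_i$ with exactly the boundary dictated by $(\Lambda,\tau)$ and the loop-closing rule --- including the correct handling of the vertex ordering in the definition of the $G_i$. Once this bookkeeping is set up, the one-edge chain rule and the inductive collapse are routine mean-value-theorem manipulations of \eqref{eq:DefOfH} and \eqref{eq:DerivOfLogRatio}, and present no further difficulty.
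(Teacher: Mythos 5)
Your plan (an induction that mirrors Weitz's construction, finished off by the tree chain rule) is in the right family of ideas, but the step you yourself single out as the crux --- your ``first fact'' --- is stated incorrectly, and the error is fatal as written. The subtree of $\Tsaw(w)$ hanging below the child $r_i$ is \emph{not} the self-avoiding-walk tree of $u_i$ in the graph obtained from $G$ by deleting $w$ and the neighbours of $w$ that precede $u_i$: a walk $w,u_i,\dots$ may still pass through the other neighbours $u_j$ of $w$ as \emph{free} internal vertices, and it may terminate in a \emph{pinned} copy of $w$ (the cycle-closing rule), and neither feature is present in your $G_i$. Concretely, take the Hard-core model on a triangle $\{w,a,b\}$ with $\Lambda=\emptyset$ and $v=a$. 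In $\Tsaw(w)$ the branch below $r_b$ contains a free copy of $a$ (the walk $w,b,a$) followed by a pinned copy of $w$; with your $G_b=G-w-a$ that copy of $a$ does not exist, so your bookkeeping ``$\cp(v)$ is the disjoint union of the non-boundary copies of $v$ in the $T_i$'' fails, and that copy carries a nonzero contribution to \eqref{eq:lemma:Redaux2Tree} (it is exactly $\lambda^{2}/[(1+\lambda)(1+2\lambda)]$). Your inductive identity $\infmatrix^{\Lambda,\tau}_G(w,v)=\sum_i\dlogtrecur(\log\gratio^{\Gamma,\sigma}_{r_i})\cdot\infmatrix^{\Lambda_i,\tau_i}_{G_i}(u_i,v)$ also fails on this example: the left-hand side is $-\lambda/(1+2\lambda)$, while the right-hand side is $-\lambda/(1+\lambda)$, because $\gratio^{\Gamma,\sigma}_{r_a}=\lambda$ (the pinned occupied copy of $w$ below the copy of $b$ forces that copy of $b$ out), $\infmatrix_{G_a}(a,a)=1$, and the $b$-branch contributes $0$ since $a\notin G_b$.

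Even after you repair the structural description (below $r_i$ one has the SAW tree of $u_i$ in $G$ with $w$ replaced by pinned pendant copies attached to the other neighbours $u_j$, the pins depending on $i$ through the ordering convention --- and getting this convention exactly right is itself essential for \eqref{eq:lemma:Redaux2Tree} to hold), the graph-level decomposition at $w$ is precisely the nontrivial content and cannot be dispatched as ``the cut-vertex-plus-chain-rule argument'': the neighbours of $w$ remain mutually dependent in $G-w$, so conditioning on their joint spins does not factor the conditional measure into independent pieces indexed by $i$, and the asserted per-branch factorization is exactly what has to be proved. Note also that the paper does not prove this lemma; it imports it from \cite{VigodaSpectralInd} (building on \cite{OptMCMCIS}), where the standard route is to write $\infmatrix^{\Lambda,\tau}_{G}(w,v)$ as the derivative of $\log\gratio^{\Lambda,\tau}_{w}$ with respect to the logarithm of an external field placed at $v$, invoke Weitz's theorem (the SAW tree preserves the root ratio as a function of all the fields, every copy of $v$ inheriting the field of $v$), and apply the chain rule over the copies of $v$, the pinned copies contributing zero --- a route that avoids the branch-by-branch bookkeeping your induction would have to get right.
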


Combining results from \cite{OptMCMCIS}  and \cite{VigodaSpectralInd},  we have the following lemma, 
whose proof appears in Section \ref{sec:lemma:RootInfluenceProduct}.

\begin{lemma}\label{lemma:RootInfluenceProduct}
Let $r$ be the root in $\Tsaw(w)$. For any vertex $u$ in $\Tsaw(w)$, different than $r$,   the following holds:
letting $z_0, \ldots, z_{\ell}$ be the path in $\Tsaw(w)$ that connects the root of the tree with  $u$, 
i.e.,  $z_0=r$ and $z_{\ell}=u$,  we have that 
\begin{align}\label{eq:DefOfXi}
\infmatrix^{\Gamma,\sigma}_{T}(r, u)  & = \textstyle  \prod^{\ell}_{i=1} \infweight(\{z_{i-1}, z_i\}),
\end{align}
where $\infweight \in \mathbb{R}^{\WEdges_{\saw}}$ is specified in \eqref{def:OfInfluenceWeights}.
Furthermore, we have   $\infmatrix^{\Gamma,\sigma}_{T}(r,r)=1$.  
\end{lemma}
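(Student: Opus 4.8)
The plan is to prove both assertions by induction on $\ell$, the length of the path $z_0,\ldots,z_\ell$ joining the root $r=z_0$ to $u=z_\ell$ in $T=\Tsaw(w)$. The equality $\infmatrix^{\Gamma,\sigma}_{T}(r,r)=1$ is immediate from \eqref{def:InfluenceMatrix}: conditioning the spin at $r$ to $1$ forces $\mu_r(1\mid\cdot)=1$ and conditioning it to $-1$ forces $\mu_r(1\mid\cdot)=0$, so their difference is $1$; this also serves as the base case $\ell=0$ of \eqref{eq:DefOfXi} (read as an empty product).

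First I would treat the base case $\ell=1$, where $u$ is a child of $r$ in $T$. If $u\in\Gamma$, both conditional marginals at $u$ equal $\mathbf{1}\{\sigma(u)=1\}$, so $\infmatrix^{\Gamma,\sigma}_{T}(r,u)=0=\infweight(\{r,u\})$. If $u\notin\Gamma$, conditioning on the spin at $r$ decouples the subtrees hanging off $r$, and inspecting the edge factors $\beta,\gamma$ from \eqref{def:GibbDistr} for the single edge $\{r,u\}$ shows that the ratio of marginals at $u$ in the conditioned tree equals $\beta\,\gratio^{\Gamma,\sigma}_{u}$ when $r$ is pinned to $1$ and $\gratio^{\Gamma,\sigma}_{u}/\gamma$ when $r$ is pinned to $-1$, where $\gratio^{\Gamma,\sigma}_{u}$ is the ratio of marginals at the root of the subtree $T_u$. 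Passing from ratios to probabilities via $x\mapsto x/(x+1)$ and subtracting,
\begin{align}\nonumber
\infmatrix^{\Gamma,\sigma}_{T}(r,u)=\frac{\beta\,\gratio^{\Gamma,\sigma}_{u}}{\beta\,\gratio^{\Gamma,\sigma}_{u}+1}-\frac{\gratio^{\Gamma,\sigma}_{u}}{\gratio^{\Gamma,\sigma}_{u}+\gamma}=\frac{(\beta\gamma-1)\,\gratio^{\Gamma,\sigma}_{u}}{(\beta\,\gratio^{\Gamma,\sigma}_{u}+1)(\gratio^{\Gamma,\sigma}_{u}+\gamma)}=\dlogtrecur(\log\gratio^{\Gamma,\sigma}_{u}),
\end{align}
which is exactly $\infweight(\{r,u\})$ by \eqref{def:OfInfluenceWeights} and the definition of $\dlogtrecur$ in \eqref{eq:DerivOfLogRatio}.

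For the inductive step $\ell\ge 2$, let $z_1$ be the child of $r$ on the path to $u$. If $z_1\in\Gamma$, then $\infweight(\{r,z_1\})=0$, and since the pinned vertex $z_1$ separates $r$ from $u$ in $T$ we also have $\infmatrix^{\Gamma,\sigma}_{T}(r,u)=0$, so \eqref{eq:DefOfXi} holds. If $z_1\notin\Gamma$, I would use that, conditioned on the spin at $z_1$, vertex $u$ is independent of $r$: writing $p_s=\mu_{z_1}(1\mid\Gamma,\sigma,(\{r\},s))$, $a=\mu_u(1\mid\Gamma,\sigma,z_1=1)$ and $b=\mu_u(1\mid\Gamma,\sigma,z_1=-1)$, one gets $\mu_u(1\mid\Gamma,\sigma,(\{r\},s))=b+(a-b)p_s$, hence
\begin{align}\nonumber
\infmatrix^{\Gamma,\sigma}_{T}(r,u)=(a-b)(p_{+1}-p_{-1})=\infmatrix^{\Gamma,\sigma}_{T_{z_1}}(z_1,u)\cdot\infmatrix^{\Gamma,\sigma}_{T}(r,z_1),
\end{align}
where $\infmatrix^{\Gamma,\sigma}_{T_{z_1}}(z_1,u)$ denotes the influence in the subtree $T_{z_1}$ with the induced boundary condition. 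The second factor equals $\infweight(\{r,z_1\})$ by the $\ell=1$ case, and the first equals $\prod_{i=2}^{\ell}\infweight(\{z_{i-1},z_i\})$ by the induction hypothesis applied inside $T_{z_1}$; here one uses that each weight $\infweight(\{z_{i-1},z_i\})$ depends only on the subtree $T_{z_i}$, so it is unchanged when passing from $T$ to $T_{z_1}$. Multiplying the two factors gives \eqref{eq:DefOfXi}.

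The only genuinely computational ingredient is the $\ell=1$ identity, which is nothing but the two-spin tree recursion \eqref{eq:BPRecursion}--\eqref{eq:DerivOfLogRatio}; everything else is the telescoping afforded by the tree structure of $\Tsaw(w)$, exactly as in \cite{OptMCMCIS,VigodaSpectralInd}. I expect the part requiring the most care to be the bookkeeping around the boundary set $\Gamma$: one must check that every vertex forcing a factor $\infweight(\cdot)$ to vanish (a pinned copy, or a ``return'' copy carrying a boundary condition) simultaneously forces the corresponding influence to vanish, because pinned vertices separate the tree — so that the two sides of \eqref{eq:DefOfXi} agree on their supports, and not merely on the set where all factors are nonzero.
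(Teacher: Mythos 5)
Your proof is correct, and it differs from the paper's write-up mainly in how much it takes off the shelf: the paper proves the lemma in two lines by invoking Lemma \ref{lemma:ProductFactorizationInlfuence} (the path-factorization of influences, imported from \cite{OptMCMCIS}) and Lemma \ref{lemma:1StepInfluencePotential} (the one-step identity $\infmatrix^{\Gamma,\sigma}_{T}(v,u)=\dlogtrecur(\log\gratio^{\Gamma,\sigma}_u)$ for a child $u\notin\Gamma$, imported from \cite{VigodaSpectralInd}), whereas you rederive both ingredients from scratch. Your base case $\ell=1$ is exactly the content of the cited one-step lemma (and your computation via the edge factors $\beta,\gamma$, the ratio map $x\mapsto x/(x+1)$, and \eqref{eq:DerivOfLogRatio} checks out), and your inductive step, via conditional independence given $z_1$ and the identity $\mu_u(1\mid r=s)=b+(a-b)p_s$, is precisely the factorization lemma in disguise; note that you are really inducting on the slightly stronger statement ``influence from the root of any subtree $T_v$ to a descendant equals the product of weights along the path,'' which is legitimate since $\infweight(\{z_{i-1},z_i\})$ depends only on $T_{z_i}$, as you observe. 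Your handling of $\Gamma$ (pinned copies and return copies both separate $r$ from $u$, so the influence and the corresponding weight vanish simultaneously) matches what the cited lemmas encode via the ``$0$ otherwise'' clause of \eqref{def:OfInfluenceWeights}. The trade-off is the obvious one: your argument is self-contained and makes the mechanism transparent, while the paper's is shorter because it delegates both facts to the prior works.
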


In light of Lemmas \ref{lemma:RootInfluenceProduct} and \ref{lemma:Redaux2Tree},   \eqref{eq:InfluenceEntryAsWeightedSum} follows 
as a simple corollary. We conclude that \eqref{eq:Base4lemma:InfluenceMatrixIsWalkMatrixA} is true. 

Now, consider the matrix $\Pweight_{\LamSAW,\bphi}$.   Similarly to \eqref{eq:Base4lemma:InfluenceMatrixIsWalkMatrixB}, 
we have that
\begin{align}\label{eq:BaseB4lemma:InfluenceMatrixIsWalkMatrixB}
\Pweight_{\LamSAW,\bpsi}(w,v)={\textstyle \sum_{M}\prod_{e\in M}} \bpsi(e),
\end{align}
where $M$ varies over the paths in $\walkT_{\LamSAW}(w)$ from the root to the set $\cp_{\LamSAW,w}(v)$.

Comparing the trees $\walkT_{\LamSAW}(w)$ and $\walkT_{\saw}(w)$, we have that the first one 
is a subtree of the second one.   One  obtains  $\walkT_{\LamSAW}(w)$  be removing the subtrees 
of $\walkT_{\saw}(w)$ which are rooted to vertices in $\Gamma$.  Due to \eqref{def:OfRestInfluenceWeights}, 
we have that $\Pweight_{\LamSAW,\bpsi}(w,v)$ and $\Pweight_{\saw,\infweight}(w,v)$ differ only on the sum 
of the weight of the paths that appear in $\walkT_{\saw}(w)$ but do not appear in  $\walkT_{\LamSAW}(w)$.  
However,  it is immediate  that the weight of these paths  is equal to zero. Note that  each one of these
paths involves at least one vertex in $\Gamma$, while  all the edges  $e$ incident to such vertex have 
$\infweight(e)=0$, i.e.,  this is due to \eqref{def:OfInfluenceWeights}. Hence,  
\eqref{eq:BaseB4lemma:InfluenceMatrixIsWalkMatrixA} is true.

All the above, conclude the proof of   Lemma \ref{lemma:InfluenceMatrixIsWalkMatrix}. \hfill $\Box$

\subsubsection{Proof of Lemma \ref{lemma:RootInfluenceProduct}}\label{sec:lemma:RootInfluenceProduct}
In order to prove Lemma \ref{lemma:RootInfluenceProduct}, we only need to use the following two results from \cite{OptMCMCIS} 
and \cite{VigodaSpectralInd}, respectively.

\begin{lemma}[\cite{OptMCMCIS}]\label{lemma:ProductFactorizationInlfuence}
Suppose that $x,y,z$ are three distinct vertices in $T=\Tsaw(w)$ such  that $y $  is on the unique path from $x$ to $z$. Then
\begin{align}\nonumber
 \infmatrix^{\Gamma,\sigma}_{T}(x,z)=  \infmatrix^{\Gamma,\sigma}_{T}(x, y)\times \infmatrix^{\Gamma,\sigma}_{T}(y, z).
\end{align}
\end{lemma}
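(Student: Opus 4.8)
The plan is to use the fact that, on the tree $T=\Tsaw(w)$, the vertex $y$ lying on the unique $x$--$z$ path separates the two sides and hence decouples them under the Gibbs distribution $\mu_T(\cdot\mid\Gamma,\sigma)$; this is the argument of \cite{OptMCMCIS}. Recall that, applying \eqref{def:InfluenceMatrix} to $\mu_T$ with boundary condition $(\Gamma,\sigma)$, for $a,b\in V_T\setminus\Gamma$ we have $\infmatrix^{\Gamma,\sigma}_{T}(a,b)=\mu_{T,b}\bigl(1\mid(\Gamma,\sigma),(\{a\},1)\bigr)-\mu_{T,b}\bigl(1\mid(\Gamma,\sigma),(\{a\},-1)\bigr)$. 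Since deleting $y$ disconnects $T$, let $T_z$ be the component containing $z$; by hypothesis $x$ does not belong to $T_z$. The spatial Markov property of $\mu_T$ then gives that, conditioned on the spin at $y$, the configuration on $T_z$ is independent of the spins on $V_T\setminus V_{T_z}$, in particular of the spin at $x$: for all $s_x,s_y\in\{\pm1\}$,
\begin{align}\label{eq:PlanDecouple}
\mu_{T,z}\bigl(1\mid(\Gamma,\sigma),(\{x\},s_x),(\{y\},s_y)\bigr)=\mu_{T,z}\bigl(1\mid(\Gamma,\sigma),(\{y\},s_y)\bigr)=:g(s_y),
\end{align}
a quantity that does not depend on $s_x$ (both sides equal the marginal at $z$ of the Gibbs measure on $T_z$ with boundary $(\Gamma\cap V_{T_z},\sigma)$ together with the pinned value $(\{y\},s_y)$).

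Next I would condition on the spin at $y$ and use \eqref{eq:PlanDecouple}: for each $s_x\in\{\pm1\}$,
\begin{align}\nonumber
\mu_{T,z}\bigl(1\mid(\Gamma,\sigma),(\{x\},s_x)\bigr)=\sum_{s_y\in\{\pm1\}}\mu_{T,y}\bigl(s_y\mid(\Gamma,\sigma),(\{x\},s_x)\bigr)\,g(s_y).
\end{align}
Subtracting the $s_x=1$ and $s_x=-1$ instances, and using $\mu_{T,y}(1\mid\cdot)+\mu_{T,y}(-1\mid\cdot)=1$ so that the coefficient of $g(-1)$ is the negative of the coefficient of $g(1)$, one obtains
\begin{align}\nonumber
\infmatrix^{\Gamma,\sigma}_{T}(x,z)=\infmatrix^{\Gamma,\sigma}_{T}(x,y)\bigl(g(1)-g(-1)\bigr)=\infmatrix^{\Gamma,\sigma}_{T}(x,y)\cdot\infmatrix^{\Gamma,\sigma}_{T}(y,z),
\end{align}
which is the asserted identity; everything here is elementary once \eqref{eq:PlanDecouple} is in place.

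The one point requiring care — and the main (though mild) obstacle — is the well-definedness of the conditional marginals in \eqref{eq:PlanDecouple} (the conditioning events must have positive probability) together with a precise statement of the Markov property for $\mu_T$ under a partial boundary $(\Gamma,\sigma)$. In the paper's setting this is harmless: for the zero external field Ising model with $\beta>0$ every conditional marginal lies in $(0,1)$, and for the Hard-core model one first performs the reduction that deletes boundary-forced vertices (as in Section \ref{sec:thrm:GoodPotentialHC}), after which every remaining vertex again has both spins with positive probability. Hence the decoupling at $y$ applies on each subtree hanging off $y$, and the only bookkeeping is to track which boundary $T_z$ inherits, namely $(\Gamma\cap V_{T_z},\sigma)$ plus the pinned value at $y$.
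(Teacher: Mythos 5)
Your proof is correct: conditioning on the spin at $y$, using the spatial Markov property of the tree measure (so that the marginal at $z$ given the spin at $y$ does not depend on the spin at $x$, since $y$ separates $x$ from $z$ in $T$), and then using $\mu_{T,y}(1\mid\cdot)+\mu_{T,y}(-1\mid\cdot)=1$ to collapse the two terms is exactly the standard argument. Note, however, that the paper itself gives no proof of this statement — it is imported as a black box from \cite{OptMCMCIS} and only invoked in the proof of Lemma \ref{lemma:RootInfluenceProduct} — so what you have done is reconstruct the original argument of \cite{OptMCMCIS} rather than follow (or diverge from) anything in this paper. Your caveats about well-definedness (positive-probability conditioning, and the hard-core reduction removing boundary-forced vertices) are the right ones and are harmless for the way the lemma is used here, since edges incident to $\Gamma$ are assigned influence $0$ in \eqref{def:OfInfluenceWeights} in any case.
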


\begin{lemma}[\cite{VigodaSpectralInd}]\label{lemma:1StepInfluencePotential}
Let $v,u$ be two vertices in $T=\Tsaw(w)$, while suppose $v\notin \Gamma$ and   $u$ is a child of $v$. Then we have 
\begin{align}\nonumber
 \infmatrix^{\Lambda,\tau}_{T}(v,u) &=
 \left \{
 \begin{array}{lcl}
 h(\log \gratio^{\Lambda,\tau}_u) &\quad& \textrm{if $u\notin \Gamma$} \\
0 &\quad& \textrm{otherwise}. \\
 \end{array}
 \right .
 \end{align}
 
\end{lemma}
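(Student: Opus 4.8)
The plan is to evaluate, for the tree $T=\Tsaw(w)$ carrying the boundary condition $(\Gamma,\sigma)$ built in Section~\ref{sec:InfluenceVsWalkMatrix} (the statement as worded writes $(\Lambda,\tau)$ for the same boundary; this relabelling is immaterial), both conditional marginals appearing in the definition of the influence, and then to subtract. First I would dispose of the trivial case $u\in\Gamma$: the spin at $u$ is pinned to $\sigma(u)$ by the boundary condition, so $\mu_{u}(1\mid(\Gamma,\sigma),(\{v\},s))={\bf 1}\{\sigma(u)=1\}$ for both $s\in\{\pm1\}$ — each conditioning is consistent because $v\notin\Gamma$ — and hence $\infmatrix^{\Gamma,\sigma}_{T}(v,u)=0$, as claimed. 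So for the rest assume $u\notin\Gamma$.

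The crux is the tree Markov property. Deleting the edge $\{v,u\}$ splits $T$ into the subtree $T_u$ rooted at $u$ and its complement $T'$ (which still contains $v$), and $\{v,u\}$ is the unique edge joining them. By the product form of the Gibbs weight in \eqref{def:GibbDistr}, conditioning on $\sigma(v)=s$ together with $(\Gamma,\sigma)$ makes the configuration on $T_u$ distributed as the Gibbs distribution on $T_u$ with the boundary $\sigma$ restricted to $\Gamma\cap T_u$ — which is exactly the distribution defining $\gratio^{\Gamma,\sigma}_u$ — modified only by the factor coming from the edge $\{v,u\}$ with $v$ pinned to $s$, while the $T'$-part (together with the $\lambda^{\#\{+1\}}$-contribution from $v$ and from $T'$) integrates out to a multiplicative constant that cancels in the marginal ratio at $u$. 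That edge factor depends only on the spin at $u$: pinning $v=+1$ multiplies the weight of $\{\sigma(u)=+1\}$ by $\beta$ and leaves $\{\sigma(u)=-1\}$ unchanged, whereas pinning $v=-1$ leaves $\{\sigma(u)=+1\}$ unchanged and multiplies $\{\sigma(u)=-1\}$ by $\gamma$. Writing $R:=\gratio^{\Gamma,\sigma}_u$, this yields
\begin{align}\nonumber
\frac{\mu_{u}(1\mid(\Gamma,\sigma),(\{v\},+1))}{\mu_{u}(-1\mid(\Gamma,\sigma),(\{v\},+1))}=\beta R,\qquad
\frac{\mu_{u}(1\mid(\Gamma,\sigma),(\{v\},-1))}{\mu_{u}(-1\mid(\Gamma,\sigma),(\{v\},-1))}=\frac{R}{\gamma},
\end{align}
and therefore $\mu_{u}(1\mid(\Gamma,\sigma),(\{v\},+1))=\tfrac{\beta R}{\beta R+1}$ and $\mu_{u}(1\mid(\Gamma,\sigma),(\{v\},-1))=\tfrac{R}{R+\gamma}$.

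What then remains is a one-line simplification and the identification with $h$: a direct computation gives
\begin{align}\nonumber
\infmatrix^{\Gamma,\sigma}_{T}(v,u)=\frac{\beta R}{\beta R+1}-\frac{R}{R+\gamma}
=\frac{R(\beta\gamma-1)}{(\beta R+1)(R+\gamma)}
=-\frac{(1-\beta\gamma)R}{(\beta R+1)(R+\gamma)},
\end{align}
which is precisely $h(\log R)$ with $h=\dlogtrecur$ as defined in \eqref{eq:DerivOfLogRatio} (substitute $\exp(\log R)=R$); the degenerate values $R\in\{0,+\infty\}$ are handled by passing to the limit, consistently with the conventions of Section~\ref{sec:RecursionVsSpectralIneq} and using $\gamma>0$. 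The only genuinely delicate point is the Markov-property step: one must track carefully which spins of the boundary $(\Gamma,\sigma)$ fall into $T_u$ versus $T'$, and verify that the $T'$- and $\lambda$-contributions really do factor out of the marginal ratio at $u$; the case analysis on $u\in\Gamma$ and the final algebra are routine.
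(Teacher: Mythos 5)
Your argument is correct: the case $u\in\Gamma$ is immediate, and for $u\notin\Gamma$ the conditional independence across the edge $\{v,u\}$ in the tree gives exactly the two conditional ratios $\beta R$ and $R/\gamma$ with $R=\gratio^{\Gamma,\sigma}_{u}$ computed in the subtree $T_u$, so the difference of the two marginals at $u$ equals $h(\log R)$ with $h=\dlogtrecur$ from \eqref{eq:DerivOfLogRatio}. Note that the paper does not prove this lemma at all — it is imported from \cite{VigodaSpectralInd} — and your derivation is essentially the standard proof of that cited result; the only loose end, which you yourself flag, concerns degenerate situations (e.g.\ $R\in\{0,\infty\}$, or a pinning of $v$ inconsistent with $(\Gamma,\sigma)$), and these are resolved by the conventions of Section \ref{sec:RecursionVsSpectralIneq} (the function $\dlogtrecur$ is defined on all of $[-\infty,+\infty]$) without affecting how the lemma is used in the proof of Lemma \ref{lemma:RootInfluenceProduct}.
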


\begin{proof}[Proof of Lemma \ref{lemma:RootInfluenceProduct}]
Recall that  $z_0, z_1, \ldots, z_{\ell}$ is the path in $T$ that
starts from the root  to  the vertex $u$.   

If $\ell=0$, then we immediately have $\infmatrix^{\Gamma,\sigma}_{T}(r,r)=1$.
For what follows, we focus on the case $\ell\geq 1$.
 Lemma \ref{lemma:ProductFactorizationInlfuence} implies  that
\begin{align}\nonumber
\infmatrix^{\Gamma,\sigma}_{T}(r,u) &=\textstyle \prod^{\ell}_{i=1} \infmatrix^{\Lambda,\tau}_{T}(z_{i-1},z_i).
\end{align}
Furthermore, Lemma \ref{lemma:1StepInfluencePotential} implies the following:
If there is $z_i\in \Gamma$, then
\begin{align}\nonumber
\infmatrix^{\Gamma,\sigma}_{T}(r,u) =0,
\end{align}
while if all $z_i$'s are outside $\Gamma$, then we have
\begin{align}\nonumber
\infmatrix^{\Gamma,\sigma}_{T}(z_{i-1},z_i)=h(\log \gratio^{\Lambda, \tau}_{z_i}).
\end{align}
From \eqref{def:OfInfluenceWeights}, it is immediate that 
\begin{align}\nonumber
\infmatrix^{\Gamma,\sigma}_{T}(r,u)  & = \textstyle  \prod^{\ell}_{i=1} \bxi(\{z_{i-1}, z_i\}).
\end{align}
The above concludes the proof. 
\end{proof}

\section{Proofs for results from Section \ref{sec:SPComparisonBasedOnEntries}}\label{sec:Proofs4EntryBasedTech}

\subsection{Proof of Theorem \ref{thrm:Monotonicity4WeiightedMatrix}}\label{sec:thrm:Monotonicity4WeiightedMatrix}

W.l.o.g. assume that the sets $\cP$ and $\cQ$ are non-empty. 
For  $u,v\in V_{\cQ}\subseteq V_{\cP}$ consider the trees $\walkT_{\cQ}(u)$ and $\walkT_{\cP}(u)$.
Let the sets  $\cp_{\cQ}(v)$ and $\cp_{\cP}(v)$ be the set of copies  of the vertex $v$ in these two trees, 
respectively.

Let $\cM_{\cQ}$ be the set of  paths in  $\walkT_{\cQ}(u)$  that  connect the root with the vertex set 
$\cp_{\cQ}(v)$.  Similarly,  for the tree $\walkT_{\cP}(u)$  we  define the set of paths $\cM_{\cP}$.

For each $M\in \cM_{\cQ}$ we consider the weight ${\tt weight}_{\cQ}(M)$ which is as follows:
\begin{align} \nonumber 
{\tt weight}_{\cQ}(M) &= {\textstyle \prod_{e\in M} } \bxi_1(e).
\end{align}
Similarly, for each $M\in \cM_{\cP}$ we consider  the weight ${\tt weight}_{\cP}(M)$ and  the weights of the 
edges are specified by $\bxi_2$. 

First we focus on proving \eqref{eq:thrm:Monotonicity4WeiightedMatrixA}. For this we use the following claim. 

\begin{claim}\label{claim:Injection2Edges}
There is an injective map $H:\cM_{\cQ}\to \cM_{\cP}$ such that for every $M\in \cM_{\cQ}$, we have that
\begin{align}\label{eq:claim:Injection2Edges}
{\tt weight}_{\cQ}(M) \leq {\tt weight}_{\cP}(H(M)).
\end{align}
\end{claim}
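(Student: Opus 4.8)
The plan is to take for $H$ the most naive possible map: the one induced by the subtree inclusion. By Lemma~\ref{lemma:StrongSubtreeRelation}, since $\cQ\subseteq\cP$ the tree $\walkT_{\cQ}(u)$ is a subtree of $\walkT_{\cP}(u)$ sharing the same root; concretely $\WVertices_{\cQ,u}\subseteq\WVertices_{\cP,u}$ and $\WEdges_{\cQ,u}\subseteq\WEdges_{\cP,u}$ under the identification of vertices and edges with the walks, respectively walk-extensions, of $G$ that they represent. First I would check that this inclusion sends $\cM_{\cQ}$ into $\cM_{\cP}$: a path $M\in\cM_{\cQ}$ runs from the root of $\walkT_{\cQ}(u)$ to some copy $z\in\cp_{\cQ}(v)$; the vertex $z$ corresponds to a walk $w_0,\dots,w_{\ell}$ of $G$ lying in $\cQ$ with $w_{\ell}=v$, and as $\cQ\subseteq\cP$ that same walk lies in $\cP$, so $z\in\cp_{\cP}(v)$. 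The edges of $M$ all lie in $\WEdges_{\cQ,u}\subseteq\WEdges_{\cP,u}$, hence $M$ is literally a path in $\walkT_{\cP}(u)$ from its root to $z\in\cp_{\cP}(v)$, i.e.\ $M\in\cM_{\cP}$. I then set $H(M):=M$, reread inside $\walkT_{\cP}(u)$.

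Next I would verify injectivity. In a tree there is a unique path joining any two vertices, so a path from the root is determined by its endpoint. If $M_1\neq M_2$ in $\cM_{\cQ}$, then their endpoints $z_1,z_2\in\cp_{\cQ}(v)$ are distinct vertices of $\walkT_{\cQ}(u)$, hence distinct vertices of $\walkT_{\cP}(u)$, so $H(M_1)$ and $H(M_2)$ are root-paths with different endpoints and are therefore distinct. Thus $H$ is injective.

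Finally comes the weight comparison~\eqref{eq:claim:Injection2Edges}. The edge set of $H(M)$ is exactly the edge set of $M$ (we have only re-interpreted the path), so ${\tt weight}_{\cP}(H(M))=\prod_{e\in M}\bxi_2(e)$ while ${\tt weight}_{\cQ}(M)=\prod_{e\in M}\bxi_1(e)$, the two products ranging over the same set of edges. Every such edge $e$ lies in $\WEdges_{\cQ}$, where the hypothesis $0\leq|\bxi_1(e)|\leq\bxi_2(e)$ is in force; in particular $\bxi_1(e)\leq|\bxi_1(e)|\leq\bxi_2(e)$ and $\bxi_2(e)\geq0$. Multiplying over $e\in M$, all factors on the right-hand side being non-negative, yields $\prod_{e\in M}\bxi_1(e)\leq\prod_{e\in M}|\bxi_1(e)|\leq\prod_{e\in M}\bxi_2(e)$, which is precisely~\eqref{eq:claim:Injection2Edges}.

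The one point that needs genuine care — and the closest this argument comes to an obstacle — is the bookkeeping of the identification between the edges of $\walkT_{\cQ}(u)$ and those of $\walkT_{\cP}(u)$, so that when we write $\bxi_1(e)$ and $\bxi_2(e)$ we evaluate the two weight functions on literally the same element of $\WEdges_{\cP}$; this is exactly what Lemma~\ref{lemma:StrongSubtreeRelation} together with the paper's convention on $\WEdges_{\cP}$ supplies. Once the claim is in hand, \eqref{eq:thrm:Monotonicity4WeiightedMatrixA} follows by summing \eqref{eq:claim:Injection2Edges} over $M\in\cM_{\cQ}$ and using injectivity of $H$ together with $\bxi_2\geq0$, so that the leftover terms $\sum_{M'\in\cM_{\cP}\setminus H(\cM_{\cQ})}{\tt weight}_{\cP}(M')$ are non-negative.
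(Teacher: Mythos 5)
Your proof is correct and follows essentially the same route as the paper: Lemma \ref{lemma:StrongSubtreeRelation} gives the subtree inclusion, $H$ is the induced identity map on root-to-copy paths, injectivity comes from the subtree relation, and the weight bound is an edgewise comparison under the hypothesis $0\leq|\bxi_1(e)|\leq\bxi_2(e)$. If anything, your chain $\prod_{e\in M}\bxi_1(e)\leq \prod_{e\in M}|\bxi_1(e)|\leq \prod_{e\in M}\bxi_2(e)$ spells out the absolute-value step more carefully than the paper's wording of the final inequality, which is exactly the right way to handle possibly negative weights.
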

\begin{proof}
From Lemma \ref{lemma:StrongSubtreeRelation} we have the following: Since $\cQ\subseteq \cP$, 
the tree $\walkT_{\cQ}(u)$ is a subtree of  $\walkT_{\cP}(u)$.
We construct  the injective map $H:\cM_{\cQ}\to \cM_{\cP}$ such that it  maps every path $M$ in the 
tree $\walkT_{\cQ}(u)$ to the same path in the tree $\walkT_{\cP}(u)$.  The fact that $H(\cdot)$ is injective follows
from that $\walkT_{\cQ}(u)$ is a subtree of  $\walkT_{\cP}(u)$.

As far as \eqref{claim:Injection2Edges} is concerned, recall that 
\begin{align}
{\tt weight}_{\cQ}(M) &= \textstyle{\prod_{e\in M}}  \bxi_1(e)  & \textrm{and} &&
{\tt weight}_{\cQ}(H(M)) &= {\textstyle \prod_{e\in H(M)} }  \bxi_2(e). \nonumber
\end{align}
However,  from the definition of $H(\cdot)$ we have that the set of edges in $M$ is exactly the same as
the set of edges in $H(M)$.  Then, we get \eqref{eq:claim:Injection2Edges} by recalling that for any edge 
$e$ that appears in both $\walkT_{\cQ}(u), \walkT_{\cP}(u)$ we have that $\bxi_1(e)\leq \bxi_2(e)$.
The claim follows. 
\end{proof}

Let $\cS\subseteq \cM_{\cP}$ be such that $\cS=\cM_{\cP}\setminus \cM_{\cQ}$, that is  $\cS$ contains 
every $M\in \cM_{\cP}$ such that there is no $M'\in \cM_{\cQ}$ 
for which  $H(M')=M$.  Using  the above claim, we have that 
\begin{align}
\Pweight_{\cP,\bxi_2}(w,v) &\textstyle =\sum_{M\in \cM_{\cP}}{\tt weight}(M)  \ = \ 
\sum_{M\in \cM_{\cQ}}{\tt weight}(H(M))+ \sum_{M\in \cS }{\tt weight}(M)   \nonumber \\
&\textstyle \geq \sum_{M\in \cM_{\cQ}}{\tt weight}(M)+ \sum_{M\in \cS}{\tt weight}(M)   \qquad \qquad\qquad \mbox{[from Claim \ref{claim:Injection2Edges}]} \nonumber \\
&\textstyle \geq \sum_{M\in \cM_{\cQ}}{\tt weight}(M)\ =\  \Pweight_{\cQ,\bxi_1} (w,v). \nonumber
\end{align}
The inequality in the last line follows by noting that for every $M\in \cS$ we have that ${\tt weight}_{\cP}(M)\geq 0$, i.e., 
since we have  $\bxi_2\in \mathbb{R}^{\WEdges_{\cP}}_{\geq 0}$.  
The above proves that \eqref{eq:thrm:Monotonicity4WeiightedMatrixA} is true.

As far as \eqref{eq:thrm:Monotonicity4WeiightedMatrixB} is concerned, let us first prove it by using  
the assumption that $V_{\cP}=V_{\cQ}$. In this case, note that the two matrices  $\Pweight_{\cQ,\bxi_1}$,  $\Pweight_{\cP,\bxi_2}$
are indexed by the same set of vertices, and hence, they are of the same dimension.
Furthermore, from the assumption that $\bxi_2\in \mathbb{R}^{\WEdges_{\cP}}_{\geq 0}$ we have that
$\Pweight_{\cP,\bxi_2}$ is non-negative matrix, while  \eqref{eq:thrm:Monotonicity4WeiightedMatrixA} implies that
\begin{align}\nonumber 
|\Pweight_{\cQ,\bxi_1}|\leq \Pweight_{\cP,\bxi_2},
\end{align}
where recall that for the matrices    $\bA, \bB, \bC \in \mathbb{R}^{ N \times N}$,  we let $|\bA|$ denote the matrix having entries $|\bA_{i,j}|$.
while  we defined  $\bB\leq \bC$ to mean that $\bB_{i,j}\leq \bC_{i,j}$ for each $i$ and $j$.

Then, \eqref{eq:thrm:Monotonicity4WeiightedMatrixB} follows from the above by  using Lemma \ref{lemma:MonotoneVsSRad}.

We now proceed with \eqref{eq:thrm:Monotonicity4WeiightedMatrixB} and assuming  that $|V_{\cP}| > |V_{\cQ}|$, 
and $\Pweight_{\cP,\bxi_2}$ is symmetric. Consider the matrix $\Pweight^{V_{\cQ}}_{\cP,\bxi_2}$, 
this is the principle submatrix of $\Pweight_{\cP,\bxi_2}$  obtained by removing the rows and columns  
that correspond to the vertices in $V_{\cP}\setminus V_{\cQ}$. 

Note, now, that $\Pweight^{V_{\cQ}}_{\cP,\bxi_2}$ and  $\Pweight_{\cQ,\bxi_1}$ are indexed by the same set of vertices. 
Additionally, we have that $|\Pweight_{\cQ,\bxi_1}|\leq \Pweight^{V_{\cQ}}_{\cP,\bxi_2}$, which, together with
Lemma \ref{lemma:MonotoneVsSRad},  implies that 
\begin{align}\nonumber
\spradius\left(\Pweight_{\cQ,\bxi_1}\right) &\leq  \spradius\left(\Pweight^{V_{\cQ}}_{\cP,\bxi_2}\right).
\end{align}
However, since we assumed that $\Pweight_{\cP,\bxi_2}$ is symmetric and 
$\Pweight^{V_{\cQ}}_{\cP,\bxi_2}$ is a principal submatrix of $\Pweight_{\cP,\bxi_2}$, 
we also have that
\begin{align}\nonumber
\spradius\left(\Pweight^{V_{\cQ}}_{\cP,\bxi_2} \right)\leq \spradius\left(\Pweight_{\cP,\bxi_2}\right).
\end{align}
The above is a consequence of the well-known  Cauchy’s interlacing theorem, e.g. see \cite{MatrixAnalysis}.
Combining the two inequalities above it is immediate to get \eqref{eq:thrm:Monotonicity4WeiightedMatrixB}. 
The theorem follows.  \hfill $\Box$

\section{Proof  of results from Section  \ref{sec:SPComparisonBasedOnNorms}}\label{sec:Proofs4NormBasedTech}

\subsection{Proof of Theorem \ref{thrm:MySpectralMatrixNorm}}\label{sec:thrm:MySpectralMatrixNorm}
We use Theorems \ref{thrm:NormRedaux2WalkVector}, \ref{thrm:MonotoneWVector} and Corollary \ref{cor:MaxElement2}
to prove the theorem. Particularly, using these results, we obtain  the following:

For the walk-vector $\DBounded_{\KWalks}=\DBounded_{\KWalks} \left( \KWalks, {\SpGMatrix}^{\circ ({1}/{s})}, s, \delta, c\right) $, 
where  the $V\times V$ matrix $\SpGMatrix$ is defined in \eqref{def:AdjMatrixEigenvals}, we have that
\begin{align}\label{eq:Base4thrm:MySpectralMatrixNorm}
\lnorm \Pweight_{\cP,\bxi}  \rnorm_{\SpGMatrix_{\cP}, \frac{1}{s}, \infty}  &  \leq  \lnorm \DBounded_{\KWalks} \rnorm_{\infty}. 
\end{align}
Recall that the diagonal matrix $\SpGMatrix_{\cP}$ at the index of the matrix norm, is obtained from $\SpGMatrix$ by removing 
the rows and columns that correspond to vertices outside $V_{\cP}$. 
In light of \eqref{eq:Base4thrm:MySpectralMatrixNorm}, the theorem follows   showing  that 
\begin{align} \label{eq:Target4thrm:MySpectralMatrixNorm} 
\lnorm  \DBounded_{\KWalks}  \rnorm_{\infty}  &\leq  1+c \cdot \left(\maxDeg\right)^{1-\frac{1}{s}}\cdot
\left( \spradius_G \right)^{\frac{1}{s}} \cdot  {\textstyle \sum^{k-1}_{\ell=0} }  (\delta \cdot \spradius_G)^{\frac{\ell}{s}}.
\end{align}
%
According to    Definition \ref{def:WalkVector},  for every $r\in V$ the entry $\DBounded_{\KWalks}(r)$ satisfies
\begin{align} \nonumber %
\DBounded_{\KWalks}(r) & \textstyle = 1+ \frac{c}{\left(\SpGMatrix(r,r)\right)^{\frac{1}{s}}}  \times 
\sum^d_{i=1}  \sum^{k-1}_{\ell= 0}
\left( \delta^{\ell} \cdot \sum_{w\in V } |\cp_{i,\ell}(w)| \cdot \SpGMatrix(w,w)
\right)^{\frac{1}{s}}.
\end{align}
In order to study the above quantity, note that the walk-tree of interest is $\walkT_{\KWalks}(r)$.

Let us recall the quantities in the above expression.  The quantity   $d$ is the degree of the root of the 
walk-tree $\walkT_{\KWalks}(r)$. Letting $T_i$ be the subtree  that is induced
by the $i$-th child of the root of $\walkT_{\KWalks}(r)$ and its decedents, 
$\cp_{i,\ell}(w)$ is the set of copies of vertex $w$ in the subtree $T_i$ that are 
at distance $\ell$ from the  root of $T_i$.

Using that  $\SpGMatrix(w,w)=\maxeigenv(w)$ for every $w\in V$, we have that
\begin{align}\label{eq:Base4Proofthrm:PowerMatrixTopologicalNorm}
\DBounded_{\KWalks}(r) & \textstyle = 1+ \frac{c}{\left(\maxeigenv(r)\right)^{\frac{1}{s}}} \times \sum^d_{i=1}\sum^{k-1}_{\ell= 0}
\left(\delta^{\ell} \cdot \sum_{w\in V} |\cp_{i,\ell}(w)|   \cdot  \maxeigenv(w)\right)^{\frac{1}{s}}.
\end{align}

\noindent
For $\ell\geq 1$, for every $x, w \in V$, let  $\cp_{i, x, \ell}(w) \subseteq \cp_{i,\ell}(w)$ be the set which contains  
all vertices $u$ in $T_i$,  copies of  $w$,  such that the parent of $u$  is in $\cp_{i, (\ell-1)}(x)$.

Since we assumed that the graph $G$ is simple, it is straightforward that for all  $w\in V$,  there are no
two copies of $w$ in $\walkT_{\KWalks}(r)$ that have the same parent.   This implies that  $|\cp_{i, (\ell-1)}(x)|$ is 
equal to $|\cp_{i, x, \ell}(w)|$, for any  $w$ neighbour of $x$ in $G$.  Using this observation, we have that
\begin{align}\nonumber
\sum_{w\in V} |\cp_{i,\ell}(w)|   \cdot  \maxeigenv(w) 
&= \sum_{w\in V} \sum_{x\in V}  |\cp_{i,x, \ell}(w)|   \cdot  \maxeigenv(w) \\ & =\ 
\sum_{x\in V} \sum_{w\in V} |\cp_{i,x, \ell}(w)|   \cdot  \maxeigenv(w) \\
&= \sum_{x\in V}   |\cp_{i, \ell-1}(x)|   \sum_{w\in V : \{w,x\}\in E}   \maxeigenv(w),  \nonumber
\end{align}
where in the second equation changed order of summation. Using the definition of $\maxeigenv$, 
note that the last summation is equal $ \eigenval_{\rm max}(\simpleadj_G)\maxeigenv(x)$.
Hence, we have that
\begin{align}\nonumber
{\textstyle \sum_{w\in V} } |\cp_{i,\ell}(w)|   \cdot  \maxeigenv(w) 
&=  \eigenval_{\rm max}(\simpleadj_G) \cdot 
{\textstyle \sum_{x\in V} }   |\cp_{i, \ell-1}(x)|  \cdot  \maxeigenv(x) 
\ =\  \spradius_G\cdot 
{\textstyle \sum_{x\in V}}   |\cp_{i, \ell-1}(x)|  \cdot  \maxeigenv(x). \nonumber 
\end{align}
For the last equality we use that $\eigenval_{\rm max}(\simpleadj_G)=\spradius(\simpleadj_G)$. This equality 
is a standard application of the Perron-Frobenius Theorem
\footnote{Note that the assumption that  $G$  is connected, implies that  $\simpleadj_G$ is a non-negative, irreducible matrix.}. 
Repeating the above  $\ell$ times in total, we get that
\begin{align}\nonumber
{\textstyle \sum_{w\in V} } |\cp_{i,\ell}(w)|   \cdot  \maxeigenv(w)
& =   \left( \spradius_G\right)^{\ell} \cdot \maxeigenv(k_i),
\end{align}
where $k_i\in V$  is such that   the root of $T_i$ belongs in  $\cp(k_i)$. 
Note that the above applies for $\ell\geq 1$. 
For $\ell=0$, it is immediate that   $\sum_{w\in V} |\cp_{i,\ell}(w)|   \cdot  \maxeigenv(w)= \maxeigenv(k_i)$.

Plugging the above into \eqref{eq:Base4Proofthrm:PowerMatrixTopologicalNorm} and rearranging, we get that
\begin{align}\label{eq:fthrm:PowerMatrixTopologicalNormTempA}
\DBounded_{\KWalks}(r) & =1+ c \cdot \sum^{k-1}_{\ell=0} \left(\delta \cdot \spradius_G \right)^{\frac{\ell}{s}} 
\cdot \sum^d_{i=1}\left( \frac{\maxeigenv(k_i)}{\maxeigenv(r)} \right)^{\frac{1}{s}}.
\end{align}
Note that the vertices $k_1, \ldots, k_d$ are the neighbours  of $r$  in the graph $G$. 

We need to bound the rightmost sum in the equation above. Recall 
that we have $\eigenval_{\rm max}(\simpleadj_G)=\spradius(\simpleadj_G)$, which
implies that  $\sum^d_{i=1}\maxeigenv(k_i)=\spradius_G\cdot \maxeigenv(r)$. 
Using this observation, we get that 
\begin{align}\label{eq:fthrm:PowerMatrixTopologicalNormTempB}
\sum^d_{i=1}\left( \frac{\maxeigenv(k_i)}{ \maxeigenv(r)} \right)^{\frac{1}{s}} &
\leq \max_{\substack{z_1, \ldots, z_d\ \in\  (0,\spradius_G):\\  \sum_i z_i=\spradius_G}} \sum^d_{i=1}(z_i)^\frac{1}{s} \ 
\leq \ \sum^d_{i=1}\left(\frac{\spradius_G}{d} \right)^\frac{1}{s} \ =\ d^{1-\frac{1}{s}} \left(\spradius_G \right)^{\frac{1}{s} }.
\end{align}
In the above series of inequalities, we use the following observations: Since we assumed that $s\geq 1$,  
it is elementary to show that for $z_1,\ldots, z_d>0$,  the function $f(z_1,\ldots, z_d)=\sum^d_{i=1}(z_i)^\frac{1}{s}$ is concave.
In the interval specified by the restrictions $z_1, \ldots, z_d \in\  (0,\spradius_G)$ and $\sum_i z_i=\spradius_G$, due to 
concavity, the function  $f(z_1,\ldots, z_d)$ attains its maximum when all $z_i$'s are equal with each other, i.e., 
$z_i=\frac{\spradius_G}{d}$, for $i=1, \ldots, d$. 

Plugging \eqref{eq:fthrm:PowerMatrixTopologicalNormTempB} into \eqref{eq:fthrm:PowerMatrixTopologicalNormTempA} 
we get that 
\begin{align}
\DBounded_{\KWalks}(r) &\leq 1+ c \cdot d^{1-\frac{1}{s}} \cdot 
\left(\spradius_G \right)^{\frac{1}{s} } \cdot {\textstyle \sum^{k-1}_{\ell=0}} \left(\delta \cdot \spradius_G \right)^{\frac{\ell}{s}} \ 
\leq \  1+  c \cdot \maxDeg^{1-\frac{1}{s}} \cdot  \left(\spradius_G \right)^{\frac{1}{s} } \cdot
{\textstyle \sum^{k-1}_{\ell=0} } \left(\delta \cdot \spradius_G \right)^{\frac{\ell}{s}}. \nonumber 
\end{align}
For the last inequality we use that $d\leq \maxDeg$.

Noting that the above bound holds for any $r\in V$,  it is immediate  to get   \eqref{eq:Target4thrm:MySpectralMatrixNorm}. 
The theorem follows.
\hfill $\Box$

\subsection{Proof of Theorem \ref{thrm:NormRedaux2WalkVector}}\label{sec:thrm:NormRedaux2WalkVector}

Let $\bC=(\bD^{\circ \frac{1}{s}})^{-1}  \cdot \Pweight_{\cP,\bxi} \cdot \bD^{\circ \frac{1}{s}}$. 
The theorem follows by showing that 
\begin{align}\label{eq:thrm:WeightNonLinearRelation}
{\textstyle \sum_{u\in V_{\cP}}} \left| \bC(r,u)  \right|  & \leq \DBounded(r) & \forall r\in V_{\cP},
\end{align}
for the walk-vector $\DBounded=\DBounded(\cP, \bD^{\circ \frac{1}{s}}, s, \delta, c)$  specified  in the statement of 
Theorem  \ref{thrm:NormRedaux2WalkVector}.

Before showing that \eqref{eq:thrm:WeightNonLinearRelation} is indeed true, let us show how we can use it to 
prove the theorem. 
From the definition of the norm $\lnorm \cdot \rnorm_{\bD, \frac{1}{s}, \infty}$, it is immediate that
\begin{align}\label{eq:proof:NormRedaux2WalkVectorA}
\lnorm \Pweight_{\cP,\bxi} \rnorm_{\bD, \frac{1}{s}, \infty} =  \lnorm \bC \rnorm_{\infty}\ =\ \max_{r} \left \{ {\textstyle \sum_{w\in V_{\cP}} } |\bC(r,w)| \right\}. 
\end{align}
Recalling that $\DBounded$ is a strictly positive vector, using  \eqref{eq:proof:NormRedaux2WalkVectorA} and \eqref{eq:thrm:WeightNonLinearRelation}
we conclude that 
\begin{align}\nonumber 
\lnorm \Pweight_{\cP,\bxi} \rnorm_{\bD, \frac{1}{s}, \infty} & \leq \max_{r}\DBounded(r) \ =\  \lnorm \DBounded \rnorm_{\infty}.
\end{align}
It remains to show that \eqref{eq:thrm:WeightNonLinearRelation} is true.
For  $r\in V_{\cP}$,  consider   the walk-tree $T=\walkT_{\cP}(r)$.   Also, consider a path $M$ of length $\ell$ in $T$ 
that emanates from the root, while let $e_1, e_2, \ldots, e_{\ell}$ be the edges on this path. 
In order to define $\Pweight_{\cP,\bxi} $,  we  specify that  $M$ has weight such that
\begin{align}\nonumber 
{\tt weight}(M)&=  \textstyle  \prod^{\ell}_{i=1}\bxi(e_i), 
\end{align}
where the weights $\bxi$ are specified in the statement of Theorem \ref{thrm:NormRedaux2WalkVector}.

We  use a simple telescopic trick to write the weight of $M$ slightly differently than what
we have above, i.e., involve the potential vector $\bgamma$. 

\begin{claim}\label{claim:AddPotential}
We have that
\begin{align} \nonumber
{\tt weight}(M)&=\bgamma(e_{\ell}) \frac{\bxi(e_1)}{\bgamma(e_1)} \prod^{\ell}_{i=2}\frac{\bgamma(e_{i-1})}{\bgamma(e_i)}\bxi(e_i).
\end{align}
\end{claim}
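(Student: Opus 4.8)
The plan is to prove Claim~\ref{claim:AddPotential} by a one-line telescoping computation; the identity is purely formal and uses only that the potential vector $\bgamma$ is strictly positive, so that the reciprocals $1/\bgamma(e_i)$ appearing below are well-defined (this positivity is built into Definition~\ref{def:PotentialWeights}). Recall that $M$ is a path of length $\ell\geq 1$ emanating from the root of $\walkT_{\cP}(r)$, with edges $e_1,\dots,e_\ell$, and that $\mathtt{weight}(M)=\prod_{i=1}^{\ell}\bxi(e_i)$.

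The first step is to separate, in the claimed right-hand side, the $\bxi$-factors from the $\bgamma$-factors in the product $\prod_{i=2}^{\ell}\frac{\bgamma(e_{i-1})}{\bgamma(e_i)}\bxi(e_i)$, and then observe that the $\bgamma$-part telescopes. Concretely, I would write
\begin{align*}
\bgamma(e_{\ell})\,\frac{\bxi(e_1)}{\bgamma(e_1)}\prod^{\ell}_{i=2}\frac{\bgamma(e_{i-1})}{\bgamma(e_i)}\bxi(e_i)
&= \bgamma(e_{\ell})\,\frac{\bxi(e_1)}{\bgamma(e_1)}\left(\prod^{\ell}_{i=2}\frac{\bgamma(e_{i-1})}{\bgamma(e_i)}\right)\left(\prod^{\ell}_{i=2}\bxi(e_i)\right)\\
&= \bgamma(e_{\ell})\,\frac{\bxi(e_1)}{\bgamma(e_1)}\cdot\frac{\bgamma(e_1)}{\bgamma(e_{\ell})}\cdot\prod^{\ell}_{i=2}\bxi(e_i)\\
&= \prod^{\ell}_{i=1}\bxi(e_i) = \mathtt{weight}(M),
\end{align*}
where the second equality uses the telescoping collapse $\frac{\bgamma(e_1)}{\bgamma(e_2)}\cdot\frac{\bgamma(e_2)}{\bgamma(e_3)}\cdots\frac{\bgamma(e_{\ell-1})}{\bgamma(e_{\ell})}=\frac{\bgamma(e_1)}{\bgamma(e_{\ell})}$, and the third equality cancels the surviving $\bgamma(e_1)$ and $\bgamma(e_{\ell})$ factors. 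For the degenerate case $\ell=1$ the empty product convention makes the right-hand side equal to $\bgamma(e_1)\frac{\bxi(e_1)}{\bgamma(e_1)}=\bxi(e_1)=\mathtt{weight}(M)$, so the identity holds there too.

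There is no real obstacle in this step; the only point worth flagging is the positivity of $\bgamma$ (so the quotients make sense), which is immediate from the hypothesis that $\bgamma\in\mathbb{R}^{\WEdges_{\cP}}_{>0}$ is a potential vector. The purpose of rewriting $\mathtt{weight}(M)$ in this form is purely to prepare for the subsequent grouping of the edges of $M$ by generation in $\walkT_{\cP}(r)$, where the factors $\bgamma(e_{i-1})/\bgamma(e_i)$ will be paired with the contraction inequality of Definition~\ref{def:PotentialWeights} to bound the absolute row sum of $(\bD^{\circ 1/s})^{-1}\cdot\Pweight_{\cP,\bxi}\cdot\bD^{\circ 1/s}$ by the walk-vector $\DBounded$.
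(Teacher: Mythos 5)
Your proposal is correct and is essentially the same argument as the paper's: both rely on the strict positivity of $\bgamma$ and a one-line telescoping of the ratios $\bgamma(e_{i-1})/\bgamma(e_i)$, the only difference being that you collapse the telescope starting from the right-hand side while the paper inserts the factors $\bgamma(e_i)/\bgamma(e_i)$ starting from ${\tt weight}(M)$. Your extra check of the $\ell=1$ case is a harmless refinement.
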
 
\begin{proof}
Since, for every $e_i\in \WEdges_{\cP, r}$ we have that $\bgamma(e_i)>0$, it holds that
\begin{align}
{\tt weight}(M)&= \textstyle \prod^{\ell}_{i=1}\frac{\bgamma(e_i)}{\bgamma(e_i)}\bxi(e_i) \ 
 = \ \bgamma(e_{\ell}) \frac{\bxi(e_1)}{\bgamma(e_1)} \prod^{\ell}_{i=2}\frac{\bgamma(e_{i-1})}{\bgamma(e_i)}\bxi(e_i). \nonumber
\end{align}
The claim follows.
\end{proof}

For any $u\in V_{\cP}$ and any integer $\ell\geq 0$,  let ${\cM}(\ell, u)$ be the set of paths of length $\ell$ in $T$
that connect the root of the tree with a vertex $v\in \cp(u)$. 

From the definition of $\Pweight_{\cP, \bxi}$, recall that
\begin{align}\nonumber
\Pweight_{\cP, \bxi}(r,u)=\sum_{\ell\geq 0} \sum_{M\in \cM(\ell,u)}{\tt weight}(M).
\end{align}
Since  $\bC=(\bD^{\circ \frac{1}{s}})^{-1}\cdot \Pweight_{\cP, \bxi} \cdot \bD^{\circ \frac{1}{s}}$ and $\bD^{\circ \frac{1}{s}}$ 
is diagonal,  for any $u\in V_{\cP}$ we have that 
\begin{align}
\bC(r,u) &\textstyle = \frac{\bD^{\circ \frac{1}{s}}(u,u)}{\bD^{\circ \frac{1}{s}}(r,r)} \cdot \Pweight_{\cP, \bxi}(r,u) 
\ =\ \frac{\bD^{\circ \frac{1}{s}}(u,u)}{\bD^{\circ \frac{1}{s}}(r,r)}\sum_{\ell\geq 0 } \sum_{M\in \cM(\ell,u)}{\tt weight}(M). \label{eq:DescOfbCru}
\end{align}
For every integer $\ell\geq 0$,  we let 
\begin{align}\nonumber
\bC^{(\ell)} &\textstyle =\frac{1}{\bD^{\circ \frac{1}{s}}(r,r)}   \left| \sum_{u\in V_{\cP}}  \sum_{M\in \cM(\ell,u)}
 {\tt weight}(M)\cdot  \bD^{\circ \frac{1}{s}}(u,u) \right |.
\end{align}
It is easy to see that $\bC^{(0)}=1$.
From the definition  of $\bC^{(\ell)}$  and \eqref{eq:DescOfbCru}, it is immediate that 
\begin{align}\label{eq:bCVsbCELL}
{\textstyle \sum_{u\in V_{\cP}} } \left| \bC(r,u) \right|  & \leq   {\textstyle \sum_{\ell\geq 0}}  \bC^{(\ell)} \ =\  1+ \textstyle{\sum_{\ell\geq 1} } \bC^{(\ell)}.
\end{align}

\noindent
Given some fixed $\ell\geq 1$,   for   $m=0, \ldots, \ell$ consider the vertex $z$ at distance  $m$ from the root of  $T$. 
Suppose  that $d_z$ is the number of children of $z$ in $T_{z}$, while let $x_1, x_2, \ldots, x_{d_z}$ 
be these children.  Recall that $T_z$ is the subtree  that is  induced by  $z$ and all its descendant in $T$.

With respect to vertex $z$, we define the quantity  $\subcont_{z}(\ell-m)$ as follows: 
For $m=\ell$, we have that
\begin{align} \label{eq:SRecurBaseWeights}
\subcont_{z}(0) &\textstyle =\sum_{u\in V_{\cP}} \mathbf{1}\{z\in \cp (u)\}\times \bD^{\circ \frac{1}{s}}(u,u).
\end{align}
For $0<m<\ell$, the quantity $\subcont_{z}(\ell-m)$ satisfies the following recursive relation:  
\begin{align}  \nonumber  
\subcont_{z}(\ell-m) &=  \textstyle
\bgamma(e_z)\sum^{d_z}_{j=1}
\frac{|\bxi(e_j)|}{\bgamma(e_{j})}
\times  \subcont_{x_j}(\ell-m-1),
\end{align}
where $e_{z}$ is the edge that connect $z$ with its parent, while $e_j$ is the edge that connects $z$
with  its child $x_j$.  Note that since we assumed that $z$ is at level $m>0$ it has a parent, i.e., 
$z$ is not the root of $T$.

Finally,  for $m=0$, i.e., $z$ and the root of $T$ are identical,  we have that   
\begin{align}\label{eq:SRecurTopWeights}
\subcont_{z}(\ell) &\textstyle = \frac{1}{\bD^{\circ \frac{1}{s}}(r,r) } \max_{e_1,e_2\in \WEdges_{\cP, r}}
\left\{ \bgamma(e_1) \cdot \frac{| \bxi(e_2)|}{\bgamma(e_2)} \right\}   
\sum^{d_z}_{j=1}\left|  \subcont_{x_j}( \ell-1) \right |.
\end{align}
Claim \ref{claim:AddPotential} and an  elementary induction imply that for any $\ell\geq 1$, we have
\begin{align}  \label{eq:CEllVsCalD}
\bC^{(\ell)}  &\leq   \subcont_{z}(\ell).
\end{align}
Furthermore, the assumption that  $\bgamma\in \mathbb{R}^{\WEdges}$ is  a $(s,\delta,c)$-potential vector
with respect to $\bxi$ and $\cP$,  together with \eqref{eq:SRecurTopWeights} 
imply that
\begin{align}\label{eq:SRecurTopWeightsB}
\subcont_{z}(\ell) & \leq \textstyle \frac{c}{\bD^{\circ \frac{1}{s}}(r,r) } \sum^{d_z}_{j=1}  \subcont_{x_j}( \ell-1) .
\end{align}
The same assumption about $\bgamma$  implies that for any $0<m<\ell$ we have that
\begin{align}\label{eq:SubContVsDeltaKWeights}
\left[ \subcont_{z}( \ell-m) \right] ^s & \leq \textstyle 
\delta \times   \sum^{d_z}_{j=1} \left[ \subcont_{x_j}(\ell-m-1) \right]^s.
\end{align}
Suppose that $k_i$ is  the $i$-th child of the root, while the  subtree $T_i$ includes vertex $k_i$ and all of its decedents.
Then, from \eqref{eq:SubContVsDeltaKWeights}  and  \eqref{eq:SRecurBaseWeights} it is elementary  to get the that
\begin{align}
 \left [ \subcont_{k_i}( \ell-1) \right]^s &\textstyle \leq  \left( \delta \right)^{\ell-1}\times \sum_{u\in V_{\cP}}| \cp_{i,\ell-1}(u)|  \cdot 
\left [ \bD^{\circ \frac{1}{s}}(u,u)\right]^s, \nonumber
\end{align}
where, recall that,  $\cp_{i,\ell-1}(u)$ is the set of copies of vertex $u$ in the subtree $T_i$ that are 
at distance $\ell-1$ from the  root of $T_i$.  
Plugging the above into \eqref{eq:SRecurTopWeightsB} yields
\begin{align}\nonumber
 \subcont_{z}(\ell)   &\textstyle \leq  \frac{c}{\bD^{\circ \frac{1}{s}}(r,r) } \sum^{d_z}_{i=1} \left( \delta^{\ell-1} \sum_{u\in V_{\cP}}| \cp_{i,\ell-1}(u)|  \cdot 
\left [  \bD^{\circ \frac{1}{s}}(u,u)\right ] ^s \right )^{\frac{1}{s}}.
\end{align}
Combining the above with \eqref{eq:CEllVsCalD} and \eqref{eq:bCVsbCELL} we get that
\begin{align}
\sum_{u\in V_{\cP}} \left| \bC(r,u) \right| &\textstyle 
\leq 1+ \frac{c}{\bD^{\circ \frac{1}{s}}(r,r) } \sum^{d_z}_{i =1} \sum_{\ell \geq 1} \left( \delta^{\ell-1} \sum_{u\in V_{\cP}}| \cp_{i,\ell-1}(u)|  \cdot 
 \left( \bD^{\circ \frac{1}{s}}(u,u)\right)^s \right)^{\frac{1}{s}}  \nonumber \\
&\textstyle = 1+  \frac{c}{\bD^{\circ \frac{1}{s}}(r,r) } \sum^{d_z}_{i =1} \sum_{\ell \geq 0} \left( \delta^{\ell} \sum_{u\in V_{\cP}}| \cp_{i,\ell}(u)|  \cdot 
\left( \bD^{\circ \frac{1}{s}}(u,u)\right)^s \right)^{\frac{1}{s}}, \nonumber  
\end{align}
where, in the last equality we  change variable.  
From the above and the definition of walk-vector $\DBounded=\DBounded(\cP, \bD^{\circ \frac{1}{s}}, s, \delta, c)$, it is immediate that
\eqref{eq:thrm:WeightNonLinearRelation} is true. 
The theorem follows. \hfill $\Box$

 \subsection{Proof of Theorem \ref{thrm:MonotoneWVector}}\label{sec:thrm:MonotoneWVector}

Note that \eqref{eq:thrm:MonotoneWVectorBB} follows immediately from \eqref{eq:thrm:MonotoneWVector}
since, by definition,   both  $\DBounded_{\cS}$ and $\DBounded_{\cP}$ have positive entries.
The rest of the proof focuses on proving \eqref{eq:thrm:MonotoneWVector}.

Firstly, we note for any $r\in V_{\cS}\subseteq V_{\cP}$, we have that
that $\walkT_{\cS}(r)$ is a  subtree of  $\walkT_{\cP}(r)$.  This follows from
 Lemma \ref{lemma:StrongSubtreeRelation}.

Suppose that the  root of $\walkT_{\cS}(r)$ has degree $d_{\cS}$, while let $T_{\cS, i}$ be the subtree
that is induced by the $i$-th child  of the root of $\walkT_{\cS}(r)$ and its descendants. Also, for $\ell\geq 0$ and any vertex $v\in V_{\cS}$,
let $\cp_{\cS, i,\ell}(v)$ be the set of copies of vertex $v$ in the subtree $T_{\cS,i}$ which is at distance $\ell$
from the root of $T_{\cS,i}$.

Following the definition of walk-vector, i.e.,  Definition \ref{def:WalkVector},  we have that
\begin{align}\label{eq:WeightNonLinearRelation4S}
\DBounded_{\cS}(r) &  \textstyle = 1+ \frac{c}{\bD(r,r)}\times \sum^{d_{\cS}}_{i=1} \sum_{\ell \geq 0} 
\left( \delta^{\ell} \cdot \sum_{w\in V_{\cS} }  |\cp_{\cS, i,\ell}(w)| \cdot (\bD(w,w))^{s}
\right)^{\frac{1}{s}}.
\end{align}
Suppose that the root of  $\walkT_{\cP}(r)$ has degree $d_{\cP}$.  In the same way as above, we define 
the subtrees $T_{\cP,i}$  and the set of copies $\cp_{\cP, i,\ell}(v)$ for  every $i\in [d_{\cP}]$ and $\ell\geq 0$. 
We also have that
\begin{align}\label{eq:WeightNonLinearRelation4P}
\DBounded_{\cP}(r) & \textstyle =1+  \frac{c}{\bD(r,r)}\times \sum^{d_{\cP}}_{i=1} \sum_{\ell \geq 0} 
\left( \delta^{\ell} \cdot \sum_{w\in V_{\cP}} |\cp_{\cP, i,\ell}(w)| \cdot (\bD(w,w))^{s}
\right)^{\frac{1}{s}}.
\end{align}
Note that, since $\delta, \bD(w,w)>0$,  the summands in both \eqref{eq:WeightNonLinearRelation4S}
and \eqref{eq:WeightNonLinearRelation4P} are non-negative quantities.  

The  theorem follows by observing   that $d_{\cS}\leq d_{\cP}$,  $V_{\cS}\subseteq V_{\cP}$ and $|\cp_{\cS, i,\ell}(w)| \leq |\cp_{\cP, i,\ell}(w)|$, 
for any $w\in V_{\cS}$, $i\in [d_{\cS}]$ and $\ell\geq 0$.  This is due to the  fact that $\walkT_{\cS}(r)$ is a subtree of  $\walkT_{\cP}(r)$.  
The theorem follows.  \hfill $\Box$

\appendix
\section{Basics in Algebra}

\subsection{Matrix and Vector Norms}
For what follows, we let  $N, M$ be  positive integers. Furthermore, we denote with $\mathbb{R}$ the set of real numbers, while
$\mathbb{C}$ is the set of complex numbers. 

For $p\geq 1$, the $p$-norm of  the vector ${\bf x}\in \mathbb{C}^N$, denoted as $\lnorm {\bf x} \rnorm_{p}$, is defined 
such that 
\begin{align}\nonumber
\lnorm {\bf x} \rnorm_{p}&=\textstyle \left( \sum^N_{i=1} |{\bf x}_i |^{p} \right)^{1/p}.
\end{align}
A matrix norms is a function $\lnorm \cdot \rnorm$  from the set of all complex matrices (of all finite orders) into $\mathbb{R}$ that satisfies
the following properties:
\begin{description}
\item[P.1] $\lnorm {\bf A} \rnorm\geq 0$, while $\lnorm {\bf A} \rnorm= 0\Leftrightarrow {\bf A}={\bf 0}$.
\item[P.2] $\lnorm \alpha  {\bf A} \rnorm = |\alpha| \lnorm {\bf A} \rnorm$, for any scalar $\alpha$.
\item[P.3] $\lnorm  {\bf A}+{\bf B} \rnorm \leq \lnorm {\bf A} \rnorm+\lnorm {\bf B} \rnorm$, for matrices of the same size.
\item[P.4] $\lnorm  {\bf A}{\bf B} \rnorm \leq \lnorm {\bf A} \rnorm \cdot \lnorm {\bf B} \rnorm$, for all conformable matrices.
\end{description}
For  each norm $\lnorm \cdot \rnorm$ on $\mathbb{R}^r$, where $r\in \{N, M \}$ there is a matrix norm that is ``induced" by 
$\lnorm \cdot \rnorm$ on $\mathbb{R}^{N\times M}$ by setting
\begin{align}\nonumber 
\lnorm {\bf A}\rnorm &=\max_{\lnorm {\bf x} \rnorm=1}\lnorm {\bf A} {\bf x}\rnorm & 
\textrm{for ${\bf A}\in \mathbb{R}^{N\times M}$ and ${\bf x}\in \mathbb{R}^{M}$.}
\end{align}
It is standard that $\lnorm {\bf A}\rnorm_{\infty}$ corresponds to the maximum absolute row sum in ${\bf A}$, 
while $\lnorm {\bf A}\rnorm_1$ corresponds to the maximum absolute column sum, i.e., 
\begin{align}\nonumber 
\lnorm {\bf A}\rnorm_{\infty}&=\max_{i}\{{\textstyle \sum_{j}|{\bf A}_{i,j}|}\} & \textrm{and} &&
\lnorm {\bf A}\rnorm_{1}&=\max_{j}\{{\textstyle \sum_{i}|{\bf A}_{i,j}|}\}.
\end{align}

\subsection{Perron-Frobenius Theorem}\label{sec:PerronFrobeniusThrm}
Let the matrix $\bM \in \mathbb{R}^{N\times N}$ be  non-negative. That is,  every entry $\bM_{i,j}\geq 0$.
We say that $\bM$ is irreducible if and only if $(\bI+\bM)^{N-1}$ is a positive matrix, i.e., all its entries are positive
numbers. 

 We can associate $\bM$ with the directed graph $G_{\bM}$ on the vertex set $[N]$, while the edge $(i,j)$
is in $G_{\bM}$ iff $\bM_{i,j}>0$. Then, $\bM$ is irreducible,  if the resulting graph $G_{\bM}$ is strongly 
connected.

In this work, it is common to use the so-called Perron-Frobenius Theorem. For the sake of keeping this
paper  self-contained,  we state this theorem below. 
\begin{theorem}[Perron-Frobenius Theorem]
Let $\bA\in \mathbb{R}^{N\times N}$  be irreducible and non-negative matrix and suppose that $N\geq 2$. Then, 
\begin{enumerate}
\item $\spradius(\bA)>0$
\item $\spradius(\bA)$ is an algebraically simple eigenvalue of $\bA$
\item there is a {\em unique} real vector ${\bf x}=({\bf x}_1, \ldots, {\bf x}_N)$ such that   $\bA\cdot {\bf x}=\spradius(\bA){\bf x}$
and ${\bf x}_1+\cdots {\bf x}_N=1$, while ${\bf x}_j>0$ for all $j\in {N}$
\item there is a {\em unique} real vector ${\bf y}=({\bf y}_1, \ldots, {\bf y}_N)$ such that   ${\bf y}^T\bA=\spradius(\bA){\bf y}^T$
and ${\bf x}_1{\bf y}_1+\cdots {\bf x}_N{\bf y}_N=1$, while ${\bf y}_j>0$ for all $j\in {N}$.
\end{enumerate}
\end{theorem}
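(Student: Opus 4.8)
The plan is to follow the classical route to Perron--Frobenius, organised around a Brouwer fixed-point argument, a "left eigenvector" comparison device, and a determinantal computation for algebraic simplicity. First I would record that irreducibility with $N\ge 2$ forces $\bA$ to have no identically-zero column (a vertex with no incoming edge cannot be reached in the digraph $G_{\bA}$), so $\bA\mathbf{x}\neq\mathbf{0}$ for every $\mathbf{x}$ in the simplex $\Delta=\{\mathbf{x}\ge 0:\sum_i\mathbf{x}_i=1\}$; hence $\Phi(\mathbf{x})=\bA\mathbf{x}/\lnorm\bA\mathbf{x}\rnorm_1$ is a continuous self-map of the compact convex set $\Delta$, and Brouwer's theorem yields $\mathbf{x}_0\in\Delta$ with $\bA\mathbf{x}_0=\lambda\mathbf{x}_0$ for $\lambda=\lnorm\bA\mathbf{x}_0\rnorm_1>0$. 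Next, multiplying by $(\bI+\bA)^{N-1}$ gives $(\bI+\bA)^{N-1}\mathbf{x}_0=(1+\lambda)^{N-1}\mathbf{x}_0$; the left side is a strictly positive vector because $\mathbf{x}_0\ge 0$, $\mathbf{x}_0\neq\mathbf{0}$ and $(\bI+\bA)^{N-1}$ is positive by the definition of irreducibility recalled in the excerpt, so $\mathbf{x}_0>0$. Running the same argument on $\bA^{T}$ (also irreducible and non-negative) produces a strictly positive left vector $\mathbf{y}$ with $\mathbf{y}^{T}\bA=\lambda'\mathbf{y}^{T}$, and pairing with $\mathbf{x}_0$ forces $\lambda'=\lambda$ since $\mathbf{y}^{T}\mathbf{x}_0>0$.

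For parts (1) and (4), let $\mu\in\mathbb{C}$ be any eigenvalue with eigenvector $\mathbf{z}$: taking moduli entrywise gives $|\mu|\,|\mathbf{z}|=|\bA\mathbf{z}|\le\bA|\mathbf{z}|$, and left-multiplication by the positive row vector $\mathbf{y}^{T}$ yields $|\mu|\,\mathbf{y}^{T}|\mathbf{z}|\le\lambda\,\mathbf{y}^{T}|\mathbf{z}|$, whence $|\mu|\le\lambda$; thus $\spradius(\bA)=\lambda>0$, which is (1), and (4) is the vector $\mathbf{y}$ rescaled so that $\sum_i\mathbf{x}_i\mathbf{y}_i=1$. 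For geometric simplicity (the eigenspace assertion of (3)), take a real eigenvector $\mathbf{v}$ with $\bA\mathbf{v}=\lambda\mathbf{v}$ (the complex case splits into real and imaginary parts): then $\bA|\mathbf{v}|\ge\lambda|\mathbf{v}|$, and if this inequality were strict in a coordinate, multiplying by $\mathbf{y}^{T}$ would give $\lambda\,\mathbf{y}^{T}|\mathbf{v}|>\lambda\,\mathbf{y}^{T}|\mathbf{v}|$, absurd; so $\bA|\mathbf{v}|=\lambda|\mathbf{v}|$ and hence $|\mathbf{v}|>0$ by the $(\bI+\bA)^{N-1}$ trick. Then $|\mathbf{v}|-\mathbf{v}\ge 0$ is either zero or a strictly positive eigenvector, but it vanishes wherever $\mathbf{v}_i>0$, so it must be zero, i.e. $\mathbf{v}=\pm|\mathbf{v}|$ is proportional to $\mathbf{x}_0$; this gives a one-dimensional eigenspace and, with the normalisation $\sum_i\mathbf{x}_i=1$, the uniqueness of $\mathbf{x}$ in (3).

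Finally, for algebraic simplicity (part (2)), set $p(t)=\det(t\bI-\bA)$ and use the Jacobi formula $p'(\lambda)=\sum_{i=1}^{N}\det(\lambda\bI-\bA^{(i)})$, where $\bA^{(i)}$ is the principal submatrix obtained by deleting row and column $i$ from $\bA$. Each proper principal submatrix $\bA^{(i)}$ of an irreducible non-negative matrix has spectral radius \emph{strictly} less than $\spradius(\bA)=\lambda$, so every eigenvalue $\nu$ of $\bA^{(i)}$ obeys $|\nu|<\lambda$; hence $\det(\lambda\bI-\bA^{(i)})=\prod_{\nu}(\lambda-\nu)>0$ (conjugate complex pairs contribute $|\lambda-\nu|^2$, real eigenvalues contribute $\lambda-\nu>0$). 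Therefore $p'(\lambda)>0$, so $\lambda$ is a simple root of $p$, which proves (2).

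The routine parts are the fixed-point existence and the modulus comparison in Steps above; the delicate points are the two structural facts invoked as black boxes: that positivity of $(\bI+\bA)^{N-1}$ upgrades a non-negative eigenvector to a strictly positive one (this is where irreducibility is genuinely consumed), and that every proper principal submatrix of an irreducible non-negative matrix has strictly smaller spectral radius. I expect the latter to be the main obstacle, since a clean proof must avoid circularity; the cleanest route is again the comparison device — pad an eigenvector of $\bA^{(i)}$ with a zero coordinate, apply $\bA$, and use strict positivity of $\mathbf{x}_0$ (or of $\mathbf{y}$) to extract a strict gap between $\spradius(\bA^{(i)})$ and $\lambda$.
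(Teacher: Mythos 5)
The paper never proves this statement: it is quoted in the appendix as the classical Perron--Frobenius theorem (with a pointer to standard linear-algebra references), so there is no internal proof to compare against. Your argument is the standard textbook route -- Brouwer's fixed point on the simplex to produce a non-negative eigenpair, the positivity upgrade via $(\bI+\bA)^{N-1}$, the left-eigenvector comparison to identify $\lambda=\spradius(\bA)$ and control all other eigenvalues, and the derivative-of-characteristic-polynomial identity $p'(\lambda)=\sum_i \det(\lambda\bI-\bA^{(i)})$ for algebraic simplicity -- and it is essentially sound.

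Two places deserve to be written out rather than asserted. First, from $\bA|\mathbf{v}|=\lambda|\mathbf{v}|$ and $\mathbf{v}=\pm|\mathbf{v}|>0$ you conclude ``proportional to $\mathbf{x}_0$,'' but sign-definiteness of every real eigenvector does not by itself give a one-dimensional eigenspace: you still need the comparison step, e.g.\ set $c=\min_i \mathbf{v}_i/\mathbf{x}_{0,i}$ and note that $\mathbf{v}-c\,\mathbf{x}_0\ge 0$ is an eigenvector with a zero coordinate, hence (by the same $(\bI+\bA)^{-1}$-free positivity upgrade) must vanish. The same remark applies to the uniqueness claim in item (4), which needs the simplicity argument repeated for $\bA^{T}$ (or deduced from algebraic simplicity); you only gesture at this. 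Second, the key lemma behind item (2) -- that every proper principal submatrix $\bA^{(i)}$ has $\spradius(\bA^{(i)})<\lambda$ -- is left as a sketch. Your sketch does close it, provided you take an eigenvector $\mathbf{z}$ of $\bA^{(i)}$ for an eigenvalue of maximal modulus, pass to $|\mathbf{z}|$ so that $\bA^{(i)}|\mathbf{z}|\ge\spradius(\bA^{(i)})|\mathbf{z}|$ (this avoids needing a non-negative eigenvector of the possibly reducible submatrix), pad with a zero in coordinate $i$, multiply by the positive left eigenvector $\mathbf{y}^{T}$ to get $\spradius(\bA^{(i)})\le\lambda$, and observe that equality would force the padded vector to be a non-negative eigenvector of $\bA$ with a zero entry, contradicting the positivity upgrade. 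With those steps spelled out, the proof is complete and matches the classical argument the paper implicitly relies on.
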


\section{Basic Proporties of  $\Pweight_{\nbk, \bpsi}$}\label{sec:PropOfNB}

Consider the graph $G=(V,E)$.  
 For  integer $k\geq 1$ and   $\zeta\in \mathbb{R}_{\geq 0}$,  consider the walk matrix $\Pweight_{\nbk, \bpsi}$, where
$\bpsi  \in  \mathbb{R}^{\WEdges_{\nbk}}_{\geq 0}$ is such that $\bpsi(e)=\zeta$,  for all  $e\in \WEdges_{\nbk}$. 
That is, $\Pweight_{\nbk, \bpsi}$ is induced by all the non-backpacking  walks in $G$ which are of length  
$\leq k$, while for every walk-tree $\walkT_{\nbk}(r)$ every edge $e$ has weight $\bpsi(e)=\zeta$. 
Recall from Lemma \ref{lemma:WalkMatrixVsHashimoto} that we have  
\begin{align}\label{eq:NBKVsHashimotoAgain}
\Pweight_{\nbk, \bpsi} &= \textstyle \bI + \vtooedge \cdot \left( \sum^{k-1}_{\ell=0} \zeta^{\ell+1} \cdot \NBMatrix^{\ell}_G\right)\cdot \oedgetov,
\end{align} 
where $\NBMatrix_G$ is the non-backtracking matrix of $G$, while 
$\vtooedge$, $\oedgetov$ are   $V\times \OrntEdges$ and $\OrntEdges\times V$  matrices, respectively,   such that 
for any $v\in V$ and for any $(x,z)\in \OrntEdges$ we have 
\begin{align}\nonumber 
\vtooedge(v,(x,z)) &= {\bf 1}\{ x=v\} 
& \textrm{and} &&
\oedgetov((x,z), v) &=  {\bf 1}\{ z=v\}
\end{align}

In what follows, we present we show that   $\Pweight_{\nbk, \bpsi} $ is symmetric.

\begin{lemma}\label{lemma:WNBKSymmetric}
For any $k\geq 0$ and any $\zeta>0$, the matrix $\Pweight_{\nbk, \bpsi} $ is  symmetric. 
\end{lemma}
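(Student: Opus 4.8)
The plan is to read off symmetry from the closed form for $\Pweight_{\nbk,\bpsi}$ established in Lemma~\ref{lemma:WalkMatrixVsHashimoto}, combined with the PT-invariance \eqref{def:PTInvariance} of the non-backtracking matrix $\NBMatrix_G$. By that lemma, $\Pweight_{\nbk,\bpsi} = \bI + \vtooedge\cdot\bM\cdot\oedgetov$, where $\bM := \sum_{\ell=0}^{k-1}\zeta^{\ell+1}\NBMatrix_G^{\ell}$ and $\vtooedge,\oedgetov$ are the incidence matrices of \eqref{def:HasVertex2EdgeMatrices}, i.e. $\vtooedge(v,(x,z))=\mathbf{1}\{x=v\}$ and $\oedgetov((x,z),v)=\mathbf{1}\{z=v\}$. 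Since $\bI$ is symmetric, it suffices to prove that $\bM':=\vtooedge\cdot\bM\cdot\oedgetov$ is symmetric.

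First I would expand the entries: for $u,v\in V$,
\[
\bM'(u,v) = \sum_{e,f\in\OrntEdges}\vtooedge(u,e)\,\bM(e,f)\,\oedgetov(f,v).
\]
Because $\vtooedge(u,e)=\mathbf{1}\{e \text{ has tail } u\}$ and $\oedgetov(f,v)=\mathbf{1}\{f \text{ has head } v\}$, the right-hand side is exactly the sum of $\bM(e,f)$ over all oriented edges $e$ with tail $u$ and all oriented edges $f$ with head $v$. The key step is that $\bM$ is itself PT-invariant: applying \eqref{def:PTInvariance} term by term,
\[
\bM(e,f) = \sum_{\ell=0}^{k-1}\zeta^{\ell+1}\NBMatrix_G^{\ell}(e,f) = \sum_{\ell=0}^{k-1}\zeta^{\ell+1}\NBMatrix_G^{\ell}(f^{-1},e^{-1}) = \bM(f^{-1},e^{-1})
\]
for all $e,f\in\OrntEdges$.

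To conclude I would apply the involution $(e,f)\mapsto(f^{-1},e^{-1})$ on $\OrntEdges\times\OrntEdges$. Reversing an oriented edge interchanges its tail and head, so this map sends the index set $\{(e,f): e \text{ has tail } u,\ f \text{ has head } v\}$ bijectively onto $\{(e',f'): e' \text{ has tail } v,\ f' \text{ has head } u\}$. Carrying out this change of summation variable and using $\bM(e,f)=\bM(f^{-1},e^{-1})$ rewrites the formula for $\bM'(u,v)$ as the corresponding formula for $\bM'(v,u)$; hence $\bM'(u,v)=\bM'(v,u)$, so $\bM'$, and therefore $\Pweight_{\nbk,\bpsi}=\bI+\bM'$, is symmetric.

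I do not anticipate a real obstacle here: the argument is entirely carried by the PT-invariance \eqref{def:PTInvariance}, which is already available, together with routine index bookkeeping involving $\vtooedge$ and $\oedgetov$. As an alternative (self-contained but slightly longer) plan, one could instead observe that $\Pweight_{\nbk,\bpsi}(u,v)=\sum_{m=0}^{k}\zeta^{m}\bW^{(m)}(u,v)$, where $\bW^{(m)}(u,v)$ counts the length-$m$ non-backtracking walks from $u$ to $v$ (cf. \eqref{eq:NoOfWalksVSH2L} and Section~\ref{sec:thrm:Recur4InfSPBoundNBK}), and that walk reversal is a bijection between non-backtracking walks from $u$ to $v$ and from $v$ to $u$; this makes each $\bW^{(m)}$ symmetric and hence $\Pweight_{\nbk,\bpsi}$ symmetric, at the cost of re-deriving the walk interpretation of the entries from the proof of Lemma~\ref{lemma:WalkMatrixVsHashimoto}.
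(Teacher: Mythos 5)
Your proposal is correct and follows essentially the same route as the paper: starting from the closed form of Lemma \ref{lemma:WalkMatrixVsHashimoto}, it reduces symmetry to the middle term $\vtooedge\cdot\NBMatrix_G^{\ell}\cdot\oedgetov$ (or their weighted sum) and concludes via the PT-invariance \eqref{def:PTInvariance}, with the same tail/head index bookkeeping the paper uses. The only cosmetic difference is that you apply PT-invariance to the summed matrix $\bM$ and phrase the index change as an explicit involution, while the paper treats each power $\NBMatrix_G^{\ell}$ separately; the content is identical.
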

\begin{proof}
From the definition of $\Pweight_{\nbk, \bpsi}$  in \eqref{eq:NBKVsHashimotoAgain}, 
we note that, since the identity matrix $\bI$  is symmetric, the lemma follows by showing 
that for any $\ell\geq 0$  the matrix 
\begin{align}\nonumber
\bX &=\textstyle \vtooedge \cdot  \NBMatrix^{\ell}_G \cdot \oedgetov,
\end{align}
is symmetric.  Note that $\bX$ is a $V\times V$ matrix. 

Elementary calculations, imply that for any $u,w\in V$, different with each other,  we have that
\begin{align}\nonumber
\bX(u,w) &= {\textstyle \sum_{z\in V} \sum_{y\in V} }  \NBMatrix^{\ell }_{(u,z), (y,w)}.
\end{align}
From the  PT invariance of $\NBMatrix^{\ell}$, i.e.,  \eqref{def:PTInvariance}, we get the following: 
\begin{align}
\bX(u,w)& =  {\textstyle \sum_{z} \sum_{y}}  \NBMatrix^{\ell }_{(u,z), (y,w)}
 = {\textstyle  \sum_{z} \sum_{y} }  \NBMatrix^{\ell }_{(w,y), (z,u)} 
 =\bX(w,u). \nonumber 
\end{align}
It is in the second equality that we use the PT invariance of $(\NBMatrix_G)^{\ell}$. 

The above implies that $\bX$ is a symmetric matrix.  The lemma follows. 
\end{proof}

\section{Results for Unbounded spectral radius}\label{sec:UnboundedRadius}

In this section we present results which are analogous to  Theorems \ref{thrm:Ising4SPRadius}
and \ref{thrm:HC4SPRadius} for graphs whose adjacency matrix $\simpleadj_G$
has unbounded spectral radius, i.e., $\spradius(\simpleadj_G)$ grows with the size of the graph $n$.
We start with the Ising model.

\begin{theorem}\label{thrm:Ising4SPRadiusUnbounded}
For any $\delta\in (0,1)$ and for $\spradius_G>1$,     consider  the graph  $G=(V,E)$  whose 
adjacency matrix $\simpleadj_G$ has  spectral radius $\spradius_{G}$. Also,   let $\mu_G$  be 
the Ising model on $G$ with zero external field and parameter $\beta\in \UnIsing(\spradius_G,\delta)$.  

If $\spradius_G$ is unbounded,  there is a constant  $C=C( \delta)$ such that the mixing time of 
the  Glauber dynamics  on $\mu_G$  is  at most $C n^{1+\frac{1}{\delta}}$.
\end{theorem}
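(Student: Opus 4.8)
The plan is to reuse, essentially verbatim, the spectral‑independence part of the proof of Theorem~\ref{thrm:Ising4SPRadius}, and then — since for unbounded spectral radius (equivalently, unbounded $\maxDeg$) one can no longer invoke the optimal‑mixing result of \cite{VigodaSpectralIndB} — to conclude via the generic spectral‑gap bound of Theorem~\ref{thrm:SPCT-INDClosed} together with the spectral‑gap‑to‑mixing‑time inequality \eqref{eq:MivingTimeVsSPGap}. First I would apply Lemma~\ref{lemma:IsingInfNormBound} with $R=\spradius_G$ and $\zeta=\delta$: for every $\beta\in\UnIsing(\spradius_G,\delta)$ the log‑recursions $\{\logtrecur_d\}_{d\in[\maxDeg]}$ of the zero external field Ising model exhibit $\tfrac{1-\delta}{\spradius_G}$‑contraction. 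Feeding this into Theorem~\ref{thrm:Recurrence4InfluenceEigenBound} with $\epsilon=\delta$ yields $\spradius(\infmatrix^{\Lambda,\tau}_G)\le\delta^{-1}$ for all $\Lambda\subset V$ and $\tau\in\{\pm1\}^\Lambda$, i.e.\ $\mu_G$ is $\eta$‑spectrally independent with $\eta=\delta^{-1}-1>0$; this is exactly \eqref{eq:thrm:Ising4SPRadiusBasis}.

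Next, Theorem~\ref{thrm:SPCT-INDClosed} supplies a constant $C_0=C_0(\delta)>0$ such that the (ergodic, $\mu_G$‑reversible) Glauber dynamics has spectral gap at least $C_0\,n^{-(1+\eta)}=C_0\,n^{-1/\delta}$. Plugging this into \eqref{eq:MivingTimeVsSPGap} gives
\[
T_{\rm mix}\;\le\;C_0^{-1}\,n^{1/\delta}\,\log\!\Big(\frac{4}{\min_{\sigma\in\{\pm1\}^V}\mu_G(\sigma)}\Big),
\]
so the theorem follows once we show $\log\big(1/\min_\sigma\mu_G(\sigma)\big)=O(n)$ with an absolute implied constant; then $T_{\rm mix}=O\big(n^{1/\delta}\cdot n\big)=O(n^{1+1/\delta})$ with $C=C(\delta)$ proportional to $C_0^{-1}$.

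It remains to estimate the smallest marginal. Writing $m(\sigma)$ for the number of monochromatic edges under $\sigma$, we have $\mu_G(\sigma)=\beta^{m(\sigma)}/Z$ with $0\le m(\sigma)\le|E|$, whence $\mu_G(\sigma)\ge\min(\beta,1/\beta)^{|E|}\,2^{-n}$ and therefore $-\log\min_\sigma\mu_G(\sigma)\le n\log2+|E|\,|\log\beta|$. Since $\spradius_G$ is unbounded we may assume $\spradius_G\ge2$, and then $\beta\in\UnIsing(\spradius_G,\delta)$ forces $|\log\beta|\le\log\frac{\spradius_G+1-\delta}{\spradius_G-1+\delta}\le\frac{2}{\spradius_G-1}$. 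The decisive point is to control $|E|$: because $\simpleadj_G$ is non‑negative and (by connectedness) irreducible, Perron--Frobenius gives $\spradius_G=\eigenval_1(\simpleadj_G)\ge\frac{\mathbf 1^{\mathsf T}\simpleadj_G\mathbf 1}{\mathbf 1^{\mathsf T}\mathbf 1}=\frac{2|E|}{n}$, i.e.\ $|E|\le\tfrac12 n\,\spradius_G$. Hence $|E|\,|\log\beta|\le\tfrac12 n\,\spradius_G\cdot\frac{2}{\spradius_G-1}\le2n$ for $\spradius_G\ge2$, so $-\log\min_\sigma\mu_G(\sigma)\le n\log2+2n$, as needed.

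I expect the last paragraph to be the only genuinely new ingredient, and the place where care is required: the obvious bounds $|E|\le\binom n2$ or $|E|\le\tfrac12 n\maxDeg\le\tfrac12 n\spradius_G^2$ would only yield $-\log\min_\sigma\mu_G(\sigma)=O(n^{3/2})$ (or worse), hence a mixing bound of order $n^{3/2+1/\delta}$, which is too weak; the gain to order $n$ comes precisely from pairing the $\Theta(1/\spradius_G)$ width of the uniqueness window $\UnIsing(\spradius_G,\delta)$ with the elementary inequality $|E|\le\tfrac12 n\,\spradius_G$ (spectral radius dominates the average degree). Everything else is bookkeeping, identical to the bounded‑degree argument for Theorem~\ref{thrm:Ising4SPRadius}.
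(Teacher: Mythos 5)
Your proposal is correct and follows essentially the same route as the paper: Lemma~\ref{lemma:IsingInfNormBound} plus Theorem~\ref{thrm:Recurrence4InfluenceEigenBound} give $\spradius(\infmatrix^{\Lambda,\tau}_{G})\leq \delta^{-1}$, and the mixing bound then comes from Theorem~\ref{thrm:SPCT-INDClosed} combined with \eqref{eq:MivingTimeVsSPGap}. Your final paragraph, bounding $\log\bigl(1/\min_{\sigma}\mu_G(\sigma)\bigr)=O(n)$ by pairing the $O(1/\spradius_G)$ width of $\UnIsing(\spradius_G,\delta)$ with $|E|\leq \tfrac12 n\,\spradius_G$, is a detail the paper leaves implicit when invoking \eqref{eq:MivingTimeVsSPGap}, and it is a correct and welcome addition needed to actually land on the stated $Cn^{1+1/\delta}$ bound.
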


Note that there is  a discrepancy between the two mixing times we get from Theorems \ref{thrm:Ising4SPRadius}
and \ref{thrm:Ising4SPRadiusUnbounded}.
 This  has to do with how  we use  spectral independence, i.e.,  once this has been  established,  to  bound  the mixing 
 time of Glauber  dynamics. In that respect,  we make a direct use of results from  \cite{VigodaSpectralInd,VigodaSpectralIndB}
and hence, the discrepancy comes from the fact that  these two papers derive different bounds on the mixing time. 
In the related literature there is also the work in  \cite{YitongAllDegreeOptMix} which can be considered 
for  the unbounded case. However, it seems that the result in  \cite{YitongAllDegreeOptMix}
requires from the Gibbs distribution  to satisfy a strong condition called Gibbs uniqueness. The range of the parameters 
we consider for the Gibbs distribution in  Theorems \ref{thrm:Ising4SPRadiusUnbounded} and
 \ref{thrm:Ising4SPRadius} is beyond this region.

Theorem \ref{thrm:Ising4SPRadius}   improves on results in \cite{Hayes06} for the Ising model 
by allowing a wider rage for $\beta$.
For $\spradius_G=\omega(1)$, the range of $\beta$  is wider than that in \cite{Hayes06} 
but the increase   is with regard to smaller order terms.  Note that the  mixing time there is $O(n\log n)$.

\begin{proof}[Proof of Theorem \ref{thrm:Ising4SPRadiusUnbounded}] 

The proof of Theorem \ref{thrm:Ising4SPRadiusUnbounded} is almost identical to that
of Theorem \ref{thrm:Ising4SPRadius}. 
Specifically, using Lemma \ref{lemma:IsingInfNormBound}  and 
Theorem \ref{thrm:Recurrence4InfluenceEigenBound} in the same way as in the proof of 
Theorem \ref{thrm:Ising4SPRadius} we get the following:
for   $\Lambda\subseteq V$ and $\tau\in \{\pm 1\}^{\Lambda}$,  the pairwise influence 
matrix $\infmatrix^{\Lambda,\tau}_{G}$,  induced by $\mu_G$,  satisfies that  
\begin{align}\label{eq:thrm:Ising4SPRadiusBasisUnbounded}
\spradius(\infmatrix^{\Lambda,\tau}_{G})\leq \delta^{-1}.
\end{align}
The theorem follows as a corollary from \eqref{eq:thrm:Ising4SPRadiusBasisUnbounded},  
Theorem \ref{thrm:SPCT-INDClosed}   and \eqref{eq:MivingTimeVsSPGap}.
\end{proof}

We proceed with the Hard-core model on a graph with unbounded spectral radius. We prove the following result.

\begin{theorem}\label{thrm:HC4SPRadiusUnbounded}
For any $\epsilon \in (0,1)$,  $\maxDeg\geq 2$ and  $\spradius_G\geq 2$,  consider the graph $G=(V,E)$  
of maxim degree $\maxDeg$,  whose adjacency matrix $\simpleadj_G$ has  spectral radius $\spradius_{G}$. Also,   let $\mu_G$ be 
the Hard-core model on $G$ with  fugacity $\lambda\leq (1-\epsilon)\lcritical(\spradius_G)$.

If $\spradius_G$ is unbounded, there are constants  $C=C(\epsilon)$ and $C'=C'(\epsilon)$ such that the mixing 
time of the   Glauber dynamics on $\mu_G$  is at most  $C n^{2+C'\sqrt{\maxDeg/\spradius_G}}$.
\end{theorem}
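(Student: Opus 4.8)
The plan is to obtain Theorem~\ref{thrm:HC4SPRadiusUnbounded} by precisely the argument used for Theorem~\ref{thrm:HC4SPRadius}, replacing only the last step that converts spectral independence into a mixing‑time bound; this mirrors how Theorem~\ref{thrm:Ising4SPRadiusUnbounded} was deduced from Theorem~\ref{thrm:Ising4SPRadius}. The key observation is that the estimate of Theorem~\ref{thrm:SPIndependenceHC} (which rests on the $(s,\delta,c)$‑potential of Section~\ref{sec:thrm:SPIndependenceHC} and on Theorem~\ref{thrm:Recurrence4InfluenceEigenBoundNonLinear}) is proved \emph{without} assuming $\spradius_G$ bounded, so nothing in that part of the paper needs to be redone. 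Concretely, applying Theorem~\ref{thrm:SPIndependenceHC} with the present hypotheses $\epsilon\in(0,1)$, $\maxDeg\ge 2$, $\spradius_G\ge 2$, $\lambda\le(1-\epsilon)\lcritical(\spradius_G)$ yields a constant $z=z(\epsilon)\in(0,1)$ with
\begin{align}\nonumber
\spradius\!\left(\infmatrix^{\Lambda,\tau}_{G}\right)\le 1+e^{3}z^{-1}\sqrt{\maxDeg/\spradius_G}\qquad\text{for all }\Lambda\subset V,\ \tau\in\{\pm1\}^{\Lambda},
\end{align}
so $\mu_G$ is $\eta$‑spectrally independent (Definition~\ref{Def:SpInMu}) with $\eta:=e^{3}z^{-1}\sqrt{\maxDeg/\spradius_G}$.

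Next I would feed this into the unbounded‑degree route to mixing. Since Theorem~\ref{thrm:SPCT-INDClosed} holds for arbitrary graphs, there is a constant $C_1\in(0,1]$ such that the spectral gap of the Glauber dynamics $\Glauber$ on $\mu_G$ is at least $C_1\,n^{-(1+\eta)}$; after the standard passage to the lazy chain (which costs only a constant factor and makes the relevant eigenvalue $\eigenval^*$ equal to one minus the gap), \eqref{eq:MivingTimeVsSPGap} gives $T_{\rm mix}\le C_1^{-1}\,n^{1+\eta}\,\log(4/\min_{x}\mu_G(x))$. For the Hard‑core model $\min_x\mu_G(x)\ge(\min\{1,\lambda\}/(1+\lambda))^{n}$, so the logarithm is $O\!\big(n\log(2+\lambda+\lambda^{-1})\big)$; since $\lambda\le e^{3}/\spradius_G$ (as in Claim~\ref{claim:InterpretGoodPotentialHC}) this is $O(n\log n)$ in the relevant range. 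Using $\eta=e^{3}z^{-1}\sqrt{\maxDeg/\spradius_G}$ together with $\maxDeg\ge\spradius_G$ (hence $\maxDeg/\spradius_G\ge1$, so $\eta>1$) we get
\begin{align}\nonumber
T_{\rm mix}= O\!\left(n^{\,2+e^{3}z^{-1}\sqrt{\maxDeg/\spradius_G}}\log n\right),
\end{align}
and the $\log n$ is absorbed by taking $C'=C'(\epsilon)$ slightly larger than $e^{3}z^{-1}$ (again because $\sqrt{\maxDeg/\spradius_G}\ge1$ there is a full polynomial‑in‑$n$ of slack over $n^{1+\eta}$), with $C=C(\epsilon)$ collecting $C_1^{-1}$ and the implied constants. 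This is the claimed $C\,n^{2+C'\sqrt{\maxDeg/\spradius_G}}$.

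I do not expect a genuine obstacle: there is essentially no new mathematical content, so the work is bookkeeping — checking that each constant ($z$ from Theorem~\ref{thrm:SPIndependenceHC}, $C_1$ from Theorem~\ref{thrm:SPCT-INDClosed}, and the $\log(1/\mu_{\min})$ factor) fits inside the exponent $2+C'\sqrt{\maxDeg/\spradius_G}$. The one mildly subtle point is the $\log(1/\mu_{\min})$ factor: it is exactly what raises the unbounded‑case exponent from the $n\log n$ of the bounded case (where Theorem~1.9 of \cite{VigodaSpectralIndB} is applicable) to roughly $n^{2}$, the same phenomenon seen in Theorem~\ref{thrm:Ising4SPRadiusUnbounded}; and the reason the two unbounded theorems carry different exponents is precisely the different values of $\eta$ produced by Theorems~\ref{thrm:Recurrence4InfluenceEigenBound} and~\ref{thrm:SPIndependenceHC}.
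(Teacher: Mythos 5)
Your proposal is correct and follows essentially the same route as the paper: invoke Theorem \ref{thrm:SPIndependenceHC} (which indeed does not need $\spradius_G$ bounded) to get $\eta$-spectral independence with $\eta = O(\sqrt{\maxDeg/\spradius_G})$, then combine Theorem \ref{thrm:SPCT-INDClosed} with \eqref{eq:MivingTimeVsSPGap} to convert the spectral gap $\Omega(n^{-(1+\eta)})$ into the stated $C n^{2+C'\sqrt{\maxDeg/\spradius_G}}$ bound. You in fact supply the bookkeeping (laziness, the $\log(1/\mu_{\min})=O(n\log n)$ factor for the Hard-core model, and absorbing the $\log n$ into $C'$ using $\sqrt{\maxDeg/\spradius_G}\geq 1$) that the paper's one-line proof leaves implicit.
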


Note that   the above result implies a polynomial bound for the mixing time only for the case
where the ratio $\sqrt{\maxDeg/\spradius_G}$ is bounded. 

The above improves on \cite{Hayes06} by allowing a wider rage for $\lambda$.  The  improvement is 
the same as in the bounded case, i.e., get an extra factor $e$.  Note, though,  the  extra  condition that
the ratio of maximum degree $\maxDeg$ over   $\spradius_G(\simpleadj_G)$ must be bounded.  

\begin{proof}[Proof of Theorem \ref{thrm:HC4SPRadiusUnbounded}]
The proof of Theorem \ref{thrm:HC4SPRadiusUnbounded} is almost identical to that of Theorem \ref{thrm:HC4SPRadius}.
Specifically, using  Theorem \ref{thrm:SPIndependenceHC} we get the following:
There is a constant $z>0$  which depend on $\epsilon$ such that for any $\Lambda\subseteq V$ and 
$\tau\in \{\pm 1\}^{\Lambda}$ we have that
\begin{align} \nonumber  
\textstyle \spradius  \left( \infmatrix^{\Lambda,\tau}_{G} \right) &\leq 
1+  e^3   \left( {\maxDeg}/{\spradius_G} \right)^{1/2}  z^{-1}.
\end{align}
The theorem follows from the above inequality,  Theorem   \ref{thrm:SPCT-INDClosed}   and \eqref{eq:MivingTimeVsSPGap}.
\end{proof}

\section{Rapid mixing bound for General Gibbs Distributions}\label{sec:GeneralGibbs}

Note that  Theorems \ref{thrm:Recurrence4InfluenceEigenBound} and \ref{thrm:Recurrence4InfluenceEigenBoundNonLinear}
do not provide bounds for the spectral radius of the influence matrix  only for the Ising model and the Hard-core model.
They are general results and apply to any Gibbs distribution on $G$. In that respect, it might be interesting to write 
the corresponding bounds   we get from these two theorems on the mixing time  of  Glauber dynamics on a general two-spin
Gibbs distribution.

%

Consider a graph $G=(V,E)$ and assume that $\spradius(\simpleadj_G)$ is bounded. Recall that this assumption
implies that the maximum degree $\maxDeg$ is also bounded. 

For what follows, we need to introduce few useful concepts from \cite{VigodaSpectralIndB}. 
For  $S\subset V$, let the Hamming graph $\cH_S$ be the graph whose vertices correspond to 
 the configurations $\{\pm 1\}^S$, while two configurations are adjacent iff they differ at the assignment
 of a single vertex, i.e., their Hamming distance  is  one. 
 Similarly, any subset $\Omega_0\subseteq   \{\pm 1\}^S$ is considered to be connected if the subgraph
 induced by $\Omega_0$ is connected.   
 
 A distribution $\mu$ over $\{\pm 1\}^V$ is considered to be {\em totally
 connected} if for every nonempty $\Lambda \subset V$ and every boundary condition $\tau$ at $\Lambda$
 the set of configurations in the support of $\mu(\cdot \ |\ \Lambda, \tau)$ is connected. 

We need to remark here that all Gibbs distribution with soft-constraints such as the Ising model are 
totally connected in a trivial way.  The same holds for the Hard-core model and this follows with standard arguments.  

\begin{definition}
For some number $b\geq 0$,  we say that a distribution $\mu$ over $\{\pm 1\}^V$ is $b$-marginally
bounded if for every $\Lambda\subset V$ and any configuration $\tau$ at $\Lambda$ we have the following:
for any $V\setminus \Lambda$ and for any $x\in\{\pm 1\} $ which is in the support of $\mu_u(\cdot \ |\ \Lambda, \tau)$,
we have that
\begin{align}\nonumber 
\mu_u(x\ |\ \Lambda, \tau)\geq b. 
\end{align}
\end{definition}

The following result is a part of Theorem 1.9 from \cite{VigodaSpectralIndB} (arxiv version). This is one of the results in the
literature that improves on Theorem \ref{thrm:SPCT-IND}. 

\begin{theorem}[\cite{VigodaSpectralIndB}]\label{thrm:SPINLOGN} Let the integer $\maxDeg\geq 3$ and  $b,\eta\in \mathbb{R}_{>0}$. 
Consider  $G=(V,E)$ a graph with $n$ vertices and maximum degree  $\maxDeg$. Also, let $\mu$ be a totally connected Gibbs
distribution on $\{\pm 1\}^V$. 

If $\mu$ is both $b$-marginally bounded and $\eta$-spectrally independent, then there are constants $C_1, C_2>0$  such
the Glauber dynamics for $\mu$ exhibits mixing time
\begin{align}\nonumber 
T_{\rm mix} &\leq  \left( \frac{\maxDeg}{b} \right)^{C_1\left ( \frac{\eta}{b^2}+1 \right)} \times C_2 \left( n \log n \right).
\end{align} 
\end{theorem}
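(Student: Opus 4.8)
\textbf{Proof proposal for Theorem~\ref{thrm:SPINLOGN}.}
The plan is to route this through the by-now standard pipeline
\emph{spectral independence $+$ marginal boundedness $\Rightarrow$ approximate tensorization of entropy $\Rightarrow$ modified log-Sobolev inequality $\Rightarrow$ $O(n\log n)$ mixing}. The first observation is that the purely spectral ``local-to-global'' machinery (comparing the second eigenvalue of the down-up walk on the top level of the simplicial complex of partial configurations of $\mu$ against those of its links) only yields a \emph{polynomial} bound $n^{O(\eta)}$; to reach $O(n\log n)$ one must work with the entropy functional rather than the variance, and it is precisely here that $b$-marginal boundedness and total connectedness enter. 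So I would start by setting up Glauber dynamics as the down-up walk on this complex, and reduce the theorem to lower-bounding its modified log-Sobolev constant $\rho$.

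The key step is to upgrade $\eta$-spectral independence to a quantitative single-site entropy inequality. Fix $\Lambda\subset V$ and a boundary condition $\tau$: the conditional law $\mu(\cdot\mid\Lambda,\tau)$ again has maximum pairwise-influence eigenvalue at most $1+\eta$ and is again $b$-marginally bounded. I would show, for a uniformly random unrevealed vertex $v$ among the $k=|V\setminus\Lambda|$ remaining ones, a contraction of the form
\begin{align}\nonumber
\mathrm{Ent}_{\mu(\cdot\mid\Lambda,\tau)}(f)\ \le\ \Big(1+\tfrac{C'}{k}\Big)\,\mathbb{E}_{v}\big[\mathrm{Ent}_{\mu(\cdot\mid\Lambda\cup\{v\},\tau')}(f)\big],
\end{align}
with $C'=C'(\eta,b)$. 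The eigenvalue bound controls the covariance structure of a single revealed spin in an $\ell_2$ sense; converting this into the $\ell_1$/entropy statement above costs a factor polynomial in $1/b$, since marginal boundedness is exactly what lets one pass between variance and entropy of $\{\pm1\}$-valued observables with controlled loss. (Equivalently, one establishes $O(\eta)$-\emph{entropic independence} of every link, which is a more robust reformulation; total connectedness guarantees the conditional measures and the recursion are all well defined, with no disconnected supports to break the induction.)

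Next I would iterate this single-site inequality down the complex by induction on the number of revealed vertices — structurally the same ``iterate the recursion over a tree'' move used for the walk-matrix and walk-vector bounds in Sections~\ref{sec:SPComparisonBasedOnEntries}--\ref{sec:SPComparisonBasedOnNorms} — to obtain approximate tensorization of entropy:
\begin{align}\nonumber
\mathrm{Ent}_{\mu}(f)\ \le\ C_{\rm AT}\sum_{v\in V}\mathbb{E}_{\mu}\big[\mathrm{Ent}_{\mu(\cdot\mid V\setminus\{v\})}(f)\big],\qquad C_{\rm AT}=\prod_{k=1}^{n}\Big(1+\tfrac{C'}{k}\Big)=\exp\!\big(O(C'\log n)\big),
\end{align}
and, after optimizing the shape of the local step in $\eta$ and $b$, this constant collapses to the advertised $C_{\rm AT}=(\maxDeg/b)^{O(\eta/b^{2}+1)}$.

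Finally, approximate tensorization with constant $C_{\rm AT}$ is equivalent, up to absolute constants, to Glauber dynamics having modified log-Sobolev constant $\rho\ge \tfrac{1}{C_{\rm AT}\,n}$. Plugging into the standard entropy-decay estimate $T_{\rm mix}\le \rho^{-1}\big(\log\log\mu_{\min}^{-1}+O(1)\big)$ and using $\mu_{\min}\ge b^{\,n}$ (reveal the vertices one at a time and apply $b$-marginal boundedness at each step), so that $\log\log\mu_{\min}^{-1}=O(\log n)$, gives $T_{\rm mix}=O\big(C_{\rm AT}\,n\log n\big)=(\maxDeg/b)^{O(\eta/b^{2}+1)}\cdot O(n\log n)$, as claimed. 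I expect the main obstacle to be the second paragraph: quantitatively passing from the eigenvalue bound (spectral independence, an $\ell_2$ statement) to the single-site entropy contraction (an $\ell_1$ statement) with the correct power of $1/b$ — everything after that is either a mechanical iteration of the recursion or a black-box MLSI-to-mixing conversion.
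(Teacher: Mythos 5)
First, a point of scope: this statement is not proved in the paper at all — it is imported verbatim as ``part of Theorem 1.9'' of \cite{VigodaSpectralIndB} and used as a black box to convert the spectral-independence bounds of Theorems \ref{thrm:Recurrence4InfluenceEigenBound} and \ref{thrm:Recurrence4InfluenceEigenBoundNonLinear} into mixing times. So your sketch can only be compared with the proof in that reference, whose overall pipeline (spectral independence $+$ marginal boundedness $\Rightarrow$ entropy factorization $\Rightarrow$ modified log-Sobolev $\Rightarrow$ $O(n\log n)$ mixing) you have identified correctly; your last paragraph (AT $\Rightarrow$ MLSI with constant $1/(C_{\rm AT}n)$, $\mu_{\min}\geq b^{\,n}$, hence the $\log n$ factor) is standard and fine.

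The middle of your argument, however, has a genuine gap, and it is exactly the content of the theorem. Even granting the single-site entropy contraction $\mathrm{Ent}\leq(1+C'/k)\,\mathbb{E}_v[\mathrm{Ent}]$ that you defer (you call it the main obstacle — it is, and asserting it is not proving it), iterating it over \emph{all} levels $k=1,\dots,n$ yields $C_{\rm AT}=\prod_{k=1}^{n}(1+C'/k)=\Theta(n^{C'})$, i.e.\ a constant growing polynomially in $n$; no ``optimization of the shape of the local step in $\eta$ and $b$'' can collapse a divergent harmonic product into an $n$-independent constant. Your route therefore only recovers a mixing bound of the shape $n^{O(C'(\eta,b))}\cdot n\log n$ — essentially the entropic analogue of Theorem \ref{thrm:SPCT-INDClosed} — not the advertised $(\maxDeg/b)^{C_1(\eta/b^2+1)}\,n\log n$. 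The proof in \cite{VigodaSpectralIndB} avoids this by a two-scale argument: the level-by-level contraction is only iterated from level $n$ down to a level $\ell=\lceil\theta n\rceil$ which is a \emph{constant fraction} of $n$, so the accumulated loss telescopes to an $n$-independent factor $\theta^{-O(\cdot)}$, giving uniform block factorization of entropy over random blocks of linear size; the passage from such blocks to single sites is a separate step that crucially uses the bounded maximum degree (a random density-$\theta$ block induces components of constant expected size with exponential tails when $\theta\lesssim 1/\maxDeg$, and inside each component one pays a crude factorization cost exponential in the component size and in $\log(1/b)$, which is summable against the tail). This second step is also the only place the $\maxDeg$-dependence of the final constant arises — a dependence your conclusion asserts but your argument never produces, since nothing in your sketch uses $\maxDeg$. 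As written, the deferred contraction inequality plus the unjustified collapse of $\prod_k(1+C'/k)$ together constitute the whole theorem rather than a proof of it.
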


We combine Theorems \ref{thrm:Recurrence4InfluenceEigenBound} and \ref{thrm:Recurrence4InfluenceEigenBoundNonLinear} 
with the above to get the following rapid mixing results for the Glauber dynamics on ageneral Gibbs distributions on
$G$ with bounded $\spradius(\simpleadj_G)$.

\begin{theorem}[$\delta$-contraction]\label{thrm:DeltaConMixing}
Let   the integer $\maxDeg\geq 3$, $b,\epsilon \in (0,1]$,   $\spradius_G>1$ and  $\beta,\gamma, \lambda \in  \mathbb{R}$  
such that  $0\leq \beta\leq  \gamma$,  $\gamma >0$ and $\lambda>0$.

Consider  $G=(V,E)$ a graph with $n$ vertices and maximum degree at most $\maxDeg$, 
while   $ \simpleadj_G$ is of spectral radius $\spradius_G$.
 Also, let $\mu$ be a totally connected Gibbs distribution on $\{\pm 1\}^V$ specified by the parameters $\beta,\gamma$
 and $\lambda$ as in \eqref{def:GibbDistr}. 

Setting  $\delta =\frac{1-\epsilon}{\spradius_G}$, suppose that $\mu$ is  $b$-marginally bounded while 
the set  of functions $\{ \logtrecur_d\}_{d\in [\maxDeg]}$  specified  with respect to  $(\beta,\gamma,\lambda)$ 
exhibits   $\delta$-contraction. 

There are  constants $C_1, C_2>0$  such the Glauber dynamics on $\mu$ exhibits mixing time $T_{\rm mix}$
such that 
\begin{align}\nonumber 
T_{\rm mix} &= \left( \frac{\maxDeg}{b} \right)^{C_1\left ( \frac{1}{\epsilon b^2}+1 \right)} \times C_2 \left( n \log n \right).
\end{align} 
\end{theorem}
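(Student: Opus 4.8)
\textbf{Proof plan for Theorem \ref{thrm:DeltaConMixing}.} The statement is a clean packaging result: it combines the spectral independence bound we already have for distributions satisfying $\delta$-contraction with the external mixing-time machinery of \cite{VigodaSpectralIndB}. The plan is therefore to (i) invoke Theorem \ref{thrm:Recurrence4InfluenceEigenBound} to obtain the spectral independence constant, and (ii) feed this into Theorem \ref{thrm:SPINLOGN} with the right choice of parameters. No genuinely new mathematics is needed; the work is bookkeeping to make the hypotheses of Theorem \ref{thrm:SPINLOGN} line up with those assumed here.

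First I would observe that since $\spradius_G > 1$ is bounded (implicitly, as in the statement's context for $O(n\log n)$ bounds) the maximum degree $\maxDeg$ is bounded too, via $\maxDeg\le(\spradius_G)^2\le\maxDeg^2$; but actually for this theorem we only need $\maxDeg\ge 3$ as assumed. Next, apply Theorem \ref{thrm:Recurrence4InfluenceEigenBound} with $\delta = \frac{1-\epsilon}{\spradius_G}$: the hypothesis that $\{\logtrecur_d\}_{d\in[\maxDeg]}$ exhibits $\delta$-contraction with respect to $(\beta,\gamma,\lambda)$ is exactly what that theorem requires, so we conclude that for every $\Lambda\subset V$ and every $\tau\in\{\pm1\}^\Lambda$,
\begin{align}\nonumber
\spradius\left(\infmatrix^{\Lambda,\tau}_{G}\right)\le\epsilon^{-1}.
\end{align}
Since $\eigenval_{\rm max}(\infmatrix^{\Lambda,\tau}_{G})\le\spradius(\infmatrix^{\Lambda,\tau}_{G})\le\epsilon^{-1}$, the distribution $\mu$ is $\eta$-spectrally independent in the sense of Definition \ref{Def:SpInMu} with $\eta=\epsilon^{-1}-1\le\epsilon^{-1}$ (taking $\eta=\epsilon^{-1}$ is a valid, if slightly lossy, choice, and keeps the exponent clean).

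Then I would apply Theorem \ref{thrm:SPINLOGN} from \cite{VigodaSpectralIndB}. Its hypotheses are: $\maxDeg\ge 3$ (assumed), $\mu$ totally connected (assumed), $\mu$ is $b$-marginally bounded (assumed), and $\mu$ is $\eta$-spectrally independent (just established with $\eta=\epsilon^{-1}$). The conclusion gives constants $C_1,C_2>0$ with
\begin{align}\nonumber
T_{\rm mix}\le\left(\frac{\maxDeg}{b}\right)^{C_1\left(\frac{\eta}{b^2}+1\right)}\times C_2\,(n\log n).
\end{align}
Substituting $\eta=\epsilon^{-1}$ yields the exponent $C_1\left(\frac{1}{\epsilon b^2}+1\right)$, which is precisely the form claimed in the statement. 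This completes the argument.

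\textbf{Main obstacle.} There is no deep obstacle — the only thing requiring care is verifying that Theorem \ref{thrm:SPINLOGN}, as quoted from \cite{VigodaSpectralIndB}, really does apply at spectral independence parameter $\eta=\epsilon^{-1}$ without any additional smallness or uniqueness assumption on $\eta$; the cited theorem is stated for arbitrary $b,\eta\in\mathbb{R}_{>0}$, so this is fine. A secondary point of hygiene is the passage from the \emph{spectral radius} bound of Theorem \ref{thrm:Recurrence4InfluenceEigenBound} to the \emph{maximum eigenvalue} bound required by the definition of spectral independence: since $\eigenval_{\rm max}(\bM)\le\spradius(\bM)$ always, this direction is immediate, and one should just note it explicitly. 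Finally, one should confirm that ``totally connected'' plus ``$b$-marginally bounded'' are indeed the only structural hypotheses needed — which they are, per the discussion preceding Theorem \ref{thrm:SPINLOGN}.
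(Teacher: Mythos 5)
Your proposal is correct and follows exactly the route the paper intends: the paper explicitly states that Theorem \ref{thrm:DeltaConMixing} is straightforward from Theorems \ref{thrm:Recurrence4InfluenceEigenBound} and \ref{thrm:SPINLOGN} and omits the proof, and your bookkeeping (spectral radius bound $\epsilon^{-1}$, hence $\eta$-spectral independence with $\eta=\epsilon^{-1}$, then the mixing bound from \cite{VigodaSpectralIndB}) fills in precisely that argument.
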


 Theorem \ref{thrm:DeltaConMixing} is straightforward  from Theorems \ref{thrm:SPINLOGN} and \ref{thrm:Recurrence4InfluenceEigenBound}. 
 For this reason, we omit its proof. 

\begin{theorem}[$(s,\delta,c)$-potential  function]\label{thrm:GoodPotFMixing}
Let   the integer $\maxDeg\geq 3$, $b,\epsilon \in (0,1]$,   $\zeta>0$,  $\spradius_G>1$  and  $\beta,\gamma, \lambda \in  \mathbb{R}$  
such that  $0\leq \beta\leq  \gamma$,  $\gamma >0$ and $\lambda>0$.

Consider  $G=(V,E)$ a graph with $n$ vertices and maximum degree at most $\maxDeg$, 
while   $ \simpleadj_G$ is of spectral radius $\spradius_G$.
 Also, let $\mu$ be a totally connected Gibbs distribution on $\{\pm 1\}^V$ specified by the parameters $\beta,\gamma$
 and $\lambda$ as in \eqref{def:GibbDistr}. 

Setting  $\delta= \frac{1-\epsilon}{\spradius_{G}}$ and $c=\frac{\zeta}{\spradius_G}$, suppose 
that $\mu$ is  $b$-marginally bounded, while there is a  $(s,\delta,c)$-potential  function $\potF$ with 
respect to  $(\beta,\gamma,\lambda)$.

There is  a constant $C>0$  such the Glauber dynamics on $\mu$ exhibits mixing time $T_{\rm mix}$
such that 
\begin{align}\nonumber 
T_{\rm mix} &= \left( \frac{\maxDeg}{b} \right)^{C \left ( \frac{\eta}{ b^2}+1 \right)} \times \left( n \log n \right), 
\end{align} 
where $\eta=1+ \zeta \cdot (1-(1-\epsilon)^s)^{-1} \cdot \left( {\maxDeg}/{\spradius_G} \right)^{1-\frac{1}{s}}$.
\end{theorem}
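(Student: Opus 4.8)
\textbf{Proof plan for Theorem \ref{thrm:GoodPotFMixing}.}
The plan is to combine the spectral-independence estimate of Theorem \ref{thrm:Recurrence4InfluenceEigenBoundNonLinear} with the quantitative mixing-time bound of Theorem \ref{thrm:SPINLOGN}, exactly as was done for Theorem \ref{thrm:DeltaConMixing} but now tracking the slightly more complicated value of $\eta$. First I would recall that, under the hypotheses of the statement, Theorem \ref{thrm:Recurrence4InfluenceEigenBoundNonLinear} applies verbatim: we are given the integer $\maxDeg\geq 1$ (here $\maxDeg\geq 3$ a fortiori), $\spradius_G>1$, some $s\geq 1$ (allowing $s=\infty$), $\epsilon\in(0,1)$, $\zeta>0$, and parameters $\beta,\gamma,\lambda$ with $0\le\beta\le\gamma$, $\gamma>0$, $\lambda>0$, and a $(s,\delta,c)$-potential function $\potF$ with $\delta=\frac{1-\epsilon}{\spradius_G}$ and $c=\frac{\zeta}{\spradius_G}$. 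Hence for every $\Lambda\subset V$ and every $\tau\in\{\pm1\}^\Lambda$ the pairwise influence matrix $\infmatrix^{\Lambda,\tau}_G$ induced by $\mu$ satisfies
\begin{align}\nonumber
\spradius\left(\infmatrix^{\Lambda,\tau}_G\right) \le 1+\zeta\cdot(1-(1-\epsilon)^s)^{-1}\cdot\left(\maxDeg/\spradius_G\right)^{1-\frac1s} =: 1+\eta,
\end{align}
where $\eta$ is precisely the quantity named in the theorem statement. Since $\spradius(\infmatrix^{\Lambda,\tau}_G)$ dominates the maximum eigenvalue $\eigenval_{\max}(\infmatrix^{\Lambda,\tau}_G)$, this shows, by Definition \ref{Def:SpInMu}, that $\mu$ is $\eta$-spectrally independent.

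Next I would invoke the hypotheses that $\mu$ is totally connected and $b$-marginally bounded, which are assumed in the statement, so that all three preconditions of Theorem \ref{thrm:SPINLOGN} (total connectivity, $b$-marginal boundedness, $\eta$-spectral independence) hold for the graph $G$ of maximum degree at most $\maxDeg\geq 3$. Applying Theorem \ref{thrm:SPINLOGN} then yields constants $C_1,C_2>0$ with
\begin{align}\nonumber
T_{\rm mix} \le \left(\frac{\maxDeg}{b}\right)^{C_1\left(\frac{\eta}{b^2}+1\right)}\times C_2\,(n\log n).
\end{align}
Absorbing $C_2$ and $C_1$ into a single constant $C>0$ (which is legitimate since $\maxDeg,b$ are fixed and the exponent is monotone) gives the asserted bound $T_{\rm mix}=\left(\frac{\maxDeg}{b}\right)^{C\left(\frac{\eta}{b^2}+1\right)}\times(n\log n)$ with $\eta$ as displayed. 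Strictly speaking one should note that $\maxDeg$ being an input of the theorem and $\spradius_G$ being bounded (as observed repeatedly in the main text via $\maxDeg\le(\spradius_G)^2\le\maxDeg^2$) makes $\eta$ a finite, bounded quantity, so the bound is meaningful; this is the only place where boundedness of the spectral radius is used, and it parallels the remark after Theorem \ref{thrm:SPIndependenceHC}.

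There is essentially no obstacle here: the proof is a two-line composition of already-proved results, which is why, as with Theorem \ref{thrm:DeltaConMixing}, one is tempted to omit it. If anything, the one point requiring a sentence of care is the bookkeeping of constants — verifying that the $C_1,C_2$ coming out of Theorem \ref{thrm:SPINLOGN} can be folded into a single $C=C(\maxDeg,b,\epsilon,\zeta,s)$ without changing the shape of the bound — and the observation that the $\eta$ produced by Theorem \ref{thrm:Recurrence4InfluenceEigenBoundNonLinear} is exactly the $\eta$ named in the statement, so no further estimation (e.g.\ the simplification $\sqrt{\maxDeg}\le\spradius_G$ used elsewhere) is needed. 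Accordingly I would present the argument in two short paragraphs: one deriving $\eta$-spectral independence from Theorem \ref{thrm:Recurrence4InfluenceEigenBoundNonLinear}, and one feeding that into Theorem \ref{thrm:SPINLOGN}.
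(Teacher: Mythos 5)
Your proposal is correct and follows exactly the route the paper indicates (and omits as straightforward): apply Theorem \ref{thrm:Recurrence4InfluenceEigenBoundNonLinear} to obtain $\eta$-spectral independence with the stated $\eta$, then feed this together with total connectivity and $b$-marginal boundedness into Theorem \ref{thrm:SPINLOGN} and absorb the constants. Nothing further is needed.
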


Theorem \ref{thrm:GoodPotFMixing} is straightforward  from Theorems \ref{thrm:SPINLOGN} and 
\ref{thrm:Recurrence4InfluenceEigenBoundNonLinear}.   For this reason, we omit its proof. 


\begin{thebibliography}{00}


\bibitem{OptMCMCIS}
N.~Anari, K.~Liu, S.~Oveis Gharan:
Spectral Independence in High-Dimensional Expanders and Applications to the Hardcore Model. 
In {\em Proc. of 61st {IEEE} Annual Symposium on Foundations of Computer Science}, (FOCS), 
pp. 1319-1330,  2020.

\bibitem{SIGnp}
I.~Bez\'akov\'a,  A.~Galanis, L.~A.~Goldberg, D.~l Stefankovic:
Fast sampling via spectral independence beyond bounded-degree graphs. {\em CoRR abs/2111.04066},  2021.

\bibitem{NBMGnp}
C.~Bordenave, M.~Lelarge and L.~Massouli\'e.  
Non-backtracking spectrum of random graphs: community detection and non-regular Ramanujan graphs.
%
{\em The Annals of Probability}, {\bf 46}(1), pp 1-71, 2018. 



\bibitem{PathCoupling}
R. Bubley and M. Dyer. Path coupling: A technique for proving rapid mixing in Markov chains.
In {\em Proc. 38th Annual Symposium on Foundations of Computer Science} (FOCS), pp. 223 -- 231, 1997.



\bibitem{VigodaSpectralInd}
Z.~Chen, K.~Liu, E.~Vigoda. Rapid Mixing of Glauber Dynamics up to Uniqueness via Contraction. 
In {\em Proc. of 61st {IEEE} Annual Symposium on Foundations of Computer Science}, (FOCS)
pp 1307--1318, 2020.



\bibitem{VigodaSpectralIndB}
Z.~Chen, K.~Liu, E.~Vigoda. 
Optimal Mixing of Glauber Dynamics: Entropy Factorization via High-Dimensional Expansion.
In {\em 53rd Annual {ACM} {SIGACT} Symposium on Theory of Computing}, (STOC'21),
pp 1537--1550, 2021.  Online version: {\em https://arxiv.org/abs/2011.02075}.


\bibitem{YitongAllDegreeOptMix}
X.~Chen, W.~Feng, Y.~Yin, X.~Zhang.
Rapid mixing of Glauber dynamics via spectral independence for all degrees. 
In 62nd {IEEE} Annual Symposium on Foundations of Computer Science, (FOCS) pp:137-148, 2021.



\bibitem{DGJ09}
M.~Dyer, L.~A.~Goldberg and M.~Jerrum. Matrix norms and rapid mixing for spin systems.
{\em Ann. Appl.  Probab.} {\bf 19}(1), pp 71 -- 107, 2009.










\bibitem{TmixSpGap}
P. Diaconis and D. Stroock. Geometric bounds for eigenvalues of Markov chains. 
{\em The Annals of Applied Probability},  pp. 36–61, 1991.

\bibitem{SPRadiusPlannar}
Z.~Dvo\v r\'ak, B.~Mohar.
Spectral radius of finite and infinite planar graphs and of graphs of bounded genus. 
{\em Journal of Combinatorial Theory, Series B} (JCTB) {\bf 100}  pp 729 -- 739, 2010. 

\bibitem{Efth19}
 C.~Efthymiou, T.~Hayes, D.~\v Stefankovi\v c and E.~Vigoda.
{Sampling Random Colorings of Sparse Random Graphs.}
In the proc of {\em 29th ACM-SIAM Symposium on Discrete Algorithms} (SODA), pp 1759 --1771, 2018.

\bibitem{EHSVY19}
C.~Efthymiou, T.~Hayes, D.~Stefankovic, E.~Vigoda and Y.~Yin. 
{ Convergence of MCMC and Loopy BP in the Tree Uniqueness Region for
the Hard-Core Model}.  
{\em SIAM Journal on Computing} (SICOMP) {\bf 48}(2), pp 581--643, 2019.

\bibitem{Efth22}
C.~Efthymiou. On Sampling Symmetric Gibbs Distributions on Sparse Random Graphs and Hypergraphs.
In the proc. of  {\em 49th International Colloquium on Automata, Languages, and Programming} (ICALP'22), pp  57:1--57:16, 2022.

\bibitem{GodsilBookComb}
C. D. Godsil.  Algebraic Combinatorics. {\em Chapman and Hall Math. Series, Chapman \& Hall}, New York, 1993. MR 1220704. Zbl 0784.05001.


\bibitem{Hash89}
K. Hashimoto. 
Zeta functions of finite graphs and representations of $p$-adic groups. In {\em Automorphic Forms and Geometry of Arithmetic Varieties}. 
{\em Adv. Stud. Pure Math.} {\bf 15}, pp 211--280.  Academic Press, Boston, MA, 1989.




\bibitem{Hayes06}
T.~Hayes. A simple condition implying rapid mixing of single-site dynamics on spin systems.
In {Proc. of the 47th Annual IEEE Symposium on Foundations of Computer Science} (FOCS),  pp 39-- 46, 2006. 


\bibitem{MatrixAnalysis}
R.~A.~Horn and C.~R.~Johnson. Matrix Analysis, 2nd Edition. {\em Cambridge University Press}, New York, 2013. 

\bibitem{Ihara}
Y. Ihara, On discrete subgroups of the two by two projective linear group over $p$-adic fields, {\em J. Math. Soc. Japan}, 18, 
pp 219--235, 1966.

\bibitem{Kelly85}
F.~P.~Kelly. Stochastic Models of Computer Communication Systems.
{\em Journal of the Royal Statistical Society. Series B (Methodological)},
47(3):379-395, 1985.

\bibitem{PhDNBM}
M.~Kempton. High Dimensional Spectral Graph Theory and Non-backtracking Random Walks on Graphs. 
PhD Thesis, University of California, San Diego, 2015. 


%








\bibitem{LLY13}
L.~Li, P.~Lu, and Y.~Yin.  Correlation Decay Up to Uniqueness in Spin Systems. 
{\em In: Proceedings of the 24th Annual ACM-SIAM Symposium on Discrete Algorithms} (SODA),  pp. 67--84, 2013.

\bibitem{SIAM-LAlg}
C.~D.~Meyer. Matrix analysis and applied linear algebra.  	Society for Industrial and Applied  Mathematics (SIAM), 2000.

\bibitem{InterlacingFam}
A.~W.~Marcus, D.~A.~Spielman and N.~ Srivastava.
Interlacing families I: Bipartite Ramanujan graphs of all degrees.
{\em Annals of Mathematics} {\bf 182} (2015),  pp 307--325. 



\bibitem{SJ89}
A. Sinclair and M. Jerrum. Approximate counting, uniform generation and rapidly mixing Markov chains. 
{\em Information and Computation}, {\bf 82}, pp. 93 -- 133, 1989.

\bibitem{ConnectiveConst}
A.~Sinclair, P.~Srivastava, D.~Stefankovic and Y.~Yin.
Spatial mixing and the connective constant: Optimal bounds.
In {\em Probability Theory and Related Fields} {\bf 168}, pp. 153--197,  2017.



\bibitem{Sly10}
A.~Sly. Computational Transition at the Uniqueness Threshold. In {\em Proc. of the 51st Annual IEEE Symposium on Foundations 
of Computer Science} (FOCS), pp. 287– 296,  2010.

\bibitem{SS14}
A.~Sly and N.~Sun. The Computational Hardness of Counting in Two-Spin Models on $d$-Regular Graphs. 
{\em The Annals of Probability} {\bf 42}(6)  pp. 2383 -- 2416, 2014.

\bibitem{VIS}
E. Vigoda.
A Note on the Glauber Dynamics for Sampling Independent Sets. 
{\em Electronic Journal of Combinatorics},  8(1): Research paper 8, 2001.


\bibitem{Weitz}
D.~Weitz. Counting independent sets up to the tree threshold.
{\em Proc.of 38th  {ACM} Symposium on Theory of Computing}
(STOC), pp 140--149, 2006.
 
\end{thebibliography}
\end{document}